\theoremstyle{plain}
\newtheorem*{theorem*}{Theorem} 
\newtheorem{theorem}{Theorem}[section]
\newtheorem{lemma}[theorem]{Lemma}
\newtheorem{cor}[theorem]{Corollary}
\newtheorem{prop}[theorem]{Proposition}
\newtheorem{rem}[theorem]{Remark}
\theoremstyle{definition}
\newtheorem{definition}[theorem]{Definition}
\renewcommand{\Re}{{\rm Re}\,}
\renewcommand{\Im}{{\rm Im}\,}
\newcommand{\R}{\mathbb{ R}}
\newcommand{\C}{\mathbb{ C}}
\newcommand{\Cc}{\mathfrak{C}}
\newcommand{\Z}{\mathbb{ Z}}
\renewcommand{\H}{\mathbb{ H}}
\newcommand{\N}{\mathbb{ N}}
\renewcommand{\P}{\mathbb{ P}}
\newcommand{\HP}{\H\P}
\newcommand{\CP}{\C\P}
\newcommand{\CcP}{\Cc\P}
\newcommand{\invers}{^{-1}}
\newcommand{\Hh}{\mathcal{H}}
\newcommand{\ii}{{\bf i\,}}
\DeclareMathOperator{\ord}{ord}
\DeclareMathOperator{\Gl}{GL}
\DeclareMathOperator{\SO}{SO}
\DeclareMathOperator{\Oo}{O}
\DeclareMathOperator{\im}{im}
\DeclareMathOperator{\Span}{span}
\DeclareMathOperator{\Id}{Id}
\DeclareMathOperator{\Gr}{Gr}
\DeclareMathOperator{\res}{res}
\newcommand{\zo}{^{(0,1)}}
\newcommand{\oz}{^{(1,0)}}
\newcommand{\trivial}[1]{\underline{\H}^{#1}}
\newcommand{\trivialC}[1]{\underline{\C}^{#1}}
\newcommand{\ttrivial}[1]{\widetilde{\underline{\H}}^{#1}}
\newcommand{\todoo}[1]{}
\newcommand{\done}[1]{}
\begin{document}
\title[Simple factor dressing and the L\'opez-Ros deformation]
{Simple factor dressing and the L\'opez-Ros deformation of
     minimal surfaces in Euclidean 3--space} \author{K. Leschke and K. Moriya}

 \address{K. Leschke, Department of Mathematics,
  University of Leicester, University Road, Leicester LE1 7RH, United
  Kingdom}
\address{K. Moriya, Division of Mathematics, Faculty of Pure and Applied Sciences, University of Tsukuba,
  1-1-1 Tennodai, Tsukuba-shi, Ibaraki-ken 305-8571, Japan}  
\email{k.leschke@le.ac.uk, moriya@math.tsukuba.ac.jp}

\thanks{First author  partially supported by DFG SPP 1154 ``Global
  Differential Geometry'' and JSPS KAKENHI  Grant-in-Aids for Scientific
  Research (C), Grant Number
  24540090. Both authors  supported by JSPS
  KAKENHI Grant-in-Aids for Scientific
  Research (C),  Grant Numbers 22540064 and 25400063}
\maketitle

\begin{abstract}
  The aim of this paper is to give a new link between integrable
  systems and minimal surface theory.  The dressing operation uses the
  associated family of flat connections of a harmonic map to construct
  new harmonic maps.  Since a minimal surface in 3--space is a
  Willmore surface, its conformal Gauss map is harmonic and a dressing
  on the conformal Gauss map can be defined. We study the induced
  transformation on minimal surfaces in the simplest case,
  the simple factor dressing, and show that the well--known
  L\'opez--Ros deformation of minimal surfaces is a special case of
  this transformation. We express the simple factor dressing and the
  L\'opez--Ros deformation explicitly in terms of the minimal surface
  and its conjugate surface.  In particular, we can control periods
  and end behaviour of the simple factor dressing. This allows to
  construct new examples of doubly--periodic minimal surfaces arising
  as simple factor dressings of Scherk's first surface.

\end{abstract}


\section{Introduction}

Minimal surfaces, that is, surfaces with vanishing mean curvature,
first implicitly appeared as solutions to the Euler-Lagrange equation
of the area functional in \cite{Lagrange} by Lagrange.  The classical
theory flourished through contributions of leading mathematicians
including, amongst others, Catalan, Bonnet, Serre, Riemann,
Weierstrass, Enneper, Schwarz and Plateau.  By now, the class of
minimal surfaces belongs to the best investigated and understood
classes in surface theory.  One of the reasons for the success of its
theory is the link to Complex Analysis: since a minimal conformal
immersion $f: M \to\R^3$ from a Riemann surface $M$ into 3--space is a
harmonic map, minimal surfaces are exactly the real parts holomorphic
curves $\Phi: M \to \Cc^3$ into complex 3--space. Due to the
conformality of $f$, the holomorphic  map $\Phi$ is a null curve  with
respect to the standard symmetric bilinear form on $\Cc^3$. A
particularly important aspect of this approach is that the
\emph{Enneper--Weierstrass representation} formula,  \cite{enneper},
\cite{weierstrass}, allows to construct all holomorphic null curves,
and thus all minimal surfaces, from the \emph{Weierstrass data} $(g,
\omega)$ where $g$ is a meromorphic function and $\omega$ a
holomorphic 1--form.  For details on the use of the holomorphic null
curve and the associated Enneper--Weierstrass representation as well
as historical background we refer the reader to standard works on
minimal surfaces, such as \cite{nitsche}, 
\cite{hoffman_karcher}, \cite{lopez_martin}, \cite{perez_ros},
\cite{dierkes}, \cite{meeks_perez_book}.

For the purposes of this paper, it is however useful to point out two obvious
ways to construct new minimal surfaces from a given minimal surface
$f: M \to \R^3$ and its holomorphic null curve $\Phi$: firstly,
multiplying $\Phi$ by $e^{-i\theta}$, $\theta\in\R$, one obtains the
\emph{associated family} of minimal surfaces $f_{\cos \theta, \sin\theta}
=\Re(e^{-i\theta} \Phi)$ as the real parts of the holomorphic null
curves $e^{-i\theta} \Phi$. The associated family of minimal surfaces
was introduced by  Bonnet, \cite{Bonnet}, in the study of surfaces
parametrised by a curvilinear coordinate.  An interesting feature of
the associated family is that it is an isometric deformation of
minimal surfaces which preserves the Gauss map.  The converse was
shown by  Schwarz, \cite{Schwarz}: if two simply-connected
minimal surfaces are isometric, then, by a suitable rigid motion, they
belong to the same associated family.

The second transformation, the so--called \emph{Goursat
  transformation} \cite{goursat}, is given by any orthogonal matrix
$\mathcal{A}\in \Oo(3, \Cc)$: since $\mathcal A$ preserves the
standard symmetric bilinear form on $\Cc^3$, the holomorphic map
$\mathcal{A}\Phi$ is a null curve, and $\Re(\mathcal{A}\Phi)$ is a
minimal surface in $\R^3$.  As pointed out by P\'erez and Ros,
\cite{perez_ros}, an interesting special case is known as the
\emph{L\'opez--Ros deformation}.  To show that any complete, embedded
genus zero minimal surface with finite total curvature is a catenoid
or a plane, L\'opez and Ros \cite{lopez_ros} used a deformation of the
Weierstrass data which preserves completeness and finite total
curvature.  This L\'opez-Ros deformation has been later used in
various aspects of minimal surface theory, e.g., in the study of
properness of complete embedded minimal surfaces,
\cite{meeks_perez_ros}, the discussion of symmetries of embedded genus
$k$--helicoids, \cite{bernstein_breiner}, and in an approach to the
Calabi-Yau problem, \cite{ferrer_martin_umehara_yamada}.

On the other hand, by the Ruh--Vilms theorem the Gauss map of a
minimal surface is a harmonic map $N: M \to S^2$ from a Riemann
surface $M$ into the 2-sphere, \cite{ruh_vilms}. Harmonic maps from
Riemann surfaces into compact Lie groups and symmetric spaces, or more
generally, between Riemannian manifolds, have been extensively studied
in the past.  Harmonic maps are critical points of the energy
functional and include a wide range of examples such as geodesics,
minimal surfaces, Gauss maps of surfaces with constant mean curvature
and classical solutions to non--linear sigma models in the physics of
elementary particles. Surveys on the remarkable progress in this topic may
be found in \cite{eells_lemaire1}, \cite{eells_lemaire2},
\cite{guest}, \cite{helein_wood}, \cite{ohnita}.

One of the big breakthroughs in the theory of harmonic maps was the
observation from theoretical physicists that a harmonic map equation
is an integrable system, \cite{pohlmeyer}, \cite{zakharov_mikhailov},
\cite{zakharov_shabat}: The harmonicity condition of a map from a
Riemann surface into a suitable space can be expressed as a
Maurer--Cartan equation. This equation allows to introduce the
spectral parameter to obtain the \emph{associated family of
  connections}. The condition for the map to be harmonic is then
expressed by the condition that every connection in the family is a
flat connection.  This way, the harmonic map equation can be
formulated as a Lax equation with parameter. Starting with the work of
Uhlenbeck \cite{uhlenbeck} integrable systems methods have been highly
successful in the geometric study of harmonic maps from Riemann
surfaces into suitable spaces, e.g., \cite{hitchin-harmonic},
\cite{Uhl92}, \cite{Annals}, \cite{duke}, \cite{DPW},
\cite{terng_uhlenbeck}. In particular, the theory can be used to
describe the moduli spaces of surface classes which are given
 in terms of a harmonicity condition, such as
constant mean curvature surfaces, e.g., \cite{pin&ster},
\cite{sym_bob}, isothermic surfaces, e.g.,
\cite{cieslinski1995isothermic}, \cite{bjpp}, \cite{fran_epos},
Hamiltonian Stationary Lagrangians, e.g., \cite{pascal_frederic},
\cite{hsl}, and Willmore surfaces, e.g., \cite{Helein1},
\cite{schmidt}, \cite{bohle}, \cite{burstall_quintino}.

We recall the methods of integrable systems which are relevant for our
paper: given a $\C_*$--family of flat connections $d_\lambda$ of the
appropriate form, one can construct a harmonic map from it. In
particular, the associated family $d_\lambda$ of flat connections of a
harmonic map gives an element of  the \emph{associated family of harmonic maps} by,
up to a gauge by a $d_\mu$--parallel endomorphism, using the family
$d_{\mu\lambda}$ for some fixed $\mu\in\C_*$. The
\emph{dressing operation} was introduced by Uhlenbeck and Terng, \cite{uhlenbeck},
\cite{terng_uhlenbeck}: as pointed out to us by Burstall, in the case
of a harmonic map $N: M \to S^2$  the dressing is given
by a gauge  $\hat d_\lambda = r_\lambda\cdot d_\lambda$ of   $d_\lambda$
by   a $\lambda$--dependent dressing matrix $r_\lambda$, \cite{simple_factor_dressing}. The
\emph{dressing} of $N$ is then the harmonic map $\hat N$ that has
$\hat d_\lambda$ as its associated family of flat connections. In general, it is hard to find explicit
dressing matrices and compute the resulting harmonic map. However, if
$r_\lambda$ has a simple pole $\mu\in\C_*$ and is given by a
$d_\mu$--parallel bundle, then the so--called \emph{simple factor
  dressing} can be computed explicitly,  e.g., \cite{terng_uhlenbeck},
\cite{dorfmeister_kilian}, \cite{simple_factor_dressing}. 

Parallel bundles of the associated family of flat connections also
play an important role in Hitchin's classification of harmonic tori in
terms of spectral data, \cite{hitchin-harmonic}, and in applications
of his methods to constant mean curvature and Willmore tori, e.g.,
\cite{pin&ster}, \cite{schmidt}.  The holonomy representation of the family
$d_\lambda$ with respect to a chosen base point on the torus is
abelian and hence has simultaneous eigenlines. From the corresponding
eigenvalues one can
define the \emph{spectral curve} $\Sigma$, a hyperelleptic curve over
$\CP^1$ (which is independent of the chosen base point), together with
a holomorphic line bundle over $\Sigma$, given by the eigenlines of
the holonomy (these depend on the base point, and sweep out a subtorus
of the Jacobian of the spectral curve). Conversely, the spectral data
can be used to construct the harmonic tori in terms of
theta--functions on the spectral curve $\Sigma$. This idea can be
extended to a more general notion \cite{taimanov_weierstrass}
of a spectral curve for conformal tori $f: T^2 \to
S^4$. Geometrically, this multiplier spectral curve arises as a
desingularisation of the set of all Darboux transforms of $f$ where
one uses a generalisation of the notion of Darboux transforms for
isothermic surfaces to conformal surfaces \cite{conformal_tori}.  In
the case when the conformal immersion is a constant mean curvature or
Willmore surface, one obtains as special cases the so--called
\emph{$\mu$--Darboux transforms} which are given by parallel sections
of the associated family of flat connections. In particular, the
(normalisations of the) eigenline spectral curve for the harmonic
Gauss map of a constant mean curvature torus $f$ is, \cite{cmc}, the
multiplier spectral curve of $f$. A similar result holds for
(constrained) Willmore surfaces, \cite{bohle}.
  
As mentioned above, by the Ruh--Vilms theorem the Gauss map of a
minimal immersion $f: M \to\R^3$ is harmonic, and thus, the various
operations discussed above can be applied to its Gauss map. However,
as opposed to the case of an immersion with constant non--vanishing
mean curvature, the Gauss map does not uniquely determine the minimal
surface. Thus, although the associated family and the dressing
operation for the harmonic Gauss map of a minimal surface can be
defined, \cite{dorfmeister_pedit_toda}, the investigation of minimal
surfaces with these dressed harmonic Gauss maps complicates.  On the
other hand, Meeks, P\'erez and Ros \cite{meeks_perez_ros2},
\cite{meeks_perez}, use algebro--geometric solutions to the KdV
equation to show that the only properly embedded minimal planar domains with
infinite topology are the Riemann minimal examples.  The same Lam\'e
potentials appear in the study of the spectral curve of an Euclidean
minimal torus with two planar ends and translational periods
\cite{bohle_taimanov}.  This indicates that applying integrable system
methods may lead to a further development of minimal surface
theory. Conversely, getting a better understanding of the special case
of minimal surfaces may also give insights into the more general
methods from integrable systems.  

The aim of our paper is to provide further evidence that concepts on
minimal surfaces may in fact be special cases of the harmonic map
theory: the L\'opez--Ros deformation is a special case of a simple
factor dressing of a minimal surface.

To avoid the issue that a minimal surface is not uniquely determined
by its Gauss map, we will work with the conformal Gauss map which
determines a minimal surface in 3--space uniquely.  Since minimal
surfaces are Willmore the conformal Gauss map is harmonic, too.  We
will briefly recall the construction of the associated families
$d_\lambda$ and $d^S_\lambda$ of flat connections for both the
harmonic Gauss map $N$ and the conformal Gauss map $S$ of a minimal
surface in our setup. Both are closely related: parallel sections of
$d^S_\lambda$ can be expressed in terms of parallel sections of
$d_\lambda$ and generalisations $f_{p,q}$, $p, q\in S^3$, of the
associated family of minimal surfaces $f_{\cos\theta, \sin\theta}$.
It turns out that this new family $f_{p,q}$, the
\emph{right--associated family}, is in fact a family of minimal
surfaces in 4--space which contains the classical associated
family. In view of this natural appearance of minimal surfaces in
4--space, we will develop our theory more generally for minimal
surfaces in 4--space and restrict to the case of minimal surfaces in
3--space when appropriate.  As in the case of a harmonic map $N: M \to
S^2$ one can define the \emph{associated family of harmonic maps} of
the harmonic conformal Gauss map of a Willmore surface, \cite{tokyo}.  In the
case of a minimal surface, we show that the harmonic maps in the
associated family of the conformal Gauss map are indeed the conformal
Gauss maps of the associated family of minimal surfaces.

Moreover, due to the harmonicity of the conformal Gauss map of a
Willmore surface, a dressing operation on Willmore surfaces can be
defined \cite{burstall_quintino}.  In particular, for the most simple
dressing operation given by a dressing matrix with a simple pole, the
so--called \emph{simple factor dressing}, the new harmonic map can be
computed explicitly and is the conformal Gauss map of a new Willmore
surface in the 4-sphere, \cite{willmore_harmonic},
\cite{burstall_quintino}.

In the case of a minimal surface $f: M \to \R^4\subset S^4$ we only
consider simple factor dressings which preserve the Euclidean
structure and show that in this case, the simple factor dressing of the
conformal Gauss map of $f$ is indeed the conformal Gauss map of a minimal
surface in 4--space. In fact, the simple factor dressing can be given
explicitly in terms of the minimal surface $f$, its conjugate and the
parameters $(\mu, m, n)$ where $\mu\in\C\setminus\{0\}$ is the pole of
the simple factor dressing, and $m, n \in S^3$ determine the
$d^S_\mu$--stable bundle which is needed in the definition of the
dressing matrix. Even for surfaces in 3--space, the simple factor
dressing will in general give surfaces in 4--space. However, for
$n=m$, the simple factor dressing of a minimal surface $f: M \to \R^3$
will be in 3--space and the Gauss map of a simple factor dressing is
the simple factor dressing of the Gauss map of $f$. In the simplest
case when $n=m =1$ and $\mu\in\R$ the simple factor dressing is the
minimal surface
\begin{equation}
\label{eq: sfd mu real}
f^\mu = \begin{pmatrix} f_1 \\ f_2 \cosh s - f_3^* \sinh s\\ f_3\cosh
  s + f_2^* \sinh s
\end{pmatrix}
\end{equation}
where $s =-\ln|\mu|$ and $f_l, f^*_l$ are the coordinate functions of
$f$ and a conjugate $f^*$ of $f$. In this case, we see immediately
that $f^\mu $ is a Goursat transformation of $f$ with holomorphic null
curve $\mathcal L^\mu\Phi$ where $\Phi = f+\ii f^*$ is the holomorphic
null curve of $f$ and
\[
\mathcal L^\mu = \begin{pmatrix} 1 & 0 & 0\\
0&\cosh s & \ii \sinh s\\ 0 & -\ii \sinh s & \cosh s
\end{pmatrix}\in \Oo(3,\Cc)\,.
\]
Indeed, we prove more generally that every simple factor dressing of a
minimal surface $f: M \to \R^4$ with parameters $(\mu, m, n)$ is a
Goursat transformation. In particular, we show that this implies that
the simple factor dressing preserves completeness. If the Goursat
transform is single--valued on $M$ then finite total curvature is
preserved, too.

In the case when $m=n\in S^3$, the orthogonal matrix of the Goursat
transformation is given as $\mathcal{R}_{m,m} \mathcal L^\mu
\mathcal{R}_{m,m}\invers\in \Oo(3, \Cc)$ where the rotation matrix
$\mathcal R_{m,m}$ in 3--space is given by $m\in S^3\subset \R^4$:
decomposing 
$
m
= (\cos \theta, q \sin\theta )
$
 with $q\in \R^3, ||q||=1$, the matrix
$\mathcal R_{m,m}$ is the rotation along the axis given by $q$ about
the angle $2\theta$. In other words, the simple factor dressing in
$\R^3$ with parameters $(\mu, m, m)$ is obtained from the simple factor
dressing (\ref{eq: sfd mu real}) with parameter $\mu$ applied to the
(inverse of the) rotation given by $m$.

The L\'opez--Ros deformation of a minimal surface $f: M \to \R^3$ is
usually given in terms of the Weierstrass data. We recall an explicit
form of the L\'opez--Ros deformation in terms of the minimal surface
and its conjugate surface: the L\'opez--Ros deformation $f_\sigma$ of
$f$ with parameter $\sigma \in\R, \sigma>0,$ is indeed given by
\[ 
f_\sigma= \begin{pmatrix} f_1 \cosh s - f_2^* \sinh s\\ f_2\cosh
  s + f_1^* \sinh s \\ f_3
\end{pmatrix}
\]
where $s= \ln \sigma$.  In other words, since
\[
f_\sigma = \Re(\mathcal R_{m,m}\
\mathcal L^\mu \mathcal R_{m,m} \invers\Phi)
\]
with $\mu = - \frac 1 \sigma$ and $m= \frac 12(1, -1, -1, -1)\in S^3$,
the Lopez--Ros deformation with parameter $\sigma$ is the simple
factor dressing of $f$ with parameters $(\mu, m, m)$.

We investigate the periods of the simple factor dressings in terms of
the periods of the holomorphic null curve, and give conditions on the
parameters $(\mu, m, n)$ for a simple factor dressing to be
single--valued.   We discuss the end behaviour of the simple factor
dressing on minimal surfaces in 3--space with finite total curvature
ends: the simple factor dressing preserves planar ends for all
parameters and, due to the special form of the Goursat transformation,
catenoidal ends if the parameters of the simple factor dressings are
chosen so that the simple factor dressing is single--valued.

Previous results seemed to indicate that the $\mu$--Darboux
transformation, which is used in the geometric understanding of the
spectral curve of conformal tori, preserves a surface class.  For
example, $\mu$--Darboux transforms of constant mean curvature surfaces
$f: M \to\R^3$, i.e., Darboux transforms which are given by parallel
sections of the flat connection $d_\mu$ in the associated family of
the Gauss map $N$ of $f$, have constant mean curvature \cite{cmc} and
similarly, $\mu$--Darboux transforms of Hamiltonian Stationary
Lagrangians are Hamiltonian Stationary Lagrangians,
\cite{hsl}. However, for minimal surfaces we show that $\mu$--Darboux
transforms are in general not minimal but are still given by complex
holomorphic data.  More precisely, a minimal surface has an
\emph{associated Willmore surface} which is the twistor projection of
a holomorphic curve in complex projective 3-space, and a
$\mu$--Darboux transform of $f$ is the associated Willmore surfaces of
an element of the right--associated family $f_{p,q}$ of $f$.  In case
of a minimal surface $f: M \to\R^3$ the associated Willmore surface
$f^\flat$ is the conformal immersion in 4--space which is given by
\[
f^\flat =\begin{pmatrix}
-<f, N> \\
f\times N - f^*
\end{pmatrix}\,,
\]
where $N$ is the Gauss map of $f$ and $f^*$ is a conjugate of $f$. 

We conclude the paper by demonstrating our results for various
well-known minimal surfaces, including the catenoid, surfaces with one
planar end and Scherk's first surface.

In particular, the simple factor dressings of the catenoid which are
again periodic are reparametrisations of the catenoid if they are
surfaces in 3--space. This immediately follows from our result that
the simple factor dressing of a catenoidal end is catenoidal, provided
the simple factor dressing is single--valued.  Since planar ends are
preserved for any parameters, all simple factor dressings of surfaces
with one planar end have one planar end, too.

Using our closing conditions, we show that the L\'opez--Ros
deformation of Scherk's first surface gives doubly--periodic minimal
surfaces. Moreover, for any rational number $q>0$ we show that the
simple factor dressing (\ref{eq: sfd mu real}) with parameter
$\mu=-\frac 1{\sqrt{q}}$ is doubly--periodic, thus we obtain a family
of new examples of doubly--periodic (non--embedded) minimal surfaces.

The authors would like to thank Wayne Rossman and Nick Schmitt for
directing their attention towards the L\'opez-Ros deformation and the
Goursat transformation. Parts of this research were conducted while
the first author was visiting the Department of Mathematics at the
University of Tsukuba and the OCAMI at Osaka University. The first
author would like to thank the members of both institutions for their
hospitality during her stay, and the University of Leicester for
granting her study leave.


 

\section{Minimal surfaces}
We first recall some basic facts on minimal surfaces in Euclidean
space which will be needed in the following whilst setting up our
notation.  Although we are mostly interested in minimal surfaces in
$\R^3$, some of our transforms will be surfaces in $\R^4$. Therefore,
we will study more generally minimal  immersions in $\R^4$ and
specialise to the case of minimal surfaces in 3--space when
appropriate.

\subsection{Minimal surfaces in $\R^4$}
Let $f: M \to \R^4$ be a conformal (branched)
immersion from a Riemann surface $M$ into 4--space. If $f$ is minimal,
then $f$ is harmonic, i.e.,
\[
d*df =0
\]
where we put $*\omega(X) = \omega(J_{TM}X)$ for a 1--form
$\omega\in\Omega^1(TM)$, $X\in TM$. Here, $J_{TM}$ is the complex
structure of the Riemann surface $M$, thus, $*$ is the negative Hodge
star operator. In particular, $*df$ is closed if  $f$ is harmonic and  there
exists a \emph{conjugate surface} $f^*$   on the universal cover
$\tilde M$ of $M$, given up to translation by
\[
df^* = -*df\,.
\]
Note that $f^*$ is minimal, and so is the \emph{associated family},
e.g. \cite{eisenhart},
\[
f_{\cos\theta,\sin\theta} = f\cos\theta + f^*\sin\theta, \quad
\theta\in \R\,.
\]

We model Euclidean 4--space by the quaternions $\R^4=\H$, and the
Euclidean 3--space in $\R^4$ by the
 imaginary quaternions $\R^3=\Im\H$.
The conformality of an immersion $f: M \to\R^4$ gives
\cite[p. 10]{coimbra} the \emph{left} and \emph{right normal} $N, R: M
\to S^2=\{n\in\Im\H \mid n^2=-1\}$ of $f$ by
\begin{equation}
\label{eq:left and right normal}
*df = N df = -df R\,.
\end{equation}
Then 
the \emph{mean curvature vector} $\Hh$ of $f: M \to\R^4$
satisfies \cite[p. 39]{coimbra}
\[
\bar \Hh df = \frac12(*dR + RdR), \quad \text{or, equivalently,} \quad df\bar\Hh = -\frac 12(*dN + NdN)\,.
\]
Since $\Hh$ is normal we have $N\Hh = \Hh R$.  We put $H = -R\bar\Hh$
and denote by
\[
(dR)' = \frac 12(dR - R*dR), \quad (dR)'' = \frac 12(dR + R*dR)
\]
the $(1,0)$ and $(0,1)$--part of $dR$ with respect to the complex
structure $R$. Then the equation of the mean curvature vector becomes 
\begin{equation}
\label{eq:H R}
Hdf    =(dR)'\,.
\end{equation}

Similarly, there is also an equation for the  mean curvature vector in terms of
the left normal:
\begin{equation}
\label{eq:H}
dfH = \frac 12(dN - N*dN) = (dN)'\,.
\end{equation}

Note that $f: M \to
\R^4$ is minimal if and only if 
\[
(dR)'=0\,, \quad \text{ or, equivalently, } \quad (dN)'=0\,.
\]

In other words, if $f$ is minimal
then 
\[
*dR = -RdR = dR R
\]
and $*dN = -NdN = dN N$ for the left and right normal of $f$. Thus,
both $N$ and $R$ are quaternionic holomorphic sections
\cite{klassiker} with respect to the induced quaternionic holomorphic
structures on the trivial $\H$ bundle $\trivial{} = M \times \H$. Note
also that a map $R: M\to S^2$ is harmonic if and only if 
\begin{equation}
\label{eq: N harmonic}
d(dR)'=0 \quad \text{or, equivalently, } \quad d(dR)''=0\,.
\end{equation}
In particular, both the left and right
normal $N$ and $R$ of a minimal surface are conformal and harmonic.

Next, we observe that a conjugate surface $f^*$ of a minimal
surface $f$ has the same left and right normal as $f$ since
\[
*df^* = -*(*df) = df = *df R= - df^*R\,,
\]
and similarly $*df^* = Ndf^*$.  
Since $f$ is harmonic and $*df =-df^*$, the map  
\[
\Phi= f+\ii f^*: \tilde M \to \Cc^4
\]
is a  holomorphic   curve in $\Cc^4$, that is, $*d\Phi = \ii d\Phi$.   Here we use ``$\ii$'' to denote  the complex structure of the
com\-plexifica\-tion $\Cc^4=\R^4 + \ii \, \R^4$ to avoid confusion
with the imaginary quaternion $i$. 
If $f: M \to\R^4$ is a minimal conformal immersion, then the  holomorphic   curve   
\[
\Phi =\begin{pmatrix} \Phi_0, \Phi_1, \Phi_2, \Phi_3 
\end{pmatrix}
\colon\tilde M  \to\Cc^4
\]
is a null curve in $\Cc^4$, i.e., $d\Phi_0\otimes d\Phi_0 +
d\Phi_1\otimes d\Phi_1+d\Phi_2\otimes d\Phi_2+d\Phi_3\otimes d\Phi_3
=0$. In fact, every holomorphic null curve $\Phi: \tilde M \to \Cc^3$
gives rise to a conformal minimal immersion by setting $f = \Re(\Phi):
\tilde M \to \R^3$.  Note that the holomorphic null curve of the
associated family $f_{\cos \theta, \sin\theta}$ of a minimal surface
$f$ is given by $\Phi_{\cos \theta, \sin \theta} = e^{-\ii \theta}
\Phi$ where $\Phi$ is the holomorphic null curve of $f$.

   The \emph{Weierstrass data} of $f$ is given by the
meromorphic functions
\[
g_1 = \frac{d\Phi_3}{d\Phi_1 - \ii  d\Phi_2},  \quad g_2 =\frac{d\Phi_0}{ d\Phi_1 - \ii  d\Phi_2}
\]
and the holomorphic 1--form
\[
\omega= d\Phi_1 - \ii  d\Phi_2\,.
\]

Conversely, let $g_1, g_2: M \to \Cc \cup \{\infty\}$ be meromorphic
functions  and $\omega$ a holomorphic 1--form. Assume that  if $m$ is
the maximum 
order 
of poles of $g_1, g_2,$ and $g_1^2+g_2^2$  at $p\in M$  then
$\omega$ has a 
zero at $p$ of order at least $m$. Then $(g_1, g_2, \omega)$ gives
rise \cite{hoffman_osserman}  to a holomorphic null
curve $\Phi$, and thus a minimal surface
$f=\Re(\Phi): \tilde M \to \R^4$, via 
\[
\Phi = \int \left( g_2 \omega
,  \frac 12(1-g_1^2 -g_2^2)\omega, \frac
  {\ii }2(1+g_1^2+g_2^2)\omega , g_1\omega\right)\,.
\]
Our choice of Enneper--Weierstrass representation is so that it
specialises to the standard Enneper--Weierstrass representation in $\R^3$
whenever $f: M\to \H$ is a minimal surface with $\Re(f) =0$.
We allow $f$ to be branched which happens whenever the order of
$\omega$ at $p$ is bigger than the maximum order of poles of $g_1,
g_2,$ and $g_1^2+g_2^2$ at $p\in M$. Note that $f$ is in general only
defined on the universal cover $\tilde M$ of $M$. We say that a
minimal surface $f: \tilde M \to \R^4$ is single--valued on $M$ if
$f\circ \pi\invers: M \to \R^4$ is well--defined where $\pi: \tilde M
\to M$ is the canonical projection of the universal cover $\tilde M$
to $M$. In this case, we will identify $f$ and $f\circ \pi\invers$ and
write, in abuse of notation, from now on $f: M \to\R^4$.

The Gauss map $G: M \to \Gr_2(\R^4)$ of $f$ is a map into the
Grassmannian of oriented two planes in $\R^4$. In our case, it is given by $\varphi
dz=d\Phi$ where the two-plane $G(p)$ in
$\R^4$ is spanned by $\Re \varphi, \Im \varphi$ at $p$ since
$d\Phi = df +\ii df^* = df - \ii *df$.    
But $
\Gr_2(\R^4) = \CcP^1\times \CcP^1$ and  the Gauss map $G$ can be identified
\cite{osserman} with the two meromorphic functions $G_1, G_2:
M \to \Cc$:
\[
G_1 = \frac{\varphi_3-\ii\varphi_0}{\varphi_1-\ii\varphi_2} = g_1-\ii g_2,
\quad
G_2 = \frac{\varphi_3+\ii\varphi_0}{\varphi_1- \ii\varphi_2} = g_1+\ii g_2\,.
\]
Indeed, stereographic projections of $G_1$ and $G_2$ give the left and
right normals $N$ and $R$ by 
\[
N =  \frac 1{1+|G_1|^2}(2\, \Re G_1, 2\, \Im G_1, |G_1|^2-1), \quad
R=  \frac 1{1+|G_2|^2}(2\, \Re G_2, 2\, \Im G_2, |G_2|^2-1)\,.
\]
To  verify this, we  note that $\Phi$ can be expressed in terms of $G_1, G_2, \omega$ as
\[
d\Phi= \varphi dz= \frac\omega 2\big(\ii (G_1 -G_2), 1-G_1G_2, \ii (1+ G_1G_2),
G_1+G_2\big)\,.
\]
Since $\varphi = f_x - \ii f_y$ in the conformal coordinate $z=x+iy$, 
we obtain $f_x = \frac 12(\varphi +\bar\varphi), f_y =  \frac
\ii 2(\varphi-\bar\varphi)$, and it is a straight forward computation to
verify that $N = \frac 1{1+|G_1|^2}(2\, \Re G_1, 2\, \Im G_1,
|G_1|^2-1)$ and
$R= \frac 1{1+|G_2|^2}(2\, \Re G_2, 2\, \Im G_2, |G_2|^2-1)$ satisfy
$Nf_x = -f_x R = f_y$ which are the defining equations
(\ref{eq:left and right normal})  of the left and right
normal of $f$.

Finally we recall the following result due to  Chern and Osserman:
\begin{theorem}[\cite{chern_osserman}, \cite{moriya_minimal}]
\label{thm:FTC}
Let $f: M \to\R^4$  be a complete (branched) minimal immersion with holomorphic
null curve $\Phi: M \to \Cc^4$. 

Then $f$
has finite total curvature if and only if $M$ is
conformally equivalent to a compact Riemann surface $\bar M$ punctured
at finitely many points $p_1, \ldots, p_r$ such that $d\Phi$ extends
meromorphically into punctures $p_i$.  
\end{theorem}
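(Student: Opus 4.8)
The plan is to prove both implications by relating the total curvature of $f$ to the two components $G_1,G_2$ of the Gauss map, and then to invoke a uniformisation-type result to control the conformal type. Throughout I write the induced metric as $ds^2=\tfrac12|\varphi|^2|dz|^2$ in a conformal coordinate $z$, where $d\Phi=\varphi\,dz$ and $|\varphi|^2=\sum_j|\varphi_j|^2$; using the Enneper--Weierstrass formula $d\Phi=\tfrac{\omega}{2}(\ii(G_1-G_2),1-G_1G_2,\ii(1+G_1G_2),G_1+G_2)$ one computes $|\varphi|^2=\tfrac{|h|^2}{2}(1+|G_1|^2)(1+|G_2|^2)$, where $\omega=h\,dz$. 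Since the Gauss curvature of a conformal metric satisfies $K\,ds^2=-\tfrac12\Delta\log|\varphi|^2\,dx\,dy$, and $\log|h|^2$ is harmonic away from the zeros of $\omega$ (which are exactly the branch points of $f$), the total curvature reduces, away from those isolated points, to the sum of the two Fubini--Study energies
\[
\int_M K\,dA=-\tfrac12\int_M\Big(\Delta\log(1+|G_1|^2)+\Delta\log(1+|G_2|^2)\Big)\,dx\,dy=-2\pi(\deg G_1+\deg G_2),
\]
where the last equality holds once $G_1,G_2$ extend to $\bar M$, because $\int\tfrac{4|G_i'|^2}{(1+|G_i|^2)^2}\,dx\,dy$ is exactly the image area with multiplicity $4\pi\deg G_i$ in $\CcP^1=S^2$. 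This identity is the engine for both directions.

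For the converse (easier) direction I would assume $M=\bar M\setminus\{p_1,\dots,p_r\}$ with $d\Phi$ meromorphic at each $p_i$. Then $G_1=(\varphi_3-\ii\varphi_0)/(\varphi_1-\ii\varphi_2)$ and $G_2=(\varphi_3+\ii\varphi_0)/(\varphi_1-\ii\varphi_2)$ are ratios of meromorphic one-forms, hence extend to meromorphic functions on the compact surface $\bar M$ and so have finite degree; the displayed formula then gives $\int_M K\,dA=-2\pi(\deg G_1+\deg G_2)>-\infty$, i.e.\ finite total curvature. A short check that the finitely many branch points contribute only finite point masses completes this direction.

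For the forward direction I would argue in three steps. First, completeness together with $\int_M|K|\,dA<\infty$ (recall $K\le0$, so $K^-=|K|$) places $f$ in the scope of Huber's theorem, yielding that $M$ is conformally a compact Riemann surface $\bar M$ with finitely many punctures $p_i$; fix a punctured coordinate disc $0<|z|<\varepsilon$ about each $p_i$. Second, finiteness of the total curvature forces each $\int_{0<|z|<\varepsilon}\tfrac{4|G_i'|^2}{(1+|G_i|^2)^2}\,dx\,dy$ to be finite, so each $G_i$ is a holomorphic map of the punctured disc to $\CcP^1$ of finite area; a removable-singularity argument (finite area rules out an essential singularity, which would make $G_i$ cover a neighbourhood in $\CcP^1$ infinitely often) shows $G_1,G_2$ extend meromorphically across $0$. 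Third, I would upgrade this to meromorphic extension of the full $d\Phi$: writing $\varphi=\psi\,v$ with $v$ the now-meromorphic Gauss-data vector normalised to be holomorphic and nonvanishing at $0$, the scalar $\psi$ is holomorphic on the punctured disc, and completeness of $ds^2=\tfrac12|\psi|^2|v|^2|dz|^2$ at $z=0$ is exploited to rule out an essential singularity of $\psi$, forcing $\psi$, and hence $d\Phi=\psi\,v\,dz$, to be meromorphic at $p_i$.

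The main obstacle is this last step: passing from the extension of the Gauss map to the meromorphic extension of the entire Weierstrass one-form. The delicate point is that finite area controls only the projectivised data $[\varphi_0:\varphi_1:\varphi_2:\varphi_3]$, whereas meromorphicity of $d\Phi$ additionally requires ruling out an essential singularity of the scalar factor $\psi$ (equivalently of $\omega$). Here completeness must be used quantitatively — an essential singularity of $\psi$ would, by Casorati--Weierstrass/Picard behaviour, be incompatible with the metric being complete at the puncture while the total curvature stays finite — and one must simultaneously track the interior branch points (zeros of $\omega$) so that they contribute only isolated finite masses to $\int_M K\,dA$. Making this rigidity argument precise in the four-dimensional setting, where both $G_1$ and $G_2$ are in play, is where I expect the real work to lie.
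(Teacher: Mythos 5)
The paper itself gives no proof of this theorem: it is quoted from \cite{chern_osserman} and \cite{moriya_minimal}. So the benchmark is the classical Chern--Osserman argument, and your outline follows exactly that route: Huber's theorem for the conformal type, meromorphic extension of $G_1,G_2$ via the finite-spherical-area/removable-singularity lemma, then extension of the full Weierstrass form. Your curvature identity $\int_M K\,dA=-2\pi(\deg G_1+\deg G_2)$ is the correct one in $\R^4$, and the converse direction as you sketch it is essentially complete, since away from the isolated branch points $-K\,dA$ is exactly the sum of the two pulled-back Fubini--Study area forms, which have finite integral once $G_1,G_2$ extend to $\bar M$.

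The genuine gap is the last step of the forward direction, and it is the crux of the whole theorem. You assert that an essential singularity of the scalar factor $\psi$ "would, by Casorati--Weierstrass/Picard behaviour, be incompatible with the metric being complete at the puncture". That inference does not work as stated: Casorati--Weierstrass and Picard produce \emph{sequences of points} $z_n\to 0$ on which $|\psi|$ is small or takes prescribed values, whereas completeness is a statement about \emph{divergent paths}; nothing prevents $|\psi|$ from being huge off such a sequence, and the entire difficulty is to manufacture a single path $\gamma$ tending to the puncture with $\int_\gamma|\psi|\,|dz|<\infty$. What you actually need is Osserman's lemma (Lemma 9.6 of his \emph{Survey of Minimal Surfaces}): if $\psi$ is holomorphic on a punctured disc and every divergent path has infinite length in the metric $|\psi|\,|dz|$, then the puncture is at most a pole of $\psi$. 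Its proof is a genuine construction of a finite-length divergent path when the singularity is essential, and it must also handle the case where the zeros of $\psi$ (the branch points of $f$) accumulate at the puncture, in which case your normalisation of $\psi$ as $z^{-N}$ times a nonvanishing function is not available a priori. Note also that the fourth dimension is not where the work lies: once $G_1,G_2$ extend, the vector $v$ is meromorphic and the problem reduces to the same scalar lemma as in $\R^3$. Since you have explicitly deferred exactly this lemma ("where I expect the real work to lie"), the forward implication remains unproved; supplying Osserman's lemma -- or, equivalently, the classification of complete flat conformal metrics $|\psi|^2|dz|^2$ on the punctured disc -- is what would close the argument.
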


\subsection{Minimal surfaces in $\R^3$}

We identify the Euclidean 3--space with the imaginary quaternions
$\R^3 =\Im\H$. If 
 $f: M \to \R^3$ is conformal then the left and right normal coincide
 and are given by the Gauss map $N: M\to S^2$ of $f$. In this case, the function $H$
given in (\ref{eq:H}) is  real--valued, and indeed, $H$ is
the mean curvature function of $f$.   For a minimal
 immersion in $\R^3$,
the Gauss map is thus both harmonic and conformal, that is, 
\[
*dN = -NdN\,, \quad \text{ and } \quad d(dN)'=0\,,
\]
where as before $(dN)' = \frac 12(dN - N*dN)$ is the $(1,0)$--part of
$dN$ with respect to the complex structure $N$.  

The holomorphic null curve 
\[
\Phi = f + \ii f^*: \tilde M \to \Cc^3
\]
gives the Weierstrass data $(g, \omega)$ of $f$ as 
\[
\omega = d\Phi_1-\ii  d\Phi_2
\quad \text{
and
}
\quad
g= \frac{d\Phi_3}{d\Phi_1-\ii  d\Phi_2}\,,
\]
where $\Phi_l$ are the coordinates of $\Phi=(\Phi_1, \Phi_2, \Phi_3)$. 

Conversely, a meromorphic $g$ and a holomorphic 1--form $\omega$,  such
that  if $g$ has a pole of order $m$ at $p$ then $\omega$ has a zero of
order at least $2m$,   give a
minimal surface $f: \tilde M \to \R^3$ as $f = \Re(\Phi)$ where $\Phi$ is
given by the Enneper--Weierstrass representation,   \cite{enneper},
\cite{weierstrass}, 
\[
\Phi = \int \left( \frac 12(1-g^2)\omega, \frac {\ii }2(1+g^2)\omega, g \omega\right)\,.
\]
 For convenience, we also will occasionally use in the case of a surface in $\R^3$
 the Weierstrass data $(g, dh)$  given in terms of the height differential
 $dh = g \omega$. Written in terms of $(g, dh)$ the Enneper--Weierstrass
 representation becomes
\begin{equation}
\label{eq:Weierstrass representation}
\Phi = \int \left( \frac 12(\frac 1g-g), \frac {\ii }2(\frac 1g+g),
  1\right)dh\,.
\end{equation}
The Gaussian curvature of $f$
is given in terms of the Weierstrass data \cite{karcher} as
\[
K = -\left(\frac{2}{|g|+\frac 1{|g|}}\right)^4 \left|\frac{d\log g}{dh}\right|^2\,.
\]

and the Gauss map is given by $g$ via stereographic projection
\begin{equation}
\label{eq:Gauss with g}
N =  \frac1{|g|^2+1}(2 \, \Re g, 2 \, \Im g, |g|^2-1)\,.
\end{equation}

Note that if we consider a minimal surface $f: M\to\R^3$ as
a surface in 4-space, that is $f: M \to\H$ with $\Re(f) =0$, then its
Weierstrass data in $\R^4$ is under our choices given by $(g_1=g,
g_2=0, \omega)$ and its holomorphic null curve $\int \left(0, \frac
  12(1-g^2)\omega, \frac {\ii }2(1+g^2)\omega, g \omega\right)$ in
$\Cc^4$ is given by the embedding of the curve $\Phi$ into
$\Cc^4$. Moreover, we see that in this case the Gauss map $G=(G_1,
G_2): M \to S^2\times S^2$ takes values in the diagonal of $S^2\times
S^2$  and is given by $G_1=G_2 =g$  which is by
\cite{hoffman_osserman}, \cite{taimanov_2dim_Dirac} the condition for the Enneper--Weierstrass
representation to take values in 3-space.

As before, complete minimal immersions of finite total curvature can
be characterised by the holomorphic null curve $\Phi$:
\begin{theorem}[\cite{osserman}]
\label{thm:FTC in 3space}
Let $f: M \to\R^3$  be a complete (branched) minimal immersion with holomorphic
null curve $\Phi: M \to \Cc^3$. 

Then $f$
has finite total curvature if and only if $M$ is
conformally equivalent to a compact Riemann surface $\bar M$ punctured
at finitely many points $p_1, \ldots, p_r$ such that $d\Phi$ extends
meromorphically into punctures $p_i$.  
\end{theorem}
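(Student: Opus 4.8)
The plan is to deduce this as the $\R^3$ specialisation of Theorem~\ref{thm:FTC}. As observed above, a minimal immersion $f: M \to \R^3 = \Im\H$ may be regarded as a minimal immersion $f: M \to \H = \R^4$ with $\Re(f)=0$, under which the holomorphic null curve $\Phi=(\Phi_1,\Phi_2,\Phi_3):\tilde M \to \Cc^3$ becomes the null curve $(0,\Phi_1,\Phi_2,\Phi_3):\tilde M \to \Cc^4$. The metric induced by $f$, and hence its completeness and its total curvature $-\int_M K\, dA$, are unchanged by the isometric inclusion $\R^3\subset\R^4$, and the conformal type of $M$ is intrinsic. Moreover $d\Phi$ extends meromorphically into a puncture $p_i$ as a $\Cc^3$--valued form if and only if the augmented form $(0,d\Phi)$ does so in $\Cc^4$, since the extra component vanishes identically. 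Thus both the hypotheses and the two assertions correspond verbatim to those of Theorem~\ref{thm:FTC}, and the claim follows.

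For completeness I indicate the self--contained argument that underlies Theorem~\ref{thm:FTC}. For the forward direction, assume $f$ is complete with finite total curvature. Since $f$ is minimal we have $K\le 0$, so $\int_M |K|\, dA = -\int_M K\, dA < \infty$; by Huber's theorem a complete surface with $\int_M \max(-K,0)\,dA<\infty$ is conformally a compact Riemann surface $\bar M$ punctured at finitely many points $p_1,\dots,p_r$. It remains to see that $d\Phi$ extends meromorphically across each $p_i$, and this is controlled by the Gauss map: since $-K\, dA$ equals, up to a positive factor, the pull--back under $N$ (equivalently under $g$) of the spherical area form, finiteness of the total curvature forces the meromorphic function $g$ to extend meromorphically over each puncture. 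Together with the holomorphic $1$--form $\omega = d\Phi_1 - \ii\, d\Phi_2$ this yields the meromorphic extension of $d\Phi$ via the Enneper--Weierstrass representation (\ref{eq:Weierstrass representation}).

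For the converse, suppose $M = \bar M\setminus\{p_1,\dots,p_r\}$ and $d\Phi$ extends meromorphically into the punctures. Then $g$ extends meromorphically and $\omega$ extends as a meromorphic $1$--form over each $p_i$, so the curvature integrand, computed from the Weierstrass formula
\[
K = -\left(\frac{2}{|g|+\frac1{|g|}}\right)^4\left|\frac{d\log g}{dh}\right|^2,
\]
has at worst an integrable singularity at each puncture; integrating over a punctured neighbourhood and summing over the finitely many $p_i$ shows $\int_M |K|\, dA < \infty$.

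The only genuinely nontrivial input is Huber's theorem, which converts the metric hypotheses (completeness together with the curvature bound) into the purely conformal statement that $M$ has finite conformal type; everything else is a local order count at the punctures, matching the vanishing and pole orders of $g$ and $\omega$ against the integrability of $K\, dA$. If one is content to invoke Theorem~\ref{thm:FTC} directly, there is no remaining obstacle, since the inclusion $\R^3\subset\R^4$ identifies every relevant quantity.
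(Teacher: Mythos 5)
The paper itself gives no proof of this statement: it is quoted from Osserman, exactly as Theorem~\ref{thm:FTC} is quoted from Chern--Osserman, so there is no internal argument to compare against. Your main argument---deducing the $\R^3$ case from Theorem~\ref{thm:FTC} via the inclusion $\R^3=\Im\H\subset\H=\R^4$---is correct and complete: completeness, the induced metric, the (intrinsic) Gauss curvature and hence the total curvature, and the conformal type of $M$ are all unchanged, and the paper itself records that the null curve of $f$ viewed as a surface in $4$--space is precisely $(0,\Phi_1,\Phi_2,\Phi_3)$, so the meromorphic extension conditions for $d\Phi$ in $\Cc^3$ and for the augmented curve in $\Cc^4$ coincide. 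This is a legitimate derivation given that Theorem~\ref{thm:FTC} precedes the statement in the paper and is available as a black box (even though, historically, the logical dependence runs the other way: Osserman's theorem underlies the Chern--Osserman generalisation).

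One caveat concerns your supplementary ``self--contained'' sketch: its forward direction has a genuine gap. Finiteness of the total curvature does force $g$ to extend meromorphically (via the interpretation of $|K|\,dA$ as the pulled--back spherical area form, together with Picard's theorem, which you leave implicit), but the meromorphic extension of $\omega$, and hence of $d\Phi$, does \emph{not} follow from this ``together with'' the mere holomorphicity of $\omega$ on $M$: a priori $\omega$ could have an essential singularity at a puncture. Ruling this out is exactly where completeness of the metric at the punctures enters (Osserman's lemma); for instance $\omega\sim e^{1/z}\,dz$ with $g$ bounded would be compatible with finite total curvature but admits a divergent path of finite length along a suitable ray, so it is completeness, not curvature, that excludes it. Since you present that sketch only as background and your actual proof is the reduction to Theorem~\ref{thm:FTC}, this does not affect the correctness of the proposal, but the sketch as written would not survive as a stand--alone proof.
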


We will now give a description of embedded finite total curvature ends
in terms of the holomorphic null curve. Although the result seems to be
known for vertical ends, we include the  argument for completeness.
\begin{theorem}
\label{thm:FTCend}
Let $f: M \to\R^3$ be a minimal surface with complete end at $p$.  Let
$z$ be a conformal coordinate of $M$ at the end $p$ which is defined
on a punctured disc $D_* = D\setminus\{0\}$ and is centered at $p$.

Then the following statements are equivalent:

\begin{enumerate}
\item $f$ has an embedded finite total curvature end at $p$.
\item $d\Phi$ has order $-2$  at $z=0$ and  $\res_{z=0}d \Phi $ is real. 
\end{enumerate}
If $\res_{z=0}d \Phi =0$ then the end is \emph{planar}, otherwise, it
is \emph{catenoidal}. \\

Here, $\Phi =(\Phi_1, \Phi_2, \Phi_3)$ is the holomorphic null curve
of $f$ and $\ord_{z=0}d\Phi$ is the minimum of $\ord_{z=0} d\Phi_i$
for $i=1,2,3$.

\end{theorem}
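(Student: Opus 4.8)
The plan is to work entirely with the Laurent expansion of the meromorphic $\Cc^3$--valued form $d\Phi$ at the puncture and read off the geometry of the end from it. By Theorem \ref{thm:FTC in 3space} the end has finite total curvature precisely when $d\Phi$ extends meromorphically to $z=0$; since statement (ii) presupposes meromorphicity, finite total curvature is present on both sides of the asserted equivalence, and I may write $d\Phi=\sum_{n\ge k}c_n z^n\,dz$ with $c_n\in\Cc^3$ and $k:=\ord_{z=0}d\Phi$, integrating to $\Phi=-\tfrac{c_{-2}}{z}+\dots+c_{-1}\log z+(\text{holomorphic})$, where $c_{-1}=\res_{z=0}d\Phi$. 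Two elementary facts drive the argument. First, a direct computation from the Enneper--Weierstrass representation (\ref{eq:Weierstrass representation}) gives the induced metric $ds=\tfrac12(1+|g|^2)\,|\omega|=\tfrac1{\sqrt2}|\varphi|\,|dz|$; since the Hermitian norm $|\varphi|^2=\sum_i|\varphi_i|^2$ admits no cancellation between components, $|\varphi|\gtrsim|z|^{k}$, so completeness of the end is equivalent to $\int_0 r^{k}\,dr=\infty$, i.e.\ to $k\le-1$. Second, a loop once around the puncture sends $\log z\mapsto\log z+2\pi\ii$, hence $\Phi\mapsto\Phi+2\pi\ii\,c_{-1}$ and $f=\Re\Phi\mapsto f-2\pi\,\Im(c_{-1})$; thus $f$ is single--valued on the annular end exactly when $\Im(c_{-1})=0$, that is, when $\res_{z=0}d\Phi$ is real.

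Next I would pin down the order. The null condition $\sum_i d\Phi_i\otimes d\Phi_i=0$ gives at lowest order $c_{k}\cdot c_{k}=0$ (complex--bilinear dot product), a null vector. Were $k=-1$, then $c_{-1}$ would be a nonzero null vector which, since embeddedness entails single--valuedness, is also real; but a real null vector in $\Cc^3$ vanishes. Hence completeness together with single--valuedness forces $k\le-2$. To reach ``exactly $-2$'', note that $\Phi$ has a pole of order $|k|-1$, so on a small loop $|z|=r$ the leading term of $f$ winds $|k|-1$ times; combined with the classical asymptotic description (Schoen, Jorge--Meeks) of a complete finite--total--curvature end as a multigraph over the exterior of a disc in $N(0)^\perp$ with number of sheets equal to this winding number, embeddedness is equivalent to winding number one, i.e.\ to $k=-2$. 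This yields (i)\,$\Leftrightarrow$\,(ii).

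For the planar/catenoidal dichotomy, assume $k=-2$ and $c_{-1}$ real, and write $c_{-2}=a+\ii b$ with $a,b\in\R^3$. The null relation gives $|a|=|b|$ and $a\perp b$, so $f\approx\Re(-c_{-2}/z)$ wraps once around the plane $\Span(a,b)$, whose unit normal $a\times b$ is the limit $N(0)$ of the Gauss map. The $z^{-3}$--coefficient of $\sum_i d\Phi_i\otimes d\Phi_i=0$ gives $c_{-2}\cdot c_{-1}=0$, so the real vector $c_{-1}$ is orthogonal to both $a$ and $b$, i.e.\ $c_{-1}\parallel N(0)$. Therefore, if $\res_{z=0}d\Phi=c_{-1}=0$ there is no logarithmic term and $f$ stays at bounded distance from $\Span(a,b)$, a planar end; if $c_{-1}\ne0$ the term $c_{-1}\log|z|$ forces unbounded growth along the axis $N(0)$, exactly the logarithmic behaviour of a half--catenoid.

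The main obstacle is the geometric step identifying embeddedness with a double pole: the Laurent analysis yields the winding number cheaply, but converting ``winding number one'' into genuine embeddedness (and its converse) requires the asymptotic normal--convergence and graph structure of finite--total--curvature ends. I would either invoke this classical result directly or, to keep the non--vertical case self--contained, first rotate so that $N(0)$ is vertical (legitimate since $g$ extends meromorphically, so $N(0)$ exists) and then verify the single--sheeted graph structure over $N(0)^\perp$ from the explicit Weierstrass expansion, thereby matching the double pole with embeddedness.
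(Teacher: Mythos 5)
Your proposal is correct in substance, but it is organized quite differently from the paper's proof. The paper reduces everything to vertical ends: for (i)$\Rightarrow$(ii) it quotes Hoffman--Karcher's description of embedded finite total curvature vertical ends (logarithmic growth, orders of $g$ and $dh$) and handles non-vertical ends by a rotation; for (ii)$\Rightarrow$(i) it splits into the cases $\res_{z=0}d\Phi=0$ and $\res_{z=0}d\Phi\neq 0$, proves in the latter case that the end is vertical by an explicit residue computation with $g\,dh$ and $dh/g$ (the contradiction $\alpha b_0=0$), then computes the asymptotics of $\psi_r=f/r$ by hand, invoking Jorge--Meeks for embeddedness and Schoen for finite total curvature. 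You instead work coordinate-free with the Laurent coefficients: single-valuedness $\Leftrightarrow$ real residue via the monodromy of the $\log$ term; completeness $\Leftrightarrow$ $\ord_{z=0} d\Phi\le-1$ via $ds\asymp|z|^{k}|dz|$; the null identities $c_k\cdot c_k=0$ and $c_{-2}\cdot c_{-1}=0$ replace the paper's verticality argument (they rule out $k=-1$ and show the residue points along the limit normal); and the single criterion ``embedded $\Leftrightarrow$ winding number $|k|-1$ equals one'' settles both directions at once. What your route buys is brevity and symmetry --- no rotation, no case analysis, no Weierstrass-data bookkeeping --- while the paper's route makes the asymptotic geometry (graph structure, logarithmic growth) completely explicit. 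Both arguments rest on the same classical black boxes: the Jorge--Meeks multiplicity-one criterion, and the equivalence of finite total curvature with meromorphic extension. Two small points you should make precise: Theorem \ref{thm:FTC in 3space} as stated is global, so you need its standard localization to a single complete end (finite total curvature of the end $\Leftrightarrow$ $d\Phi$ meromorphic at the puncture); and in identifying the multiplicity with $|k|-1$ you should note that the logarithmic term $c_{-1}\log z$ is dominated by $z^{k+1}$, so it does not affect the winding --- the paper's explicit computation of $\psi_r$ is doing exactly this.
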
 
\begin{proof} 
 
We first assume that the end is vertical.  By \cite{hoffman_karcher} an embedded complete finite total
curvature vertical end has logarithmic growth $\alpha\in\R$ satisfying
$\res_{z=0} d\Phi = -(0, 0, 2\pi \alpha)$. The end is planar if
$\alpha=0$ and catenoidal otherwise.

 Moreover,
if $f$ has a catenoidal end then the
Gauss map $g$ of $f$ has a simple pole or zero, and the height differential $dh$ has a simple
pole. If $f$ has a planar end then $g$ has a pole or zero of order $m>1$
and $dh$ has a zero of order $m-2$.  In both cases,
(\ref{eq:Weierstrass representation}) shows that $\ord_{z=0} d\Phi
=-2$.

If the end is not vertical, we can apply a rotation $\mathcal R\in
\SO(3, \R)$ on $f$ to
obtain a min\-i\-mal surface $\tilde f = \mathcal R f$ with a vertical end. Then
the holomorphic null curve of $\tilde f$ is $\tilde \Phi =\mathcal R \Phi$ and
thus,   $\ord_{z=0}
d\Phi =\ord_{z=0}
d\tilde\Phi =-2$.  
Moreover,  the residues at $z=0$ vanish
at a planar end, whereas $\res_{z=0}d\Phi = \mathcal R\invers \res_{z=0}d\tilde
\Phi $ is real   for a
catenoidal end.
 
Conversely, we will show that if $\ord_{z=0} d\Phi=-2$ and
$\res_{z=0}d\Phi$ is real, then we can assume $f$ has an embedded
vertical end at $p$ and its Gauss map $g$ has a pole or zero of order
$m\ge 1$ and $\ord_{z=0} dh = m-2$ holds for the height differential.
From this we conclude that the end has finite total curvature.

If $\res_{z=0}d\Phi =(0,0,0)$ then we can rotate the end so that we
have a vertical end without changing the order and  the residues of
$d\Phi$ at the end.  In particular, we can assume that the Gauss map
$g$ has a pole or zero at $p$ of order $m\ge 1$.  Since
the residue of $d\Phi$ at $z=0$ vanishes, we see that $dh$ cannot have a simple pole.
But then $\ord_{z=0} d\Phi=-2$ implies that $dh$ is holomorphic with
$\ord_{z=0} dh=m-2$, $m\ge 2$, so that with the Enneper--Weierstrass
representation (\ref{eq:Weierstrass
  representation})
\begin{equation}
 \label{eq:asymptotic planar}
d\Phi = \begin{pmatrix} \frac{c_{-2}}{z^2} + b_0 + \ldots\\[4pt]
i\frac{c_{-2}} {z^2} + c_0 + \ldots\\[4pt]
a_{m-2} z^{m-2} + a_{m-1} z^{m-1} +\ldots
\end{pmatrix}
dz\,, \qquad c_{-2}, a_{m-2}\not=0\,.
\end{equation}
After possible translation, we thus have 
\[
\Phi(z) = \begin{pmatrix}
-\frac{c_{-2}}z + \ldots\\[4pt]
-i\frac{c_{-2}}z + \ldots\\[4pt] \frac{a_{m-2} }{m-1}z^{m-1} + \ldots\end{pmatrix}\,.
\]
Consider the set $C_r=\{z\in D_* \mid ||f(z)||=r\}$ where $f=\Re(\Phi)$.
Since the end is at $z=0$, we have that $z\in C_r$ tends to $0$  for
$r\to \infty$. In particular, the asymptotic behaviour of $\psi_r =
\frac{f}{r}: C_r \to S^2$ is
governed by 
\[
\psi_r \sim \frac{|z|}{|c_{-2}|}\begin{pmatrix}
  \Re(-\frac{c_{-2}}z)\\ \Im(\frac{c_{-2}}z)\\ 0
\end{pmatrix}\,.
\]

From \cite{jorge_meeks} we know that $\psi_r$ converges to a
horizontal circle with multiplicity when $r\to \infty$ and that the end is
embedded if the multiplicity is one. From the
above asymptotic behaviour we see that the multiplicity of
$\lim_{r\to\infty} \psi_r$ is indeed one. Hence, the end is embedded
but then (\ref{eq:asymptotic planar}) shows that $p$ is a planar end.

If the order of $d\Phi=-2$ and $\res_{z=0} d\Phi\not=0$ is real then we can rotate the end so that
$\res_{z=0} d\Phi=-(0,0,2\pi\alpha)$ for some $\alpha\in\R_*=\R\setminus\{0\}$.
Then the order of $dh$  is either $-1$ or $-2$.

We first show that the end is vertical by contradiction:  if g has
neither a zero nor a pole at the end, then $\ord_{z=0}gdh= \ord_{z=0}
\frac{dh}g= \ord_{z=0} dh$. Using $\ord_{z=0} d\Phi=-2$ and the
Enneper--Weierstrass representation  (\ref{eq:Weierstrass representation}), the order of $dh$ at
the end is $-2$.  We write
\[
dh=(\frac{a_{-2}}{z^2}-\frac\alpha z+ a_0+a_1z+\ldots) dz\,, \quad
g = b_0+b_1z+\ldots 
\]
where $a_{-2}, b_0\not=0$. 
Since 
\[
gdh=(\frac{a_{-2}b_0}{z^2}+\frac{-\alpha b_0+a_{-2}b_1}z+...) dz\,,
\quad 
\frac{dh}g=(\frac{a_{-2}}{b_0z^2}-\frac{\alpha b_0+ a_{-2}b_1}{b_0^2z}+...) dz
\]
and $\res_{z=0} d\Phi_1=\res_{z=0} d\Phi_2=0$, we have
\[
(-\alpha b_0+a_{-2}b_1) + \frac{\alpha b_0+ a_{-2}b_1}{b_0^2}=0\,, \quad 
(-\alpha b_0+a_{-2}b_1) - \frac{\alpha b_0+ a_{-2}b_1}{b_0^2}=0\,.
\]
Hence
$\alpha b_0=0$ which contradicts $\alpha \neq 0$ and $b_0\neq 0$. 
Therefore,  $g$ has a  zero or a pole at the end, but then   the end has
vertical normal, the zero or pole of $g$ is simple and $dh$ has a
simple pole at $z=0$, that is,  the holomorphic null
curve $\Phi$ can be written as 
\[
d\Phi = \begin{pmatrix} \frac{c_{-2}}{z^2} + b_0 + \ldots\\[4pt]
i\frac{c_{-2}} {z^2} + c_0 + \ldots\\[4pt]
\frac{a_{-1}} z + a_{0} + \ldots
\end{pmatrix}
dz\,, \qquad c_{-2}, a_{-1}\not=0\,.
\]
Since $\res_{z=0} d\Phi = (0, 0, -2\pi \alpha)$   we have $a_{-1} =
-2\pi \alpha\in\R$ and, 
after possibly translation, 
\[
f = \Re \Phi =  \begin{pmatrix}
-\Re \big(\frac{c_{-2}}z+ \ldots\big)\\[4pt]
\Im \big(\frac{c_{-2}}z + \ldots\big)\\[4pt]
a_{-1} \log|z| + \Re(a_0 z) +
\ldots)
\end{pmatrix}\,.
\]
Let as before $C_r=\{z\in D_* \mid ||f(z)||=r\}$   then
the asymptotic behaviour of $\psi_r =
\frac{f}{r}: C_r \to S^2$ is
determined by
\[
\psi_r \sim \frac{1}{\sqrt{\left(\frac{|c_{-2}|}{|z|}\right)^2
  +(a_{-1}\log|z|)^2}}\begin{pmatrix}
 - \Re(\frac{c_{-2}}z)\\[4pt] \Im(\frac{c_{-2}}z)\\[4pt] a_{-1}\log|z|
\end{pmatrix}\,.
\]
 
Since the multiplicity does not depend on $|z|$ we see that in the
limit $r\to \infty$ the multiplicity is
again 1, and the end is embedded by \cite{jorge_meeks}.
   Following the arguments in the proof of 
\cite[Prop. 2.1]{hoffman_karcher}, $f$ is a graph over (the exterior
of a bounded domain of) the $(f_1,
f_2)$ plane with   asymptotic
behaviour 
\[
f_3(f_1, f_2) = \alpha\log R + \beta + R^{-2}(\gamma_1 f_1 +
\gamma_2 f_2) + O(R^{-2})
\]
for $R = \sqrt{f_1^2   + f_2^2}$, and  the end has finite
total curvature, \cite{schoen} .
 
\end{proof}

The \emph{L\'opez-Ros deformation} of a minimal surface   $f: M \to
\R^3$ with Weierstrass data $(g, \omega)$ is \cite{lopez_ros} the
minimal surface $f_r: \tilde M \to \R^3$ given by the new Weierstrass data
$(r g, \frac\omega r)$ with $r\in\R_* $.  Obviously
this can be extended to a deformation $f_\sigma$ with complex parameter
$\sigma\in\Cc_*=\Cc\setminus\{0\}$ by using the Weierstrass data
$(\sigma g,
\frac\omega \sigma)$.

Since any matrix in $\Oo(3,\Cc)=\{ \mathcal{A} \in \rm{GL}(3, \Cc)
\mid \mathcal{A}^t = \mathcal{A}\invers\}$ preserves the standard
symmetric bilinear form on $\Cc^3$, we obtain new minimal surfaces via
the \emph{Goursat transformation}, \cite{goursat}: if $f: M\to\R^3$ is
minimal with holomorphic null curve $\Phi = f+\ii f^*$ then
$\mathcal{A}\Phi$ is again a holomorphic null curve and
$\Re(\mathcal{A} \Phi)$ is a minimal surface in $\R^3$.

As pointed out  by  P\'erez and Ros  \cite{perez_ros}, the L\'opez-Ros deformation
  is a special case of the Goursat transformation:

\begin{theorem}
\label{thm:Goursat}
The L\'opez-Ros deformation $f_\sigma$ with complex 
parameter $\sigma=e^{s+\ii t}\in\Cc_*$ is given  by
\[
f_\sigma = \begin{pmatrix}
\cos t \, (f_1\cosh s- f_2^*\sinh s) - \sin  t \, (f_2 \cosh s + f_1^* \sinh s)  \\
\sin t \, (f_1\cosh s - f_2^*\sinh s) + \cos t (f_2\cosh s + f_1^* \sinh s) \\
f_3
\end{pmatrix}\,.
\]
\end{theorem}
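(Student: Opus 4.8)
The plan is to show that the L\'opez--Ros deformation acts on the holomorphic null curve $\Phi$ as a \emph{Goursat transformation}, that is, $\Phi_\sigma = \mathcal{A}_\sigma\Phi$ (up to an additive constant of integration) for an explicit matrix $\mathcal{A}_\sigma\in\Oo(3,\Cc)$, and then to extract the real part $f_\sigma=\Re(\Phi_\sigma)$. Since both the deformation and the asserted formula are defined only up to translation, I may work throughout at the level of the differentials $d\Phi$.

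First I would record the building blocks from the Enneper--Weierstrass representation. Writing $d\Phi=\big(\tfrac12(1-g^2)\omega,\ \tfrac{\ii}2(1+g^2)\omega,\ g\omega\big)$, one reads off the two identities $\omega=d\Phi_1-\ii\,d\Phi_2$ and $g^2\omega=-(d\Phi_1+\ii\,d\Phi_2)$, while $g\omega=d\Phi_3$. Substituting the deformed data $(\sigma g,\tfrac{\omega}{\sigma})$ into the same representation and replacing $\omega$ and $g^2\omega$ by these expressions, the first two components of $d\Phi_\sigma$ become $\Cc$--linear combinations of $d\Phi_1,d\Phi_2$ with coefficients $\tfrac12(\sigma+\sigma^{-1})$ and $\tfrac{\ii}2(\sigma-\sigma^{-1})$, while the third component is unchanged. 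This exhibits
\[
\mathcal{A}_\sigma=\begin{pmatrix}
\tfrac12(\sigma+\sigma^{-1}) & \tfrac{\ii}2(\sigma-\sigma^{-1}) & 0\\
\tfrac{\ii}2(\sigma^{-1}-\sigma) & \tfrac12(\sigma+\sigma^{-1}) & 0\\
0 & 0 & 1
\end{pmatrix}\,,
\]
and a one--line check that $\big(\tfrac12(\sigma+\sigma^{-1})\big)^2+\big(\tfrac{\ii}2(\sigma-\sigma^{-1})\big)^2=1$ confirms $\mathcal{A}_\sigma\in\Oo(3,\Cc)$, so that $f_\sigma=\Re(\mathcal{A}_\sigma\Phi)$ is a genuine Goursat transformation of $f$.

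It then remains to substitute $\sigma=e^{s+\ii t}$ and $\Phi=f+\ii f^*$ and take real parts. Here $\tfrac12(\sigma+\sigma^{-1})=\cosh(s+\ii t)=\cosh s\cos t+\ii\sinh s\sin t$ and $\tfrac{\ii}2(\sigma-\sigma^{-1})=\ii\sinh(s+\ii t)=-\cosh s\sin t+\ii\sinh s\cos t$, by the addition formulas for $\cosh$ and $\sinh$. Feeding these into $\mathcal{A}_\sigma(f+\ii f^*)$ and using $\Re\big((a+\ii b)(f_j+\ii f_j^*)\big)=a\,f_j-b\,f_j^*$ componentwise reproduces the two nontrivial rows of the claimed formula, while the third row is simply $\Re(\Phi_3)=f_3$.

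I expect no conceptual obstacle: the content of the theorem is precisely the translation of the Weierstrass--data scaling $(g,\omega)\mapsto(\sigma g,\tfrac{\omega}{\sigma})$ into the linear action $\mathcal{A}_\sigma$ on the null curve. The only care needed is the bookkeeping of real and imaginary parts in the final substitution, where it is easy to misplace a sign in the mixed $\sinh s\,\sin t$ and $\sinh s\,\cos t$ terms; organising the computation through the coefficients $\Re$ and $\Im$ of the two entries of $\mathcal{A}_\sigma$ keeps this transparent.
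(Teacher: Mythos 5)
Your proposal is correct and takes essentially the same approach as the paper: both proofs hinge on identifying the L\'opez--Ros deformation with the Goursat transformation given by the matrix $\mathcal{A}_\sigma$ (the paper's $\mathcal{L}_\sigma$), and your computations (the identities $\omega = d\Phi_1-\ii\,d\Phi_2$, $g^2\omega = -(d\Phi_1+\ii\,d\Phi_2)$, the orthogonality check, and the real-part bookkeeping with $\cosh(s+\ii t)$, $\sinh(s+\ii t)$) all check out. The only difference is direction -- you derive the formula forward from the deformed Weierstrass data $(\sigma g,\tfrac{\omega}{\sigma})$, whereas the paper starts from the claimed formula, writes it as $\Re(\mathcal{L}_\sigma\Phi)$, and then verifies that its Weierstrass data is $(\sigma g,\tfrac{\omega}{\sigma})$ -- which is a presentational rather than a substantive distinction.
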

\begin{proof}
Let $\Phi = f + \ii f^*$ be the
holomorphic null curve of $f$.  Putting
\[
\tilde f = \begin{pmatrix}   \cos t & -\sin t  & 0 \\  \sin t&
  \cos t & 0\\ 0 &0&1
\end{pmatrix} \begin{pmatrix}
f_1\cosh s - f_2^*\sinh s \\ f_2\cosh s+ f_1^*\sinh s \\ f_3
\end{pmatrix} \,,
\]
a straightforward computation  shows that $\tilde f = \Re(
\mathcal{L}_\sigma\Phi)$ where the holomorphic map $\mathcal{L}_\sigma\Phi$,  
 \[
\mathcal{L}_\sigma =  \begin{pmatrix}  
 \frac 12(\sigma+ \frac 1{\sigma}) & \frac i 2(\sigma - \frac
  1{\sigma}) & 0\\[.1cm]
 -\frac i2(\sigma- \frac 1{\sigma}) & \frac 12 (\sigma+ \frac
 1{\sigma}) & 0\\
0&0&1
\end{pmatrix} \in \Oo(3, \Cc)\,,
\]
is a null curve in $\Cc^3$.  Thus, $\tilde f$ is a Goursat
transformation of $f$. Indeed, if $(g, \omega)$ denotes the
Weierstrass data of $f$ then the Weierstrass data of $\tilde f$
computes to
\[
\tilde \omega = d\tilde \Phi_1 -\ii d\tilde\Phi_2 = \frac{\omega}{\sigma} 
\]
and
\[
\tilde g = \frac{d\tilde\Phi_3}{d\tilde \Phi_1 -\ii d\tilde \Phi_2}
=\sigma
{g}\,.
\]
This shows that $\tilde f=f_\sigma$ is the L\'opez--Ros deformation
of $f$ with
parameter $\sigma\in\Cc_*$.
\end{proof}

\subsection{Willmore surfaces}

Using the one--point compactification of $\R^4$ we  consider a
conformal immersion $f: M \to \R^4$ as a conformal immersion into the
4--sphere. We identify the 4-sphere $S^4=\HP^1$ with the quaternionic
projective line
where the oriented M\"obius transformations are given by
$\Gl(2,\H)$. In particular, a map $f: M \to\HP^1$ can be identified with a line
subbundle $L\subset \trivial{2} = M \times \H^2$ of the trivial $\H^2$ bundle over
$M$ whose fibers  at $p\in M$ are given by 
\[
L_p= f(p)\,.
\]

For an immersion $f: M \to \R^4$ the line bundle $L$ is given by
\[
L = \psi \H\,, \quad \text{ where } \quad \psi =\begin{pmatrix} f\\ 1
\end{pmatrix}\,,
\]
when choosing the point at infinity as $\infty = \begin{pmatrix} 1 \\ 0
\end{pmatrix}\H\in\HP^1$.
Oriented M\"obius transformations on $\R^4=\H$ are given by
\[
v \mapsto
(av+b)(cv+d)\invers \quad\text{with} \quad\begin{pmatrix} a & b \\ c&d
\end{pmatrix} \in\Gl(2,\H).
\]
In particular, the group of oriented M\"obius transformations acts on the line bundle $L$ given
by an immersion $f: M \to \R^4$ via $L \mapsto B L, B\in\Gl(2,\H)$.
Every pair of unit quaternions $m, n\in S^3$ gives an element $\mathcal{R}_{m,n} \in
\SO(4,\R)$ by
\[
v\in\H \mapsto \mathcal{R}_{m,n}v= m v n\invers \in\H\,,
\]
and conversely, every element of the special orthogonal group arises
this way, \cite{cayley}.  The corresponding action on the line bundle $L$ of an
immersion $f: M \to \R^4$ is given by
\[
L \mapsto \begin{pmatrix} m&0\\ 0& n
\end{pmatrix} L, \quad m, n\in S^3\,.
\]

\begin{definition}[{\cite[p. 27]{coimbra}}]
  The \emph{conformal Gauss map} of a conformal immersion $f: M \to S^4$ is
  the unique complex structure $S$ on $\trivial 2$ such that $S$ and
  $dS$ stabilise the line bundle $L$ of $f$ and its Hopf field $A$ is
  a 1--form with values in $L$. 

  Here, the \emph{Hopf field} $A$  of $S$ is the 1--form given by
\[ 
A =  \frac 14 (*dS +SdS) = \frac 12(*dS)'
\]
where $(dS)' =\frac 12(dS - S*dS)$ is the $(1,0)$--part of the
derivative of $S$ with respect to the complex structure $S$.
\end{definition}

In affine coordinates, the conformal Gauss map of a conformal immersion $f: M \to\R^4$ 
is given 
by the complex structure,  see \cite[p. 42]{coimbra}, 
\begin{equation}
\label{eq:conformal Gauss map}
S = G
\begin{pmatrix} N &0 \\ -H& -R
\end{pmatrix} G\invers, \quad G = \begin{pmatrix} 1 & f\\ 0 &1
\end{pmatrix}
\end{equation}
on the trivial bundle $\trivial 2$ where $N, R$ are the
left and right normal of $f$ and $H = -R\bar\Hh$ with mean curvature
vector $\Hh$.  Thus,  we see that
 $S\psi = -\psi R$ and 
\[
(dS) \psi =  \psi(-dR + H  df)\,.
\]
Therefore, $S$ and $dS$ indeed stabilise the line bundle $L=\psi\H$ of $f$. In
affine coordinates, the Hopf field computes \cite[Prop 12, p. 42]{coimbra} to
\begin{equation}
\label{eq: Hopf}
2*A = G\begin{pmatrix} 0& 0 \\ \omega & \tau
\end{pmatrix}
 G\invers
\end{equation}
with $2\omega = dH + H*dfH + R*dH - H*dN$ satisfying 
\[
*\omega= -\omega
N + (dR)'' H
\]
 and 
\[
\tau= (dR)'' =\frac12(dR + R*dR)\,.
\]
Thus, $A$ is indeed a 1--form with values in $L$. Note that the Hopf
field $A$ is holomorphic \cite[p. 68]{coimbra} with $\im A \subset
L$. In particular, if $A
\not\equiv 0$, the Hopf field $A$ gives the line bundle $L$ by
holomorphically extending $\im A$ into the isolated zeros of
$A$. Therefore, when fixing the point at $\infty$, the immersion $f: M
\to\R^4$ is uniquely determined by $A$.

\begin{cor} 
 A conformal immersion $f: M \to\R^4$ is minimal if and only if the
  Hopf field $A$ of the conformal Gauss map $S$ of $f$ satisfies
\begin{equation}
\label{eq:A of minimal}
2*A = G \begin{pmatrix} 0 &0 \\ 0 & dR
\end{pmatrix}
G\invers
\end{equation}
with $G = \begin{pmatrix} 1&f\\ 0&1
\end{pmatrix}
$.  
\end{cor}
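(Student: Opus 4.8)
The plan is to obtain both implications by directly comparing the general expression \eqref{eq: Hopf} for the Hopf field with the target formula \eqref{eq:A of minimal}, using the characterization (established just above) that $f$ is minimal exactly when $(dR)' = 0$. Since $G \in \Gl(2,\H)$ is invertible, conjugation by $G$ is injective, so the whole statement reduces to identifying the lower row $\begin{pmatrix} \omega & \tau \end{pmatrix}$ of the middle matrix in \eqref{eq: Hopf} with $\begin{pmatrix} 0 & dR \end{pmatrix}$; that is, I would show that minimality is equivalent to having $\omega = 0$ and $\tau = dR$ at once.

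For the forward direction I would assume $f$ minimal, so $(dR)' = 0$. By \eqref{eq:H R} this gives $H\,df = (dR)' = 0$, and since $df$ is nowhere vanishing for an immersion, $H \equiv 0$. I would then substitute $H \equiv 0$ into $2\omega = dH + H*dfH + R*dH - H*dN$; every summand involves $H$ (including $dH$, as $H$ is identically zero), so $\omega = 0$. From the splitting $dR = (dR)' + (dR)''$ with $(dR)' = 0$ I would read off $\tau = (dR)'' = dR$. Inserting $\omega = 0$ and $\tau = dR$ into \eqref{eq: Hopf} produces exactly \eqref{eq:A of minimal}.

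For the converse I would assume \eqref{eq:A of minimal}, compare it with \eqref{eq: Hopf}, and cancel the injective conjugation by $G$ to deduce $\tau = dR$ (and $\omega = 0$). Since $\tau = (dR)'' = \tfrac12(dR + R*dR)$, the equality $\tau = dR$ forces $(dR)'' = dR$, hence $(dR)' = dR - (dR)'' = 0$, which is precisely minimality.

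I do not expect a genuine obstacle here: the argument is pure bookkeeping once \eqref{eq: Hopf} is in hand. The only point demanding care is the vanishing of $\omega$ in the forward direction, where one must use that $H$ vanishes identically on $M$ (so that $dH = 0$ as well), a conclusion legitimated by $Hdf = (dR)' \equiv 0$ together with $df$ being an immersion, rather than by pointwise vanishing alone.
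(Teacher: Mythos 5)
Your proof is correct and follows essentially the same route as the paper, which states this corollary without further argument precisely because it is the immediate consequence of the formula \eqref{eq: Hopf} for $2*A$ together with the characterisation of minimality by $(dR)'=0$ (equivalently $H\equiv 0$ via \eqref{eq:H R}). Your careful handling of the forward direction — deducing $H\equiv 0$ globally from $Hdf=(dR)'\equiv 0$ and the immersion property, so that $dH=0$ and hence $\omega=0$ — is exactly the bookkeeping the paper leaves implicit.
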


In particular, if $f: M\to\R^4$ is minimal then $*dR = - RdR$ and thus
$df\wedge dR =0$ by type arguments. Therefore, the Hopf field is
harmonic, that is, $d*A =0$.

\begin{theorem}[\cite{Ejiri}, \cite{rigoli}]
  Let $f: M \to S^4$ be a conformal immersion with conformal Gauss
  map $S$ and Hopf field $A$. Then $f$ is Willmore if and only if $S$
  is harmonic, that is, if and only if
\[
d*A =0\,.
\]
\end{theorem}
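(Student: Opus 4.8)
The plan is to prove the substantive equivalence, that $f$ is Willmore if and only if $d*A=0$, by reading the Willmore functional as the Dirichlet energy of the conformal Gauss map and extracting its Euler--Lagrange equation; the reformulation ``$S$ harmonic'' is then just the harmonic map equation of $S$ written out. First I would recall from \cite{coimbra} that, for a conformal immersion $f\colon M\to S^4$, the Willmore energy equals $c\int_M\langle A\wedge *A\rangle$ up to a nonzero constant $c$ and an additive topological term. The cleanest way to see this is that the flat trivial connection splits as $d=\nabla+A+Q$ with $\nabla S=0$ and anticommuting part $\tfrac12 SdS=A+Q$, so that $|dS|^2$ is a constant multiple of $|A|^2+|Q|^2$; since $\int_M(|A|^2-|Q|^2)$ is topological, the Dirichlet energy $E(S)=\tfrac12\int_M|dS|^2$ and $c\int_M\langle A\wedge *A\rangle$ agree modulo variationally inert terms. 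Thus ``$f$ Willmore'' becomes ``$S$ is a critical point of $E$ within the family of conformal Gauss maps''.

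Before varying, I would record the structure equation that makes the two Hopf fields interchangeable. From the definitions one computes the type conditions $*A=SA$ and $*Q=-SQ$, together with $dS=2*(Q-A)$. Differentiating the last identity and using $d(dS)=0$ gives $d*(Q-A)=0$, that is, $d*A=d*Q$. This is the key algebraic input. The harmonic map equation of $S$ into the symmetric space of complex structures is $d^\nabla*(A+Q)=0$ for the anticommuting part $A+Q=\tfrac12 SdS$; using $\nabla S=0$, the type conditions, and the structure relation $d*A=d*Q$, this is equivalent to the flat equation $d*A=0$, so that ``$S$ harmonic'' and $d*A=0$ coincide, settling the second equivalence in the statement.

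It then remains to identify the Euler--Lagrange equation of $W$ with $d*A=0$. For a compactly supported variation $f_t$ of $f$ I would compute the induced infinitesimal change $\dot S$ of the conformal Gauss map; because $S$ and $dS$ must keep stabilising the line bundle $L$ and $\im A\subset L$, this $\dot S$ is an \emph{admissible} deformation, namely an $S$-anticommuting endomorphism field subject to the constraints defining a conformal Gauss map. A single integration by parts then rewrites $\tfrac{d}{dt}\big|_{0}W(f_t)$ as a pairing of $\dot S$ against $d*A$. Consequently $\delta W=0$ for every admissible variation is equivalent to $d*A=0$, and combining this with the previous paragraph yields the full chain $f$ Willmore $\iff d*A=0 \iff S$ harmonic. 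The degenerate case $A\equiv0$, corresponding to a totally umbilic sphere, is trivially Willmore and satisfies $d*A=0$, so it causes no difficulty.

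The step I expect to be the main obstacle is the last one: producing the first-variation formula with its correct constant and, more importantly, checking that the admissible fields $\dot S$ arising from genuine deformations of $f$ are numerous enough that vanishing of the pairing $\langle\dot S,d*A\rangle$ forces $d*A=0$ pointwise. In other words, the delicate point is that criticality of $E$ restricted to the constrained family of conformal Gauss maps is no weaker than unconstrained harmonicity of $S$; one must verify that the forbidden gauge and conformal directions pair trivially with $d*A$ and hence impose no spurious conditions. The type decomposition $d=\nabla+A+Q$ together with $\im A\subset L$ is exactly the bookkeeping device that makes this work, and it is where I would concentrate the detailed computation.
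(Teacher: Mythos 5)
The paper does not actually prove this theorem: it is quoted with citations to Ejiri and Rigoli, and the surrounding text only combines it with the computation of the Hopf field of a minimal surface to deduce that minimal surfaces are Willmore. So your proposal has to stand on its own, and it should be compared with the classical variational proof in those references rather than with anything in this paper.

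Two of your three ingredients are sound. The energy identity is correct: with the paper's conventions one has $\langle A,Q\rangle=0$ pointwise, $|dS|^2=4\left(|A|^2+|Q|^2\right)$, and $\int\langle A\wedge *A\rangle-\int\langle Q\wedge *Q\rangle$ is topological, so $W$ and the Dirichlet energy of $S$ differ by variationally inert terms. The identification of ``$S$ harmonic'' with $d*A=0$ is also fine, and in fact cheaper than you make it: since $*A=-\tfrac12 (dS)'$, the condition $d*A=0$ is literally the condition $d(dS)'=0$ that the paper itself uses as the characterisation of harmonicity, so this equivalence is essentially the gloss ``that is'' in the statement, not the substance. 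The genuine gap is your third step, which is the entire content of the Ejiri--Rigoli theorem: the claim that for every compactly supported variation $f_t$ the derivative of $W$ becomes, after one integration by parts, a pairing of $\dot S$ against $d*A$, and that the admissible $\dot S$ are ``numerous enough'' to force $d*A=0$ pointwise. You never produce the first-variation formula, never compute which anticommuting fields $\dot S$ actually arise (the conformal Gauss map is determined by $f$, so $\dot S$ is a specific second-order differential operator applied to $\dot f$, not a free tangent field along $S$), and never carry out the nondegeneracy argument showing that criticality under this constrained family implies the unconstrained harmonic map equation. The easy direction ($d*A=0$ implies $f$ Willmore) does follow from your energy identity, but the converse is exactly the assertion you defer to ``where I would concentrate the detailed computation.'' As written, the proposal is a correct strategy with the decisive computation missing, not a proof.
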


From this characterisation of Willmore  surfaces and the fact that  
the conformal Gauss map of a minimal surface is harmonic we see:
\begin{cor}
Every minimal immersion in $\R^4$ is a Willmore surface in $\R^4$.
\end{cor}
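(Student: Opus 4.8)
The plan is to read off the result by chaining two facts that are already available: the Ejiri--Rigoli theorem, which says $f$ is Willmore exactly when its conformal Gauss map is harmonic, i.e.\ when $d*A = 0$; and the preceding corollary, which pins down the Hopf field of a minimal immersion as $2*A = G\begin{pmatrix} 0 & 0 \\ 0 & dR\end{pmatrix}G\invers$ with $G = \begin{pmatrix} 1 & f \\ 0 & 1\end{pmatrix}$. Thus the whole task reduces to verifying $d*A = 0$ for a minimal $f: M \to \R^4$.

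To obtain $d*A = 0$ I would differentiate the minimal-surface Hopf field formula directly, using the Leibniz rule for matrix-valued differential forms. Since $G$ and $G\invers$ are function-valued with $dG = \begin{pmatrix} 0 & df \\ 0 & 0\end{pmatrix}$ and the off-diagonal entry of $dG\invers$ equal to $-df$, and since the diagonal block $dR$ satisfies $d(dR) = 0$, most contributions collapse: the $G\,d(\cdots)\,G\invers$ term vanishes outright, the $GB\wedge dG\invers$ term vanishes because the relevant matrix product has no surviving entry, and the single remaining contribution is $\tfrac12\,dG\wedge\begin{pmatrix} 0 & 0 \\ 0 & dR\end{pmatrix}G\invers$, whose only nonzero entry is, up to the constant, $df\wedge dR$. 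Hence $d*A = 0$ is equivalent to $df\wedge dR = 0$.

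The one genuine point is the vanishing of $df \wedge dR$, and here I would run the type argument flagged in the remark. Minimality gives $*dR = -R\,dR$, while the left/right normal relation gives $*df = -df\,R$. Using that on a surface $*\alpha\wedge*\beta = \alpha\wedge\beta$ for any pair of (algebra-valued) $1$--forms, one computes both $*df\wedge*dR = df\wedge dR$ (the surface identity) and, from the normal relations together with $R^2 = -1$, $*df\wedge*dR = df\,R\wedge R\,dR = -df\wedge dR$; these force $df\wedge dR = 0$. I expect this quaternionic bookkeeping --- keeping the noncommutative multiplication and the sign conventions of the negative Hodge star straight --- to be the only place where care is needed; everything else is formal. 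With $df\wedge dR = 0$ in hand we get $d*A = 0$, and the Ejiri--Rigoli theorem delivers that $f$ is Willmore.
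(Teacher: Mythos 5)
Your proposal is correct and follows essentially the same route as the paper: it uses the Hopf field formula $2*A = G\begin{pmatrix} 0 & 0 \\ 0 & dR\end{pmatrix}G\invers$ for a minimal immersion, reduces $d*A=0$ to $df\wedge dR=0$, establishes this by the type argument from $*dR=-RdR$ and $*df=-dfR$, and concludes via the Ejiri--Rigoli characterisation. The only difference is that you spell out the Leibniz computation and the type argument that the paper compresses into ``$df\wedge dR=0$ by type arguments,'' and your details are correct.
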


Consider the right normal $R$ of a minimal surface $f: M \to\R^4$
which satisfies $*d R = -RdR = dRR$. Thus, if the right normal is not
constant, we can consider $R: M \to S^2$ as a (branched) conformal
immersion whose right normal is $-R$. Since $*d(-R) = (-R)d(-R)$, we
see from (\ref{eq: Hopf}) that its Hopf field vanishes on the line
bundle of $R$, that is, $R$ is the twistor projection of a holomorphic
curve in $\CP^3$:

\begin{theorem}[{\cite[Thm 4, p. 47]{coimbra}}]
  \label{thm: twistor} Let $f: M \to\R^4$ be a conformal immersion
  with conformal Gauss map $S$ and Hopf field $A$. Then $f$ is the
  twistor projection of a holomorphic curve $F: M \to\CP^3$ if and
  only if $A|_L =0$ where $L$ is the line bundle of $f$.

  In this case, $f$ is Willmore and the twistor lift of $F$ is the
  holomorphic curve $F: M \to \CP^3$ which is given by the line
  subbundle $E\subset L$ by
\[
F(p) = E_p\,,
\]
where $E$ is the $+i$ eigenspace of the conformal Gauss map $S|_L$ restricted to $L$.

\end{theorem}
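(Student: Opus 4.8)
The plan is to work in the quaternionic model and realise the twistor space as $\P(\trivial 2)$, where $\trivial 2 = M\times\H^2$ is regarded as a complex rank--four bundle via right multiplication by the imaginary quaternion $i$, and the twistor projection sends a complex line $\ell$ to the quaternionic line $\ell\H$ it spans. Since $S$ is quaternionic linear with $S^2=-1$, it commutes with right multiplication by $i$ and splits $\trivial 2$ into its $\pm i$--eigenbundles $V_\pm$; right multiplication by $j$ anticommutes with right multiplication by $i$ and hence interchanges $V_+$ and $V_-$, so both have complex rank two. Because $S$ stabilises $L$, I set $E = V_+\cap L$, the $+i$--eigenbundle of $S|_L$; it is a complex line with $E\oplus Ej = L$, so $E\H = L$ and $F(p)=E_p$ is a lift of $f$ along the twistor fibration. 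Everything then reduces to deciding when this $F$ is holomorphic.

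The heart of the argument is the equivalence \emph{$F=E$ holomorphic $\iff A|_L=0$}. For a local section $\varepsilon$ of $E$, holomorphicity in $\CP^3$ means that the $(0,1)$--part (with respect to right multiplication by $i$) of $d\varepsilon$ projects to zero in $\trivial 2/E$. Differentiating $S\varepsilon=\varepsilon i$ shows the $V_-$--component of $d\varepsilon$ is $\tfrac12(dS)\varepsilon\,i$, which lies in $L$ because $dS$ stabilises $L$; meanwhile the defining conformality relation of the conformal Gauss map, $*\delta = S\delta$ for $\delta=\pi\circ d|_L$ on $\trivial 2/L$, forces the $V_+$--component of the $(0,1)$--part of $d\varepsilon$ to lie already in $L$, hence in $E$. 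Thus the sole obstruction is the $V_-\cap L = Ej$ term. Inserting $\varepsilon\in V_+$ into $A=\tfrac14(*dS+SdS)$ and using $S(dS)\varepsilon = -(dS)\varepsilon\, i$, a short computation identifies this term, giving the clean relation $(d\varepsilon)^{(0,1)}_- = -A\varepsilon$; hence $F$ is holomorphic exactly when $A\varepsilon=0$ for all sections of $E$.

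Since $A$ is quaternionic linear and $E\H=L$, the condition $A\varepsilon=0$ on $E$ is equivalent to $A|_L=0$; this proves $A|_L=0\Rightarrow f$ is a twistor projection with lift $E$, and $f$ is then Willmore, as twistor projections of holomorphic curves always are. For the converse I would show that any holomorphic lift must coincide with $E$: given a holomorphic curve $F=E'\subset L$ with $E'\H=L$, the conformality relation again forces $\delta\varepsilon'$ to be of type $(1,0)$, and the quaternionic--linear complex structure on $\trivial 2$ having $E'$ as its $+i$--eigenspace in $L$ is readily checked to stabilise $L$ together with its derivative and to have Hopf field valued in $L$; by the uniqueness clause in the definition of the conformal Gauss map it equals $S$, whence $E'=E$ and the first part yields $A|_L=0$.

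I expect the main obstacle to be this converse step --- ruling out that a holomorphic lift is some complex line in $L$ other than the $+i$--eigenbundle of $S|_L$, which requires extending the candidate complex structure off $L$ and invoking the uniqueness of the mean curvature sphere. The remainder is bookkeeping with the two decompositions of $d\varepsilon$, by $S$--eigenvalue and by form type, where at each step the crucial input is that both $S$ and $dS$ stabilise $L$.
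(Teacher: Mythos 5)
You should first be aware that the paper itself contains no proof of this statement: Theorem \ref{thm: twistor} is quoted verbatim from \cite[Thm 4, p.~47]{coimbra}, so your attempt can only be measured against the standard argument of that reference. Your forward direction essentially \emph{is} that argument, and it is sound. The identification $E=V_+\cap L$ with $E\oplus Ej=L$, hence $E\H=L$, is correct; differentiating $S\varepsilon=\varepsilon i$ gives $(dS)\varepsilon=2(d\varepsilon)_-\,i$, so the $V_-$--component is $(d\varepsilon)_-=-\tfrac12(dS)\varepsilon\, i$ (note your sign is off, harmlessly), which lies in $\Omega^1(L)$ because $dS$ stabilises $L$; the relation $*\delta=\delta\circ S|_L$ (one line from $S\psi=-\psi R$ and $*df=-dfR$) puts the whole of $(d\varepsilon)\zo$ in $\Omega^1(L)$, so its $V_+$--part lies in $E$; and $S(dS)\varepsilon=-(dS)(S\varepsilon)=-(dS)\varepsilon\, i$ identifies the sole obstruction as $(d\varepsilon)\zo_-=A\varepsilon$ (again up to your sign). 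Quaternionic linearity of $A$ and $E\H=L$ then give: $E$ holomorphic $\iff A|_L=0$.

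The genuine gap is the converse, exactly where you place your own warning flag, and flagging it does not fill it: ``the quaternionic--linear complex structure on $\trivial 2$ having $E'$ as its $+i$--eigenspace in $L$'' is not a well--defined object. Prescribing the $+i$--eigenspace \emph{inside} $L$ determines a complex structure only on $L$ (by quaternionic extension); to have a complex structure on all of $\trivial 2$ you must also prescribe its rank--2 eigenbundle $V_+'\subset\trivial 2$ with $V_+'\cap L=E'$, and choosing that extension so that $dS'$ stabilises $L$ and the Hopf field is $L$--valued is precisely the construction of the mean curvature sphere of a twistor projection (in \cite{coimbra} it is built from the osculating flag of the holomorphic curve). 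So ``readily checked'' conceals the entire content of the converse. Fortunately the converse has a much shorter proof inside your own framework, requiring no new complex structure and no uniqueness clause: if $E'\subset L$ is a holomorphic lift, then for sections $\varepsilon'$ of $E'$ one has $(d\varepsilon')\zo\in\Omega^1(E')\subset\Omega^1(L)$, hence $(\delta\varepsilon')\zo=0$, i.e. $*\delta\varepsilon'=(\delta\varepsilon')i$; since $f$ is an immersion, $\delta_X\colon L\to\trivial 2/L$ is injective for $X\neq0$, so $*\delta=\delta\circ S|_L$ forces $S\varepsilon'=\varepsilon' i$, i.e. $E'\subset E$ and therefore $E'=E$, after which your forward computation delivers $A|_L=0$. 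Finally, the clause ``$f$ is then Willmore, as twistor projections of holomorphic curves always are'' is not available to you: that is Bryant's theorem, i.e. part of the very statement being proved, and it needs its own verification that $d*A=0$ (in \cite{coimbra} this is a separate computation from $A|_L=0$); as written, that portion of the conclusion is assumed rather than established.
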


 The surfaces which are both minimal and twistor projections are
 indeed given by holomorphic maps into $\C^2$:

\begin{cor}
Let $f: M \to \R^4$ be a conformal immersion with right normal $R$.

Then $R$ is constant if and only if $f$ is minimal and the twistor
projection of a holomorphic curve in $\CP^3$. 

In this case, we can identify $\R^4 =\H$ with $\C^2$ via the complex
structure which is given by right multiplication by the constant
$-R$. Then $f: M \to \C^2$ is  holomorphic.
\end{cor}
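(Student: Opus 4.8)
The plan is to funnel both the minimality of $f$ and the twistor condition through the single equation $dR = 0$, using the explicit minimal form (\ref{eq:A of minimal}) of the Hopf field together with the twistor characterization of Theorem~\ref{thm: twistor}. The guiding observation is that for a minimal surface the only nontrivial entry of the Hopf field is governed by $dR$, so that $A$ vanishes on the line bundle $L$ exactly when $R$ is constant.

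First I would prove the forward implication. If $R$ is constant then $dR = 0$, so $(dR)' = \frac12(dR - R*dR) = 0$ and $f$ is minimal. Feeding $dR = 0$ into the minimal form (\ref{eq:A of minimal}) gives $2*A = 0$, hence $A \equiv 0$ and in particular $A|_L = 0$; Theorem~\ref{thm: twistor} then shows that $f$ is the twistor projection of a holomorphic curve in $\CP^3$. For the converse, suppose $f$ is minimal and a twistor projection. By Theorem~\ref{thm: twistor} we have $A|_L = 0$, which, writing $\psi = \begin{pmatrix} f \\ 1 \end{pmatrix}$ for the spanning section of $L$, is equivalent to $*A\psi = 0$ since $*$ is invertible on $1$--forms. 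The key step is the direct computation from (\ref{eq:A of minimal}): because $G\invers\psi = \begin{pmatrix} 0 \\ 1 \end{pmatrix}$, one finds $2*A\psi = \begin{pmatrix} f\,dR \\ dR \end{pmatrix}$, so $A|_L = 0$ forces $dR = 0$ and $R$ is constant. This closes the equivalence.

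It then remains to identify $f$ as a holomorphic curve when $R$ is constant. Since $(-R)^2 = R^2 = -1$, right multiplication by $-R$ defines a complex structure $J$ on $\H = \R^4$, identifying it with $\C^2$. The immersion $f$ is $J$--holomorphic exactly when $df$ intertwines $J_{TM}$ with $J$, that is, when $*df = J(df) = -dfR$; but this is precisely the defining equation (\ref{eq:left and right normal}) of the right normal. Hence $f: M \to \C^2$ is holomorphic, with no further work needed.

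I expect the converse direction to be the main obstacle, since it is where one must extract the geometric conclusion $dR = 0$ from the analytic vanishing of $A$ on $L$. This hinges on correctly interpreting $A|_L = 0$ as $A\psi = 0$ and on carrying out the matrix computation in the affine coordinates $G = \begin{pmatrix} 1 & f \\ 0 & 1 \end{pmatrix}$. The remaining two implications are essentially immediate once the minimal form of the Hopf field and the right--normal equation are in hand.
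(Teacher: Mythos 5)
Your proof is correct and takes essentially the same route as the paper's: both directions hinge on the minimal form (\ref{eq:A of minimal}) of the Hopf field together with Theorem~\ref{thm: twistor}, and the final holomorphicity claim follows from the defining equation $*df = -dfR$ exactly as in the paper. Your explicit computation $2*A\psi = \begin{pmatrix} f\,dR \\ dR \end{pmatrix}$ merely fills in the detail the paper compresses into ``we see again by (\ref{eq:A of minimal}) that $dR=0$.''
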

\begin{proof}
If $dR =0$ then we see by (\ref{eq:H R}) that $H=0$ and $f$ is
minimal.  But then (\ref{eq:A of minimal})  shows that the Hopf field
$A$ vanishes identically, that is, $f$ is the twistor projection of a
holomorphic curve. 

Conversely, if $f$ is minimal and $A|_L =0$ we see again by (\ref{eq:A
  of minimal}) that $dR =0$. 

If $R$ is constant, then the right multiplication by $-R$ gives a
complex structure on $\R^4=\H$.  Then $*df = -df R$ shows that $f: M
\to \C^2$ is holomorphic when identifying  $\C^2 = (\H, -R)$.
\end{proof}

Note that if $f$ is a minimal surface with constant right normal $R$
then $df^* = - *df = dfR$ shows that $f^* = fR
+ c$ with $c\in\H$. For general minimal surfaces $f$ the map
$fR -f^*$ is a Willmore surface in $\R^4$:

\begin{theorem}[\cite{moriya}, \cite{dajczer}]
\label{thm:twistor minimal}
If $f: M \to\R^4$ is a minimal surface with conjugate surface
$f^*$ and (non--constant) right normal $R$ then 
\[
f^\flat= fR  -f^*\,,
\]
is a twistor projection of a holomorphic curve in $\CP^3$. We call $f^\flat$
an \emph{associated Willmore surface} of $f$. 

Conversely, let $f^\flat: M \to \R^4$ be a twistor projection of a
holomorphic curve in $\CP^3$ on the simply connected $M$ and let
$R^\flat$ be its non--constant right normal.  Away from the isolated
zeros of $dR^\flat$, the immersion $f^\flat$ is, up to translation,
given by either $f^\flat=cR^\flat, c\in\H,$ or by a minimal surface
$f: M \to \R^4$ via
\[
f^\flat=-fR^\flat -f^*
\]
where  $f^*$ is a conjugate surface of
$f$.
\end{theorem}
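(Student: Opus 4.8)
The plan is to handle both directions through a single mechanism: a conformal immersion into $\R^4$ is a twistor projection of a holomorphic curve in $\CP^3$ exactly when the $\tau$--component $(dR)''=\frac12(dR+R*dR)$ of its Hopf field vanishes, $R$ being its right normal. Indeed, for any conformal immersion, evaluating (\ref{eq: Hopf}) on the section $\psi=\begin{pmatrix} f\\1\end{pmatrix}$ spanning $L$ gives $2*A\,\psi=\psi\,\tau$ with $\tau=(dR)''$; hence by Theorem~\ref{thm: twistor} the twistor condition $A|_L=0$ is equivalent to $(dR)''=0$. So in both directions the work reduces to computing the right normal of $f^\flat$ and then $(dR^\flat)''$.

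For the forward direction I would first record that minimality gives $df^*=-*df=df\,R$. Differentiating $f^\flat=fR-f^*$ then yields the clean identity
\[
df^\flat=(df)R+f\,dR-df^*=f\,dR ,
\]
the term $(df)R$ being cancelled by $-df^*=-(df)R$. Using $*dR=-RdR$ I get $*df^\flat=f*dR=-fR\,dR$, and since $dR\,R=-RdR$ this equals $-(f\,dR)(-R)$; thus $f^\flat$ is conformal with right normal $R^\flat=-R$. Finally $(dR^\flat)''=\frac12\bigl(-dR+(-R)*(-dR)\bigr)=\frac12(-dR+R*dR)=-(dR)'=0$ because $f$ is minimal. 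By the equivalence above $A^\flat|_{L^\flat}=0$, so $f^\flat$ is a twistor projection, which is the first claim.

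For the converse I would run this backwards. Given $f^\flat$ twistor with non--constant right normal $R^\flat$, set $R:=-R^\flat$; the twistor condition $(dR^\flat)''=0$ rewrites as $*dR=-RdR$, i.e.\ $R$ obeys the right--normal equation of a minimal surface. Off the isolated zeros of $dR^\flat$ the $\H$--valued $1$--forms $df^\flat$ and $dR$ are pointwise left--proportional, so I can \emph{define} $f$ by the pointwise relation $df^\flat=f\,dR$. The two coordinate equations $f^\flat_x=fR_x$, $f^\flat_y=fR_y$ determine the same $f$ because $R_x\invers R_y=R$ --- which follows from $R_y=-RR_x$ together with the anticommutation $RR_x=-R_xR$ (differentiate $R^2=-1$) --- combined with the conformality relation $f^\flat_y=f^\flat_x R$. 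Differentiating $f^\flat_x=fR_x$, $f^\flat_y=fR_y$ and equating mixed partials gives $f_yR_x=f_xR_y$, whence $f_y=f_xR_yR_x\invers=-f_xR$; thus $f$ is conformal with right normal $R$, and since $(dR)'=0$ it is minimal. On the simply connected $M$ it then admits a conjugate $f^*$ with $df^*=df\,R$, and by the forward computation $d(fR-f^*)=f\,dR=df^\flat$, so $f^\flat=fR-f^*=-fR^\flat-f^*$ after translation. The exceptional case is precisely $f\equiv c$ constant: then $df^\flat=c\,dR=-c\,dR^\flat$ forces $f^\flat=-cR^\flat$ up to translation, which is the alternative $f^\flat=c'R^\flat$.

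The genuinely delicate point is not the quaternionic bookkeeping but the consistency of the pointwise definition of $f$ in the converse, and this is exactly where the twistor hypothesis is used, entering through $R_y=-RR_x$ (equivalent to $(dR^\flat)''=0$) and the anticommutation $RR_x=-R_xR$. I would also be careful that $f$ is only defined on the complement of the zero set of $dR^\flat$, and that simple connectedness is needed both to produce the conjugate $f^*$ and to integrate the closed form $d(fR-f^*-f^\flat)=0$ to an honest constant.
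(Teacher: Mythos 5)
Your proposal is correct and follows essentially the same route as the paper's proof: the identity $d(fR-f^*)=f\,dR$ giving right normal $R^\flat=-R$, the characterisation of twistor projections via $A|_L=0\Leftrightarrow(dR)''=0$ from (\ref{eq: Hopf}) and Theorem~\ref{thm: twistor}, and in the converse the pointwise definition of $f$ by $df^\flat=f\,dR$ followed by reconstruction through the conjugate surface. Your mixed-partials computation is just the coordinate form of the paper's argument $0=df\wedge dR^\flat$ plus the type argument, so the two proofs coincide in substance.
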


\begin{proof}
If $f$ is minimal then its right normal $R$ satisfies $*dR
= -RdR = dRR$ and
\[
d(fR -f^*) = fdR
\] 
shows that the right normal of $f^\flat=fR-f^*$ is $R^\flat= -R$. But then
\[
dR^\flat + R^\flat*dR^\flat = -dR + R*dR=0
\]
 and thus
$f^\flat$ is a twistor projection of a holomorphic curve in $\CP^3$ by
Theorem~\ref{thm: twistor} and (\ref{eq: Hopf}).

Conversely, let $f^\flat: M \to\R^4$ be the twistor projection of a
holomorphic curve in $\CP^3$, and let $R^\flat$ be its right normal. From
Theorem \ref{thm: twistor} and (\ref{eq: Hopf}) we see that
\[
*dR^\flat = R^\flat dR^\flat = -dR^\flat R^\flat\,.
\]
Since $R^\flat$ is conformal and not constant, the zeros of $dR^\flat$ are isolated.
Since $*df^\flat = -df^\flat R^\flat$ we conclude that there is a function $f: M
\to\R^4$ such that $df^\flat= -f dR^\flat$ away from the zeros of $dR^\flat$. If $f$ is not constant then
$0=df\wedge dR^\flat$ shows that $*df = df R^\flat$ and $f$ is a
(branched)  
conformal immersion with right normal $R= -R^\flat$. But then $f$ is minimal since
$*dR = -R dR$. Now, for a conjugate surface $f^*$ of $f$ the map
\[
\tilde f = -fR^\flat - f^*
\]
satisfies $d\tilde f = dfR - fdR^\flat + *df = -fdR^\flat = df^\flat$ so that $f^\flat=-fR^\flat-f^*$
up to a constant. 
\end{proof}


\section{Harmonic maps and their associated families of flat connections}

It is well--known \cite{uhlenbeck} that a harmonic map gives rise to a
family of flat connections. There are various transformations on
harmonic maps whose 
new harmonic maps   are build from
parallel sections of the associated family of flat connections: e.g.,
the associated family, the simple factor dressing
\cite{terng_uhlenbeck} and Darboux transforms
\cite{cmc}, \cite{simple_factor_dressing}.  In this paper, we investigate the links
between these transformations, when applied to the left and right normals
and to the conformal Gauss map of a minimal surface $f: M \to\R^4$. 

\subsection{The harmonic right normal and its associated family}

We  equip $\H$ with the complex structure $I$ which is given
by the right multiplication by the unit quaternion $i$. This way, we
 identify $\C^2=(\H, I)$. It is worthwhile to point out that this
complex structure $I$ differs from the complex structure $\ii$ we
used before. We will use the symbol $\C =\Span_\R\{1, I\}$ to indicate
that we use the complex structure $I$ whereas $\Cc= \Span_\R\{1,
\ii\}$.

With this at hand, the $\C_*$--family of  flat connections of a
harmonic map $R: M \to S^2$ on the trivial $\C^2$ bundle $\trivialC 2=
\trivial {}$ over $M$ can be written as
\begin{equation}
\label{eq:associated connections}
d_\lambda = d + (\lambda-1)Q\oz + (\lambda\invers-1)Q\zo\,,
\end{equation}
where $d$ is the trivial connection on $\trivial{}$,
$\lambda\in\C_*=\C\setminus\{0\}$, and $Q=-\frac 12(*dR)'' = \frac 14(RdR -*dR)$ is the
Hopf field of $R$. Moreover,
\[
Q\oz = \frac 12(Q -I*Q), \quad Q\zo = \frac 12(Q+I*Q)
\]
are the $(1,0)$ and $(0,1)$--parts of $Q$ with respect to the complex
structure $I$.  The flatness of $d_\lambda$ is obtained from the
harmonicity  (\ref{eq: N harmonic}) of $R$, that is, from 
$d*Q=\frac 12d(dR)'' =0$. Note that our choice of associated family differs from the
associated family
\begin{equation}
\label{eq:a family}
\hat d_\lambda  = d + (\lambda-1)A_R\oz + (\lambda\invers-1)A_R\zo
\end{equation}
used in \cite{cmc, simple_factor_dressing} where $A_R=\frac 12(*dR)' = \frac 14(*dR
+RdR)$. However, since
\[
Q = \frac 14(RdR -*dR) = \frac 14\Big(*d(-R) + (-R)d(-R)\Big)
\]
we see that our family of flat connections $d_\lambda$ is the
associated family in \cite{cmc, simple_factor_dressing} of the harmonic map
$-R$. Indeed, both families are gauge equivalent \cite{cmc}.  We
choose the $d_\lambda$ family since  it is closely related to the
associated family of flat connections of the conformal Gauss map.

 Since $R$ is conformal, i.e., 
 $*dR=-RdR$, we have 
\[
2*Q = dR\,,
\]
and,  for $\mu\in\C_*$ and
$\beta\in\Gamma(\trivial{})$, 
\begin{equation}
\label{eq:dbeta}
 d_\mu \beta= d\beta+\frac12dR(-R\beta(a-1)+\beta b)\,,
\end{equation}
where $a=\frac{\mu+\mu\invers}2, b=i\frac{\mu\invers-\mu}2$.
We first note that for $\mu=1$ the connection $d_{\mu}=d$ is the
trivial connection on $\trivial{}$, and all parallel sections are
constants. Thus, we will from now on assume that $\mu\not=1$. For a
$d_\mu$--parallel section $\beta\in\Gamma(\trivial{})$ put
\begin{equation}
\label{eq:m}
2 m = R\beta(a-1) - \beta b\,.
\end{equation}
Then $m$ is constant since  $a^2+b^2=0$ and thus
\[
2 dm = dR \beta(a-1) + R d\beta (a-1) - d\beta b= 
dR \beta(a-1)+  \frac 12 dR\beta((a-1)^2+ b^2)=0\,.
\]
In particular, if $\beta$ is $d_\mu$--parallel then
\begin{equation}
\label{eq:dbetam}
d\beta = dR m
\end{equation}
with $m\in\H$ constant, and (\ref{eq:m})  shows
\begin{equation}
\label{eq:beta1}
\beta = R m  + m\frac{b}{a-1}\
\end{equation}
with $m\in\H$. Note that $\beta$ is a global parallel section of the
trivial connection $d_\mu$ which is nowhere vanishing if $m\not=0$:
If
$R(p)= m \frac{b}{1-a} m\invers$ then
\[
-1 = R^2(p) = m \frac{b^2}{(1-a)^2} m\invers = m\frac{1+a}{1-a}m\invers\,,
\]
where we used $a^2+b^2=1$. Therefore $-1 = \frac{1+a}{1-a}$  which gives
a contradiction.

Conversely, every $\beta$ given by the   equation (\ref{eq:beta1}) is
$d_\mu$--parallel. Since
\begin{equation}
\label{eq:b/ a-1 in mu}
\frac{b}{a-1} = \frac{i(1+\mu)}{1-\mu}
\end{equation}
we can summarise:

\begin{lemma}
\label{lem:parallel sections r}
  Let $f: M \to \R^4$ be minimal and $d_\lambda$ the associated
  family of the right normal $R$ of $f$. For $\mu\in
  \C\setminus\{0,1\}$ every (non--trivial)
  $d_\mu$--parallel section $\beta\in\Gamma(\trivial{})$ is given by  
\begin{equation}
\label{eq:beta}
\beta = Rm + m \frac{i(1+\mu)}{1-\mu}, \quad m\in\H_* =\H\setminus\{0\}\,.
\end{equation}
\end{lemma}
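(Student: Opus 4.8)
The plan is to extract the $d_\mu$-parallel sections directly from the explicit shape of the connection. Since $R$ is conformal, $2*Q=dR$, so by \eqref{eq:dbeta} the equation $d_\mu\beta=0$ is the first order condition
\[
d\beta = \tfrac12\,dR\big(R\beta(a-1)-\beta b\big),\qquad a=\tfrac{\mu+\mu\invers}2,\quad b=i\tfrac{\mu\invers-\mu}2 .
\]
Following \eqref{eq:m} I would introduce the $\H$-valued function $m$ via $2m=R\beta(a-1)-\beta b$, so that parallelism reads simply $d\beta=dR\,m$ as in \eqref{eq:dbetam}. The whole argument then turns on proving that $m$ is \emph{constant}; once this is known, $\beta$ is recovered algebraically and the normal form \eqref{eq:beta} drops out.

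To show $dm=0$, I would differentiate $2m=R\beta(a-1)-\beta b$ and substitute $d\beta=dR\,m$, obtaining
\[
2\,dm = dR\,\beta(a-1) + R\,dR\,m(a-1) - dR\,m\,b .
\]
Here the minimality of $f$ enters in the form $*dR=-RdR=dRR$, i.e. $R\,dR=-dR\,R$, which lets me pull $R$ across $dR$; re-substituting $m$ from its definition and using $R^2=-1$ then collapses the last two terms to $\tfrac12\,dR\,\beta\big((a-1)^2+b^2\big)$. The point where care is needed is that $a-1$ and $b$ lie in the commutative field $\C=\Span_\R\{1,I\}$ and act by right multiplication, hence commute with each other and associate past the left factor $R$; this is exactly what makes the two mixed terms $R\beta(a-1)b$ cancel. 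One is left with
\[
2\,dm = dR\,\beta\Big[(a-1)+\tfrac12\big((a-1)^2+b^2\big)\Big] = \tfrac12\,dR\,\beta\,(a^2+b^2-1)=0,
\]
since a direct computation gives $a^2+b^2=1$. Thus $m$ is constant.

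With $m$ constant, $d\beta=dR\,m$ shows $\beta-Rm$ is $d$-parallel, hence constant, and feeding $\beta=Rm+c$ back into $2m=R\beta(a-1)-\beta b$ (using $R^2=-1$, $a^2+b^2=1$, and that $R$ is non-constant, so the $R$-dependent parts must balance separately) pins down $c=m\,\tfrac{b}{a-1}$; this is \eqref{eq:beta1}. The stated form \eqref{eq:beta} then follows from the scalar identity \eqref{eq:b/ a-1 in mu}, namely $\tfrac{b}{a-1}=\tfrac{i(1+\mu)}{1-\mu}$. Conversely, every $\beta$ of this shape differentiates to $d\beta=dR\,m$ and hence satisfies $d_\mu\beta=0$, so the description is exact. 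Finally, to confirm that a non-trivial section is obtained precisely for $m\in\H_*$, I would check that $\beta$ is nowhere vanishing when $m\neq0$: if $\beta(p)=0$ then $R(p)=m\,\tfrac{b}{1-a}\,m\invers$, so $-1=R(p)^2=m\,\tfrac{b^2}{(1-a)^2}\,m\invers=m\,\tfrac{1+a}{1-a}\,m\invers$, which forces $\tfrac{1+a}{1-a}=-1$ and hence $1=-1$, a contradiction.

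I expect the only genuine obstacle to be the noncommutative bookkeeping in the constancy step: one must keep straight which factors are quaternions multiplying on the left ($R$, $\beta$) and which are $\C$-scalars multiplying on the right ($a$, $b$), and invoke $R\,dR=-dR\,R$ at exactly the right moment for the mixed terms to cancel. Once $m$ is known to be constant, the remaining identifications are routine substitutions.
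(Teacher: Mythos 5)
Your proof is, in its core, the paper's own argument: the same auxiliary quantity $m$ defined by (\ref{eq:m}), the same constancy computation (substitute $d\beta=dR\,m$, anticommute $R$ past $dR$ via $*dR=-RdR$, cancel the mixed terms because $a-1$ and $b$ are commuting complex scalars acting on the right, and finish with $a^2+b^2=1$ --- you are right that this equals $1$; the paper's ``since $a^2+b^2=0$'' is a typo), the same converse, and the same nowhere--vanishing check. All of these computations in your write-up are correct.

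The one genuine divergence is how $\beta$ is recovered from the constant $m$, and it creates a small gap. You integrate $d\beta=dR\,m$ to get $\beta=Rm+c$ and then determine $c$ by splitting the identity $2m=R\beta(a-1)-\beta b$ into its $R$--dependent and constant parts, which --- as you yourself note --- requires $R$ to be non-constant. The lemma carries no such hypothesis, and $dR\equiv 0$ does occur for minimal $f:M\to\R^4$: precisely when $f$ is the twistor projection of a holomorphic curve, a case the paper genuinely needs later (for instance when classifying the admissible $\phi$ in the simple factor dressing of $f$). In that case $d_\mu=d$, every constant section is parallel, and your balancing step yields no information, so as written your proof does not establish (\ref{eq:beta}) there. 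The repair is cheap: consider the real--linear maps $T(m)=Rm+m\frac{b}{a-1}$ and $S(\beta)=\frac12\bigl(R\beta(a-1)-\beta b\bigr)$ on $\H\cong\R^4$. The identity $S\circ T=\id$ (which is exactly the algebraic verification underlying your converse direction, using $R^2=-1$ and $(a-1)^2+b^2=-2(a-1)$) forces $T$ to be injective, hence bijective in finite dimensions, with $T\invers=S$; so every parallel --- here constant --- section $\beta$ equals $T(S(\beta))$, which is the required form. The paper's terser route, solving the algebraic relation (\ref{eq:m}) for $\beta$ directly, is uniform in $R$ and avoids the case split from the outset.
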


In a similar way, the associated family of the left normal of a
minimal surface can be discussed.

\subsection{The conformal Gauss map and its associated family}

Again, we identify $\C^4=(\H^2, I)$ where $I$ is given by right
multiplication by the unit quaternion $i$. If the conformal Gauss map
of a conformal immersion $f: M \to S^4$ is harmonic, that is  $d*A=0$,
by the same arguments as in the case of harmonic maps into the
2--sphere, the $\C_*$--family of connections
 \begin{equation}
\label{eq:associated family of S}
d^S_\lambda = d +(\lambda-1)A\oz
+ (\lambda\invers-1)A\zo\,, \quad \lambda\in\C_*\,,
\end{equation}
is flat \cite{willmore_harmonic} on the trivial $\C^4$ bundle over $M$
where as before 
\[
A\oz =\frac 12(A-I*A) \quad \text{ and } \quad A\zo = \frac 12(A+I*A)
\]
denote the $(1,0)$ and $(0,1)$ parts of $A$ with respect to $I$.

We consider the case when $S$ is the conformal Gauss map of a minimal
immersion in $\R^4$.  We  fix $\mu\in\C_*$ and compute all
parallel sections of $d^S_\mu$. If $\mu=1$ then $d^S_\mu=d$ is
trivial, and every constant section is parallel. Assume from now on
that $\mu\not=1$, and let
\[
e=\begin{pmatrix} 1\\0
\end{pmatrix}, \quad
\psi = \begin{pmatrix} f\\ 1
\end{pmatrix}
\]
and $L=\psi \H$ the line bundle of $f$. We denote by
$ \ttrivial 2$ the pull back of the trivial $\H^2$ bundle
under the canonical projection $\pi: \tilde M \to M$ of the universal
cover $\tilde M$ to $M$. Since $e\H
\oplus L =\trivial 2$ every $d^S_\mu$--parallel section
$\varphi\in\Gamma(\ttrivial 2)$ can be written as
\[
\varphi = e\alpha + \psi \beta
\] 
where
\begin{equation}
\label{eq:dalpha}
d\alpha = -df \beta
\end{equation}
and
\begin{equation*}
d\beta = \frac 12 dR(R\beta(a-1) - \beta b)
\end{equation*}
with $a= \frac{\mu+\mu\invers}2, b=i\frac{\mu\invers-\mu}2$. Here we
used that $*dR = -RdR = dR R$ by the minimality of $f$ and thus from
(\ref{eq:A of minimal}) we see
\[
A\oz\psi = -\frac 14\psi dR(R+i), \quad A\zo\psi = -\frac 14\psi dR(R-i)\,.
\]
In particular, $\beta$ is a $d_\mu$--parallel section by (\ref{eq:dbeta}), and by Lemma \ref{lem:parallel sections r}  it is given by
\[
\beta = R m  + m \frac{i(1+\mu)}{1-\mu}\
\] 
with $m\in\H$. If $\beta=0$ is trivial, that is, $m=0$, then (\ref{eq:dalpha}) shows
that $\alpha$ is constant. For $m\not=0$, we see that
\begin{equation}
\label{eq:alpha}
\alpha = -f^*m  - fm \frac{i(1+\mu)}{1-\mu}\,,
\end{equation}
is the general solution of (\ref{eq:dalpha}) where $f^*$ is a  conjugate minimal surface of $f$, that is, 
\[
df^* = -*df = dfR\,.
\]
  We summarise:

\begin{prop}
\label{prop: parallel sections}
Let $f: M \to \R^4$ be a minimal surface with conjugate surface $f^*$
and $d^S_\lambda$ the associated family of flat connections of the
conformal Gauss map $S$ of $f$. For $\mu\in\C\setminus\{0,1\}$ every
$d^S_\mu$--parallel section is either a constant section $\varphi =e n, n\in\H$, or is
given by $\varphi = e\alpha + \psi \beta \in\Gamma(\ttrivial 2)$ with  
\begin{equation}
\label{eq:parallel sections}
\alpha =-f^*m  - fm \frac{i(1+\mu)}{1-\mu}, \quad \beta= Rm + m
\frac{i(1+\mu)}{1-\mu}\,, \quad m\in\H_* \,.  
\end{equation}   

\end{prop}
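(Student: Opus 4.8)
The plan is to exploit the splitting $\trivial 2 = e\H \oplus L$, where $L=\psi\H$ with $\psi=\begin{pmatrix} f\\ 1\end{pmatrix}$ and $e=\begin{pmatrix} 1\\ 0\end{pmatrix}$, to reduce the single equation $d^S_\mu\varphi=0$ to one equation per component. First I would write an arbitrary section as $\varphi=e\alpha+\psi\beta$ and compute $d^S_\mu\varphi$ directly from the definition (\ref{eq:associated family of S}). The decisive structural input is that $e$ is annihilated by the Hopf field: since $G\invers e=e$ and the matrix in (\ref{eq:A of minimal}) kills $e$, one has $*A\,e=0$, hence $Ae=0$ and $A\oz e=A\zo e=0$. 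Together with $de=0$ this says that $e$ is itself $d^S_\lambda$--parallel for every $\lambda$, so the curvature terms in $d^S_\mu\varphi$ only see the summand $\psi\beta$.

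Next I would collect components. Using $d\psi=e\,df$ the flat part contributes $d\varphi=e(d\alpha+df\,\beta)+\psi\,d\beta$, while the curvature terms, evaluated with the identities $A\oz\psi=-\tfrac14\psi\,dR(R+i)$ and $A\zo\psi=-\tfrac14\psi\,dR(R-i)$ recorded before the statement, land entirely in $L$. Since $e$ and $\psi$ are pointwise linearly independent, $d^S_\mu\varphi=0$ splits into the $e$--equation $d\alpha=-df\,\beta$, which is exactly (\ref{eq:dalpha}), and a $\psi$--equation involving $\beta$ alone. The content of the computation is that this $\psi$--equation is precisely the condition (\ref{eq:dbeta}) for $\beta$ to be $d_\mu$--parallel for the associated family of the right normal $R$; hence Lemma~\ref{lem:parallel sections r} applies verbatim and gives $\beta=Rm+m\frac{i(1+\mu)}{1-\mu}$ with $m\in\H$.

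It then remains to solve the $e$--equation and read off the cases. If $m=0$ then $\beta=0$, so $d\alpha=0$ forces $\alpha$ to be a constant $n\in\H$ and $\varphi=en$ is one of the constant sections. For $m\neq0$ I would verify directly that the claimed $\alpha=-f^*m-fm\frac{i(1+\mu)}{1-\mu}$ integrates $d\alpha=-df\,\beta$: differentiating and using the conjugate surface relation $df^*=-*df=dfR$ yields $d\alpha=-(dfR)m-df\,m\frac{i(1+\mu)}{1-\mu}=-df\bigl(Rm+m\frac{i(1+\mu)}{1-\mu}\bigr)=-df\,\beta$, as needed. Any two solutions of the $e$--equation differ by an additive constant, i.e.\ by one of the constant sections $en$ already listed, so this produces exactly the sections (\ref{eq:parallel sections}).

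I expect the main obstacle to be the bookkeeping in the decoupling step rather than any conceptual difficulty: one must confirm, despite the quaternionic non--commutativity and the way the complex parameter $\mu$ enters, that after substituting $A\oz\psi$ and $A\zo\psi$ the $\psi$--component reorganises into precisely the right--normal parallel equation (\ref{eq:dbeta}) with $a=\frac{\mu+\mu\invers}2$ and $b=i\frac{\mu\invers-\mu}2$, so that Lemma~\ref{lem:parallel sections r} can be invoked without modification. Once that identification is secured, the case split on $m$ and the one--line integration for $\alpha$ via $df^*=dfR$ are routine.
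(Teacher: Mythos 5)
Your proposal is correct and follows essentially the same route as the paper: decompose $\varphi=e\alpha+\psi\beta$ in $e\H\oplus L$, use the identities $A\oz\psi=-\tfrac14\psi\,dR(R+i)$, $A\zo\psi=-\tfrac14\psi\,dR(R-i)$ to split $d^S_\mu\varphi=0$ into (\ref{eq:dalpha}) and the $d_\mu$--parallelity condition (\ref{eq:dbeta}), invoke Lemma~\ref{lem:parallel sections r}, and integrate $d\alpha=-df\,\beta$ via $df^*=dfR$. Your only addition is making explicit that $Ae=0$ (so $e$ is $d^S_\lambda$--parallel), which the paper uses implicitly to justify the decoupling.
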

 

\section{The associated family of a minimal surface}

Motivated by our  observation (\ref{eq:parallel sections})
that parallel sections of the associated family of flat connections of
a minimal surface $f: M \to\R^4$ are given by a quaternionic linear
combination of $f$ and its conjugate surface $f^*$, we now discuss a
generalisation of the associated family of a minimal surface. This
associated family is indeed given by the associated family of the
harmonic conformal Gauss map.

\subsection{The generalised associated family}
Given a minimal surface $f: M \to\R^4$ with left and right normal $N$
and $R$ respectively, we have seen that its conjugate surface $f^*:
\tilde M \to \R^4$ has the same left and right normals $N$ and $R$
respectively since $df^* =-*df$.

In particular, the associated family  $f_{\cos\theta, \sin \theta} = f\cos \theta + f^*\sin \theta$  can easily be extended to a  family
\begin{equation*}
f_{p, q}  = fp + f^*q: \tilde M \to\R^4\,
\end{equation*}
with $p, q \in\H, (p,q)\not=(0,0)$.
Since $p + Rq$ is a quaternionic holomorphic section it has only
isolated zeros, and  
\begin{equation}
\label{eq:dfmn}
df_{p,q} = df(p + Rq)\,,
\end{equation}
shows that $f_{p,q}$
is a branched conformal immersion. The 
 right normal $R$ of $f$ gives, away from the isolated zeros of $p+Rq$, via  
\begin{equation}
\label{eq:right normal associated}
R_{p,q} = (p+Rq)\invers R(p+Rq)
\end{equation}
the right normal   of $f_{p,q}$ whereas the left normal $N$ of $f$ is the left normal 
\[
N_{p,q} = N
\]
of $f_{p,q}$ . Thus, by (\ref{eq:H})
\[
df_{p,q} H_{p,q} = (dN_{p,q})' =dN'=0
\]
and $f_{p,q}$ is a (branched) minimal immersion. Similarly, we have a family of (branched) minimal immersions $f^{p,q} =pf + qf^*$, $(p,q)\not=(0,0)$, with right normal $R^{p,q} = R$ and left normal $N^{p,q} = (p-qN)N(p-qN)\invers$. \\

\begin{definition}
  Let $f: M \to \R^4$ be a minimal surface.  The family of (branched) minimal immersions
\begin{equation}
\label{eq: fmn}
f_{p,q}  = fp+ f^*q \colon \tilde M \to \R^4\,, \quad p,q \in\H, (p,q)\not=(0,0)\,,
\end{equation}
where $f^*: \tilde M \to \R^4$ is a conjugate surface of $f$, is called the
\emph{right associated family} of $f$. The family of (branched) minimal immersion 
\[
f^{p,q} = p f + q f^*\colon \tilde M \to \R^4\,, \quad p,q \in\H, \quad (p,q)\not=(0,0)\,,
\]
is called the \emph{left associated family} of $f$.
\end{definition}
Note that for $p,q\in\R, (p,q)\not=(0,0),$ we obtain the usual associated family of a
minimal surface up to scaling. Moreover, $f_{pn, qn} = f_{p,q} n$ is given by a
scaling of $f_{p,q}$ and an isometry on $\R^4$ for $n\in\H_*$.

\begin{theorem}
  The right (left) associated family is a $S^3$--family of minimal
  surfaces in $\R^4$ which contains the classical associated family
  $f_{\cos\theta, \sin\theta}, \theta\in\R,$ of minimal surfaces. 

  The right (left) associated family preserves the conformal class,
  and a surface $f_{p,q}$ (or $f^{p,q}$) is isometric to $f$ if and
  only if it is an element of the classical
  associated family, up to an isometry of $\R^4$. 
\end{theorem}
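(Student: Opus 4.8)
The plan is to dispatch the structural assertions quickly and to reserve the real work for the isometric characterisation. Minimality of each $f_{p,q}$ has already been recorded: by \eqref{eq:dfmn} one has $df_{p,q} = df\,(p+Rq)$, so $f_{p,q}$ is a branched conformal immersion with right normal given by \eqref{eq:right normal associated} and left normal $N$, whence $df_{p,q}H_{p,q} = (dN)' = 0$ by \eqref{eq:H}. Choosing $p=\cos\theta$, $q=\sin\theta\in\R$ returns $f_{\cos\theta,\sin\theta}$, so the classical associated family is contained in the right associated family. For the parameter count I would use the relation $f_{pn,qn}=f_{p,q}n$, $n\in\H_*$: parameters related by such an $n$ give surfaces that agree up to the scaling by $|n|$ and the isometry $v\mapsto v\,n/|n|$ of $\R^4$, so that normalising away this $\H_*$--freedom reduces the eight real parameters to the claimed $S^3$, inside which the classical family is the real locus.

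Everything in the isometric statement is controlled by the conformal factor of $f_{p,q}$ relative to $f$. Since right multiplication by a quaternion scales length by its modulus, \eqref{eq:dfmn} gives that the first fundamental form of $f_{p,q}$ equals $|p+Rq|^2$ times that of $f$; as $|p+Rq|^2>0$ this is a positive conformal rescaling, which already proves that the right associated family preserves the conformal class. Expanding with $R\in\Im\H$, $R^2=-1$, $\bar R=-R$ and the identity $ab+ba=-2\langle a,b\rangle$ for imaginary $a,b$, I would obtain
\[
|p+Rq|^2 = |p|^2+|q|^2-2\langle R,\,\Im(q\bar p)\rangle\,,
\]
so that $f_{p,q}$ has the same induced metric as $f$ exactly when $\langle R,\Im(q\bar p)\rangle$ is constant on $M$.

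The decisive hypothesis is that the right normal $R$ is non--constant (a constant $R$ makes $f$ a holomorphic curve in $\C^2$ by the earlier corollary and is excluded): then $R$ meets an open subset of $S^2$, and constancy of $\langle R,\Im(q\bar p)\rangle$ forces $\Im(q\bar p)=0$ and hence $|p|^2+|q|^2=1$. Writing $q=tp$ with $t\in\R$ (the case $p=0$ returning $f^*$ directly), I find $f_{p,q}=(f+tf^*)\,p=f_{\cos\theta,\sin\theta}\,u$ with $\cos\theta=(1+t^2)^{-1/2}$ and $u=(1+t^2)^{1/2}p\in S^3$, i.e. a classical member followed by the isometry $v\mapsto vu$. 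Conversely a classical member has $|p+Rq|^2=|\cos\theta+R\sin\theta|^2=1$ because $\sin\theta\cos\theta\in\R$, so it is isometric to $f$, and an isometry of $\R^4$ preserves the induced metric; this yields the reverse implication. The left associated family is treated identically, with $df^{p,q}=(p-qN)\,df$ and the left normal $N$ replacing $R$. The step I expect to be delicate is precisely the passage from ``$\langle R,\Im(q\bar p)\rangle$ constant'' to ``$\Im(q\bar p)=0$'', which uses the non--constancy of $R$, together with fixing the meaning of ``isometric'' as equality of first fundamental forms rather than congruence up to a reparametrisation of $M$; the quaternionic algebra behind the displayed identity is then routine.
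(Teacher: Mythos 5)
Your main line of argument is, in substance, the paper's own: both proofs reduce everything to the isometry characterisation, read off the conformal factor from $df_{p,q}=df\,(p+Rq)$, and exploit the fact that a non--constant right normal cannot take values in a plane. The paper first normalises $p\in\R$ by the right $\H_*$--action and obtains the condition that $\Re(Rq)$ is constant; your unnormalised identity $|p+Rq|^2=|p|^2+|q|^2-2\langle R,\Im(q\bar p)\rangle$ is the same computation and leads to the same conclusion $q\in\R p$, $|p|^2+|q|^2=1$. For surfaces with non--constant $R$, your proof (including the converse direction and the conformal--class statement) is correct and essentially identical to the paper's.

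The gap is your parenthetical claim that the case of constant $R$ ``is excluded''. Nothing in the theorem excludes it: a minimal surface with constant right normal (a holomorphic curve in $\C^2$, e.g.\ a plane) is a legitimate input, and the paper's proof devotes a separate sentence to it. For constant $R$ your argument collapses: $\langle R,\Im(q\bar p)\rangle$ is then constant for \emph{every} pair $(p,q)$, so the isometry hypothesis only yields $|p+Rq|=1$, and this is satisfied by pairs with $\Im(q\bar p)\neq 0$. Take $R=i$, $p=\tfrac 1{\sqrt 2}$, $q=\tfrac j{\sqrt 2}$: then $p+Rq=\tfrac{1+k}{\sqrt 2}\in S^3$, so $f_{p,q}$ is isometric to $f$, yet $qp\invers=j\notin\R$, so $(p,q)$ is not of the form $(n\cos\theta,n\sin\theta)$ and your dichotomy misses it. The conclusion of the theorem still holds for such pairs, but this needs the paper's extra argument: constant $R$ gives $f^*=fR+c$ with $c\in\H$, hence $f_{p,q}=f(p+Rq)+cq$, which is the classical member $f=f_{1,0}$ composed with the isometry $v\mapsto v(p+Rq)+cq$ of $\R^4$. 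A secondary point: your parameter count for the ``$S^3$--family'' assertion does not add up, since quotienting $\H^2\setminus\{0\}$ (eight real parameters) by the right $\H_*$--action (four parameters) gives $\HP^1\cong S^4$, not $S^3$; the paper's proof is silent on this part, so you are no worse off than the paper here, but the sentence as you wrote it is not a proof of that claim.
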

\begin{proof}   It only remains to show that $f_{p,q}$ is isometric
  to $f$ if and only if $f_{p, q} = f_{n\cos\theta, n\sin\theta}$ for some
  $\theta\in\R$ and $n\in S^3$. Assume first that $f_{p,q}$ is
  isometric to $f$.  By multiplying with a unit quaternion from the
  right we may assume that $p\in\R$. Then $|df_{p,q}| = |df|$ implies
  by (\ref{eq:dfmn})
\begin{equation*}
p^2 + |q|^2 + 2 p\Re(Rq) = |p+Rq|^2 = 1
\end{equation*}
and thus $\Re(Rq)$ is constant. Since the stereographic projection of
$R$ is a meromorphic function, the right normal $R$ can only take
values in a plane if $R$ is constant. In other words, if $R$ is not
constant then $\Re(Rq) = \Re(R\, \Im q) = -< R, \Im q>$ is constant only
if $\Im q=0$.  But then the above equation gives $p^2+q^2=1$ with
$q\in\R$ and $(p,q) = (\cos\theta,\sin\theta)$ for some
$\theta\in\R$. If $R$ is constant then $df^* = - *df = df R$ gives
$f_{p,q} = f(p+Rq) = f_{p+Rq, 0}$ with constant $p+Rq\in S^3$.

Conversely, $f_{p,q} = f_{n\cos\theta, n\sin\theta}, n\in S^3$, then $|p+Rq|=1$ and thus $|df_{p,q}| = |df|$ by (\ref{eq:dfmn}).

A similar argument shows the statement for the left associated family.
\end{proof}

\begin{rem} For  any  immersion $f: M \to\R^3 =\Im\H$  in Euclidean 3--space, the left and
right normal coincide. A surface in the right associated
family $ f_{p,q} $ of a minimal surface $f: M \to \R^3$ has left normal $N_{p,q} = N$ and right
normal $R_{p,q} = (p+Nq)\invers N(p+Nq)$. In particular,  
we have $N_{p,q}
\not= R_{p,q}$ in general  and thus, elements of the right associated families of
a minimal surface $f: M \to \R^3$  are
not necessarily minimal in 3-space but are
minimal surfaces in $\R^4$.  
\end{rem}

\subsection{The associated family of the harmonic conformal Gauss map}

We now give a derivation of the associated family of minimal surfaces
in terms of the associated family of harmonic maps. Recall that in the
case of a constant mean curvature surface $f:M \to\R^3$ the Gauss map
$N: M \to S^2$ of $f$ is by the Ruh--Vilms theorem \cite{ruh_vilms}
harmonic and its associated family $\hat d_\lambda$ of flat
connections (\ref{eq:a family}) gives rise to the associated family of
constant mean curvature surfaces: for $\mu\in \C_*$ the map $N$ is
harmonic with respect to the quaternionic connection $\hat d_\mu$ for
all $\mu\in S^1$, see e.g. \cite{simple_factor_dressing} for
details. Thus, for any $\hat d_\mu$--parallel section
$\varphi\in\Gamma(\trivial{})$ the map $N_\mu = \varphi\invers
N\varphi$ is harmonic with respect to $d$. Note that $\varphi$ is
unique up to a right multiplication by a constant quaternion. This
family $N_\lambda$, $\lambda\in S^1$, is called the \emph{associated
  family} of $N$.  The Sym--Bobenko formula \cite{sym_bob} relies on
the link (\ref{eq:H}) between the differentials of $f$ and $N$ to
obtain the constant mean curvature surface $f$ by differentiating a
family $\varphi_\lambda$ of parallel sections of $d_\lambda$ with
respect to the parameter $\lambda$.  This way, one can obtain from the
associated family $N_\lambda$ of $N$ a family of constant mean
curvature surfaces, the \emph{associated family} of $f$.

In the case of a minimal surface $f: M\to \R^4$ with right normal $R$,
we have seen in Lemma \ref{lem:parallel sections r} that every
parallel section $\beta\in\Gamma(\trivial{})$ of the associated family
of flat connections of $R$ is, using (\ref{eq:b/ a-1 in mu}), given by
\[
\beta = Rm  + m\frac{b}{a-1}, \quad m\in\H_*\,,
 \]
where $a=\frac{\mu + \mu\invers}2, b= i\frac{\mu\invers-\mu}2$, and $\mu\in\C\setminus\{0,1\}$. 

If $\mu\in S^1, \mu\not=1, $ then $a =\Re \mu, b=\Im \mu\in\R$,
and $\beta =(R+ \frac{b}{a-1})m$ with $m\in\H_*$. Thus, the associated
family of harmonic maps is given by
\[
R_\mu = \beta\invers R \beta = m\invers R m\,.
\]
In particular, the harmonic map $R_\mu$ does not depend on the
parameter $\mu$. In the case of a minimal surface, the equation
(\ref{eq:H}) does not allow to reconstruct the minimal surface from
the harmonic Gauss map. In particular, there is no Sym--Bobenko
formula to obtain a minimal surface with right normal $R_\mu$ via
differentiation of parallel sections with respect to the parameter
$\mu$.

To obtain a non--trivial family of minimal surfaces with right normal
$R_\mu$ for $\mu\in\C_*$, we consider instead the harmonic conformal Gauss map
$S$ of $f$:

\begin{theorem}
\label{thm: associated}
Let $f: M \to \R^4$ be a minimal surface with conformal Gauss map $S$.
Then, up to M\"obius transformation, the line bundle of a minimal
surface $f_{\cos\theta, \sin\theta}, \theta\in\R,$ in the associated
family of $f$ is given by
\[
L_\phi = \phi\invers L
\]
where $\phi = (\varphi_1, \varphi_2)$ is an invertible endomorphism
with $d_\mu$--parallel columns $\varphi_1, \varphi_2$, and $\mu =
\cos(2\theta) - i\sin(2\theta)\in S^1$. Moreover,
\[
S_\phi = \phi\invers S \phi
\]
is the conformal Gauss map of $f_{\cos\theta, \sin\theta}$.
\end{theorem}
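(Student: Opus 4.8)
The plan is to use the explicit description of the $d^S_\mu$--parallel sections from Proposition~\ref{prop: parallel sections} to write the gauge $\phi$ down concretely, and then to identify $S_\phi=\phi\invers S\phi$ as a conformal Gauss map by checking the three defining properties (complex structure, stabilisation of the line bundle by $S_\phi$ and $dS_\phi$, Hopf field valued in the line bundle) and invoking uniqueness. Throughout I fix $\mu=\cos(2\theta)-i\sin(2\theta)=e^{-2i\theta}\in S^1$ and record the identity $\frac{i(1+\mu)}{1-\mu}=\cot\theta\in\R$; it is precisely the reality of this quantity, forced by $|\mu|=1$, that pushes the parallel sections into the classical associated family rather than into a genuine Darboux transform.

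First I would compute $L_\phi=\phi\invers L$ explicitly. By Proposition~\ref{prop: parallel sections}, for $m\in\H_*$ the section $\varphi_m=e\alpha+\psi\beta$ has lower component $\beta=(R+\cot\theta)m$ and, using $f^\flat=fR-f^*$ from Theorem~\ref{thm:twistor minimal}, upper component $\alpha+f\beta=f^\flat m$; thus $\varphi_m=\begin{pmatrix}f^\flat m\\ (R+\cot\theta)m\end{pmatrix}$. Together with the constant section $e$ these span the space of $d^S_\mu$--parallel sections, so I would take $\phi=\begin{pmatrix}f^\flat & 1\\ R+\cot\theta & 0\end{pmatrix}$, which is invertible since $R+\cot\theta$ is nowhere zero by Lemma~\ref{lem:parallel sections r}. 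Solving $\phi\begin{pmatrix}x\\y\end{pmatrix}=\psi$ produces a spanning section of $L_\phi$; right--multiplying by $(R+\cot\theta)$ and applying $f^\flat=fR-f^*$ once more collapses the lower entry to $f\cot\theta+f^*=\frac1{\sin\theta}f_{\cos\theta,\sin\theta}$. Hence $L_\phi=\begin{pmatrix}1\\ f_{\cos\theta,\sin\theta}\end{pmatrix}\H$, which is the line bundle of $f_{\cos\theta,\sin\theta}$ after the M\"obius transformation interchanging the two factors, as claimed.

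For the conformal Gauss map the key identity I would establish is $dS_\phi=\phi\invers\big(dS+[\Lambda_\mu,S]\big)\phi$, where $\Lambda_\mu=(\mu-1)A\oz+(\mu\invers-1)A\zo$ and the parallelism of $\phi$ reads $d\phi=-\Lambda_\mu\phi$. Then $S_\phi^2=-\id$ is immediate, and $S_\phi(\phi\invers\psi)=\phi\invers S\psi=-(\phi\invers\psi)R$ shows $S_\phi$ stabilises $L_\phi$. Evaluating the identity on $\phi\invers\psi$ gives $(dS_\phi)(\phi\invers\psi)=\phi\invers\big((dS)\psi+\Lambda_\mu S\psi-S\Lambda_\mu\psi\big)$; since for a minimal surface $(dS)\psi=-\psi\,dR$, $S\psi=-\psi R$, and $A\oz\psi,A\zo\psi\in\psi\H$ so that $\Lambda_\mu\psi\in\psi\H$, every term lies in $\psi\H=L$, whence $dS_\phi$ stabilises $L_\phi$. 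Finally, from $\phi\invers d^S_\mu\phi=d$ one computes $d^{S_\phi}_\lambda:=\phi\invers d^S_{\mu\lambda}\phi=d+\mu(\lambda-1)\phi\invers A\oz\phi+\mu\invers(\lambda\invers-1)\phi\invers A\zo\phi$, identifying the Hopf field of $S_\phi$ as $A_\phi=\mu\,\phi\invers A\oz\phi+\mu\invers\phi\invers A\zo\phi$; since $\im A\subset L$ this gives $\im A_\phi\subset\phi\invers L=L_\phi$. By the uniqueness in the definition of the conformal Gauss map, $S_\phi$ is then the conformal Gauss map of the surface with line bundle $L_\phi$, and the M\"obius--equivariance of the conformal Gauss map yields the statement for $f_{\cos\theta,\sin\theta}$.

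The main obstacle I anticipate is the bookkeeping in the gauge computation of the third paragraph: one must verify that conjugation by $\phi$ preserves the $(1,0)$/$(0,1)$ type decomposition --- this works because $\phi$ is right $\H$--linear and therefore commutes with $I$, together with $|\mu|=1$ --- and that the gauge term $d\phi=-\Lambda_\mu\phi$ is tracked correctly so that the extraneous terms reorganise into the commutator $[\Lambda_\mu,S]$. By contrast, the explicit inversion of $\phi$ and the cancellations producing $f_{\cos\theta,\sin\theta}$ in the second paragraph are routine once $f^\flat=fR-f^*$ and the identity $\frac{i(1+\mu)}{1-\mu}=\cot\theta$ are available.
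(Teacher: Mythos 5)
Your proposal is correct in substance, and its two halves should be judged separately. For the identification of $L_\phi$ your route is essentially the paper's own: the paper reduces to a gauge with columns $e$ and a single parallel section $\varphi=e\alpha+\psi\beta$ (any two admissible gauges differ by a constant matrix $B$, which is legitimate precisely because of the quaternionic linearity in $m$ of the parallel sections that you note — a consequence of $\frac{i(1+\mu)}{1-\mu}\in\R$ for $\mu\in S^1$), computes $\phi\invers\psi=\begin{pmatrix}-\alpha\\ 1\end{pmatrix}\beta\invers$, and reads off $f_\phi=-\alpha=(f^*+f\cot\theta)m$, absorbing the constant $m$ into a M\"obius transformation; your column ordering $(\varphi,e)$ and normalisation $m=1$ only introduce the coordinate swap and dilation that you already account for. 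For the second half, however, the paper does not argue at all: it observes that $S_\phi$ stabilises $L_\phi$ and then cites \cite{simple_factor_dressing}, \cite{tokyo}, \cite{burstall_quintino}. Your direct verification of the defining properties of the conformal Gauss map is therefore a genuinely different, more self-contained route, and your gauge identity $dS_\phi=\phi\invers(dS+[\Lambda_\mu,S])\phi$ and the stability of $L_\phi$ under $S_\phi$ and $dS_\phi$ are correct as written.

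The one step you cannot leave as ``identifying'' is the claim that $A_\phi=\mu\,\phi\invers A\oz\phi+\mu\invers\phi\invers A\zo\phi$ is the Hopf field of $S_\phi$: the fact that the gauged family $\phi\invers d^S_{\mu\lambda}\phi$ has the shape $d+(\lambda-1)A_\phi\oz+(\lambda\invers-1)A_\phi\zo$ does not by itself say that $A_\phi=\frac14(*dS_\phi+S_\phi\,dS_\phi)$. The identification is true, but it needs the algebraic relations of a Hopf field: $A$ anticommutes with $S$ and satisfies $*A=SA$, hence so do $A\oz$ and $A\zo$ (they are built from $A$, $*A$ and $I$, and the quaternionic--linear $S$ commutes with $I$), which gives $[\Lambda_\mu,S]=-2S\Lambda_\mu$ and $*\Lambda_\mu=S\Lambda_\mu$. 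Inserting these into your gauge identity yields
\[
\frac14\left(*dS_\phi+S_\phi\, dS_\phi\right)
=\phi\invers\left(A+\tfrac14(*+S)[\Lambda_\mu,S]\right)\phi
=\phi\invers\left(A+\Lambda_\mu\right)\phi
=\phi\invers\left(\mu A\oz+\mu\invers A\zo\right)\phi\,,
\]
which is exactly your $A_\phi$; since $\im A\subset L$ and $L$ is stable under right multiplication by quaternions, its image lies in $\phi\invers L=L_\phi$, and the uniqueness clause in the definition of the conformal Gauss map finishes the argument as you say. (A related small imprecision: in your stability check, $\Lambda_\mu S\psi=-\Lambda_\mu(\psi R)$ lies in $\Omega^1(L)$ because $\im\Lambda_\mu\subset L$, not because $\Lambda_\mu$ commutes with right multiplication by $R$ — $\Lambda_\mu$ is only complex linear, not quaternionic linear.) With these points made explicit, your argument is complete and in fact supplies the detail the paper delegates to the literature.
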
 
\begin{proof}
  For $\mu\in S^1$ and invertible  $\phi$ and $\tilde \phi$ with
  $d_\mu$--parallel columns,  we
  have $\tilde \phi = \phi B$ with $B$ constant so that
  $L_{\tilde\phi} = B\invers L_\phi$ is given by a M\"obius
  transformation. Thus, we can assume without loss of generality that
  for $\mu\not=1$
\[
\phi = (e, \varphi)
\]
where $\varphi =e\alpha + \psi\beta$ is a   $d_\mu$--parallel
section with nowhere vanishing $\beta$. Then
\[
\phi\invers \psi =\phi\invers (-e\alpha + \varphi)\beta\invers = \begin{pmatrix}
-\alpha \\ 1
\end{pmatrix}
\beta\invers\,,
\]
so that $L_\phi$ is the line bundle of the (branched) conformal
immersion $f_\phi = -\alpha: M \to\R^4$. By Proposition \ref{prop:
  parallel sections} and (\ref{eq:b/ a-1 in mu}) we have
\[
\alpha = -(f^* + f\frac{b}{a-1})m
\]
 since  $\frac{b}{a-1} \in
\R$ for $\mu\in S^1$. 
Writing $\mu=\cos\vartheta + i\sin\vartheta$ we
see
\[
\frac{b}{a-1} = \frac{\sin\vartheta}{\cos\vartheta-1} = - 
\cot\frac\vartheta 2\,.
\] 
We
consider $\alpha$ up to M\"obius transformations, and may thus assume
without loss of generality that $m= -\sin\frac\vartheta 2$ so
that
\[
f_\phi = f_{\cos\theta, \sin\theta}
\] 
for $\theta = -\frac\vartheta 2$.
For $\mu=1$ we can choose $\phi$ as the identity matrix, and obtain
$f_\phi = f_{1,0} = f$. 

By definition $S_\phi$ leaves $L_\phi$ invariant.   The final statement follows by a
similar argument as in \cite{simple_factor_dressing}. In fact, this is a special
case of a corresponding statement for (constrained) Willmore surfaces $f: M \to
\R^4$, see
\cite{tokyo}, \cite{burstall_quintino}.  
\end{proof}

  We also get an interpretation of the right associated family:
  If we consider $\phi=(e,\varphi)$ with $d^S_\mu\varphi=0$ for some
  $\mu\in\C\setminus\{0,1\}$, then the line bundle $L_\phi =
  \phi\invers L$ gives by the same argument as above a member of the
  generalised associated family, up to M\"obius transformation,
\[
f_\phi = \alpha(1-a) = f^*m(a-1) + fmb = f_{mb, m(a-1)}\,.
\]
Note however, that $\phi$ is not a $d^S_\mu$--parallel
endomorphism since $d^S_\mu$ is a complex but not a quaternionic
connection.


\section{Simple factor dressing}

As we have seen, the harmonic left and right normals and the harmonic
conformal Gauss map of a minimal surface give rise to families of flat
connections.  Conversely, if a family of flat connections is of an
appropriate form, it can be used to construct a harmonic map. In
particular, if $d_\lambda$ is the associated family of flat
connections of a harmonic map, one can gauge  $d_\lambda$ with a 
$\lambda$--dependent dressing matrix $r_\lambda$ to obtain a new
family of flat connections $\breve d_\lambda =
r_\lambda\cdot d_\lambda$. If $r_\lambda$
satisfies appropriate reality and holomorphicity conditions, then
$\breve d_\lambda$ is the associated family
of a new harmonic map, a so--called \emph{dressing} of the original
harmonic map,   see
\cite{uhlenbeck}, \cite{terng_uhlenbeck}.

\subsection{Simple factor dressing of the left and right normals}

We first consider the case when $R: M \to S^2$ is the harmonic right normal of a
minimal surface in $\R^4$ and choose the simplest possible dressing
matrix: If $d_\lambda$ is the associated family  (\ref{eq:associated
  connections}) of the harmonic map $R$ then the simple
factor dressing matrix $r_\lambda$ is obtained by choosing
$\mu\in\C\setminus\{0,1\}$ and a  parallel section $\beta$ of the flat
connection $d_\mu$ of the associated family   as
\[
r_\lambda v= \begin{cases}  v\frac{1-\bar\mu\invers}{1-\mu}
  \frac{\lambda-\mu}{\lambda-\bar\mu\invers}\,, & v \in \beta\C\\
v\,, & v \in (\beta\C)^\perp
\end{cases}\,.
\]
In \cite{simple_factor_dressing} it is shown that  $\breve d_\lambda = r_\lambda
\cdot d_\lambda$ is the associated family of a  harmonic map
$\breve R: M \to S^2$, 
the \emph{simple factor dressing} of $R$, and  
\begin{equation*}
 \breve R = \breve
T\invers R \breve T
\end{equation*}
where $ \breve T=\frac 12(-R\beta(a-1)\beta\invers+ \beta
b\beta\invers) $ is given explicitly in terms of $\beta$ and $\mu$,
where $a=\frac{\mu+\mu\invers}2, b = i\frac{\mu\invers-\mu}2$. In our
case, any $d_\mu$--parallel section $\beta$ is given by
Lemma \ref{lem:parallel sections r} and (\ref{eq:b/ a-1 in mu}) as
 \[
\beta = (R + \rho)m,  \quad m\in\H_*\,, \quad \rho = m\frac{b}{a-1}m\invers\,.
\]
Since $\beta$ is nowhere vanishing, 
we see that  
\[
\breve T =  \frac 12(-R \beta(a-1) + \beta b) \beta\invers
= -(R+\rho)\invers\,,
\]
and the simple factor dressing of $R$ is 
\begin{equation}
\label{eq:right normal dressing}
\breve R = (R+\rho) R  (R+\rho)\invers\,, \qquad \rho = m
\frac{i(1+\mu)}{1-\mu} m\invers\,,  \quad m\in\H_*\,.
\end{equation}  
In particular, a simple factor dressing is determined by the choice
$\mu\in\C\setminus\{0,1\}$, giving the pole of the simple factor, and $m\in\H_*$,
giving the parallel bundle of $d_\mu$.
A straight forward computation shows that 
\[
d\breve R = (1+\rho^2)(R+\rho)\invers dR (R+\rho)\invers\,.
\]
 Since
\[
\rho^2 =(R+\rho - R)^2 = (R+\rho)^2 - R(R+\rho) -(R+\rho)R -1
\]
we see that
\[
(R+\rho)\invers (1+\rho^2) (R+\rho)\invers = 1-  (R+\rho)\invers R -
R(R+\rho)\invers\,.
\]
The right hand side of this   equation 
commutes with $R$ so that
also 
\begin{equation}
\label{eq: R commutes with rho expression}
[(R+\rho)\invers (1+\rho^2)(R+\rho)\invers, R]= 0\,.
\end{equation}
Therefore, 
\[
(d\breve R)' = \frac 12(d\breve R - \breve R *d\breve R) =
(1+\rho^2)(R+\rho)\invers \frac 12(dR + R*dR) (R+\rho)\invers =0
\]
which shows that  $\breve R$ is conformal and harmonic. Indeed, the simple factor
dressing of the right normal of a minimal surface in $\R^4$ is the
right normal of a minimal surface:

\begin{theorem}
Let $f: M \to \R^4$ be a minimal surface with right normal $R$.

Then every simple factor dressing of $R$ is the right normal $R_{p,q}$
of a minimal surface $f_{p,q}$ in the right associated family of $f$.
\end{theorem}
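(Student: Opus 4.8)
The plan is to match the explicit formula for the simple factor dressing $\breve R$ with the formula for the right normal of a member of the right associated family, and then simply read off the parameters $p,q$. By (\ref{eq:right normal dressing}) the dressing is the conjugation $\breve R = (R+\rho)R(R+\rho)\invers$ with $\rho = m\frac{i(1+\mu)}{1-\mu}m\invers$ a \emph{constant} quaternion, whereas by (\ref{eq:right normal associated}) the right normal of $f_{p,q}$ is $R_{p,q} = (p+Rq)\invers R(p+Rq)$. Both expressions are conjugations of $R$, so it suffices to produce constants $p,q\in\H$, not both zero, for which $p+Rq$ agrees with $(R+\rho)\invers$ up to a (possibly $R$--dependent) nonzero real factor, since the conjugation $k\invers R k$ is unchanged when $k$ is scaled by a real number.

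First I would rewrite $\breve R$ in the conjugation form used for the associated family: setting $g=(R+\rho)\invers$, the equality $\breve R=(R+\rho)R(R+\rho)\invers$ becomes $\breve R = g\invers R g$. Next I would apply the quaternionic identity $w\invers = \bar w/|w|^2$ to replace $g=(R+\rho)\invers$ by its quaternionic conjugate $\overline{R+\rho}$, which differs only by the positive real factor $|R+\rho|^2$ and hence leaves the conjugation untouched. Since $R$ is a unit imaginary quaternion we have $\bar R=-R$, so $\overline{R+\rho}=\bar\rho - R = \bar\rho + R(-1)$, which is precisely of the form $p+Rq$ with the constant parameters $p=\bar\rho$ and $q=-1$.

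Combining these observations gives $\breve R = (\bar\rho-R)\invers R(\bar\rho-R)=R_{\bar\rho,-1}$, which by (\ref{eq:right normal associated}) is the right normal of the surface $f_{\bar\rho,-1}=f\bar\rho-f^*$ in the right associated family; as $q=-1\not=0$ we have $(p,q)\not=(0,0)$, so this is a genuine member of the family. The only step requiring any care — and the one I expect to carry the weight of the argument rather than being a routine simplification — is the real-scalar invariance of conjugation together with the passage from $(R+\rho)\invers$ to $\overline{R+\rho}$; once that is in place the parameters $p=\bar\rho$, $q=-1$ drop out immediately and no further computation with $\mu$ or $m$ is needed.
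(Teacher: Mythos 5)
Your proof is correct, and it takes a genuinely different route from the paper's. You start from the dressing formula (\ref{eq:right normal dressing}) and use the purely algebraic identity $w R w\invers = \bar w\invers R \bar w$ (valid because $w\bar w = |w|^2$ is real, so conjugation by $w$ is unchanged when $w$ is replaced by $\bar w$ up to the real factor $|w|^2$), applied to $w = R+\rho$ with $\overline{R+\rho} = \bar\rho - R$; this puts $\breve R$ directly into the normal form $(p+Rq)\invers R (p+Rq)$ of (\ref{eq:right normal associated}) with $p=\bar\rho$, $q=-1$, so $\breve R = R_{\bar\rho,-1}$ is the right normal of $f_{\bar\rho,-1} = f\bar\rho - f^*$. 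The paper instead exhibits the member $h = f_{\hat b, \hat a-1}$ (with $\hat a = mam\invers$, $\hat b = mbm\invers$), computes its right normal $R_h = (\hat a-1)\invers(\rho+R)\invers R(\rho+R)(\hat a-1)$ from (\ref{eq:right normal associated}), and then needs the identity $1+\rho^2 = -\frac{2}{\hat a-1}$ together with the commutation relation (\ref{eq: R commutes with rho expression}) to flip the conjugation and identify $R_h$ with $(\rho+R)R(\rho+R)\invers$. Your argument avoids the commutation lemma and any computation with $\mu$, $a$, $b$ altogether, so it is shorter and more elementary. The two proofs produce different members of the right associated family carrying the same right normal (up to constant right factors, yours is $f_{-\bar\rho,1}$ and the paper's is $f_{\rho,1}$, which differ whenever $\mu\notin\R$); this is no contradiction, since by Remark \ref{rem:minimal surface with sfd not unique} the right normal does not single out a unique surface in the family. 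What the paper's choice buys is that the specific member $f_{\hat b,\hat a-1}$ is reused later (in the corollary characterising when the dressed surface lies in $\R^3$, and in the Darboux transform section); what yours buys is brevity. Two facts you use implicitly and could cite: $R+\rho$, hence $\bar\rho - R = \overline{R+\rho}$, is nowhere vanishing (established before Lemma \ref{lem:parallel sections r}, since $\beta = (R+\rho)m$ never vanishes), so your divisions are legitimate everywhere and no isolated-zero caveat is needed; and minimality of $f_{\bar\rho,-1}$ is guaranteed by the paper's earlier result that every $f_{p,q}$ is a (branched) minimal immersion.
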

\begin{proof}
Let $\mu\in\C\setminus\{0,1\}$ be the pole of the simple factor dressing and put, as usual,
$a= \frac{\mu+\mu\invers}2, b=\frac
{i(\mu\invers-\mu)}2$.  Write $\hat c = m c
m\invers$ for $c\in\C$ where $m\in\H_*$  determines the $d_\mu$--parallel bundle of the simple factor dressing.

Consider the minimal surface 
\[
h = f\hat b + f^*(\hat a-1)
\]
in the associated family of $f$.
      From (\ref{eq:right normal
  associated}) we see that the right normal of $h= f_{\hat b,
  \hat a-1}$ is given by
\begin{equation}
\label{eq: right normal of associated}
R_h = (\hat a-1)\invers(\rho + R)\invers R (\rho+R)(\hat a-1)\,,
\end{equation}
where we used that $\hat a\not=1$ since $\mu\not=1$. Now, $\hat a^2 +
\hat b^2=1$ gives $1 + \rho^2 = 1 + \frac{\hat b^2}{(\hat a-1)^2} =
-\frac 2{\hat a-1}$.  In particular, we have seen already (\ref{eq: R
  commutes with rho expression}) that $(R+\rho)\invers \frac 2{\hat
  a-1} (R+\rho)\invers$ commutes with $R$ so that also
\[
[R, (R+\rho)(\hat a-1)(R+\rho)] =0\,.
\]
Thus, the right normal
\begin{equation}
\label{eq:right normal associated simplified}
R_h =  (\rho +R) R (\rho+R)\invers
\end{equation}
of the associated minimal surface $h$ is (\ref{eq:right normal dressing}) the dressing of $R$.
\end{proof}

\quad

\begin{rem}
\label{rem:minimal surface with sfd not unique}
  Note that the simple factor dressing of the harmonic right normal
  does not single out a canonical minimal surface with this right
  normal: the element $f_{\hat
  b, \hat a-1}$ is one example of such a minimal
  surface but so is    $pf_{\hat b, \hat a-1} + qf^*_{\hat
  b, \hat a-1}$ for any 
  $p, q \in\H_*\,.$
\end{rem}

An analogue theorem holds for the left normal $N$ of a minimal
surface $f: M \to \R^4$:

\begin{theorem}
 The simple factor dressing of $N$ is the left
normal $N^{p,q}$ of an element $f^{p,q}$ in the left associated family of $f$.
\end{theorem}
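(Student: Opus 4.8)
The plan is to deduce the statement from the theorem already proved for the right normal by exploiting the (orientation--reversing) isometry of $\R^4=\H$ given by quaternionic conjugation $v\mapsto\bar v$, which interchanges the left and right normal. First I would check that $\bar f:=\overline{f}$ is again a conformal minimal immersion for the same complex structure on $M$: since $*$ acts only on the form part and $\bar N=-N$, $\bar R=-R$, differentiating gives
\[
*d\bar f = R\, d\bar f = -d\bar f\, N\,,
\]
so, comparing with the defining equations of the left and right normal, the right normal of $\bar f$ is the left normal $N$ of $f$ and its left normal is $R$. As the minimality condition $(dN)'=0$ is unchanged, $\bar f$ is minimal, and the preceding theorem for the right normal applies to $\bar f$.

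Next I would identify the two dressing operations. The associated family of flat connections of the left normal $N$ is, as indicated in the text, the one obtained from \eqref{eq:associated connections} by replacing $R$ with $N$ (both satisfy $*dN=-NdN=dNN$, whence $2*Q=dN$); but this is literally the associated family of the \emph{right} normal of $\bar f$. Consequently the simple factor dressing $\breve N$ attached to a pole $\mu\in\C\setminus\{0,1\}$ and a parameter $m\in\H_*$ is the same harmonic map $M\to S^2$ whether $N$ is regarded as the left normal of $f$ or the right normal of $\bar f$; in particular it is given by \eqref{eq:right normal dressing} with $R$ replaced by $N$. By the right--normal theorem applied to $\bar f$, this $\breve N$ is the right normal of an element $(\bar f)_{p,q}=\bar f\,p+(\bar f)^*q$ of the right associated family of $\bar f$.

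Finally I would conjugate back. Applying $v\mapsto\bar v$ once more and using again that conjugation swaps left and right normals, the left normal of $\overline{(\bar f)_{p,q}}$ is exactly $\breve N$. It remains to recognise $\overline{(\bar f)_{p,q}}$ as a member of the left associated family of $f$. From $d(\bar f)^*=-*d\bar f=d\bar f\,N$ and the anti--automorphism property $\overline{ab}=\bar b\,\bar a$ one computes $d\,\overline{(\bar f)^*}=-N\,df=df^*$, so that $\overline{(\bar f)^*}=f^*$ up to translation; hence
\[
\overline{(\bar f)_{p,q}} = \bar p\, f + \bar q\, f^* = f^{\,\bar p,\bar q}\,,
\]
an element of the left associated family of $f$ whose left normal is $\breve N$. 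With $(p,q)$ relabelled as $(\bar p,\bar q)$ this is precisely the assertion.

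The main obstacle is bookkeeping rather than analysis: because conjugation reverses the order of every product, I must track carefully that it intertwines the left--multiplicative structures attached to the left normal of $f$ with the right--multiplicative ones attached to the right normal of $\bar f$. The one genuinely substantive point to confirm is the convention stated above, namely that the left--normal associated family is set up on the same $\C^2=(\H,I)$ with the same complex structure $I$ as in \eqref{eq:associated connections}, so that the simple factor dressing of $N$ has its conjugating factor on the left and thus matches the form $N^{p,q}=(p-qN)N(p-qN)\invers$; once this is fixed, the identities $\overline{(\bar f)^*}=f^*$ and $\overline{(\bar f)_{p,q}}=f^{\,\bar p,\bar q}$ are immediate and no computation beyond the right--normal case is needed. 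Alternatively, one could avoid conjugation altogether and simply repeat the proof of the right--normal theorem verbatim with every left multiplication interchanged with a right multiplication, using \eqref{eq:right normal dressing} and the left associated family in place of \eqref{eq:right normal associated}.
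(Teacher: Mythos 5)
Your proof is correct, and it takes a genuinely different route from the paper's. The paper offers no separate argument for this statement: it is introduced as the ``analogue'' of the right--normal theorem, the intended proof being that computation repeated with all left and right multiplications interchanged. You instead deduce it from the already--proven right--normal theorem via the anti--automorphism $v\mapsto\bar v$, and your bookkeeping checks out: since conjugation commutes with $*$ and $\bar N=-N$, $\bar R=-R$, the surface $\bar f$ is conformal and minimal with left normal $R$ and right normal $N$; moreover $d\,\overline{(\bar f)^*}=-N\,df=df^*$, so $\overline{(\bar f)^*}=f^*$ up to translation and $\overline{(\bar f)_{p,q}}=f^{\bar p,\bar q}$, and translations do not affect normals. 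The convention you flag as the one substantive point is indeed the paper's: the dressing of the harmonic map $N$ is built from the family (\ref{eq:associated connections}) with $N$ in place of $R$ (compare the proof of Corollary \ref{cor:complete}, where it is attached to the parallel section $Nn+n\frac b{a-1}$ and equals $(N+\rho)N(N+\rho)\invers$ with $\rho=n\frac{i(1+\mu)}{1-\mu}n\invers$, conjugating factor on the left); since the dressing depends only on the harmonic map and the parameters $(\mu,m)$, not on which surface it is a normal of, your identification of the two dressing operations is automatic. What your route buys: nothing needs to be recomputed, and the left/right duality is formalised once and for all. What the direct (mirrored) route buys: it is in fact shorter than the right--normal proof itself, because the left normal $N^{p,q}=(p-qN)N(p-qN)\invers$ conjugates $N$ in the \emph{same} sense as the dressing $\breve N=(N+\rho)N(N+\rho)\invers$, so one can read off $(p,q)=(\rho,-1)$ immediately, with no need for the commutation identity (\ref{eq: R commutes with rho expression}). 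One caveat that makes your reduction more than cosmetic: the mirror argument must not transplant the paper's specific choice $(\hat b,\hat a-1)$ blindly, since the left normal of $f^{\hat b,\hat a-1}$ is conjugation of $N$ by $(\hat a-1)(\rho-N)$, which differs from $\breve N$ in general; your conjugation argument sidesteps this trap automatically, producing the correct element $f^{\bar{\hat b},\,\bar{\hat a}-1}$ of the left associated family.
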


As noted before, if $f: M \to\R^3$ is a minimal surface in $\R^3$ then the left and
right associated families give in general minimal surfaces in $\R^4$. 
 In particular:
\begin{cor}
  Let $f: M \to\R^3$ be a minimal surface with Gauss map $N$ and
  assume that $f$ is not a plane.  Let $f_{\hat b, \hat a-1}$ be the
  minimal surface in the associated family of $f$ whose right normal
  $R_{\hat b, \hat a-1}$ is the simple factor dressing of $N$ given by
  $\mu\in\C\setminus\{0,1\}$, $m\in\H_*$, where $\hat a = m
  \frac{\mu+\mu\invers}2m\invers, \hat b = m i\frac{\mu\invers-\mu}2m\invers$.\\

Then $f_{\hat
  b, \hat a-1}$    is a minimal surface in $\R^3$ if and only if
$\mu\in S^1, \mu \not=1$.\\

\end{cor}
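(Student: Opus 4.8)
The plan is to reduce the statement to a commutation condition on the simple factor parameter, which can then be read off directly in terms of $|\mu|$. The key tool is the criterion, established earlier via the Weierstrass data (where the condition $G_1 = G_2$ singles out representations taking values in $3$--space), that a minimal surface in $\R^4$ lies in $\R^3 = \Im\H$, up to translation, if and only if its left and right normal agree. I would apply this to $f_{\hat b, \hat a-1}$. Its left normal is unchanged, $N_{\hat b, \hat a - 1} = N$, since the right associated family preserves the left normal, whereas its right normal is exactly the simple factor dressing $\breve R$ of $R = N$; by the preceding theorem this is
\[
\breve R = (N+\rho)N(N+\rho)\invers, \qquad \rho = m\,\frac{i(1+\mu)}{1-\mu}\,m\invers, \quad m\in\H_*\,.
\]

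Hence $f_{\hat b, \hat a-1}$ lies in $\R^3$ if and only if $N = (N+\rho)N(N+\rho)\invers$, and multiplying out this identity collapses it to the commutation relation $[\rho, N] = 0$ at every point of $M$. This is where the hypothesis that $f$ is not a plane is used: it forces $N$ to be non-constant, so (as already noted for the right normal, whose stereographic projection is meromorphic) the values of $N$ cannot be confined to a plane, and in particular the image of $N$ contains two linearly independent unit imaginary quaternions. Since the centraliser in $\H$ of a unit imaginary quaternion $n$ is the plane $\Span_\R\{1, n\}$, any $\rho$ commuting with two such independent values lies in the intersection of two distinct such planes, which is $\R$. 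Thus $[\rho, N]\equiv 0$ holds precisely when $\rho\in\R$.

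It remains to recognise $\rho\in\R$ as a condition on $\mu$. Conjugation by the invertible quaternion $m$ fixes $\R$ and maps non-real quaternions to non-real ones, so $\rho\in\R$ if and only if $\frac{i(1+\mu)}{1-\mu}\in\R$; comparing this expression with its complex conjugate and using $\bar i = -i$ shows, after a short manipulation, that it is real exactly when $|\mu|^2 = 1$. This yields the asserted equivalence with $\mu\in S^1$ (recalling $\mu\neq 1$ throughout). As a consistency check on the backward direction, when $|\mu| = 1$ one has $a = \frac{\mu+\mu\invers}2 = \Re\mu\in\R$ and $b = i\frac{\mu\invers-\mu}2 = \Im\mu\in\R$, so that $\hat a = a$, $\hat b = b$ are real and $f_{\hat b, \hat a-1} = fb + f^*(a-1)$ is a real combination of the $\Im\H$--valued maps $f$ and $f^*$, manifestly a surface in $\R^3$ belonging to the classical associated family.

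The step I expect to be the main obstacle is the correct invocation and use of the $N = R$ characterisation of surfaces in $3$--space: one must be sure that ``in $\R^3$'' is interpreted as ``in $\Im\H$ up to translation'' (the fixed identification used throughout) and that the identity $N = \breve R$ is faithfully reduced to $[\rho, N] = 0$. Once this reduction is in place, the non-constancy argument and the elementary computation $\frac{i(1+\mu)}{1-\mu}\in\R \Leftrightarrow |\mu| = 1$ are routine.
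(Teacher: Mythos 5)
Your proof is correct and follows essentially the same route as the paper: reduce ``lies in $\R^3$'' to the coincidence of left and right normals, collapse $N = (N+\rho)N(N+\rho)\invers$ to $[\rho,N]=0$, use non-constancy of $N$ (from $f$ not being a plane) to force $\rho\in\R$, and translate $i\frac{1+\mu}{1-\mu}\in\R$ into $\mu\in S^1$, with the converse handled by observing that $\hat a,\hat b\in\R$ makes $f\hat b + f^*(\hat a -1)$ a real combination of $\Im\H$--valued maps. The only difference is one of detail: you spell out the centraliser argument and the reality computation that the paper's proof leaves terse.
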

\begin{proof}
If $f_{\hat b, \hat a-1}$ is in 3--space then $N_{\hat
  b, \hat a-1} = R_{\hat b, \hat a-1}$, that is by (\ref{eq:right
  normal associated simplified}) 
\[
N = (\rho + N)N(\rho+N)\invers
\]
with $\rho=\frac{\hat b}{\hat a-1}$. But then $[\rho, N]=0$ and, since
$\rho$ is constant, we see that $\rho = m i\frac{1+\mu}{1-\mu}
m\invers\in\R$.  In particular, $i\frac{1+\mu}{1-\mu}\in\R$
which is equivalent to $\mu\in S^1, \mu\not=1$.

Conversely,  for $\mu\in S^1$ we know $\hat a, \hat b\in\R$  so 
that $f_{\hat b, \hat a-1} = f \hat b + f^*(\hat a-1)$ takes values in $\R^3$. 
\end{proof}

We will use Remark \ref{rem:minimal surface with sfd not unique} to
obtain a minimal surface in 3-space with a given simple factor
dressing as its Gauss map. This operation turns out to be the surface
obtained by applying a corresponding simple factor dressing on the
conformal Gauss map.

\subsection{Simple factor dressing of a minimal surface}

The conformal Gauss map of a Willmore surface is harmonic and one can
define a dressing on it \cite{aurea}, \cite{burstall_quintino}. Since
the conformal Gauss map determines a conformal immersion (if the Hopf
field is not zero), this induces a transformation, a dressing, on
Willmore surfaces. (Actually, Burstall and Quintino define more
generally a dressing on constrained Willmore surfaces).

We will again only discuss the special case of simple factor dressing
by choosing the simplest possible dressing matrix.  As before, the
simple factor dressing of the conformal Gauss map $S$ of a Willmore
surface $f: M\to S^4$ is given explicitly by parallel sections of a
connection $d^S_\mu$ of the associated family of flat connections
\cite{willmore_harmonic}: for $\mu\in\C\setminus\{0,1\}$ let $W_\mu$
be a $d^S_\mu$--stable, complex rank 2 bundle over $\tilde M$ with
$W_\mu \oplus W_\mu j =\ttrivial{2}$. For two $d^S_\mu$--parallel
sections $\varphi_1, \varphi_2\in \Gamma(W_\mu)$ with
$\phi=(\varphi_1, \varphi_2)$ regular, define a conformal immersion
$\hat f: \tilde M \to S^4$ by
\[
\hat L = (S + \phi \frac b{a-1} \phi\invers)L\,,
\]
where $\frac b{a-1}$ is the left multiplication by the quaternion
$\frac b{a-1}$ on $\trivial 2$ and as usual $a=\frac{\mu+\mu\invers}2,
b = i\frac{\mu\invers-\mu}2$.  Then the conformal Gauss map $\hat S$
of $\hat f$ is the \emph{simple factor dressing} of $S$.  In
particular, $\hat S$ is harmonic, and $\hat f$ is a Willmore surface.
It is known that $\hat L$, and thus $\hat f$, is independent of the choice of
basis for $W_\mu$, \cite{willmore_harmonic}. We call the Willmore
surface $\hat f$ a \emph{simple factor dressing} of $f$.  Note that
$\frac b{a-1}\in\R$ for $\mu \in S^1$ so that $\hat f = f$ in this
case.

Note that this gives a conformal theory. However, in the case of a
minimal surface $f: M \to\R^4$ we are only interested in the Euclidean
theory, that is, simple factor dressings which are again surfaces in
the same 4--space. Thus, we will restrict to simple factor dressings
 such that $S +
\phi\frac b{a-1} \phi\invers$ stabilises the point at infinity $\infty
= e\H$.  Because $Se = eN$ by (\ref{eq:conformal Gauss map}) where $N$
is the left normal of $f$, we have to restrict to $\phi$ with $\phi
\frac b{a-1} \phi\invers \infty = \infty$. Since $\frac b{a-1}\in\R$
if and only if $\mu\in S^1$ we can assume that $\frac b{a-1}$ is not
real as otherwise $\hat f =f$.

We recall that any $d^S_\mu$--parallel section $\varphi\not= en,
n\in\H$, is given by Proposition \ref{prop: parallel sections} and
(\ref{eq:b/ a-1 in mu}) as $ \varphi = e\alpha + \psi
\beta 
$ where $ \alpha = -(f^* +f\rho)m, \beta =(R+\rho) m $ with $\rho =
m\frac{i(1+\mu)}{1-\mu} m\invers$ and $ m\in\H_*$. In particular, for
regular $\phi=(\varphi_1, \varphi_2)$ with two such
$d^S_\mu$--parallel sections $\varphi_1, \varphi_2$ we write
\[
\phi =\begin{pmatrix} \alpha_1 & \alpha_2\\ 
\beta_1 & \beta_2
\end{pmatrix}, \qquad \phi\invers= \begin{pmatrix} \zeta_1 & \xi_1 \\
  \zeta_2 & \xi_2
\end{pmatrix}
\]
in the basis $(e, \psi)$ where 
\begin{equation}
\label{eq:alpha_i}
\alpha_1 =f_{- m_1 \frac b {a-1},
  -m_1}, \quad \alpha_2 =f_{-m_2 \frac b{a-1}, -m_2}\,, \quad m_1, m_2\in\H_*\,,
\end{equation}
are in the right
associated family of $f$, and 
\begin{equation}
\label{eq:beta_i}
\beta_1 = Rm_1 + m_1\frac b{a-1}, \quad
\beta_2 = Rm_2 + m_2\frac b{a-1} 
\end{equation}
are nowhere
vanishing parallel sections with respect to the connection $d_\mu$ of the
associated family of connections of the right normal $R$. Assume that
$\phi\frac{b}{a-1} \phi\invers$ fixes the point at infinity. Then
there exists $\eta: M \to\H_*$ with
\[
\frac b{a-1}\begin{pmatrix} \zeta_1 \\  \zeta_2
\end{pmatrix}
=\begin{pmatrix} \zeta_1 \\   \zeta_2  
\end{pmatrix}\eta \,.
\]
Since $\beta_1, \beta_2$ are nowhere vanishing so is $\zeta_1$ and the
equation above shows that   $\zeta_2\zeta\invers_1$ commutes with the
complex  
 number $\frac b{a-1}\not\in\R$. Therefore,  we have $\zeta_2 = q\zeta_1$ with $q: M
\to \C_*$ and, because $\phi\phi\invers= \Id$ and $\zeta_1$ is nowhere
vanishing,  we conclude $ \beta_1 + \beta_2 q =0$.
 
Both $\beta_1$ and $ \beta_2$ are
$d_\mu$--parallel, and thus, $q\in \C_*$ is constant. 

If $R$ is constant, that is, if $f$ is the twistor projection of a
holomorphic curve in $\CP^3$, then $f^* = fR + c$ with some $c\in\H$,
so that with (\ref{eq:alpha_i}) and (\ref{eq:beta_i}) we obtain
$\varphi_1 + \varphi_2 q= en$ with $n= -c(m_2 q+m_1)$ and $q\in\C_*$.
In other words, $en$ is a $d^S_\mu$--parallel section of $ W_\mu$ and
we can replace $\phi$ in the definition of $\hat S$ by $\tilde\phi =
(en, \varphi_2)$.

If $R$ is not constant, we use again the explicit forms
(\ref{eq:beta_i}) to obtain from $\beta_1 + \beta_2q=0$, $q\in\C_*$,
that $R(m_1 + m_2q)$ is constant, which implies that $m_1 =-
m_2q$. But then (\ref{eq:alpha_i}) and (\ref{eq:beta_i}) show that
$\varphi_1 = -\varphi_2 q$ which contradicts the assumption that $\phi$
is regular.

Thus, from now on we can restrict to regular endomorphism $\phi$ of
the form
\[
\phi= \begin{pmatrix} n & \alpha\\ 0 & \beta
\end{pmatrix}
\]
in the basis $(e,\psi)$ where $n\in\H_*$ and $\varphi=
e\alpha+\psi\beta$, $\beta$ nowhere vanishing, is a parallel section
of $d^S_\mu$ to obtain all simple factor dressings in 4--space.

\begin{lemma}
\label{lem:simple factor with parameter}
Let  $f: M \to \R^4$ be  a minimal
surface and 
   $\hat f: \tilde M \to \R^4$ a simple factor dressing of $f$ in
   4--space. 

Then $\hat f$ is a minimal surface, and  $\hat f$
 is
preserved when changing the parameters $(\mu, m, n)$ to   $(\mu, mz, nw)$
 or $(\bar\mu\invers, mj, nj)$ for $z, w\in \C_*$. For $\mu\in S^1$
 the simple factor dressing  $\hat f= f$ is trivial. 
 
\end{lemma}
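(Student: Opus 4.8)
The plan is to prove the three assertions in turn, throughout using the normal form
\[
\phi=\begin{pmatrix} n & \alpha\\ 0 & \beta\end{pmatrix}
\]
in the basis $(e,\psi)$ established above, with $\alpha=-(f^*+f\rho)m$, $\beta=(R+\rho)m$ and $\rho=m\frac{i(1+\mu)}{1-\mu}m\invers$ from Proposition~\ref{prop: parallel sections}. I will abbreviate $c=\frac{i(1+\mu)}{1-\mu}\in\C=\Span_\R\{1,i\}$, so that $\rho=mcm\invers$, $\beta=Rm+mc$ and $\alpha=-f^*m-fmc$, and I use throughout that the gauge $T:=S+\phi\frac b{a-1}\phi\invers$ realises the dressing via $\hat L=TL$, while the conformal Gauss map of $\hat f$ is $\hat S=TST\invers$, \cite{willmore_harmonic}.

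For minimality I would first record the criterion that a conformal immersion $g\colon M\to\R^4$ is minimal if and only if its conformal Gauss map fixes the point at infinity $\infty=e\H$: from \eqref{eq:conformal Gauss map} one computes $S_g e=\begin{pmatrix} N-gH\\ -H\end{pmatrix}$, which lies in $e\H$ exactly when the mean curvature function $H$ vanishes. Since $f$ is minimal we have $Se=eN\in e\H$, so $S$ fixes $\infty$. The upper--triangular shape of $\phi$ was arranged precisely so that $\phi\frac b{a-1}\phi\invers$ fixes $\infty$ as well: indeed $\phi\invers e=en\invers$ and $\phi e=en$ give $\phi\frac b{a-1}\phi\invers e=e\,(n\frac b{a-1}n\invers)\in e\H$. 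Hence $T$ fixes $\infty$, and so does $T\invers$; as $\hat S=TST\invers$ and all three factors preserve $e\H$, the map $\hat S$ fixes $\infty$, and the criterion shows that the (in--4--space) dressing $\hat f$ is minimal.

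For the invariance statements I would track the generators of the $d^S_\mu$--stable bundle $W_\mu=\Span_\C\{en,\varphi\}$, using that $\hat L$ depends only on $W_\mu$. Under $(\mu,m,n)\mapsto(\mu,mz,nw)$ with $z,w\in\C_*$: since $z$ commutes with $c$, $\rho$ is unchanged and $\alpha\mapsto\alpha z$, $\beta\mapsto\beta z$, whence $\varphi\mapsto\varphi z$ and $en\mapsto enw$; both generators are merely rescaled by elements of $\C_*$, so $W_\mu$, and therefore $\hat f$, is unchanged. Under $(\mu,m,n)\mapsto(\bar\mu\invers,mj,nj)$ I would use that conjugation by $j$ sends each complex number to its conjugate, together with the identity $\frac{i(1+\bar\mu\invers)}{1-\bar\mu\invers}=\bar c$; a short computation then gives $\alpha\mapsto\alpha j$, $\beta\mapsto\beta j$, so $\phi$ is replaced by $\phi$ post--composed with left multiplication by $j$, while $\frac{b'}{a'-1}=\bar c$. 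Because left multiplication by $j$ conjugates $\bar c$ back to $c$, the new gauge equals $S+\phi c\phi\invers=T$, so $\hat L$ is literally the same bundle.

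Finally, for $\mu\in S^1$ one has $a,b\in\R$, so $\frac b{a-1}\in\R$ is a real scalar commuting with $\phi$; thus $\phi\frac b{a-1}\phi\invers=\frac b{a-1}\Id$ and $T=S+\frac b{a-1}\Id$ maps $L$ to itself because $S$ does, giving $\hat L=L$ and $\hat f=f$. I expect the main obstacle to be the reality step $(\bar\mu\invers,mj,nj)$: it is the only place where the quaternionic units and the complex structure must be disentangled with care, and where one must verify that passing to the pole $\bar\mu\invers$ of the simple factor together with the $j$--conjugate bundle $W_\mu j$ reproduces exactly the same gauge $T$. The remaining steps are either direct consequences of the cited identity $\hat S=TST\invers$ or routine rescalings of the generators of $W_\mu$.
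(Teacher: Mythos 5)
Your proposal is correct in substance, but it takes a genuinely different route from the paper's own proof. The paper works by brute force from the definition $\hat L = (S+\phi\tfrac b{a-1}\phi\invers)L$: it applies the gauge to the section $\psi\beta$ of $L$, reads off the affine coordinate $\hat f = f - (\alpha\tfrac b{a-1}-n\tfrac b{a-1}n\invers\alpha)\tfrac{a-1}2m\invers$, and simplifies to the closed formula (\ref{eq:simple conformal}), which exhibits $\hat f$ as the element $h^{-\tilde\rho/2,\,1/2}$ of the left associated family of $h=f_{\hat b,\hat a-1}$; minimality then follows from the earlier theorems on associated families, and all three invariance statements are read off the explicit formula (using that complex numbers commute with each other and $(mj)\bar z(mj)\invers=mzm\invers$). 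This computation is not incidental: formula (\ref{eq:simple conformal}) becomes the \emph{definition} of the simple factor dressing with parameters $(\mu,m,n)$ and carries the later Goursat, L\'opez--Ros and period results. Your argument is instead structural: minimality via the (correct and attractive) criterion that a conformal immersion into $\R^4$ is minimal if and only if its conformal Gauss map fixes $\infty=e\H$, and invariance via the observation that the operator $\phi\tfrac b{a-1}\phi\invers$ is right--multiplication by $\tfrac b{a-1}$ in any frame of $W_\mu$, hence depends only on the pair $\bigl(W_\mu,\tfrac b{a-1}\bigr)$, which is unchanged under $(\mu,mz,nw)$ and becomes $\bigl(W_\mu j,\overline{\tfrac b{a-1}}\bigr)$ — yielding literally the same operator — under $(\bar\mu\invers,mj,nj)$. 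These invariance arguments are sound (they are closer in spirit to the Remark following the lemma than to the paper's proof), and your approach buys brevity and conceptual clarity at the price of not producing the explicit formula the paper needs downstream.

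The one step you should shore up is minimality. It hinges on the conjugation formula $\hat S=TST\invers$ with $T=S+\phi\tfrac b{a-1}\phi\invers$, which this paper never states: it only asserts that the conformal Gauss map of $\hat f$ is ``the simple factor dressing of $S$'' and points to \cite{willmore_harmonic}, while internally the dressing is defined solely by its action $\hat L=TL$ on the line bundle. Your argument does survive in robust form — it only needs $\hat S$ to be a conjugate of $S$ by \emph{some} invertible word in $S$ and the frame operators $\phi\,c\,\phi\invers$, all of which preserve $e\H$ — and it is consistent with Corollary \ref{cor:complete}, since $TST\invers$ reproduces exactly the normals $\hat N=(N+\tilde\rho)N(N+\tilde\rho)\invers$ on $e\H$ and $\hat R$ on $\hat L$ computed there; but as written the step rests on a formula imported from outside the paper rather than verified. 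Two smaller omissions: you use $T\invers$ and the conjugation without recording why $T$ is pointwise invertible (this needs $\beta=Rm+m\tfrac b{a-1}$ and $N+n\tfrac b{a-1}n\invers$ nowhere vanishing, by the argument preceding Lemma \ref{lem:parallel sections r}); and in the case $\mu\in S^1$ you should note that $(S+\tfrac b{a-1})\psi=\psi(-R+\tfrac b{a-1})$ with $-R+\tfrac b{a-1}$ nowhere zero, so that $TL=L$ rather than merely $TL\subseteq L$. Also, ``$\phi$ post--composed with left multiplication by $j$'' should read pre--composed with $v\mapsto jv$ (equivalently, each column multiplied by $j$ on the right); the computation $\alpha\mapsto\alpha j$, $\beta\mapsto\beta j$ that you actually use is the correct one.
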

\begin{proof}

  Since $\psi\beta\in\Gamma(L)$ is nowhere vanishing,  the line bundle $\hat L$
  is spanned by   
\[
  \hat \varphi = (S + \phi\frac{b}{a-1}\phi\invers)(\varphi-e \alpha)\,.
\]
Since $Se = eN$ and $S\varphi = e N\alpha - \psi R\beta$ by
(\ref{eq:conformal Gauss map}) we conclude
\[
 \hat \varphi = \begin{pmatrix}N\alpha\\ -R\beta
 \end{pmatrix}
 + \begin{pmatrix}-N\alpha\\ 0
 \end{pmatrix}
+ \begin{pmatrix}\alpha\frac b{a-1} \\ \beta\frac b{a-1} 
 \end{pmatrix}+ \begin{pmatrix}-n\frac b{a-1} n\invers \alpha \\  0
 \end{pmatrix} = \begin{pmatrix} \alpha\frac b{a-1}  -n\frac
   b{a-1}n\invers \alpha  \\ -R\beta + \beta\frac b{a-1} 
 \end{pmatrix}  
\]
in the basis $(e,\psi)$. Recalling (\ref{eq:m}), that is,   $-R\beta
+ \beta\frac{b}{a-1} = -2m(a-1)\invers$, we see
that 
\[
\hat f = f - (\alpha\frac b{a-1} -n\frac b{a-1}n\invers\alpha) \frac{a-1}2m\invers\,.
\]
We substitute $\alpha = - fm\frac{b}{a-1} -f^*m$ and use $\hat a^2 + \hat b^2=1$
for $\hat a = mam\invers$, $b = m b m\invers$, and $\tilde \rho = n
\frac{b}{a-1}n\invers,$ to obtain  
\[
\hat f = \frac 12\left(-f(\hat a-1) + f^*\hat b -
  \tilde\rho\left(f\hat b+f^*(\hat a-1)\right)\right)\,.
\]
Thus,
\[
\hat f = -\frac{\tilde \rho}2 h+\frac 12h^*
=h^{-\frac{\tilde\rho}2,\frac 12}
\]
 is an 
element of the left associated family of the minimal surface 
\[
h = f
\hat b + f^*(\hat a-1) = f_{\hat b, (\hat a-1)}
\]
in the right associated family of $f$.  In particular, $\hat f$ is
minimal. From the explicit formula above we see that $\hat f$ is preserved
when changing $(\mu, m, n)$ to $(\mu, mz, nw)$ with $z, w\in\C_*$.
Since $\frac{\bar\mu\invers +
  \bar\mu}2=\bar a, i \frac{\bar\mu - \bar\mu\invers}2 = \bar b$ and
$(mj) \bar z (mj)\invers = mzm\invers$ for all $z\in \C, m\in\H_*$, we
obtain the same simple factor dressing for  the parameters $(\mu, m,
n)$ and $(\bar\mu\invers, mj, nj)$. The final statement follows from
the fact that $\frac b{a-1} \in\R$ for $\mu\in S^1$.

\end{proof}

\begin{rem}
Note that the last statements in the above corollary are special cases
of more general facts for simple factor dressings of Willmore
surfaces: since   the
simple factor dressing is independent of the choice of basis of
$W_\mu$ and the family of flat connections satisfies a reality
condition  \cite{simple_factor_dressing}, the surface is preserved under the given
changes of parameter. The last statement holds for general simple
factor dressings with $\mu\in S^1$.
\end{rem}

In particular, we emphasise again that in contrast to the simple factor dressing of the right
and left normal, the simple factor dressing of the conformal Gauss map associates
a unique minimal surface:  

\begin{definition}
  The \emph{simple factor dressing} of a minimal surface $f: M \to
  \R^4$ with \emph{parameters} $(\mu, m, n)$ is the minimal surface
  $\hat f: \tilde M \to\R^4$
  given by
\begin{equation}
\label{eq:simple conformal}
\hat f = -f\frac{m(a -1)m\invers}2 + f^*\frac{m bm\invers}2 - n\frac{b}{a-1}n\invers\left(f\frac{m b m\invers}2+f^*\frac{m(a-1)m\invers}2\right)
\end{equation}
where $m,n \in S^3$, $\mu\in\C\setminus\{0,1\}$ and $a= \frac{\mu+\mu\invers}2, b=
i\frac{\mu\invers-\mu}2$.

If $m=n=1$ then we refer to $f^\mu = \hat f$ as the \emph{simple factor
dressing} of $f$ with \emph{parameter} $\mu$. \\
\end{definition}

The simple factor dressing with parameter $\mu$ of the rigid motion
$\tilde f = n\invers f m$ of $f$ is given by
\[
\tilde f^\mu = - n\invers fm \frac{a-1}2 + n\invers f^* m \frac b2 -
\frac{b}{a-1}( n\invers fm \frac b2+
n \invers f^*m\frac{a-1}2  ) = n\invers  \hat f m\,,
\]
where $\hat f$ is the simple factor dressing (\ref{eq:simple
  conformal}) of $f$ with parameters $(\mu, m, n)$.  Thus, all simple
factor dressings are build from rigid motions of the simple factor
dressings with parameter $\mu$:

\begin{prop}
\label{prop: SFD given by parameter mu}
Let $\hat f$ be a simple factor dressing of a minimal surface $f: M
\to \R^4$ with parameters $(\mu, m, n)$. Then 
\[
\hat f = \mathcal{R}_{n, m}((\mathcal{R}_{n,m}\invers(f))^\mu)
\]
where $(\mathcal{R}_{n,m}\invers(f))^\mu$ is the simple factor dressing of the
rotated surface $\mathcal{R}_{n,m}\invers(f) = n\invers f m$ with parameter $\mu$.
\end{prop}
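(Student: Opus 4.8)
The plan is to reduce the general parameters $(\mu,m,n)$ to the case $m=n=1$ by pre- and post-composing with the rotation $\mathcal{R}_{n,m}$. Recall from the Willmore subsection that $\mathcal{R}_{m,n}v=mvn\invers$, so $\mathcal{R}_{n,m}v=nvm\invers$ and hence $\mathcal{R}_{n,m}\invers(f)=n\invers f m=:\tilde f$, the rigid motion appearing in the discussion just before the proposition. As a preliminary I would record that the conjugate surface transforms covariantly: since $n,m\in S^3$ are constant and the negative Hodge star $*$ acts only on the $1$--form part of $df$, we have $d\tilde f=n\invers\,(df)\,m$ and $*d\tilde f=n\invers(*df)\,m$. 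Consequently $d(n\invers f^* m)=-n\invers(*df)\,m=-*d\tilde f$, so that, up to an irrelevant constant of integration, $\tilde f^*=n\invers f^* m$ is a conjugate surface of $\tilde f$. This identification is exactly what lets the parameter-$\mu$ dressing of $\tilde f$ be rewritten in terms of $f$ and $f^*$.

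With this in hand, apply the parameter-$\mu$ dressing, i.e.\ the case $m=n=1$ of (\ref{eq:simple conformal}), to $\tilde f$:
\[
\tilde f^\mu = -\tilde f\,\tfrac{a-1}2 + \tilde f^*\,\tfrac b2 - \tfrac{b}{a-1}\Big(\tilde f\,\tfrac b2 + \tilde f^*\,\tfrac{a-1}2\Big)\,.
\]
Substituting $\tilde f=n\invers f m$ and $\tilde f^*=n\invers f^* m$ reproduces precisely the first expression displayed just before the proposition. It then remains to show that $\mathcal{R}_{n,m}(\tilde f^\mu)=n\,\tilde f^\mu\,m\invers$ equals $\hat f$ as given by (\ref{eq:simple conformal}); since $\tilde f^\mu=(\mathcal{R}_{n,m}\invers(f))^\mu$, this equality $n\,\tilde f^\mu\,m\invers=\hat f$, equivalently $\tilde f^\mu=n\invers\hat f m$, is exactly the claim of the proposition.

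The verification is a term-by-term conjugation, and the quaternionic bookkeeping here is the only point requiring care and the most likely source of a sign or ordering slip. The real scalar $\tfrac12$ passes through freely, giving $m\,\tfrac{a-1}2\,m\invers=\tfrac{m(a-1)m\invers}2$ and likewise for $b$; but $\tfrac{b}{a-1}$ is in general a non-real element of $\C=\Span_\R\{1,i\}$ and so does not commute with $n$. To produce the conjugated factor $n\tfrac{b}{a-1}n\invers$ occurring in the third summand of (\ref{eq:simple conformal}) one inserts $n\invers n=1$ and regroups as $n\tfrac{b}{a-1}\,X=\big(n\tfrac{b}{a-1}n\invers\big)\,nX$ before multiplying through by $m\invers$. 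Matching each of the three summands of $n\,\tilde f^\mu\,m\invers$ against the corresponding summand of $\hat f$ in this way completes the verification. Since the key identity $\tilde f^\mu=n\invers\hat f m$ has essentially been exhibited already in the computation preceding the proposition, no genuine difficulty remains beyond this careful tracking of the non-commuting factors.
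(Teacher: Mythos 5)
Your proposal is correct and follows essentially the same route as the paper: the paper proves the proposition by the very computation you describe, namely writing out the parameter-$\mu$ dressing of $\tilde f = n\invers f m$ (with conjugate $\tilde f^* = n\invers f^* m$) and checking term by term, using $n\invers n = 1$ in the third summand, that it equals $n\invers \hat f m$ with $\hat f$ as in (\ref{eq:simple conformal}). Your explicit justification that $n\invers f^* m$ is a conjugate surface of $\tilde f$ is a point the paper leaves implicit, but it is the same argument.
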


Since the associated families of the left and right normals and the
conformal Gauss  maps are related, we also have a correspondence
between the resulting simple factor dressings: 
\begin{cor}
\label{cor:complete}
The simple factor dressing of a minimal immersion $f: M \to \R^4$ with 
parameters $(\mu, m,n)$ is a minimal immersion $\hat f:
\tilde M \to \R^4$. 

The right and left normal of $\hat f$ are given by simple factor
dressings of the right and left normal of $f$ respectively. Moreover,
$\hat f$ is complete if and only if $f$ is complete.
\end{cor}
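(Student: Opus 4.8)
The plan is to read everything off the factorisation already obtained in Lemma~\ref{lem:simple factor with parameter}. There, writing $\hat a = mam\invers$, $\hat b = mbm\invers$, $\rho = m\frac{b}{a-1}m\invers$ and $\tilde\rho = n\frac{b}{a-1}n\invers$, the simple factor dressing is displayed as
\[
\hat f = h^{-\frac{\tilde\rho}2,\,\frac12}, \qquad h = f_{\hat b,\,\hat a-1}= f\hat b + f^*(\hat a-1)\,,
\]
so that $h$ sits in the right associated family of $f$ and $\hat f$ sits in the left associated family of $h$; in particular $\hat f$ is already minimal by that lemma. First I would track the two normals through the two steps. Since the right associated family fixes the left normal and conjugates the right normal, and since $\hat b=\rho(\hat a-1)$ gives $\hat b + R(\hat a-1) = (\rho + R)(\hat a-1)$, equation~(\ref{eq:right normal associated simplified}) yields $N_h = N$ and $R_h = (\rho+R)R(\rho+R)\invers$. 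The left associated family in turn fixes the right normal and conjugates the left normal, so with $p=-\tilde\rho/2$, $q=\frac12$ the formula $N^{p,q}=(p-qN)N(p-qN)\invers$ (applied to the base surface $h$, whose left normal is $N_h=N$) gives
\[
R_{\hat f}=R_h=(\rho+R)R(\rho+R)\invers=\breve R, \qquad N_{\hat f}=(\tilde\rho+N)N(\tilde\rho+N)\invers=\breve N\,,
\]
which are exactly the simple factor dressings of $R$ and $N$ by (\ref{eq:right normal dressing}) and its left analogue.

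For completeness I would compute the induced conformal factor directly. Differentiating $f^{p,q}=ph+qh^*$ and using $dh^*=-Ndh$ to get $df^{p,q}=(p-qN)dh$ for the left step, together with $dh = df\bigl(\hat b + R(\hat a-1)\bigr) = df(\rho+R)(\hat a-1)$ from (\ref{eq:dfmn}) for the right step, I obtain
\[
d\hat f = -\tfrac12(\tilde\rho+N)\,df\,(\rho+R)(\hat a-1)\,,
\]
and hence, since left and right quaternionic multiplication scale the norm multiplicatively,
\[
|d\hat f| = \tfrac12\,|\tilde\rho+N|\,|\rho+R|\,|\hat a-1|\;|df|\,.
\]
Here $|\hat a-1|=|a-1|$ is a positive constant because $\mu\neq1$ and $m\in S^3$.

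The crux is to bound the remaining factor $\tfrac12|\tilde\rho+N|\,|\rho+R|$ away from $0$ and $\infty$. For this I would invoke the non--vanishing statements already in hand: the argument following~(\ref{eq:beta1}) shows $\rho+R$ never vanishes, and the identical computation (forcing $\mu=1$ when $N=-\tilde\rho$) shows $\tilde\rho+N$ never vanishes. Since $N$ and $R$ take values in the \emph{compact} sphere $S^2$, the continuous, nowhere--zero functions $N\mapsto|\tilde\rho+N|$ and $R\mapsto|\rho+R|$ attain positive minima and finite maxima on $S^2$, so the conformal factor lies between two positive constants uniformly on $M$. Thus $|d\hat f|^2$ and $|df|^2$ are uniformly equivalent metrics, and (passing through the covering $\pi:\tilde M\to M$, under which completeness of the pullback metric is equivalent to completeness downstairs) $\hat f$ is complete if and only if $f$ is. The main obstacle is precisely this uniform two--sided bound; everything reduces to the non--vanishing of $\rho+R$ and $\tilde\rho+N$ together with compactness of $S^2$, and as a by--product the same non--vanishing shows $d\hat f\neq0$, confirming that $\hat f$ is an (unbranched) immersion.
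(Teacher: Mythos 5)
Your proof is correct and follows essentially the same route as the paper: the paper likewise arrives at the product formula $d\hat f = -(N+\tilde\rho)\,\tfrac{df}{2}\,(R+\rho)\,m(a-1)m\invers$ (by direct differentiation rather than through the two--step factorisation of Lemma \ref{lem:simple factor with parameter}, but the formula is identical), reads off the left and right normals of $\hat f$ via (\ref{eq: right normal of associated}) and (\ref{eq:right normal associated simplified}), and uses the nowhere--vanishing of $R+\rho$ and $N+\tilde\rho$ to conclude that $\hat f$ is unbranched with $|d\hat f| = r\,|df|$, $r>0$.

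The one place where your write--up genuinely improves on the paper's is the final completeness step. The paper deduces ``$\hat f$ complete iff $f$ complete'' from the bare statement $|d\hat f| = r|df|$ with $r\colon M \to (0,\infty)$, which by itself is not sufficient: a pointwise positive conformal factor that decays along a divergent path can destroy completeness. Your observation that $r = \tfrac12\,|\tilde\rho+N|\,|\rho+R|\,|a-1|$ is pinched between two positive constants \emph{uniformly} --- because $N$ and $R$ take values in the compact sphere $S^2$, on which the continuous functions $v\mapsto|\tilde\rho+v|$ and $v\mapsto|\rho+v|$ are nowhere zero --- supplies exactly the uniform equivalence of the two metrics that the completeness equivalence actually requires, and your remark about passing through the covering $\pi\colon\tilde M\to M$ cleanly handles the fact that $\hat f$ lives on $\tilde M$ while $f$ lives on $M$. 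So your argument is not a different proof, but it closes a small gap the paper leaves implicit.
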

\begin{proof}
The differential of the
  simple factor dressing $\hat f$ with parameters $(\mu, m, n)$ is
  given by
\[
d\hat f = - (N+ n \frac{b}{a-1} n\invers) \frac{df}2 (R  + m \frac{b}{a-1} m\invers) m(a-1)m\invers
\]
where we used that $df^* =- *df$ and $*df = Ndf = -df R$.
In
particular, the right normal 
\[
\hat R =m(a-1)\invers m\invers(R+m\frac
b{a-1} m\invers)\invers R (R  + m \frac{b}{a-1} m\invers)
m(a-1)m\invers  
\]
and left normal
\[
\hat N =  (N+ n \frac{b}{a-1} n\invers) N  (N+ n \frac{b}{a-1} n\invers) \invers
\] 
of $\hat f$ are by (\ref{eq: right normal of associated}) and
(\ref{eq:right normal associated simplified}) the simple factor
dressings of the right and left normal of $f$ which are given by the
pole $\mu$ and the parallel sections $Rm + m\frac {b}{a-1}$
and $Nn + n\frac b{a-1}$ respectively.

Finally, $\hat f$ is branched at $p$ if and only if $(N(p)+ n \frac{b}{a-1} n\invers) =0$ or  $(R(p)  + m \frac{b}{a-1} m\invers)=0$.
We already have seen that $\beta= R  m + m \frac{b}{a-1}$ is nowhere
vanishing if $m\not=0$. A similar argument, as given before Lemma 
\ref{lem:parallel sections r},  gives the
corresponding statement for the expression in $N$,   so
that $f$ and  $\hat f$ have the same conformal class, that is, 
\begin{equation}
\label{eq: dhat f}
|d\hat f| = r |df|
\end{equation}
with $r: M \to (0,\infty)$. In  
particular, the simple factor dressing $\hat f$ of a minimal immersion
has no branch points, and $\hat f$ is complete if and only if $f$ is complete.
\end{proof}

From the explicit form (\ref{eq:simple conformal})  of the simple
factor dressing of a minimal surface we immediately see that the
simple factor dressing commutes with the conjugation:

\begin{cor}
Let $f: M \to\R^4$ be a minimal surface and $f^*$ a conjugate surface
of $f$. Then a conjugate surface of the  simple factor dressing of $f$
is given by a simple factor dressing of the conjugate surface $f^*$.

Moreover, the choice of a different conjugate surface results in a
translation of the simple factor dressing   in 4--space.  
\end{cor}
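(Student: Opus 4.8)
The plan is to read everything off the explicit formula (\ref{eq:simple conformal}): it presents the simple factor dressing $\hat f$ as an $\R$--linear combination of $f$ and $f^*$ in which each summand is obtained from $f$ or $f^*$ by multiplication with \emph{constant} quaternions on the left and on the right. Consequently the only thing I really need to understand is how the passage to a conjugate surface interacts with such algebraic operations, after which both assertions become a matter of substitution.

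First I would record two elementary facts about conjugation. Since $*$ satisfies $*^2=-\id$ on $1$--forms on a Riemann surface, the defining relation $df^*=-*df$ gives $d(f^*)^*=-*df^*=**df=-df$, so that $(f^*)^*=-f$ up to a translation; thus conjugation behaves like a square root of $-\id$ on minimal surfaces modulo constants. Second, because $*$ acts only on the form part of a quaternion--valued $1$--form, one has $*(c\,dg\,d)=c\,(*dg)\,d$ for constant $c,d\in\H$; hence the conjugate of $cgd$ is $cg^*d$ up to a translation. In short, conjugation is $\R$--linear, commutes with constant left and right quaternionic multiplication, and sends $f\mapsto f^*$, $f^*\mapsto -f$ modulo constants.

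With these facts in hand, and noting that $\hat f$ is minimal by Lemma~\ref{lem:simple factor with parameter} so that a conjugate $\hat f^*$ exists, I would compute $\hat f^*$ directly from (\ref{eq:simple conformal}) by conjugating termwise: every occurrence of $f$ is replaced by $f^*$ and every occurrence of $f^*$ by $-f$, while the constant coefficients $\tfrac12 m(a-1)m\invers$, $\tfrac12 mbm\invers$ and $n\tfrac{b}{a-1}n\invers$ are left untouched. I would then write down the simple factor dressing of $f^*$ by substituting $f\mapsto f^*$ and $f^*\mapsto (f^*)^*=-f$ into the same formula with the same parameters $(\mu,m,n)$, and verify that the two expressions coincide. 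The sign in $(f^*)^*=-f$ is precisely what makes the two computations match, and this establishes that a conjugate of $\hat f$ is a simple factor dressing of $f^*$.

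For the second assertion I would use that any two conjugate surfaces of $f$ differ by a constant, $f^*\mapsto f^*+c$ with $c\in\H$. Since $\hat f$ depends on $f^*$ only through the two terms $f^*\tfrac12 mbm\invers$ and $-\,n\tfrac{b}{a-1}n\invers f^*\tfrac12 m(a-1)m\invers$, replacing $f^*$ by $f^*+c$ changes $\hat f$ by the constant $\tfrac12 c\,mbm\invers-\tfrac12 n\tfrac{b}{a-1}n\invers c\,m(a-1)m\invers$, i.e.\ by a translation in $\R^4$. I do not expect a genuine obstacle here: the statement is a bookkeeping consequence of the explicit formula together with the linearity of conjugation. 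The only point demanding care is that conjugation is defined merely up to translation and satisfies $(f^*)^*=-f$ rather than $(f^*)^*=f$, so every identity must be read modulo constants and the signs tracked accordingly; this is the sole place where a careless computation could go astray.
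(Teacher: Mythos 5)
Your proposal is correct and follows exactly the paper's (implicit) argument: the paper derives this corollary by reading it off the explicit formula (\ref{eq:simple conformal}), which is precisely what you do, including the essential sign bookkeeping $(f^*)^*=-f$ modulo constants that makes the termwise substitution close up. Your write-up merely makes explicit the linearity of conjugation and its commutation with constant quaternionic multiplications, which the paper leaves to the reader.
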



\section{Simple factor dressing and the L\'opez-Ros deformation}

Given a minimal surface $f: M \to\R^4$ in 4--space with Weierstrass
data $(g_1, g_2, \omega)$ denote, in analogy to the case of a minimal
surface in $\R^3$, by $f^\sigma$ the \emph{L\'opez-Ros deformation} of
$f$ with complex parameter $\sigma\in\Cc$, that is, the minimal
surface given by the Weierstrass data $(\sigma g_1, \sigma g_2,
\frac{\omega}\sigma)$. Similarly, the \emph{Goursat transformation} is
defined by $\Re(\mathcal{A}(f+\ii f^*))$ where $\mathcal{A}\in \Oo(4,\Cc)$ and $f^*$ is a
conjugate surface of $f$. In this section, we will show that the
L\'opez--Ros deformation is a special case of the simple factor
dressing. Indeed, all simple factor dressings are (special) Goursat
transformations.

\subsection{The L\'opez--Ros deformation in $\R^4$}

Since by Proposition \ref{prop: SFD given by parameter mu} any simple
factor dressing is given in terms of the simple factor dressing with
parameter $\mu$, we will first show that these simple factor dressings
are Goursat transformations.

\begin{theorem}
\label{thm: explit SFD with paramter mu}
Let $f: M \to\R^4$  be a minimal surface in $\R^4$. 
Then the simple factor dressing with
parameter $\mu$ of   $f=f_0 + f_1 i + f_2 j + f_3 k$ is given by 
\begin{equation}
\label{eq: sfd explicit}
f^\mu = 
\begin{pmatrix} f_0 \\ f_1 \\ 
\cos  t\, (f_2\cosh s  - f_3^*\sinh s)  - \sin t \, (f_3\cosh s + f_2^*\sinh
s)\ \\
\sin t \, (f_2\cosh s  - f_3^*\sinh s) + \cos t \, (f_3\cosh s + f_2^*\sinh
s)
\end{pmatrix}
\end{equation}
where $s =- \ln|\mu|,$ $t=\arg\frac{\bar\mu-1}{\bar\mu(1-\mu)}$.  In
particular, $f^\mu$ is a Goursat transform of $f$  whose holomorphic
null curve is  
\begin{equation}
\label{eq: null curve of sfd}
\Phi^\mu = \mathcal{L}^\mu\Phi
\end{equation}
where $\Phi$ is the holomorphic null curve of $f$ and 
\[
\mathcal{L}^\mu = \begin{pmatrix} 1&0&0&0 \\
0&1&0&0 \\
  0 & 0 &\cosh w  & \ii\sinh w\\
0&0 & -\ii\sinh w & \cosh w  
\end{pmatrix} \in \Oo(4, \Cc)\,.
\]
with $w = s+ \ii t$.  
\end{theorem}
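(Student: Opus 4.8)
The plan is to compute the simple factor dressing with parameters $(\mu, 1, 1)$ directly from the explicit formula (\ref{eq:simple conformal}) and then recognise the result as the real part of $\mathcal{L}^\mu\Phi$. Setting $m=n=1$ in (\ref{eq:simple conformal}), I would write
\[
f^\mu = -f\frac{a-1}2 + f^*\frac b2 - \frac{b}{a-1}\Big(f\frac b2 + f^*\frac{a-1}2\Big),
\]
where $a = \frac{\mu+\mu\invers}2$ and $b = i\frac{\mu\invers-\mu}2$ are complex numbers (hence act by left multiplication on the quaternionic values of $f$ and $f^*$). The first step is to understand this left multiplication: since $f = f_0 + f_1 i + f_2 j + f_3 k$ and $a, b \in\C = \Span_\R\{1, I\}$ act via the quaternion $i$, multiplication by $a$ and $b$ mixes the $1, i$ coordinates among themselves and the $j, k$ coordinates among themselves, but does not mix the two pairs. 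I expect the $f_0, f_1$ coordinates to recombine and cancel down to the identity (so that $f^\mu_0 = f_0$, $f^\mu_1 = f_1$), while the nontrivial action survives in the $(f_2, f_3)$ plane.

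The second step is to carry out the coordinate computation in the $(j,k)$ plane. The key identities are $a^2 + b^2 = 1$ (giving $1+\rho^2 = -\frac{2}{a-1}$ with $\rho = \frac{b}{a-1}$, as used already in the proof of Theorem~\ref{thm: associated}) and the relation between left multiplication by $a, b$ and the real hyperbolic/trigonometric parameters $s, t$. Concretely, I would show that $\frac{a-1}{2}$, $\frac b2$ and $\frac{b}{a-1}$ assemble, after grouping the real and $i$-parts, into the hyperbolic rotation with $\cosh s, \sinh s$ coefficients twisted by a planar rotation through angle $t$. Matching the real parts of these expressions against the two bottom rows of (\ref{eq: sfd explicit}) is then a direct algebraic identification, relating $|\mu|$ to $s$ via $s = -\ln|\mu|$ and the argument of $\frac{\bar\mu-1}{\bar\mu(1-\mu)}$ to $t$.

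The third step is to verify the holomorphic null curve statement. Since $\Phi = f + \ii f^*$ and the simple factor dressing acts $\R$-linearly on $(f, f^*)$ by a formula that is manifestly the real part of a $\Cc$-linear map, I would collect the coefficients of $f$ and $f^*$ into the single matrix $\mathcal L^\mu$ acting on $\Phi$, using that conjugation $f \mapsto f^*$ corresponds to multiplication by $\ii$ in the complexification. Checking $\mathcal L^\mu \in \Oo(4,\Cc)$ is the routine verification that the lower-right $2\times 2$ block $\begin{pmatrix}\cosh w & \ii\sinh w\\ -\ii\sinh w & \cosh w\end{pmatrix}$ preserves the bilinear form, which follows from $\cosh^2 w + (\ii\sinh w)(-\ii\sinh w)\cdot(\text{sign}) = \cosh^2 w - \sinh^2 w = 1$ once signs are tracked. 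The main obstacle I anticipate is purely bookkeeping: correctly translating left multiplication by the complex scalars $a, b$ (built from the quaternion $i$) into explicit $2\times 2$ real blocks in each of the $(1,i)$ and $(j,k)$ planes, and then disentangling the real-part extraction so that the hyperbolic parameter $s = -\ln|\mu|$ and the rotation angle $t = \arg\frac{\bar\mu-1}{\bar\mu(1-\mu)}$ emerge cleanly rather than as an unsimplified mess of $\mu, \mu\invers, \bar\mu$.
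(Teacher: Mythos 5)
Your plan reproduces the paper's proof essentially step for step: the paper likewise sets $m=n=1$ in (\ref{eq:simple conformal}), writes $f^\mu = T_1(f)+T_2(f^*)$ with the real--linear maps $T_1(v) = -v\frac{a-1}2 - \frac{b}{a-1}v\frac b2$ and $T_2(v) = v\frac b2 - \frac b{a-1}v\frac{a-1}2$, shows $T_1(v)=v$, $T_2(v)=0$ for $v\in\C=\Span_\R\{1,i\}$ using $a^2+b^2=1$, computes $T_1(v)=ve^{-it}\cosh s$ and $T_2(v)=-vie^{-it}\sinh s$ for $v\in\C j$ from the identities $\frac{|a-1|^2+|b|^2}{2|a-1|}=\cosh s$, $\frac{\Im(b(\bar a-1))}{|a-1|}=\sinh s$, $\frac{a-1}{|a-1|}=-e^{-it}$, and then assembles (\ref{eq: sfd explicit}) and obtains (\ref{eq: null curve of sfd}) by a direct computation. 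Two bookkeeping corrections for the execution: the scalars $a$, $b$ multiply $f$ and $f^*$ on the \emph{right} (only $\frac b{a-1}$ acts on the left), and this distinction is precisely what matters on $\C j$, where $wv=v\bar w$ for $w\in\C$; and in passing to the null curve, $f^*$ corresponds to the complex coefficient $-\ii$ rather than $\ii$, since $\alpha f+\beta f^*=\Re\big((\alpha-\ii\beta)\Phi\big)$ for real $\alpha,\beta$.
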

\begin{proof}
Let  $\mu\in\C\setminus\{0,1\}$ and  put,  as
 usual,  $a= \frac{\mu + \mu\invers}2, b = i
\frac{\mu\invers-\mu}2$. 
The simple factor
dressing of $f$ with parameter $\mu$ is given by
\[
f^\mu = - f\frac{a-1}2 +  f^*\frac b2 -
\frac{b}{a-1}(f\frac b2 +
 f^*\frac{a-1}2) = T_1(f)+ T_2(f^*)\,,
\]
where  
\[
T_1(v) = -v \frac{a-1}2 - \frac b{a-1} v
\frac b2 \quad \text{ and } \quad T_2(v) = v \frac b2 -  \frac b{a-1}
v \frac{a-1}2,  \quad v\in\H\,.
\]
 
Next, we observe  for $
v\in \C =\Span_\R\{1, i\}$ that  
\[
T_1(v) = -\frac 12(v(a-1) + \frac{b}{a-1} v b )= v \quad \text{
  and }  \quad 
T_2(v) = \frac 12(v b - \frac{b}{a-1}v(a-1)) =0 \,,
\]
where we  used that $a^2 + b^2=1$.  To compute $T_1(v), T_2(v)$ for
$v\in \C j =\Span\{j, k\}$ we recall  that
\[
\frac{\bar\mu-1}{\bar\mu(1-\mu)} =e^{s+it}
\]
by definition of $s$ and $t$.
 Thus, with $a-1 = \frac{(1-\mu)^2}{2\mu}, b = i
\frac{1-\mu^2}{2\mu}$ we have
\[
\frac{|a-1|^2 + |b|^2}{2|a-1|} =\cosh s,  \quad
 \frac{\Im(b(\bar a-1))}{|a-1|} = \sinh s, \quad \text{ and } \quad
\frac{a-1}{|a-1|} =- e^{-it}\,.
\]
Therefore, since $w v = v\bar w$ for every $w\in \C$ and $v\in \C j$,
we see
\[
T_1(v) = -\frac 12(v(a-1) + \frac{b}{a-1} v  b
)= ve^{-it}\cosh s 
\]
and
\[ 
T_2(v) = 
\frac 12(v b - \frac{b}{a-1}v(a-1)) =
-v ie^{-it}\sinh s\,.
\]

Decomposing $f = (f_0 + f_1 i) + (f_2 j + f_3 k)$ and $f^* = (f^*_0 +
f^*_1 i) + (f^*_2 j + f^*_3 k)$, the simple factor dressing of $f$
with parameter $\mu$ is then given by
\[
 f^\mu =T_1(f) + T_2(f^*) =  (f_0 + f_1 i) + (f_2  j + f_3k) e^{-it}  \cosh s-  (f_2^* j +
f_3^*k) i e^{-it} \sinh s
\]
which gives (\ref{eq: sfd explicit}).
The final statement follows by a straight forward computation of the
holomorphic null curve.
\end{proof}

By Proposition \ref{prop: SFD given
  by parameter mu}  we immediately see that the general simple factor dressing
is a Goursat transformation, too.

\begin{theorem}
\label{thm:SFD is Goursat}
  The simple factor dressing  of a minimal surface $f: M \to \R^4$  is a
  Goursat transformation of $f$. 
\end{theorem}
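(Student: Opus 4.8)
The plan is to combine the two results immediately preceding the statement. By Proposition~\ref{prop: SFD given by parameter mu}, an arbitrary simple factor dressing $\hat f$ of $f$ with parameters $(\mu,m,n)$ factors as
\[
\hat f = \mathcal{R}_{n,m}\bigl((\mathcal{R}_{n,m}\invers(f))^\mu\bigr),
\]
where $\mathcal{R}_{n,m}\invers(f) = n\invers f m$ and the exponent $\mu$ denotes the simple factor dressing with parameter $\mu$. Since Theorem~\ref{thm: explit SFD with paramter mu} already shows that the middle operation is a Goursat transformation, it suffices to show that the two outer rotations act on the holomorphic null curve by $\Oo(4,\Cc)$ matrices, and then to use that $\Oo(4,\Cc)$ is closed under products.

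First I would record how a real rotation acts on the null curve. Each $\mathcal{R}_{n,m}$ lies in $\SO(4,\R)$ and is a constant real--linear map, hence commutes with the Hodge star $*$, which acts only on the form part. Therefore $d(\mathcal{R}_{n,m}f^*) = \mathcal{R}_{n,m}(df^*) = -\mathcal{R}_{n,m}(*df) = -*d(\mathcal{R}_{n,m}f)$, so $\mathcal{R}_{n,m}f^*$ is a conjugate surface of $\mathcal{R}_{n,m}f$. Extending $\mathcal{R}_{n,m}$ to a $\Cc$--linear map of $\Cc^4$, it commutes with multiplication by $\ii$, so the holomorphic null curve of $\mathcal{R}_{n,m}f$ is $\mathcal{R}_{n,m}\Phi = \mathcal{R}_{n,m}f + \ii\,\mathcal{R}_{n,m}f^*$. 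As $\SO(4,\R)\subset\Oo(4,\Cc)$, a rotation is itself a (real) Goursat transformation.

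Tracking the null curve $\Phi$ of $f$ through the factorisation then finishes the argument: the inner rotation sends $\Phi$ to $\mathcal{R}_{n,m}\invers\Phi$, which is the null curve of $\mathcal{R}_{n,m}\invers(f)$; by Theorem~\ref{thm: explit SFD with paramter mu} the dressing with parameter $\mu$ multiplies this by $\mathcal{L}^\mu\in\Oo(4,\Cc)$, giving $\mathcal{L}^\mu\mathcal{R}_{n,m}\invers\Phi$; and the outer rotation multiplies by $\mathcal{R}_{n,m}$. Hence $\hat f = \Re(\mathcal{A}\Phi)$ with $\mathcal{A} = \mathcal{R}_{n,m}\,\mathcal{L}^\mu\,\mathcal{R}_{n,m}\invers \in \Oo(4,\Cc)$, so $\hat f$ is a Goursat transformation of $f$, recovering the matrix announced in the introduction.

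The only point requiring care --- and the main, if modest, obstacle --- is confirming that the rotation $\mathcal{R}_{n,m}$ really acts on the null curve by the same matrix that acts on $f$; once one checks that the conjugate surface transforms covariantly, as above, everything else is formal and reduces to the closure of $\Oo(4,\Cc)$ under multiplication.
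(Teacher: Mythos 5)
Your proposal is correct and follows essentially the same route as the paper: both factor the dressing via Proposition~\ref{prop: SFD given by parameter mu}, apply Theorem~\ref{thm: explit SFD with paramter mu} (equation~(\ref{eq: null curve of sfd})) to the middle step, and conclude that the null curve transforms by $\mathcal{R}_{n,m}\,\mathcal{L}^\mu\,\mathcal{R}_{n,m}\invers\in\Oo(4,\Cc)$. The only difference is that you spell out the covariance of the conjugate surface under a real rotation, which the paper compresses into the remark that the null curve of $\mathcal{R}_{n,m}\invers(f)$ is $\mathcal{R}_{n,m}\invers\Phi$ ``since $\mathcal{R}_{n,m}$ is real.''
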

\begin{proof}
 
Let $\mu \in\C\setminus\{0,1\}$, then by Proposition \ref{prop: SFD
  given by parameter mu} the simple factor dressing $\hat
f$ with parameters $(\mu, m, n)$ is given by
\[
\hat f = \mathcal{R}_{n, m}((\mathcal{R}_{n,m}\invers(f))^\mu)
\]
where $\mathcal{R}_{n,m}\in \SO(4, \R)$ is the map $v\mapsto nvm\invers$ and
$(\mathcal{R}_{n,m}\invers(f))^\mu$ is the simple factor dressing of
$\tilde f = n\invers f m$ with
parameter $\mu$. If $\Phi$ denotes the  holomorphic null curve of $f$
then the null curve of $\mathcal{R}_{n,m}\invers(f)$ is
 $\mathcal{R}_{n,m}\invers\Phi$ since $\mathcal{R}_{n,m}$ is real.
But then the holomorphic null curve of the simple factor dressing of
$\mathcal{R}_{n,m}\invers(f)$ with parameter $\mu$ is  $\mathcal{L}^\mu\mathcal{R}_{n,m}\invers
\Phi$ by (\ref{eq: null curve of sfd}). Thus,
the holomorphic null curve of the simple factor dressing $\hat f$ with
parameters $(\mu, m, n)$ is given by 
\[
\hat \Phi = \mathcal{R}_{n,m} \mathcal{L}^\mu \mathcal{R}_{n,m}\invers \Phi\,.
\]
But $ \mathcal{R}_{n,m} \mathcal{L}^\mu \mathcal{R}_{n,m}\invers \in \Oo(4,\Cc)$ so
that $\hat f$ is a Goursat transformation of $f$.
\end{proof}

Note that the simple factor dressing is a special case of the Goursat
transformation: its matrix $\mathcal{A}\in \Oo(4, \Cc)$ has $\det
\mathcal{A} =1$ and special behaviour of the eigenspaces.

As before  the L\'opez--Ros deformation
can be given in terms of the surface and its conjugate which
immediately shows that it is a special case of the simple factor
dressing:

\begin{theorem}
\label{thm:lopezros is SFD}
Let $f: M \to \R^4$ be a minimal surface in $\R^4$ with conjugate
surface $f^*$  and let $\sigma=e^{s+\ii t}
\in\Cc_*$. Then the 
 L\'opez-Ros deformation $f^\sigma$  of $f$ is given by  
\begin{equation}
\label{eq:LopezRos}
f^\sigma = \begin{pmatrix} 
f_0 \\
\cos t \, (f_1\cosh s  - f_2^*\sinh s) - \sin t \, (f_2\cosh s + f_1^*\sinh
s)\\
\sin t  \, (f_1\cosh s  - f_2^*\sinh s) + \cos t \, (f_2\cosh s + f_1^*\sinh
s)\\
f_3
\end{pmatrix}
\end{equation}

where $f_l$ and $ f_l^*$ are the coordinates of $f = f_0 + f_1 i + f_2 j + f_3 k$ and  
$f^*= f_0^* + f_1^* i + f_2^* j + f_3^* k$ respectively.

 In particular,
the L\'opez-Ros
deformation $f^\sigma$ of $f$ with parameter $\sigma = e^{s+ \ii t}\in
\Cc_*, |\sigma|\not=1,$ is the simple factor dressing $\hat f$ of $f$
with parameters $(\mu, m, m)$ where $\mu =\frac{1-e^{-(s+i t)}}{1-
  e^{s-it}} \in\C\setminus\{0,1\}$ and $m=\frac{1-i-j-k}2\in S^3$.
\end{theorem}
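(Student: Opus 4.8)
The plan is to prove both claims by exhibiting the relevant transformation as a Goursat transformation and comparing the resulting matrices in $\Oo(4,\Cc)$, in direct analogy with Theorem~\ref{thm:Goursat}. For the explicit formula~(\ref{eq:LopezRos}) I would insert the deformed Weierstrass data $(\sigma g_1, \sigma g_2, \frac\omega\sigma)$ into the Enneper--Weierstrass representation in $\Cc^4$. A direct computation shows that $d\Phi_0$ and $d\Phi_3$ are unchanged while $d\Phi_1^\sigma, d\Phi_2^\sigma$ become explicit $\Cc$--linear combinations of $d\Phi_1, d\Phi_2$, so that the new null curve is $\Phi^\sigma = \mathcal{L}_\sigma\Phi$ with
\[
\mathcal{L}_\sigma = \begin{pmatrix} 1 & 0 & 0 & 0\\ 0 & \frac12(\sigma+\frac1\sigma) & \frac{\ii}2(\sigma-\frac1\sigma) & 0\\ 0 & -\frac{\ii}2(\sigma-\frac1\sigma) & \frac12(\sigma+\frac1\sigma) & 0\\ 0 & 0 & 0 & 1 \end{pmatrix}\in\Oo(4,\Cc)\,.
\]
Writing $\sigma=e^{s+\ii t}$ turns the diagonal block entries into $\cosh(s+\ii t)$ and $\ii\sinh(s+\ii t)$; expanding these into real and imaginary parts and taking $\Re(\mathcal{L}_\sigma\Phi)$ with $\Phi=f+\ii f^*$ gives~(\ref{eq:LopezRos}), the bookkeeping being identical to Theorem~\ref{thm:Goursat} except that the nontrivial block now acts on the $f_1,f_2$ coordinates.

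For the identification with the simple factor dressing I would appeal to Theorem~\ref{thm:SFD is Goursat}, by which $\hat f$ with parameters $(\mu,m,m)$ is the Goursat transform with null curve $\mathcal{R}_{m,m}\mathcal{L}^\mu\mathcal{R}_{m,m}\invers\Phi$. The first task is to evaluate $\mathcal{R}_{m,m}\colon v\mapsto mvm\invers$ for $m=\frac12(1-i-j-k)$: since $\Re m=\frac12$ and $|\Im m|=\frac{\sqrt3}2$, this is the rotation by $\frac{2\pi}3$ about the axis $\frac1{\sqrt3}(-1,-1,-1)$, and a short quaternion multiplication gives $i\mapsto k$, $j\mapsto i$, $k\mapsto j$, i.e. the cyclic permutation $(v_0,v_1,v_2,v_3)\mapsto(v_0,v_2,v_3,v_1)$ fixing the real axis. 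Conjugating $\mathcal{L}^\mu$ by this permutation carries its nontrivial $2,3$--block onto the $1,2$--coordinates, producing a matrix of exactly the shape of $\mathcal{L}_\sigma$ but with $\cosh w,\ii\sinh w$ in place of $\cosh(s+\ii t),\ii\sinh(s+\ii t)$, where $w=s'+\ii t'$ with $s'=-\ln|\mu|$ and $t'=\arg\frac{\bar\mu-1}{\bar\mu(1-\mu)}$ as in Theorem~\ref{thm: explit SFD with paramter mu}.

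The two Goursat matrices thus coincide exactly when $e^w=\sigma$, and since $e^w=\frac{\bar\mu-1}{\bar\mu(1-\mu)}$ by the definition of $s',t'$, everything reduces to the single rational identity
\[
\frac{\bar\mu-1}{\bar\mu(1-\mu)}=\sigma\,,\qquad \mu=\frac{1-\sigma\invers}{1-\bar\sigma}\,,
\]
understood under the standard identification of the spectral plane $\C$ with $\Cc$, so that $\sigma\invers=e^{-(s+\ii t)}$ and $\bar\sigma=e^{s-\ii t}$. I expect this to be the only genuinely computational step and hence the main obstacle. Substituting $\mu$ and $\bar\mu=\frac{1-\bar\sigma\invers}{1-\sigma}$, expressing $\bar\mu-1$ and $\bar\mu(1-\mu)$ over their common denominators and cancelling the factors $(\bar\sigma-1)$ and $(1-\sigma\bar\sigma)$ should collapse the left--hand side to $\sigma$; with $m=n$ the choice $m=\frac12(1-i-j-k)$ is precisely what places the boost in the $f_1,f_2$--plane, matching~(\ref{eq:LopezRos}). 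All remaining steps are either direct quaternionic computations or appeals to the Goursat descriptions already established in Theorems~\ref{thm: explit SFD with paramter mu} and~\ref{thm:SFD is Goursat}.
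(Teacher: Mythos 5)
Your proposal is correct, and its skeleton is the same as the paper's: both arguments run through Proposition \ref{prop: SFD given by parameter mu} and Theorem \ref{thm: explit SFD with paramter mu}, use the same rotation $\mathcal{R}_{m,m}$ (which you correctly identify as the cyclic permutation $i\mapsto k$, $j\mapsto i$, $k\mapsto j$, fixing the real axis), and hinge on the same parameter identification. The difference is the level at which the comparison is made. The paper works entirely with the surfaces: it computes $\tilde f = m\invers f m = (f_0+f_3i)+(f_1j+f_2k)$, plugs this into the explicit quaternionic formula (\ref{eq: sfd explicit}) for $\tilde f^\mu$, rotates back by $\mathcal{R}_{m,m}$, and matches the result against (\ref{eq:LopezRos}); crucially, it never verifies explicitly that $\frac{\bar\mu-1}{\bar\mu(1-\mu)}=\sigma$ for the stated $\mu$ — this is absorbed into the definitions of $s$ and $t$ in Theorem \ref{thm: explit SFD with paramter mu} and left as an implicit well-definedness check. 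You instead work at the level of holomorphic null curves: you derive $\mathcal{L}_\sigma\in\Oo(4,\Cc)$ from the deformed Weierstrass data (the reverse direction of the computation in Theorem \ref{thm:Goursat}), conjugate $\mathcal{L}^\mu$ by the permutation matrix via Theorem \ref{thm:SFD is Goursat}, and reduce the whole identification to the single rational identity $\frac{\bar\mu-1}{\bar\mu(1-\mu)}=\sigma$, which indeed collapses after cancelling $(\bar\sigma-1)$ and $(1-\sigma\bar\sigma)$ exactly as you predict. What your route buys is that the one genuinely computational step the paper glosses over is isolated and checked; what the paper's route buys is that formula (\ref{eq:LopezRos}) and the dressing are matched directly as quaternion-valued maps, with no need to pass back and forth through $\Cc^4$. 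One small point worth adding to your write-up: you should note that $|\sigma|\not=1$ is what guarantees $\mu\in\C\setminus\{0,1\}$ (the denominators $1-\bar\sigma$ and $1-\sigma\bar\sigma$ are nonzero, and $\mu=1$ would force $|\sigma|=1$), so that the simple factor dressing with pole $\mu$ is actually defined.
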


From this we see again that the L\'opez--Ros deformation is a trivial rotation in the
$ij$--plane if $\sigma\in S^1\subset \Cc$.  Moreover, if $\sigma
\in\R$ then $\mu= -\frac 1\sigma$.
   
\begin{proof}
Let $f^\sigma$ be the Lopez--Ros deformation of $f$ with parameter
$\sigma =e^{s+\ii
    t}\in\Cc_*, |\sigma|\not=1$.
The first equation (\ref{eq:LopezRos}) is an analogue computation as
in the proof of Theorem \ref{thm:Goursat}. 

By assumption $|\sigma| \not=1$ so
  that  $\mu =
  \frac{1-e^{-(s+it)}}{1-e^{s-it}} \in\C\setminus\{0,1\}$ is well
  defined. Put,  as
 usual,  $a= \frac{\mu + \mu\invers}2, b = i
\frac{\mu\invers-\mu}2$. 

 By
Proposition~\ref{prop: SFD given by parameter mu}, the simple factor
dressing with parameters $(\mu, m, m)$ is  $\hat f=
\mathcal{R}_{m,m}((\mathcal{R}_{m,m}\invers(f))^\mu)$ where $(\mathcal{R}_{m,m}\invers(f))^\mu$ is
the simple factor dressing of $\tilde f = \mathcal{R}_{m,m}\invers(f) = m\invers
f m$ with parameter $\mu$.  Decomposing $f = (f_0 + f_1 i) + (f_2 j +
f_3 k)$ we have $ \tilde f= (f_0 + f_3 i)+ (f_1 j + f_2 k) $ for
$m=\frac{1-i-j-k}2$, and the simple factor dressing of $\tilde f$ with
parameter $\mu$ is given by Theorem \ref{thm: explit SFD with paramter
  mu} as
\[
\tilde f^\mu = (f_0 + f_3 i) + (f_1 j + f_2k) e^{-it}  \cosh s-  (f_1^* j +
f_2^*k) i e^{-it} \sinh s\,.
\]
Therefore, the simple factor dressing of
$f$ with parameters $(\mu, m, m)$ 
\[
\hat f = \mathcal{R}_{m, m} \tilde f^\mu = f_0 + f_3 k + (f_1i  + f_2j)  e^{-kt}
\cosh s-  (f_1^*i  +
f_2^*j) k e^{-ik} \sinh s =f^\sigma
\]
is indeed by (\ref{eq:LopezRos})  the L\'opez--Ros deformation of $f$.

\end{proof}

\begin{rem} In particular, with Proposition \ref{prop: SFD given by
    parameter mu} we see that all simple factor dressings of a minimal
  surface are given, up to rotations, by the L\'opez--Ros deformation
  applied to a rigid motion of $f$. 
\end{rem}

If  $f: M \to \R^4$ is a periodic minimal surface then the
periods of the simple factor dressing $f^\mu$ with parameter $\mu$ are
immediately given by  the explicit formulation (\ref{eq: sfd explicit}):

\begin{cor}
\label{cor:periods}
If $f: M \to \R^4$ is a periodic minimal surface with translational
periods $\gamma^*f = f + \tau$ for $\gamma\in\pi_1(M)$,
$\tau=(\tau_0, \tau_1, \tau_2, \tau_3)$, then the simple factor
dressing $f^\mu$ of $f$ with parameter $\mu$ is periodic with
$\gamma^* f^\mu = f^\mu + \tau^\mu$ where
\[
\tau^\mu = \begin{pmatrix} \tau_0 \\ \tau_1 \\ 
\cos t \, (\tau_2 \cosh s - \tau_3^* \sinh s) - \sin t \, (\tau_3 \cosh s +
\tau_2^* \sinh s) \\
\sin t\, ( \tau_2\cosh s - \tau_3^* \sinh s) + \cos t\, (\tau_3 \cosh s +
\tau_2^* \sinh s)
\end{pmatrix}\,.
\]
Here $\tau^*=(\tau_0^*, \tau_1^*, \tau_2^*, \tau_3^*)$ denote the periods of a conjugate surface $f^*$ of $f$,
that is, $\gamma^* f^* = f^* + \tau^*$,  and
$s =- \ln|\mu|, t =\arg\frac{\bar \mu-1}{\bar \mu(1-\mu)}$. \\

In particular, $ f^\mu$ is closed along $\gamma\in\pi_1(M)$, that is,
$\gamma^*f^\mu =f^\mu$, if and only if
\[
\tau_0= \tau_1 = 0 \quad \text{ and } \quad \begin{pmatrix} \tau_2\\ \tau_3
\end{pmatrix}
 = \begin{pmatrix} \tau_3^* \\ -\tau_2^*
\end{pmatrix} \tanh s\,.
\]
\end{cor}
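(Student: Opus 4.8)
The plan is to read off everything from the explicit description of the simple factor dressing with parameter $\mu$ provided by Theorem \ref{thm: explit SFD with paramter mu}. The crucial observation is that formula (\ref{eq: sfd explicit}) expresses each coordinate of $f^\mu$ as a fixed real--linear combination of the coordinate functions $f_l$ of $f$ and $f_l^*$ of the conjugate surface $f^*$: the constants $s=-\ln|\mu|$ and $t=\arg\frac{\bar\mu-1}{\bar\mu(1-\mu)}$ depend only on $\mu$, not on the point of $M$. Since a deck transformation $\gamma$ acts on coordinate functions by $\gamma^*f_l = f_l+\tau_l$ and, by the defining relation $\gamma^*f^* = f^*+\tau^*$ of the periods of the conjugate surface, $\gamma^*f_l^* = f_l^*+\tau_l^*$, applying $\gamma^*$ to (\ref{eq: sfd explicit}) amounts to substituting $\tau_l$ for $f_l$ and $\tau_l^*$ for $f_l^*$ in the linear part while leaving the inhomogeneous term $f^\mu$ untouched.

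First I would apply $\gamma^*$ to (\ref{eq: sfd explicit}) coordinate by coordinate. The first two entries give $\gamma^*f^\mu_0 = f^\mu_0+\tau_0$ and $\gamma^*f^\mu_1 = f^\mu_1+\tau_1$, while the third and fourth entries pick up, in addition to $f^\mu_2$ and $f^\mu_3$, exactly the expressions obtained from (\ref{eq: sfd explicit}) upon replacing $(f_2,f_3,f_2^*,f_3^*)$ by $(\tau_2,\tau_3,\tau_2^*,\tau_3^*)$. This yields $\gamma^*f^\mu = f^\mu+\tau^\mu$ with $\tau^\mu$ precisely as stated, proving the first assertion.

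For the closing condition I would set $\tau^\mu=0$. Vanishing of the first two coordinates forces $\tau_0=\tau_1=0$ immediately. The last two coordinates of $\tau^\mu$ are the image of the vector $\bigl(\tau_2\cosh s-\tau_3^*\sinh s,\ \tau_3\cosh s+\tau_2^*\sinh s\bigr)$ under the planar rotation by the angle $t$. Since this rotation is invertible, $\tau^\mu_2=\tau^\mu_3=0$ is equivalent to $\tau_2\cosh s-\tau_3^*\sinh s=0$ together with $\tau_3\cosh s+\tau_2^*\sinh s=0$; dividing by $\cosh s\neq 0$ gives precisely $\tau_2=\tau_3^*\tanh s$ and $\tau_3=-\tau_2^*\tanh s$, that is, $\begin{pmatrix}\tau_2\\\tau_3\end{pmatrix}=\begin{pmatrix}\tau_3^*\\-\tau_2^*\end{pmatrix}\tanh s$.

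There is no genuine obstacle here: once Theorem \ref{thm: explit SFD with paramter mu} is in hand, the statement is a direct consequence of the real--linearity of the dressing formula in the coordinates of $f$ and $f^*$. The only point requiring a moment's care is recognising that the angle--$t$ rotation occupying the last two slots is invertible, so that it may be stripped off when solving $\tau^\mu=0$; this is exactly what permits the closing condition to be recorded without reference to $t$.
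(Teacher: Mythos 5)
Your proof is correct and is essentially the paper's own argument: the paper states the corollary as an immediate consequence of the explicit formula (\ref{eq: sfd explicit}) in Theorem \ref{thm: explit SFD with paramter mu}, relying on exactly the point you make, namely that $f^\mu$ is a point-independent real--linear expression in the coordinates of $f$ and $f^*$, so $\gamma^*$ simply substitutes the periods into that expression. Your explicit treatment of the closing condition (stripping off the invertible rotation by $t$ and dividing by $\cosh s$) fills in the routine algebra the paper leaves implicit.
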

From this, we can immediately compute the periods of all simple factor
dressings by Proposition \ref{prop: SFD given by parameter mu}.

In particular, assume that $f: M \to \R^4$ is single--valued on $M$,
and  that there exist $ m, n\in \H_*$ such that all periods of the
conjugate surface $f^*$ can be rotated simultaneously into the $1,i$--plane,
that is,
\[
\mathcal{R}_{n, m}\invers \tau^*_\gamma \in \Span\{1, i\}
\]
for all $\gamma\in\pi_1(M)$ where $\gamma^*f^* = f^* + \tau^*_\gamma$.
Then all minimal surfaces in the complex 1--parameter family given by
the simple factor dressings with parameters $(\mu, m, n)$,
$\mu\in\C\setminus\{0,1\}$, are single--valued on $M$.
\\

Finally, since a simple factor dressing of a finite total
curvature minimal surface is given by a Goursat transformation, it has again 
finite total curvature: 

\begin{theorem}
If $f: M \to \R^4$ has finite total curvature and if the simple factor
dressing $\hat f: M \to \R^4$ of $f$ with parameters $(\mu, m, n)$ is
single--valued on $M$ then $\hat f$ has finite total curvature.
\end{theorem}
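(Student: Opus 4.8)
The plan is to combine the Chern--Osserman characterisation of finite total curvature (Theorem~\ref{thm:FTC}) with the fact, established in Theorem~\ref{thm:SFD is Goursat}, that $\hat f$ is a Goursat transformation whose holomorphic null curve $\hat\Phi = \mathcal{A}\Phi$ is obtained from that of $f$ by the \emph{constant} orthogonal matrix $\mathcal{A} = \mathcal{R}_{n,m}\mathcal{L}^\mu\mathcal{R}_{n,m}\invers\in\Oo(4,\Cc)$. The key point driving the whole argument is that multiplication by a constant matrix preserves the meromorphic behaviour of $d\Phi$ at the punctures, so that the defining condition in Theorem~\ref{thm:FTC} transfers verbatim from $f$ to $\hat f$.

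First I would unpack the hypothesis. A finite total curvature minimal surface is complete, so Theorem~\ref{thm:FTC} applies to $f$ and tells us that $M$ is conformally equivalent to a compact Riemann surface $\bar M$ punctured at finitely many points $p_1,\dots,p_r$, with $d\Phi$ extending meromorphically across each $p_i$. Since $f$ is single--valued on $M$, the form $d\Phi = df - \ii *df$ is well--defined on $M$, so this extension is genuinely a statement on $\bar M$.

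Next I would transfer everything to $\hat f$. Differentiating $\hat\Phi = \mathcal{A}\Phi$ gives $d\hat\Phi = \mathcal{A}\,d\Phi$, and since $\mathcal{A}$ is constant, $d\hat\Phi$ is again well--defined on $M$ and extends meromorphically into each puncture $p_i$, precisely because $d\Phi$ does. By Corollary~\ref{cor:complete} the dressing $\hat f$ is an unbranched immersion and is complete (completeness being equivalent for $f$ and $\hat f$), and by hypothesis $\hat f$ is single--valued on $M$; moreover the simple factor dressing preserves the conformal class, so the same compactification $\bar M$ applies. Thus $\hat f: M \to \R^4$ is a complete minimal immersion on $\bar M\setminus\{p_1,\dots,p_r\}$ whose holomorphic null curve extends meromorphically into the punctures, and Theorem~\ref{thm:FTC} applied to $\hat f$ yields finite total curvature.

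The argument is a direct chaining of two earlier results, so no serious analytic difficulty arises; the only point demanding care is the function of the single--valuedness hypothesis. Since $d\hat\Phi = \mathcal{A}\,d\Phi$ is automatically well--defined on $M$, what the hypothesis actually supplies is that integrating $d\hat\Phi$ produces no nontrivial translational periods, so that $\hat f = \Re(\hat\Phi)$ descends from $\tilde M$ to an honest surface on $M$. Without it, $\hat f$ would live only on the universal cover, and finite total curvature---being a property of the surface on $M$---would not even be defined. With the hypothesis in place the application of Theorem~\ref{thm:FTC} is unobstructed.
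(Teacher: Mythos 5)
Your proposal is correct and follows essentially the same route as the paper's own proof: invoke the Chern--Osserman characterisation (Theorem~\ref{thm:FTC}) for $f$, use that the simple factor dressing is a Goursat transformation so that $\hat\Phi = \mathcal{A}\Phi$ with $\mathcal{A}\in\Oo(4,\Cc)$ constant, and conclude that $d\hat\Phi = \mathcal{A}\,d\Phi$ extends meromorphically into the punctures. Your additional remarks---explicitly securing completeness of $\hat f$ via Corollary~\ref{cor:complete} and explaining the role of the single--valuedness hypothesis---are sound elaborations of steps the paper leaves implicit, not a different argument.
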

\begin{proof}
  Since $f$ has finite total curvature, we can assume by Theorem
  \ref{thm:FTC} that $M =\bar M \setminus\{p_1, \ldots, p_r\}$ where
  $\bar M$ is a Riemann surface punctured at finitely many
  $p_i$. Moreover, if $\Phi$ denotes the holomorphic null curve of $f$
  then we can assume that $d\Phi$ extends meromorphically into the
  $p_i$. Since the simple factor dressing is a Goursat transformation,
  the holomorphic null curve $\hat \Phi$ of $\hat f$ is given by $\hat
  \Phi = \mathcal{A} \Phi$ with $\mathcal{A}\in \Oo(4,\Cc)$. Thus, $d\hat\Phi$ extends
  meromorphically into the punctures $p_i$.
\end{proof}

\subsection{Simple factor dressing in $\R^3$}

Given a minimal surface in $\R^3$ we now discuss when the simple
factor dressing $\hat f$ is a minimal surface in 3--space. Considering
a surface in $\R^3=\Im\H$ as a surface in $\H$ with vanishing real
part, we immediately see with Theorem \ref{thm: explit SFD with
  paramter mu}  that   a simple
factor dressing of $f$ with parameter $\mu\in\C\setminus\{0,1\}$ gives a
minimal surface 
\begin{equation}
\label{eq:sfd in r3 quaternion}
f^\mu = \begin{pmatrix} f_1 \\ 
\cos t \, (f_2\cosh s  - f_3^*\sinh s) - \sin t\,  (f_3\cosh s + f_2^*\sinh
s) \\
\sin t \, (f_2\cosh s  - f_3^*\sinh s) + \cos t\,  (f_3\cosh s + f_2^*\sinh
s)
\end{pmatrix}
\end{equation}
in $\R^3$ where $s=-\ln|\mu|,
t=\arg\frac{\bar\mu-1}{\bar\mu(1-\mu)}$. Moreover, $f_j$ and $f_j^*$
are the coordinates of $f=if_1 + jf_2+kf_3$ and $f^* = if_1^* + jf_2^*
+ kf_3^*$ respectively.  Since any simple factor dressing $\hat f$ of
$f$ with parameters $(\mu, m, n)$ is given by a simple factor dressing
with parameter $\mu$ and an operation of $\mathcal{R}_{n,m}\in \SO(4, \R)$, we see
from (\ref{eq: sfd explicit})
that $\hat f$ is in 3--space if $\mathcal{R}_{n,m}$ stabilises $\C =\Span_\R\{1
, i\}$. In particular:

\begin{theorem} 
Let $f: M \to \R^3$ be minimal. 
The simple factor dressing  $\hat f$ with parameters $(\mu, m,
n)$ with $m=n\lambda$, $\lambda\in\C_*$,  is a minimal surface $\hat
f:M \to\R^3$  in 3--space.
\end{theorem}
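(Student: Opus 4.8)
The plan is to exploit the parameter freedom recorded in Lemma~\ref{lem:simple factor with parameter} to reduce to the case $m=n$, and then to observe that both conjugation by a unit quaternion and the parameter-$\mu$ simple factor dressing preserve the imaginary quaternions $\Im\H=\R^3$. Since $m=n\lambda$ with $\lambda\in\C_*$, Lemma~\ref{lem:simple factor with parameter} allows me to change the parameters $(\mu,m,n)$ to $(\mu,m\lambda\invers,n)=(\mu,n,n)$ without altering $\hat f$, using that $\lambda\invers\in\C_*$ and $n\lambda\lambda\invers=n$. Hence I may assume $m=n$, and by Proposition~\ref{prop: SFD given by parameter mu},
\[
\hat f=\mathcal{R}_{n,n}\bigl((\mathcal{R}_{n,n}\invers(f))^\mu\bigr)=n\,\bigl((n\invers f n)^\mu\bigr)\,n\invers .
\]

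Next I would track the imaginary part through the three operations. As $f$ takes values in $\R^3=\Im\H$ and $v\mapsto n\invers v n$ is an $\SO(3)$ rotation of $\Im\H$, the rotated surface $g:=n\invers f n$ is again $\Im\H$-valued; its conjugate surface is $g^*=n\invers f^* n$, which also lies in $\Im\H$ because $f^*$ does. Thus equation~(\ref{eq:sfd in r3 quaternion}) applies verbatim to $g$ and shows that the parameter-$\mu$ dressing $g^\mu$ is again $\Im\H$-valued: in the coordinates of (\ref{eq: sfd explicit}) the $1$-component of $g^\mu$ equals that of $g$, namely $0$. Conjugating back by $n\invers$ preserves $\Im\H$, so $\hat f=n\,g^\mu\,n\invers\in\Im\H=\R^3$. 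Since $\hat f$ is minimal by Corollary~\ref{cor:complete}, this gives the assertion.

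The computations are routine, so the main point to get right is conceptual: the relevant invariant subspace here is $\Im\H$, \emph{not} $\C=\Span_\R\{1,i\}$. The rotation $\mathcal{R}_{n,n}$ does not stabilise $\C$, but after the reduction to $m=n$ it is a conjugation and therefore stabilises $\Im\H$, which is exactly compatible with the $\Im\H$-preserving behaviour of the dressing $(\cdot)^\mu$. Equivalently, one may bypass the reduction and compute directly, using that $(g\lambda)^\mu=g^\mu\lambda$ for $\lambda\in\C$ — which holds because the scalars $\frac{a-1}{2}$, $\frac{b}{2}$, $\frac{b}{a-1}$ all lie in the commutative field $\C$ and hence commute with $\lambda$ — together with $\mathcal{R}_{n,m}\invers(f)=(n\invers f n)\lambda$; this commutation is precisely the mechanism underlying the parameter freedom of Lemma~\ref{lem:simple factor with parameter}, and is the one identity that must be verified with care.
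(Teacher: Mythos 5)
Your proof is correct, and it actually accomplishes more than the paper's own argument, which is not a displayed proof but the paragraph immediately preceding the theorem. Both arguments rest on the same two ingredients — the decomposition $\hat f = \mathcal{R}_{n,m}\bigl((\mathcal{R}_{n,m}\invers (f))^\mu\bigr)$ of Proposition \ref{prop: SFD given by parameter mu} and the explicit formula (\ref{eq: sfd explicit})/(\ref{eq:sfd in r3 quaternion}) showing that the parameter-$\mu$ dressing fixes the $1$-- and $i$--components — but they diverge at the rotation step. The paper asserts that $\hat f$ lies in $3$--space if $\mathcal{R}_{n,m}$ stabilises $\C = \Span_\R\{1,i\}$ and then deduces the theorem ``in particular''; however, $m = n\lambda$ does not imply that hypothesis: already for $\lambda = 1$ and $n = \frac{1+j}{\sqrt 2}$ one has $n i n\invers = -k \notin \C$, so even $\mathcal{R}_{n,n}$ fails to stabilise $\C$, although the theorem covers this case. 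Your argument closes exactly this gap: the reduction to $m=n$ via the parameter freedom of Lemma \ref{lem:simple factor with parameter} turns $\mathcal{R}_{n,m}$ into a conjugation, which stabilises $\Im\H$, and $\Im\H$ — as you rightly stress — is the invariant subspace that matters, since the $\mu$--dressing of an $\Im\H$--valued surface (with its $\Im\H$--valued conjugate $n\invers f^* n$) is again $\Im\H$--valued. Your auxiliary identity $(g\lambda)^\mu = g^\mu\lambda$ for $\lambda\in\C$ is also verified correctly (the conjugate surface of $g\lambda$ is $g^*\lambda$, and $a$, $b$, $\frac{b}{a-1}$ all commute with $\lambda$), and it is precisely the commutation underlying Lemma \ref{lem:simple factor with parameter}, so the two variants you sketch are genuinely equivalent. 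In short: your proof has the same skeleton as the paper's, but your handling of the rotation step — tracking $\Im\H$ rather than $\C$ — is what actually makes the paper's ``in particular'' into a valid deduction.
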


As before, we also obtain the periods of the simple factor dressing:
 
\begin{cor}
\label{cor: periods in 3 space}
If $f: M \to \R^3$ is a periodic minimal surface with  $\gamma^*f = f +
\tau$ for $\gamma\in\pi_1(M)$, $\tau=(\tau_1, \tau_2, \tau_3)$,  then
the simple factor dressing $f^\mu$  of $f$ with parameter $\mu$ is
periodic with  $\gamma^* f^\mu = f^\mu + \tau^\mu$ where  
\[
\tau^\mu = \begin{pmatrix} \tau_1 \\  
\cos t \,(\tau_2 \cosh s - \tau_3^* \sinh s) - \sin t \,(\tau_3 \cosh s +
\tau_2^* \sinh s) \\
\sin t\, ( \tau_2\cosh s - \tau_3^* \sinh s) + \cos t\, (\tau_3 \cosh s +
\tau_2^* \sinh s)
\end{pmatrix}\,.
\]

Here $\tau^*=(\tau_1^*, \tau_2^*, \tau_3^*)$ denote the periods of a conjugate surface $f^*$ of $f$,
that is, $\gamma^* f^* = f^* + \tau^*$,  and
$s =- \ln|\mu|, t =\arg\frac{\bar \mu-1}{\bar \mu(1-\mu)}$. \\

In particular,  $ f^\mu$ is closed along $\gamma$   if and
only if 
$
\tau_1 =0$ and $\begin{pmatrix} \tau_2 \\ \tau_3
\end{pmatrix}
 = \begin{pmatrix} \tau_3^* \\ -\tau_2^*
\end{pmatrix} \tanh s\,.
$
\end{cor}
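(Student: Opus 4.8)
The plan is to obtain the period $\tau^\mu$ by reading it off directly from the explicit description (\ref{eq:sfd in r3 quaternion}) of the simple factor dressing $f^\mu$, which was established via Theorem~\ref{thm: explit SFD with paramter mu}. The crucial observation is that (\ref{eq:sfd in r3 quaternion}) expresses each coordinate of $f^\mu$ as an $\R$--linear combination of the coordinate functions $f_1, f_2, f_3$ of $f$ and $f_2^*, f_3^*$ of its conjugate $f^*$, with coefficients $\cos t, \sin t, \cosh s, \sinh s$ that are constant on $M$.

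First I would apply the deck transformation $\gamma$ to $f^\mu$. By hypothesis $\gamma^* f_l = f_l + \tau_l$ and $\gamma^* f_l^* = f_l^* + \tau_l^*$, so substituting into (\ref{eq:sfd in r3 quaternion}) and using linearity of the coefficients splits $\gamma^* f^\mu$ into the unchanged expression $f^\mu$ plus the identical linear combination evaluated on the translation data $(\tau, \tau^*)$. This second summand is by inspection exactly the claimed $\tau^\mu$, which proves $\gamma^* f^\mu = f^\mu + \tau^\mu$. This is the same mechanism used in the $4$--space case, Corollary~\ref{cor:periods}.

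For the closing condition I would impose $\tau^\mu = 0$ componentwise. The top entry gives $\tau_1 = 0$ at once. Writing $A = \tau_2\cosh s - \tau_3^*\sinh s$ and $B = \tau_3\cosh s + \tau_2^*\sinh s$, the remaining two entries read $\cos t\, A - \sin t\, B = 0$ and $\sin t\, A + \cos t\, B = 0$, i.e. the vector $(A,B)$ is sent to zero by the rotation matrix $\begin{pmatrix}\cos t & -\sin t\\ \sin t & \cos t\end{pmatrix}$. Since this matrix is invertible, the system forces $A = B = 0$; dividing by $\cosh s \neq 0$ then yields precisely $\begin{pmatrix}\tau_2\\ \tau_3\end{pmatrix} = \begin{pmatrix}\tau_3^*\\ -\tau_2^*\end{pmatrix}\tanh s$.

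There is no real obstacle: all the substantive work was already carried out in deriving (\ref{eq:sfd in r3 quaternion}), and the corollary is a routine linearity computation. The only step needing a little care is recognising the rotation structure in the last two components, so that the vanishing of the coupled $2\times 2$ system collapses to the two uncoupled conditions $A = 0$ and $B = 0$ rather than something weaker.
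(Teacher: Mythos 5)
Your proposal is correct and follows exactly the paper's route: the paper treats this corollary as an immediate consequence of the explicit formula (\ref{eq:sfd in r3 quaternion}), reading off the period $\tau^\mu$ by linearity in $(f_l, f_l^*)$ with constant coefficients, just as you do. Your closing-condition argument (invertibility of the rotation by $t$ forcing $A=B=0$, then dividing by $\cosh s\neq 0$) is the same elementary computation the paper leaves implicit, so there is nothing to add.
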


We can also investigate the behaviour of simple factor dressings in
$\R^3$ at ends:

\begin{theorem}
\label{thm:SFDends}
Let $f: M\to \R^3$ be a minimal surface on a
punctured disc $M=D\setminus\{p\}$ and $\hat f: \tilde M \to
\R^3$ its simple factor dressing with parameter $(\mu, m, m)$,
$m\in S^3$.

Then the following hold:
\begin{enumerate}
\item If $f$ has a planar end at $p$  then   $\hat f: M \to\R^3$
 is single--valued on $M$ and $\hat f$ has a planar end at $p$.
\item If $f$ has a catenoidal end at $p$ and $\hat f: M \to \R^3$ is
  single--valued on $M$ then $\hat f$ has a catenoidal end at $p$.
\end{enumerate}
\end{theorem}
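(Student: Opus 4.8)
The plan is to deduce both statements from the characterisation of embedded finite total curvature ends in Theorem~\ref{thm:FTCend}, applied to the holomorphic null curve of $\hat f$, which the Goursat description makes completely explicit. First I would invoke Theorem~\ref{thm:SFD is Goursat} in the case $n=m$: since $\hat f$ is a simple factor dressing in $\R^3$, its holomorphic null curve is $\hat\Phi = \mathcal{A}\Phi$ with the \emph{constant} matrix $\mathcal{A} = \mathcal{R}_{m,m}\mathcal{L}^\mu\mathcal{R}_{m,m}\invers \in \Oo(3,\Cc)$, where $\Phi$ is the holomorphic null curve of $f$. Because $\mathcal{A}$ is constant and invertible, the identities $d\hat\Phi = \mathcal{A}\, d\Phi$ and $d\Phi = \mathcal{A}\invers d\hat\Phi$ give $\ord_{z=0} d\hat\Phi = \ord_{z=0}d\Phi = -2$, while $\res_{z=0} d\hat\Phi = \mathcal{A}\,\res_{z=0}d\Phi$. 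Moreover the second order pole of $d\hat\Phi$ makes the induced metric complete at $p$ (equivalently, one may cite Corollary~\ref{cor:complete}), so Theorem~\ref{thm:FTCend} will apply to $\hat f$ once single--valuedness is available.

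For (i) a planar end of $f$ means $\res_{z=0}d\Phi = 0$ by Theorem~\ref{thm:FTCend}, so $\res_{z=0}d\hat\Phi = 0$ as well. Then the monodromy $\oint_\gamma d\hat\Phi = 2\pi\ii\,\res_{z=0}d\hat\Phi$ around the puncture vanishes, whence $\hat\Phi$, and thus $\hat f = \Re\hat\Phi$, is single--valued on $M$. Since $d\hat\Phi$ has order $-2$ and vanishing (hence real) residue, Theorem~\ref{thm:FTCend} shows that $\hat f$ has a planar end at $p$.

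For (ii) the decisive observation is how single--valuedness controls the reality of the residue. A catenoidal end gives a \emph{real} residue $c = \res_{z=0}d\Phi \neq 0$, so $\res_{z=0}d\hat\Phi = \mathcal{A}c$, which is generally complex. The monodromy of $\hat f = \Re\hat\Phi$ around $\gamma$ equals $\Re\big(2\pi\ii\,\mathcal{A}c\big) = -2\pi\,\Im(\mathcal{A}c)$, so the hypothesis that $\hat f$ is single--valued on $M$ is exactly the statement that $\mathcal{A}c$ is real. It is also nonzero, as $\mathcal{A}$ is invertible and $c\neq 0$. With $d\hat\Phi$ of order $-2$ and nonzero real residue, Theorem~\ref{thm:FTCend} then yields a catenoidal end for $\hat f$.

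The hard part is conceptual rather than computational: a Goursat transformation by a complex orthogonal matrix need not send real residues to real residues, so for a catenoidal end $\mathcal{A}c$ is typically complex and produces a genuine translational period that would destroy the embedded--end criterion of Theorem~\ref{thm:FTCend}. The substance of (ii) is therefore to recognise that assuming $\hat f$ single--valued is precisely the condition $\mathcal{A}c\in\R^3$ needed to re--apply that criterion; for a planar end the residue is zero and survives automatically, which is why (i) needs no such hypothesis. The remaining points — that $\ord_{z=0}d\hat\Phi$ is exactly $-2$ and that the end stays complete — are immediate from the invertibility of $\mathcal{A}$ and the order of the pole.
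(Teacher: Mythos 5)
Your proof is correct, and it reaches the conclusion by a more streamlined route than the paper's. Both arguments share the same skeleton: the Goursat description $\hat\Phi = \mathcal{A}\Phi$, $\mathcal{A} = \mathcal{R}_{m,m}\mathcal{L}^\mu\mathcal{R}_{m,m}\invers \in \Oo(3,\Cc)$, from Theorem~\ref{thm:SFD is Goursat}, preservation of completeness (Corollary~\ref{cor:complete}), and the characterisation of planar and catenoidal ends in Theorem~\ref{thm:FTCend}. Where you differ is the crucial link between single--valuedness and reality of the residue. The paper first reduces to a vertical end by a rotation, invokes Hoffman--Karcher to identify $\res_{z=0}d\Phi$ with the translational periods of the conjugate surface $f^*$, then uses Proposition~\ref{prop: SFD given by parameter mu} together with Corollary~\ref{cor: periods in 3 space} to determine \emph{explicitly} which parameters make $\hat f$ single--valued at a catenoidal end (namely $m = (1\mp j)\lambda$, $\lambda\in\C$), and finally checks that for exactly those parameters $\mathcal{R}_{m,m}\invers\res_{z=0}d\Phi$ is fixed by $\mathcal{L}^\mu$, so that $\res_{z=0}d\hat\Phi = \res_{z=0}d\Phi$ is real. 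You bypass all of this: since $d\hat\Phi = \mathcal{A}\,d\Phi$ is a single--valued meromorphic form on the punctured disc, the residue theorem gives the translational monodromy of $\hat f = \Re\hat\Phi$ as $\Re(2\pi\ii\,\mathcal{A}c) = -2\pi\Im(\mathcal{A}c)$ with $c = \res_{z=0}d\Phi$, so the single--valuedness hypothesis in (ii) is \emph{literally equivalent} to $\mathcal{A}c\in\R^3$, which, together with $\mathcal{A}c\neq 0$ by invertibility, is exactly what Theorem~\ref{thm:FTCend} requires; in (i) the residue vanishes, so single--valuedness and the planar conclusion come for free. Your version needs no reduction to vertical ends and no parameter analysis, and it makes transparent why (i) carries no single--valuedness hypothesis. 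What the paper's longer computation buys in exchange is the explicit list of parameters for which the dressing of a catenoidal end closes up (you only learn that the closed ones have real transformed residue, in fact the paper shows $\mathcal{A}c = c$), and this explicit information is reused afterwards, e.g.\ to see that the L\'opez--Ros deformation of a vertical catenoidal end is always single--valued because $1-i-j-k = (1-j)(1-i)$.
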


\begin{proof}
Let $p$ be an complete, embedded, finite total curvature end of
$f$.  We can assume that the end  of $f$ at $p$ is vertical: if the end is not
vertical, let $n\in \H_*$ such that $\tilde f = \mathcal{R}_{n, n}\invers f$ has vertical
normal at $p$. Since $\mathcal{R}_{m, m} = \mathcal{R}_{n, n} \circ \mathcal{R}_{n\invers m,n\invers m}$ and
$\hat f = \mathcal{R}_{m,m}((\mathcal{R}_{m,m}\invers(f))^\mu)$ by Proposition \ref{prop: SFD
  given by parameter mu}, the simple factor dressing of $f$ is 
up to rotation given by the simple factor dressing of $\tilde f$ with
parameters $(\mu, n\invers m,  n\invers m)$.

 In a conformal coordinate $z$ on the punctured disk
$D\setminus\{0\}$,  we know from
\cite{hoffman_karcher}, see also  Theorem
\ref{thm:FTCend}, that the holomorphic null curve $\Phi$ of $f$ has  
$\ord_{z=0}d\Phi =-2$ and $\res_{z=0} d\Phi=-(0, 0, 2\pi \alpha)$
where $\alpha\in\R$ is the logarithmic growth.

By \cite{hoffman_karcher} the periods of the conjugate surface $f^*$
around the end are given by $\res_{z=0} d\Phi$.  Therefore, if $f$ has
a planar end then $f^*$ is single--valued on $M$, and if $f$ has a catenoidal
end then the periods of $f^*$ are given by $-2\pi \alpha k$. By
Proposition \ref{prop: SFD given by parameter mu} and Corollary
\ref{cor: periods in 3 space}, the simple factor dressing is
single--valued for all parameters if $p$ is a planar end. Otherwise,
it is single--valued for parameters $(\mu, m, m)$ such that $m\invers
k m =\pm i$, that is, $m=(1\mp j)\lambda$ with $\lambda\in \C$.

We know from Corollary \ref{cor:complete} that the simple factor
dressing preserves completeness, that is, the end of $\hat f$ at $p$
is complete.  Since the simple factor dressing $\hat f$ is a Goursat
transformation, the holomorphic null curve of $\hat f$ is given by
$\hat \Phi = \mathcal{A} \Phi$ with $\mathcal{A}\in \Oo(3, \Cc)$ and thus, $\ord_{z=0}
d\hat\Phi =-2$.  At a planar end we have $\res_{z=0} d\hat \Phi =
\res_{z=0} d\Phi = (0,0,0)$. Therefore, $\hat f$ has a planar end by
Theorem \ref{thm:FTCend}.

From Proposition \ref{prop: SFD
  given by parameter mu} and  (\ref{eq: null curve of sfd}) we know that
 $\mathcal{A} = \mathcal{R}_{m,m} \mathcal{L}^\mu
\mathcal{R}_{m,m}\invers$   where  
\[
\mathcal{L}^\mu= \begin{pmatrix} 1&0&0\\
 0 &\cosh w  & \ii\sinh w\\
0 & -\ii\sinh w & \cosh w  
\end{pmatrix}\,, \quad w = s+ \ii t\,.
\] 
At a catenoidal end, a single--valued simple factor dressing has
parameters $(\mu, m, m)$ with $m = (1\mp j)\lambda$,
$\lambda\in\C$. Thus, 
$\mathcal{R}_{m,m}\invers\res_{z=0}d\Phi$ is an eigenvector with eigenvalue 1 of
the matrix $\mathcal{L}^\mu$. But then  $\res_{z=0}d\hat \Phi = \res_{z=0} d\Phi$ is real,
and the end of the simple factor dressing is catenoidal by Theorem
\ref{thm:FTCend}.

\end{proof}

Again, we obtain from Theorem \ref{thm:lopezros is SFD} the link to
the L\'opez-Ros deformation:

\begin{theorem} 
  Let $f: M\to\R^3$ be a minimal surface in $\R^3$ with conjugate
  surface $f^*$. The L\'opez-Ros deformation with complex parameter
  $\sigma=e^{s+\ii t}\in\Cc_*, |\sigma| \not=1,$ is the simple factor
  dressing of $f$ with parameters $(\mu, m, m)$ where $\mu=
  \frac{1-e^{-(s+it)}}{1-e^{s-it}}\in\C$ and $m=\frac{1-i-j-k}2$.
\end{theorem}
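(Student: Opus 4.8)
The plan is to obtain this statement as an immediate corollary of Theorem~\ref{thm:lopezros is SFD}, which already proves the analogous identity in $\R^4$; the only additional work is to check that the four--dimensional picture restricts correctly to $\R^3 = \Im\H$.

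First I would regard $f\colon M \to \R^3 = \Im\H$ as a minimal surface in $\R^4 = \H$ whose real part vanishes, so that $f_0 = 0$. I would then verify that a conjugate surface $f^*$ can be chosen imaginary as well, i.e.\ $f^*_0 = 0$: for a surface in $\R^3$ the left and right normals coincide with the Gauss map $N$, and $df^* = -*df = -N\,df$ by~(\ref{eq:left and right normal}); since $df$ is tangent to $\R^3$ and hence orthogonal to $N$ in $\Im\H$, the product $N\,df$ has vanishing real part, so $f^*$ takes values in $\Im\H$.

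Next I would compare the two L\'opez--Ros formulas. Imposing $f_0 = f_0^* = 0$ in the four--dimensional formula~(\ref{eq:LopezRos}) makes its first entry vanish, while the remaining three entries are exactly the $\R^3$ L\'opez--Ros deformation of Theorem~\ref{thm:Goursat} under the identification $f = if_1 + jf_2 + kf_3$: both act by the same $(\cosh s, \sinh s)$--transformation on the $(i,j)$--components $(f_1, f_2)$ and fix the $k$--component $f_3$. Hence the $\R^3$ deformation $f_\sigma$ is precisely the restriction of the $\R^4$ deformation $f^\sigma$.

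Finally I would invoke Theorem~\ref{thm:lopezros is SFD}: with $\sigma = e^{s+\ii t}$, $|\sigma|\neq 1$, the surface $f^\sigma$ equals the simple factor dressing of $f$ with parameters $(\mu, m, m)$ for precisely the stated $\mu = \frac{1 - e^{-(s+it)}}{1 - e^{s-it}}$ and $m = \frac{1-i-j-k}2$. To close the argument I would observe that this dressing genuinely lands in $\R^3$: since the two rotation parameters agree ($m = n$, i.e.\ $m = n\lambda$ with $\lambda = 1 \in \C_*$), the earlier theorem on simple factor dressings in $\R^3$ guarantees $\hat f\colon M \to \R^3$, consistent with the vanishing first coordinate found above. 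I do not expect a genuine obstacle here, since the entire computational content already sits in Theorem~\ref{thm:lopezros is SFD}; the only points needing care are the bookkeeping between the $\R^4$ labelling $f = f_0 + f_1 i + f_2 j + f_3 k$ and the $\R^3$ labelling $f = if_1 + jf_2 + kf_3$, and the check that the conjugate surface remains imaginary so that the restriction to $\R^3$ is legitimate.
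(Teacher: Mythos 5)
Your proposal is correct and matches the paper's own route: the paper states this theorem as an immediate consequence of Theorem~\ref{thm:lopezros is SFD} (introducing it with ``Again, we obtain from Theorem~\ref{thm:lopezros is SFD} the link to the L\'opez-Ros deformation''), which is exactly your reduction. The extra bookkeeping you supply --- that $f^*$ stays in $\Im\H$, that (\ref{eq:LopezRos}) restricts to the formula of Theorem~\ref{thm:Goursat} when $f_0=f_0^*=0$, and that $m=n$ forces the dressing into $\R^3$ --- is precisely the (unstated) content the paper leaves to the reader.
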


We obtain as a
consequence of the last two theorems  the following well--known result
\cite{lopez_ros}:

 \begin{cor}
Let $f:  M\to \R^3$ be a minimal surface on a punctured disk $M =
D\setminus\{p\}  $ and $f^\sigma: \tilde M \to
\R^3$ a L\'opez-Ros deformation with parameter $\sigma$. 

Then the following hold:
\begin{enumerate}
\item If $f$ has a planar end at $p$  then   $f^\sigma: M \to \R^3$
 is single--valued on $M$ and $f^\sigma$ has a planar end at $p$.
\item If $f$ has a catenoidal end at $p$ and $ f^\sigma: M \to\R^3$  is single--valued 
 on $M$ then $f^\sigma$ has a catenoidal end at $p$.
\end{enumerate}
 \end{cor}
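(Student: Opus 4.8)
The plan is to deduce both statements directly from Theorem~\ref{thm:SFDends} by recognising the L\'opez--Ros deformation as a simple factor dressing. The conceptual content has already been established in the two preceding theorems, so the argument will be short.

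First I would treat the generic case $|\sigma|\not=1$. Here the preceding theorem identifies $f^\sigma$ with the simple factor dressing $\hat f$ of $f$ with parameters $(\mu,m,m)$, where $m=\frac{1-i-j-k}2\in S^3$ and $\mu=\frac{1-e^{-(s+it)}}{1-e^{s-it}}\in\C\setminus\{0,1\}$. With this identification in place, statement (i) is exactly the first conclusion of Theorem~\ref{thm:SFDends} for the triple $(\mu,m,m)$: a planar end of $f$ forces $f^\sigma=\hat f$ to be single-valued on $M$ and to have a planar end at $p$. Statement (ii) is the second conclusion of the same theorem: under the extra hypothesis that $f^\sigma$ be single-valued, a catenoidal end of $f$ produces a catenoidal end of $f^\sigma$. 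Since $|m|^2=\tfrac14(1+1+1+1)=1$, the quaternion $m$ indeed lies in $S^3$, so Theorem~\ref{thm:SFDends} applies verbatim.

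It remains to dispose of the degenerate case $|\sigma|=1$, to which the preceding theorem does not apply. Here $s=0$ and $f^\sigma$ is merely a rotation of $f$ in the $ij$--plane, hence a rigid motion of $\R^3$; such a motion changes the holomorphic null curve only by a real orthogonal matrix, so by Theorem~\ref{thm:FTCend} it leaves the pole order and the reality (and vanishing) of $\res_{z=0}d\Phi$ intact, thereby preserving single-valuedness as well as planar and catenoidal ends. Both claims then hold trivially. I do not anticipate any genuine obstacle: all the substance lies in the already-proven facts that the L\'opez--Ros deformation is a simple factor dressing and that simple factor dressings behave well at ends, so the corollary is a mere specialisation of Theorem~\ref{thm:SFDends}.
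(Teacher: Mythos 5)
Your proof is correct and takes essentially the same approach as the paper: the paper derives this corollary as an immediate consequence of the two preceding theorems (the identification of the L\'opez--Ros deformation with the simple factor dressing with parameters $(\mu,m,m)$, $m=\frac{1-i-j-k}{2}$, and the theorem on end behaviour of simple factor dressings), exactly as you do. Your explicit treatment of the degenerate case $|\sigma|=1$, where the identification theorem does not apply but the deformation is a rigid rotation preserving pole order and the reality of the residue of $d\Phi$, is a small point of extra care that the paper leaves implicit.
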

 Note that if $f$ is a minimal surface with vertical catenoidal end at
 $p$, then the proof of Theorem \ref{thm:SFDends} shows that the
 L\'opez--Ros deformation is single--valued since $2m=1-i-j-k
 =(1-j)(1-i)$. In particular, the L\'opez--Ros deformation of $f$ has
 a catenoidal end at $p$, too.



\section{Darboux transforms of  minimal surfaces}

We now connect the simple factor dressing of a minimal surface with
its Darboux transform. Previous results \cite{cmc}, \cite{bohle},
\cite{hsl}, \cite{burstall_quintino} seemed to indicate that the
Darboux transformation preserves a surface class which is given by a
harmonicity condition as long as the Darboux transform is given by a
parallel section of the associated family of the harmonic map. These
Darboux transforms are the so--called $\mu$--Darboux transform. We
will show that this does not hold for minimal surfaces: a
$\mu$--Darboux transform of a minimal surface is a (non--minimal)
Willmore surface in $\R^4$. However, the Darboux transforms are still
closely related to the simple factor dressing of the minimal surface,
and in particular, a $\mu$--Darboux transform is given by complex
holomorphic data.

Let us recall that two isothermic immersions $f,  f^\sharp: M \to \R^4$
form a \emph{classical Darboux pair} \cite{Darboux} if there exists a
sphere congruence enveloping both $f$ and $f^\sharp$.  In particular, a
minimal surface $f: M\to\R^3$ in $\R^3$ is isothermic, and a classical
Darboux transform $f^\sharp = f+ T$ of $f$ is given
\cite{darboux_isothermic} by a solution $T$ of the Riccati equation
\begin{equation}
\label{eq:Riccati}
dT =   -df  + T (dN) r  T\invers
\end{equation}
where $N$ is the Gauss map of $f$ and  $r\in\R_*$.   

By weakening the enveloping condition the notion of a classical
Darboux transformation has been extended in \cite{conformal_tori} to
any conformal immersion $f: M\to S^4$. In case of a conformal torus
$f: T^2\to S^4$, there exists at least a Riemann surface worth of
Darboux transforms $f^\sharp: T^2\to S^4$ of $f$. This way, one
obtains a geometric interpretation of the spectral curve $\Sigma$ of
the conformal torus $f$ as the normalisation of the set of closed
Darboux transforms of $f$.

In this paper however, we are only interested in local theory, so we
will assume from now on that $M$ is simply connected. Denoting as
before by 
\[
L =\begin{pmatrix}
  f\\ 1
\end{pmatrix}
\H
\] 
the line bundle $L\subset \trivial 2$ of a conformal immersion $f: M
\to \R^4$, the left normal $N$ of $f$ induces a quaternionic
holomorphic structure on the bundle $\trivial 2/L$ via
\[
D(e\alpha) = e \frac 12(d\alpha + N*d\alpha)\,.
\]
Here we identify $\trivial 2/L = e\H$ via $(\pi_L)|_{e\H}: e\H \to
\trivial 2/L$ where $\pi_L: \trivial 2 \to \trivial 2/L$ is the
canonical projection.

A section $e\alpha\in\Gamma(e\H)$ is called \emph{holomorphic} if
$D(e\alpha)=0$, or, equivalently, $*d\alpha = N d\alpha$. Since $*df=
Ndf$, for any holomorphic section $e\alpha$ there is $\beta: M \to\H$
with $d\alpha = -df \beta$. In particular, there exists a \emph{prolongation}
of the holomorphic section $e\alpha$, that is, a lift $\varphi =
e\alpha + \psi \beta\in\Gamma(\trivial 2)$  such that
$d\varphi \in\Omega^1(L)$ where $\psi =\begin{pmatrix}
  f\\ 1
\end{pmatrix}$.\\

A \emph{(generalised) Darboux transform} $f^\sharp: M \to S^4$ of the
conformal immersion $f: M \to \R^4$ is then given
\cite{conformal_tori} by the prolongation $\varphi\in\Gamma(\trivial
2)$ of a holomorphic section  of $e\H$: away from the zeros
of $\varphi$, the (singular) Darboux transform is given by the line
bundle
\[
L^\sharp = \varphi\H\,.
\] 

Conversely, we obtain Darboux transforms of a minimal immersion $f$ by
finding sections $\varphi\in\Gamma(\trivial 2)$ with
$d\varphi\in\Omega^1(L)$: writing $\varphi = e\alpha + \psi \beta$ we
see that $\alpha\not=0$ since otherwise $0=\pi_L d\varphi = edf \beta$
implies $\varphi =0$. But then  $\pi_L \varphi = e\alpha$ is a non--trivial holomorphic
section and $\varphi$ is its prolongation.

If $f: M \to \R^4$ is minimal we have the associated family
(\ref{eq:associated family of S}) of flat connections
\[
d^S_\lambda = d +(\lambda-1)A\oz + (\lambda\invers-1)A\zo\,, \quad \lambda\in\C_*
\]
of the harmonic conformal Gauss map $S$ of $f$, where $A$ is the Hopf
field of $S$.  Since $\im A\subset L$ we see that for fixed
$\mu\in\C_*$ every $d^S_\mu$--parallel section
$\varphi\in\Gamma(\trivial 2)$ has $d\varphi\in\Omega^1(L)$, and thus
$L^\sharp = \varphi\H$ is a Darboux transform of $f$, a so--called
\emph{$\mu$--Darboux transform} of $f$.

If $f$ has constant right normal $R$ then by (\ref{eq: Hopf}) the Hopf
field $A$ vanishes and $d_\mu = d$ for all
$\mu\in\C\setminus\{0,1\}$. That is, all $\mu$--Darboux transforms of
$f$ are in this case the constant sections $\Gamma(\trivial 2)$.
Therefore, from now on we will assume that $f$ is not the twistor
projection of a holomorphic curve in $\CP^3$.

With Proposition \ref{prop: parallel sections} at hand, we can again
discuss all $\mu$--Darboux transforms of $f$. If $\varphi =en,
n\in\H_*$ then the corresponding $\mu$--Darboux transform is the
constant point $\infty = e\H$. On the other hand, every non--constant
$d^S_\mu$--parallel section $\varphi\in\Gamma(\trivial 2),
\mu\in\C\setminus\{0,1\}$, is given by
\[
\varphi = e\alpha + \psi \beta
\]
with $e = \begin{pmatrix} 1 \\0
    \end{pmatrix}, 
      \psi =\begin{pmatrix} f\\ 1
\end{pmatrix}$ and
\[
\alpha = -fm\frac{b}{a-1} - f^* m, \quad \beta =Rm + m \frac{b}{a-1}\,.
\]
Here $f^*$ is a conjugate surface of $f$ and $a= \frac{\mu+\mu\invers}2,
b= i\frac{\mu\invers-\mu}2$ and $m\in\H_*$.
The $\mu$--Darboux transform is in this case given by 
\[
L^\sharp = (e\alpha +\psi\beta) \H\,,
\]
where    $\beta
$ is nowhere vanishing.   Therefore, the $\mu$--Darboux
transform is given by the affine coordinate $f^\sharp = f+ T$ with 
\begin{equation}
\label{eq:T Darboux}
T=
\alpha\beta\invers =  -(f^* + f \frac{\hat b}{\hat a-1})(R+\frac{\hat b}{\hat a-1})\invers
\end{equation}
and  $\hat a = m a m\invers$, $\hat b = m b m\invers$. We summarise:

\begin{theorem}
\label{thm:mu-DT}
Every non--constant $\mu$--Darboux transform of a minimal surface $f: M
\to \R^4$, $M$ simply connected,  is given by  
\begin{equation}
\label{eq: hat f}
f^\sharp = (fR-f^*)(R+\rho)\invers\,,
\end{equation}

where
  $f^*$ is a conjugate surface of $f$ 
and $\rho = m \frac{i(1+\mu)}{1-\mu} m\invers$ with $\mu\in\C\setminus
\{0, 1\}$\,.  
\end{theorem}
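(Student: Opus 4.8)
The plan is to reduce the statement to a short quaternionic computation starting from the explicit parallel section already exhibited just before the theorem. By Proposition~\ref{prop: parallel sections} a non--constant $d^S_\mu$--parallel section is $\varphi = e\alpha + \psi\beta$ with $\alpha = -fm\frac{b}{a-1} - f^*m$ and $\beta = Rm + m\frac{b}{a-1}$, and (\ref{eq:T Darboux}) records that the corresponding Darboux transform is the affine coordinate $f^\sharp = f + T$ with $T = \alpha\beta\invers$. Thus it suffices to simplify $f + \alpha\beta\invers$ to the claimed form (\ref{eq: hat f}). Since $M$ is simply connected, the conjugate surface $f^*$ entering these expressions is globally defined on $M$, so no passage to the universal cover is needed.

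First I would record the identity $\frac{\hat b}{\hat a-1} = \rho$. Because $\hat a - 1 = mam\invers - mm\invers = m(a-1)m\invers$ and $\hat b = mbm\invers$, and because $a,b$ lie in the commutative field $\C=\Span_\R\{1,i\}$ so that conjugation by $m$ is multiplicative, one gets $\frac{\hat b}{\hat a-1} = m\frac{b}{a-1}m\invers$; by (\ref{eq:b/ a-1 in mu}) this is exactly $\rho = m\frac{i(1+\mu)}{1-\mu}m\invers$. Using the same multiplicativity in the other direction, $m\frac{b}{a-1} = \rho m$, so that $\alpha = -(f^*+f\rho)m$ and $\beta = (R+\rho)m$. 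Cancelling the common right factor $m$ (invertible since $m\in\H_*$) yields $T = -(f^*+f\rho)(R+\rho)\invers$, where $R+\rho$ is invertible because $\beta$ is nowhere vanishing, as established in the discussion of Lemma~\ref{lem:parallel sections r}.

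It then remains to combine $f$ and $T$ over the common right factor $(R+\rho)\invers$. Writing $f = f(R+\rho)(R+\rho)\invers$ and expanding $f(R+\rho) = fR + f\rho$ gives
\[
f^\sharp = f + T = \big(fR + f\rho - f^* - f\rho\big)(R+\rho)\invers = (fR - f^*)(R+\rho)\invers,
\]
which is precisely (\ref{eq: hat f}). The only point requiring care is the non--commutativity of $\H$: I must keep $f$ on the left throughout and never commute $\rho$ past $R$ or $f$. The cancellation of the two $f\rho$ terms is legitimate exactly because both appear as right multiplications of $f$, so no reordering is involved and the argument is insensitive to the fact that $\rho$ is not central. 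This is the whole content of the proof — there is no genuine obstacle, only the bookkeeping of left versus right multiplication — so once the identification $\frac{\hat b}{\hat a-1}=\rho$ is in place the formula follows immediately.
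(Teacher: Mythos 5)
Your proposal is correct and follows essentially the same route as the paper: the paper's own derivation (the discussion culminating in (\ref{eq:T Darboux}), which the theorem then "summarises") likewise takes the parallel section from Proposition~\ref{prop: parallel sections}, factors $\alpha = -(f^*+f\rho)m$ and $\beta = (R+\rho)m$ using $m\frac{b}{a-1} = \rho m$, cancels $m$, and reads off the affine coordinate $f^\sharp = f + \alpha\beta\invers$. Your only addition is writing out explicitly the final cancellation $f + T = (fR - f^*)(R+\rho)\invers$, which the paper leaves implicit, and your care with left/right multiplication and the invertibility of $R+\rho$ is exactly what is needed.
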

Note that a $\mu$--Darboux transform depends non--trivially on the
choice of the conjugate surface $f^*$.

Moreover, we see with $T= \alpha\beta\invers$ and $d\alpha=-df\beta$ that
\begin{equation}
\label{eq:gen Riccati}
dT = -df - \alpha\beta\invers d\beta \beta\invers\,.
\end{equation}
Since $d\beta = dR m$ by (\ref{eq:dbetam}) this is a generalised
Riccati equation (away from the zeros of $\alpha$)
\[
dT = -df - T dR m\alpha\invers T
\]
with $m\alpha\invers$  non--constant. In particular, if $f:
M \to\R^3$ is minimal in $\R^3$ then the Gauss map $N$ is the right
normal of $f$, and the above equation generalizes
(\ref{eq:Riccati}). Note however that non--constant $\mu$--Darboux
transforms of a minimal surface $f: M\to\R^3$ are never classical; for
$f^\sharp = f+T$ to be classical, $ m\alpha\invers\in\R_* $ has to hold
but  $\alpha$ is not constant. However, we will show
that the $\mu$--Darboux transformation still preserves geometric
information of the minimal surface: it is a Willmore surface which is
given  by a minimal surface in the
associated family of $f$:

\begin{theorem}
\label{thm:asociated darboux}
Every (non--constant) $\mu$--Darboux transform $f^\sharp: M\to\R^4$ of a
minimal surface $f: M \to\R^4$ is an associated Willmore surface of a
minimal surface in the right associated family of $f$.
\end{theorem}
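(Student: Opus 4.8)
The plan is to read off, from the explicit formula for $f^\sharp$ in Theorem~\ref{thm:mu-DT}, a concrete minimal surface in the right associated family of $f$, and then to verify that $f^\sharp$ is its associated Willmore surface (in the sense of Theorem~\ref{thm:twistor minimal}) by comparing differentials up to a constant of integration (which only reflects the translational ambiguity in the definition of the associated Willmore surface). Throughout I write $\hat a = mam\invers$, $\hat b = mbm\invers$ with $a = \frac{\mu+\mu\invers}2$, $b = i\frac{\mu\invers-\mu}2$, so that $\rho = m\frac{i(1+\mu)}{1-\mu}m\invers = \hat b(\hat a-1)\invers$ and $\hat a,\hat b,\rho$ all lie in the commutative subalgebra $m\C m\invers$.

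First I would compute $df^\sharp$ from $f^\sharp = (fR-f^*)(R+\rho)\invers$. Using $df^*=dfR$ one gets $d(fR-f^*)=f\,dR$, and since $\rho$ is constant $d((R+\rho)\invers)=-(R+\rho)\invers dR(R+\rho)\invers$; the product rule then gives
\[
df^\sharp = (f-f^\sharp)\,dR\,(R+\rho)\invers,
\qquad f-f^\sharp = (f\rho+f^*)(R+\rho)\invers,
\]
so that $df^\sharp = (f\rho+f^*)(R+\rho)\invers\,dR\,(R+\rho)\invers$.

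Next I would take the minimal surface $h = f_{\hat b,\hat a-1}=f\hat b+f^*(\hat a-1)$ already singled out in the discussion of the simple factor dressing of $R$. By (\ref{eq:right normal associated simplified}) its right normal is $R_h=(\rho+R)R(\rho+R)\invers$, which is non--constant because $f$ is not a twistor projection, so by Theorem~\ref{thm:twistor minimal} its associated Willmore surface $h^\flat = hR_h-h^*$ is a twistor projection. Using $dh^\flat = h\,dR_h$ together with the identities already established in the right--normal computation, namely $dR_h=(1+\rho^2)(R+\rho)\invers dR(R+\rho)\invers$ and $1+\rho^2=-2(\hat a-1)\invers$, and the algebraic simplification $h(1+\rho^2)=-2(f\rho+f^*)$, I obtain
\[
dh^\flat = -2(f\rho+f^*)(R+\rho)\invers\,dR\,(R+\rho)\invers = -2\,df^\sharp.
\]
Integrating on the simply connected $M$ gives $f^\sharp = -\tfrac12 h^\flat$ up to a translation. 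Finally, since real rescaling leaves the right normal unchanged and scales the conjugate linearly, $-\tfrac12 h^\flat = (h')^\flat$ for $h' = -\tfrac12 h = f_{-\hat b/2,\,-(\hat a-1)/2}$, so $f^\sharp$ is, up to translation, the associated Willmore surface of the minimal surface $h'$ in the right associated family of $f$.

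The main obstacle is bookkeeping rather than conceptual: the natural candidate $h=f_{\hat b,\hat a-1}$ (whose right normal is the simple factor dressing of $R$) does \emph{not} have $f^\sharp$ as its associated Willmore surface, but rather gives $h^\flat=-2f^\sharp$, so one must rescale to $h'=-\tfrac12 h$; pinning down this factor via $1+\rho^2=-2(\hat a-1)\invers$ is the crux of the argument. A secondary point requiring care is checking that $R_h$ is genuinely non--constant, so that Theorem~\ref{thm:twistor minimal} really applies and $h^\flat$ is an associated Willmore surface; this follows since $dR_h=(1+\rho^2)(R+\rho)\invers dR(R+\rho)\invers$ with $1+\rho^2\neq0$ and $dR\neq0$.
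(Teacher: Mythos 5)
Your proof is correct and follows essentially the same route as the paper: you single out the same surface $h = f\hat b + f^*(\hat a-1)$ in the right associated family, use the same key identities $R_h = (\rho+R)R(\rho+R)\invers$ and $1+\rho^2 = -2(\hat a-1)\invers$, and identify $f^\sharp$ with the associated Willmore surface of $-\tfrac12 h$. The only difference is in the verification: the paper checks the identity $-\tfrac12(hR_h - h^*) = (fR-f^*)(R+\rho)\invers$ pointwise, using the explicit conjugate $h^* = f^*\hat b - f(\hat a-1)$, whereas you compare differentials and integrate on the simply connected $M$, absorbing the constant of integration into the translational freedom in the choice of conjugate surface --- both verifications are sound.
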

\begin{proof}
 Consider as before the minimal surface
\[
h = f\hat b + f^*(\hat a-1)
\]
in the right associated family of $f$ where $\hat a =
m\frac{\mu+\mu\invers}2m \invers , \hat b = m i \frac{\mu\invers-\mu}2
m\invers$ and $m\in\H_*$. From (\ref{eq:right normal
  associated simplified}) we see that the right normal of the minimal surface $h$ is
\[
R_h =  (\rho +R) R (\rho+R)\invers\,.
\]
where $\rho = m
\frac{i(1+\mu)}{1-\mu} m\invers$.   Using the conjugate
surface
\[
h^* =  f^*\hat b- f(\hat a-1)
\]
of the minimal surface $h$,  the associated Willmore surface $g$ of
$-\frac h2$, see
Theorem \ref{thm:twistor minimal},  is given by
\begin{equation*}
g = -\frac 12(hR_h - h^*) =
 (fR - f^*)(\rho + R)\invers\,.
\end{equation*}
Here we used again that $\hat a^2+ \hat b^2=1$.  In other words, $g$
is by (\ref{eq: hat f}) the $\mu$--Darboux transform $f^\sharp$ of
$f$.  By Theorem \ref{thm:mu-DT} every $\mu$--Darboux transform arises
this way.
\end{proof}

By Theorem \ref{thm:twistor minimal} every twistor projection of a
holomorphic curve in $\CP^3$ which is not minimal in $\R^4$ is the
associated Willmore surface of a minimal surface. In particular, there
is an induced transformation on Willmore surfaces which are given by
complex holomorphic curves in $\CP^3$:

\begin{cor}
Let $g: M \to\R^4$ be the twistor projection of a holomorphic curve in
$\CP^3$ and $R$ its right normal. Then 
\[
g^\sharp = g(\rho-R)\invers
\]
for $\rho\in\H, (\rho-R)\not=0,$ has right normal $R^\sharp =
(\rho - R)R(\rho - R)\invers$. In particular, $g^\sharp$ is the twistor
projection of a holomorphic curve in $\CP^3$.
\end{cor}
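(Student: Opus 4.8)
The plan is to prove both assertions by a direct quaternionic differentiation, using the twistor hypothesis on $g$ only through its right normal. Write $P=\rho-R$; since $\rho$ is a constant quaternion we have $dP=-dR$ and hence $dP\invers=P\invers(dR)P\invers$. Because $g$ is the twistor projection of a holomorphic curve in $\CP^3$ with right normal $R$, Theorem~\ref{thm: twistor} together with (\ref{eq: Hopf}) gives $*dR=RdR=-dRR$ (as in the converse part of the proof of Theorem~\ref{thm:twistor minimal}); in particular $R$ anticommutes with $dR$. Conformality of $g$ gives, through (\ref{eq:left and right normal}), $*dg=-dgR$. These two relations are the only inputs.

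First I would determine the right normal of $g^\sharp=gP\invers$. Differentiating and inserting $dP\invers=P\invers(dR)P\invers$ yields $dg^\sharp=(dg)P\invers+gP\invers(dR)P\invers$. Since the negative Hodge star acts only on the form part it commutes with multiplication by the functions $g$ and $P^{\pm1}$, so substituting $*dg=-dgR$ and $*dR=-dRR$ gives $*dg^\sharp=-(dg)RP\invers-gP\invers(dR)RP\invers$, and one checks that this is precisely $-dg^\sharp R^\sharp$ for $R^\sharp=PRP\invers$. As $R^\sharp$ is a quaternionic conjugate of $R$ it is purely imaginary with $(R^\sharp)^2=-1$, hence $R^\sharp\colon M\to S^2$; the relation $*dg^\sharp=-dg^\sharp R^\sharp$ then exhibits $g^\sharp$ as a (branched) conformal immersion with right normal $R^\sharp$ in the sense of (\ref{eq:left and right normal}) (conformality being automatic once this relation holds with $R^\sharp\in S^2$). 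This is the first assertion.

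For the twistor statement I would verify the type condition $(dR^\sharp)''=\frac12(dR^\sharp+R^\sharp*dR^\sharp)=0$. By (\ref{eq: Hopf}) this says the Hopf field of $g^\sharp$ vanishes on its line bundle, so by Theorem~\ref{thm: twistor} the surface $g^\sharp$ is the twistor projection of a holomorphic curve in $\CP^3$, exactly as in the proof of Theorem~\ref{thm:twistor minimal}. Differentiating $R^\sharp=PRP\invers$ and substituting $*dR=-dRR$, all but two terms of $dR^\sharp+R^\sharp*dR^\sharp$ cancel, leaving $P(dR-R\,dR\,R)P\invers$. The anticommutation $R\,dR=-dR\,R$ then gives $R\,dR\,R=-dR\,R^2=dR$, so the bracket is $dR-dR=0$ and the twistor condition holds.

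The computation is elementary, and I expect no genuine obstacle beyond the non-commutative bookkeeping. The one point needing care is the consistent use of the twistor relation in its two equivalent forms $*dR=RdR$ and $*dR=-dRR$: it is exactly the resulting anticommutation $R\,dR=-dR\,R$ that produces the cancellations in both parts, and keeping the left/right placement of the factors $P$, $P\invers$ and $R$ correct is the only real subtlety.
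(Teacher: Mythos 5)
Your proof is correct, and it takes a genuinely different route from the paper's. The paper gives no computation at all: the corollary is presented as an immediate consequence of the structure theory, namely Theorem~\ref{thm:twistor minimal} (every non-degenerate twistor projection is, up to translation, the associated Willmore surface $g=-fR-f^*$ of a minimal surface $f$ whose right normal is $-R$) combined with Theorem~\ref{thm:asociated darboux}: under this identification $g(\rho-R)\invers$ is exactly a $\mu$--Darboux transform of $f$, hence the associated Willmore surface of a member of the right associated family of $f$, hence again a twistor projection, with the right normal read off from (\ref{eq:right normal associated simplified}). Your argument instead verifies both claims by hand from the two first-order relations $*dg=-dgR$ and $*dR=RdR=-dRR$ (the latter correctly sourced from Theorem~\ref{thm: twistor} and (\ref{eq: Hopf}), with the anticommutation $R\,dR=-dR\,R$ coming for free from $R^2=-1$). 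Both computations check out: $*dg^\sharp=-dg^\sharp R^\sharp$ with $R^\sharp=PRP\invers$ follows exactly as you write, and in $dR^\sharp+R^\sharp*dR^\sharp$ the surviving terms are $P(dR-R\,dR\,R)P\invers=0$. What each approach buys: the paper's route explains \emph{why} the corollary is true — it exhibits $g^\sharp$ as the transformation induced on twistor projections by the $\mu$--Darboux transformation, which is the point of that section — but it formally requires simply connected $M$, non-constant $R$, working away from the zeros of $dR$, handling the degenerate case $g=cR$, and it only directly produces those $\rho$ of the form $m\tfrac{i(1+\mu)}{1-\mu}m\invers$ (i.e.\ $\rho\notin S^2$). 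Your computation is self-contained and covers every constant $\rho\in\H$ with $\rho-R\neq0$, constant $R$ included, with no topological hypothesis on $M$; what it does not show is the link to the Darboux transformation, which is the conceptual content the paper is after.
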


By Theorem \ref{thm:asociated darboux} and Theorem  \ref{thm:twistor
  minimal} the right normal of a Darboux
transform is given by the negative of the right normal of a minimal
surface in the right associated family:

\begin{cor}
Let $f: M \to \R^4$ be a minimal surface with right normal $R$.

Then the  right normal $R^\sharp$ of a $\mu$--Darboux
  transform $f^\sharp$ of  $f$ is given by
  a simple factor dressing of $-R$, and vice versa.
\end{cor}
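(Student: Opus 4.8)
The plan is to read $R^\sharp$ directly off the geometric description of $f^\sharp$ already established, and then to match it against the formula (\ref{eq:right normal dressing}) for the simple factor dressing of the right normal. By Theorem \ref{thm:asociated darboux}, $f^\sharp$ is the associated Willmore surface of the minimal surface $h = f_{\hat b,\hat a-1}$ in the right associated family of $f$; more precisely, as in the proof of that theorem, $f^\sharp = -\frac12(hR_h - h^*)$, where by (\ref{eq:right normal associated simplified}) the right normal of $h$ is $R_h = (\rho+R)R(\rho+R)\invers$ with $\rho = m\frac{i(1+\mu)}{1-\mu}m\invers$.

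First I would compute $R^\sharp$. Since $f^\sharp$ differs from the associated Willmore surface $h^\flat = hR_h - h^*$ only by the real scalar $-\frac12$, Theorem \ref{thm:twistor minimal} shows that the right normal of $f^\sharp$ is the negative of the right normal of $h$, that is,
\[
R^\sharp = -R_h = -(\rho+R)R(\rho+R)\invers\,.
\]
Comparing with (\ref{eq:right normal dressing}), and using that $\rho+R = R+\rho$, the expression $R_h$ is precisely the simple factor dressing $\breve R$ of the right normal $R$ of $f$ with parameters $(\mu,m)$. Thus $R^\sharp = -\breve R$.

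The one point that needs care — and the main obstacle — is the identification of $-\breve R$ with a simple factor dressing of $-R$ rather than of $R$. Here I would invoke the observation recorded just after (\ref{eq:a family}): since $A_{-R} = \frac14(*d(-R) + (-R)d(-R)) = Q$, the very family $d_\lambda$ used to dress $R$ is simultaneously the associated family of the harmonic map $-R$. Consequently the simple factor dressing of $-R$, taken with the same pole $\mu$ and the same parallel section $\beta = (R+\rho)m$, is produced by the identical dressed family $\breve d_\lambda = r_\lambda\cdot d_\lambda$; the only difference is that the harmonic map is now extracted in the convention adapted to $-R$, which differs from the one adapted to $R$ by a global sign. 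Hence the simple factor dressing of $-R$ equals $-\breve R$, and therefore $R^\sharp = -\breve R$ is the simple factor dressing of $-R$.

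For the converse I would run this correspondence backwards: given any simple factor dressing of $-R$, determined by parameters $(\mu,m)$, I set $\rho = m\frac{i(1+\mu)}{1-\mu}m\invers$ and form the $\mu$--Darboux transform $f^\sharp$ of $f$ with these parameters; by the computation above its right normal is exactly the prescribed dressing of $-R$. Since $(\mu,m)$ ranges over the same set in both constructions, this yields a bijective correspondence and establishes the ``vice versa'' statement. I expect the two initial steps to be routine given the earlier results, and the sign/convention bookkeeping in the third paragraph to be the only genuinely delicate part.
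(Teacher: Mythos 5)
Your proposal is correct and takes essentially the same route as the paper: the paper presents this corollary as an immediate consequence of Theorem \ref{thm:asociated darboux} and Theorem \ref{thm:twistor minimal} (the $\mu$--Darboux transform is, up to a real scale, the associated Willmore surface of $h=f_{\hat b,\hat a-1}$, so $R^\sharp=-R_h$ with $R_h=(\rho+R)R(\rho+R)\invers$ being exactly the dressing formula (\ref{eq:right normal dressing})). Your third paragraph simply makes explicit the sign--convention point the paper uses implicitly, namely the observation after (\ref{eq:a family}) that $d_\lambda$, being built from $Q=A_{-R}$, is the standard associated family of $-R$, so dressing $-R$ with the same data $(\mu,\beta)$ yields $\breve T\invers(-R)\breve T=-\breve R=R^\sharp$.
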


\section{Examples}

We conclude this paper by demonstrating some of our results for
well--known examples of minimal surfaces, including surfaces with one
planar end, the first Scherk surface and a punctured torus. In
particular, as we can control the periods and the end behaviour at
punctures of simple factor dressings by choosing appropriate
parameters, we obtain simple factor dressings which are minimal
surfaces with one planar end, doubly-periodic surfaces and minimal
puncture tori respectively.  Our first example is the catenoid for
which all computations can be done completely explicitly.

The images were implemented by using the software jReality and the
jTEM library of TU Berlin.

\subsection{The catenoid}

We consider the catenoid $f: \Cc\to \R^3$ in the conformal
parametrisation
\[
f(x,y) = i x + j \cosh x \, e^{-iy}\,,
\]
where $ z=x+ i y$ is the standard conformal coordinate on $\C$ with
$*dz= i dz$. 
The left and right normal of the catenoid are given by the  Gauss map
\[
R(x,y) = N(x,y) = \frac 1{\cosh x}( i \sinh x - j e^{-iy})\,.
\]
A conjugate surface is the helicoid 
\[
f^*(x,y) = iy + ji\sinh x \, e^{-iy} 
\]
and, identifying $z= x  + \ii y$,  we obtain the   holomorphic null curve
\[
\Phi(z) = f(z) + \ii f^*(z) = \begin{pmatrix} z \\ \cosh z \\- \ii \sinh z
\end{pmatrix}: \C \to \Cc^3
\,,
\]
and the Weierstrass data $g(z)=\frac{e^z-\ii}{e^z+\ii}$ and
$\omega=-\frac{\ii}2 e^{-z}(e^z+\ii)^2 dz$.  
The right  associated family $f_{p,q} = fp + f^*q$ is given (\ref{eq: fmn}) by
\[
f_{p,q}(x,y) = i(px + qy) + je^{-iy}(p\cosh x + i q\sinh x),
\quad p, q\in\H\,.
\]
\begin{figure}[H]
\includegraphics[height=4.5cm]{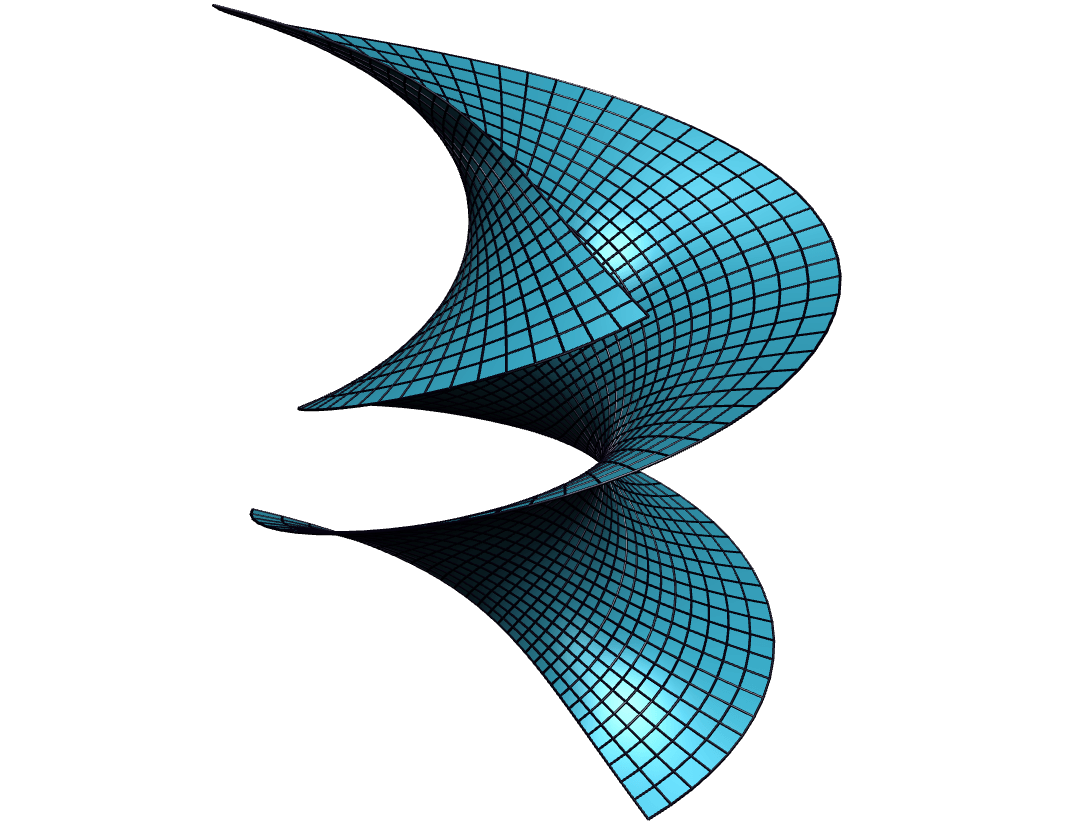} \quad
\includegraphics[height=4.5cm]{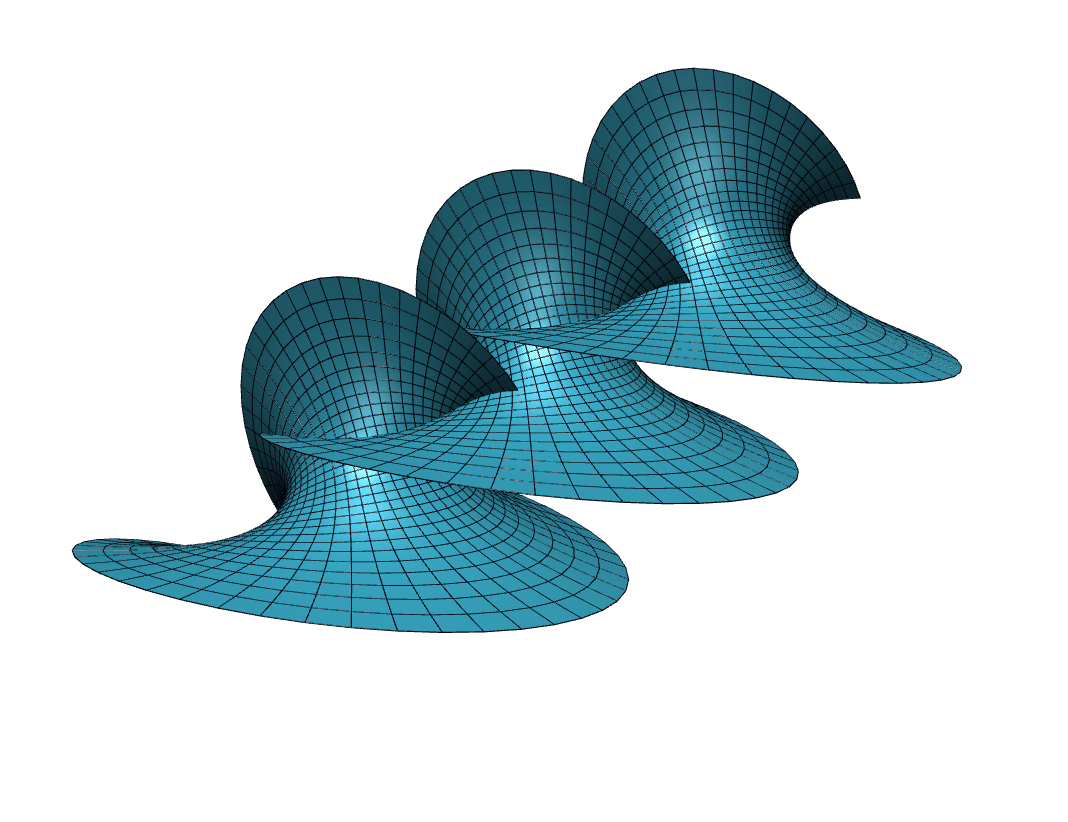}\\ \quad \\
\caption{Elements  $f_{\frac 2{\sqrt 6}, \frac 1{\sqrt 6} +\frac
    i{\sqrt 6}}$ and $f_{\frac 1{\sqrt 2}, \frac 1{2\sqrt 2}(1 +
    i-j-k)}$ of the right associated family of
  the catenoid, orthogonally projected into $\R^3$.}
\end{figure}
and the left associated family $f^{p,q} = p f + qf^*$  by 
\[
f^{p,q}(x,y) =(px + qy)i +(p\cosh x - q i\sinh x) je^{-iy},
\quad p, q\in\H\,.
\]

\begin{figure}[H]
\includegraphics[height=4.5cm]{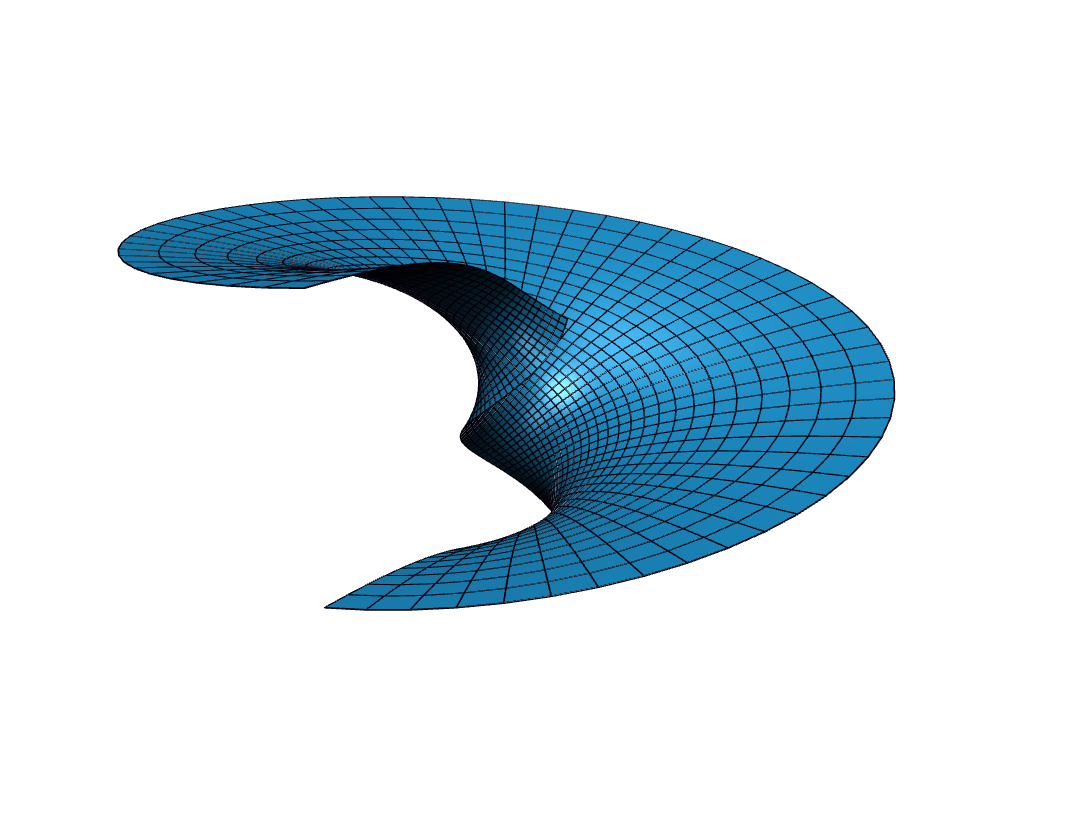}\quad
\includegraphics[height=4.5cm]{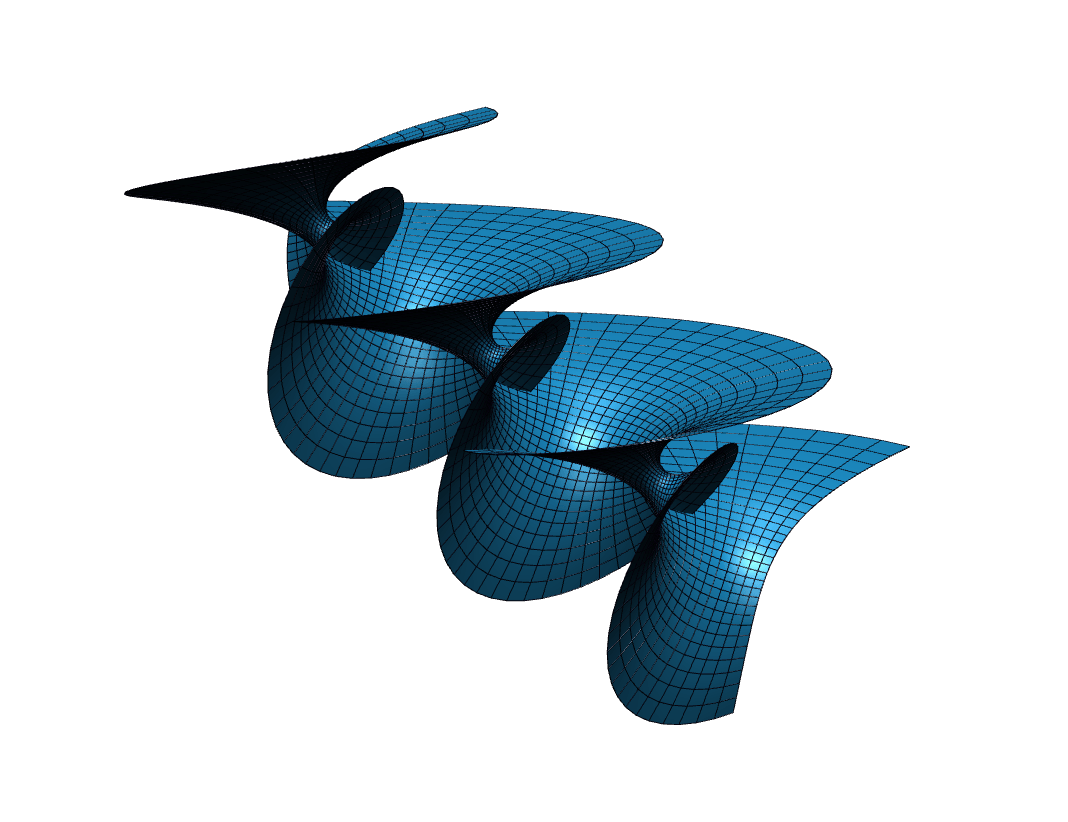} 
\caption{Elements  $f^{\frac 2{\sqrt 6}, \frac 1{\sqrt 6} +\frac
    i{\sqrt 6}}$ and $f^{\frac 1{\sqrt 2}, \frac 1{2\sqrt 2}(1 +
    i-j-k)}$ of the left associated family of
  the catenoid, orthogonally projected into $\R^3$.}
\end{figure}

The associated Willmore surface $f^\flat = fR -f^*$ of $f$ computes to
\begin{equation}
\label{eq: associated Willmore of catenoid}
f^\flat(x,y) = \frac 1{\cosh x}\big(\cosh x - x\sinh x - iy\cosh x + ji
xe^{-iy}\big)\,,
\end{equation}
and has right normal $R^\flat=-R$.
The associated Willmore surface
   $f^\flat$  is the twistor projection of the holomorphic curve 
\[
F^\flat(z)= \begin{pmatrix} ie^z(z-1)) \\ z+1 \\- ie^z \\1
\end{pmatrix}\,,
\]
where we used  that by  Theorem \ref{thm: twistor} and (\ref{eq:conformal Gauss map}) the line subbundle $E= F^\flat
  \C$ is given by
\[
E = \begin{pmatrix} f^\flat \\ 1
\end{pmatrix}
(1+R^\flat i)\C\,.
\]

\begin{figure}[H]
\includegraphics[height=4.2cm]{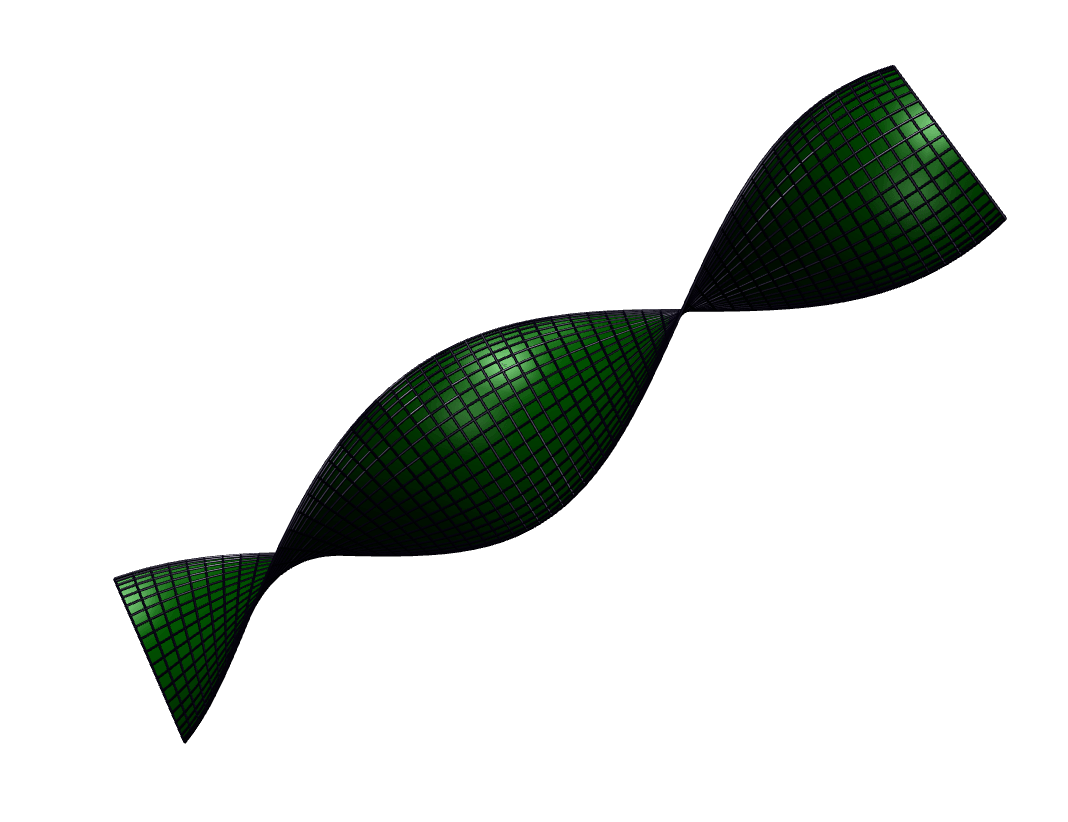}
\includegraphics[height=4.2cm]{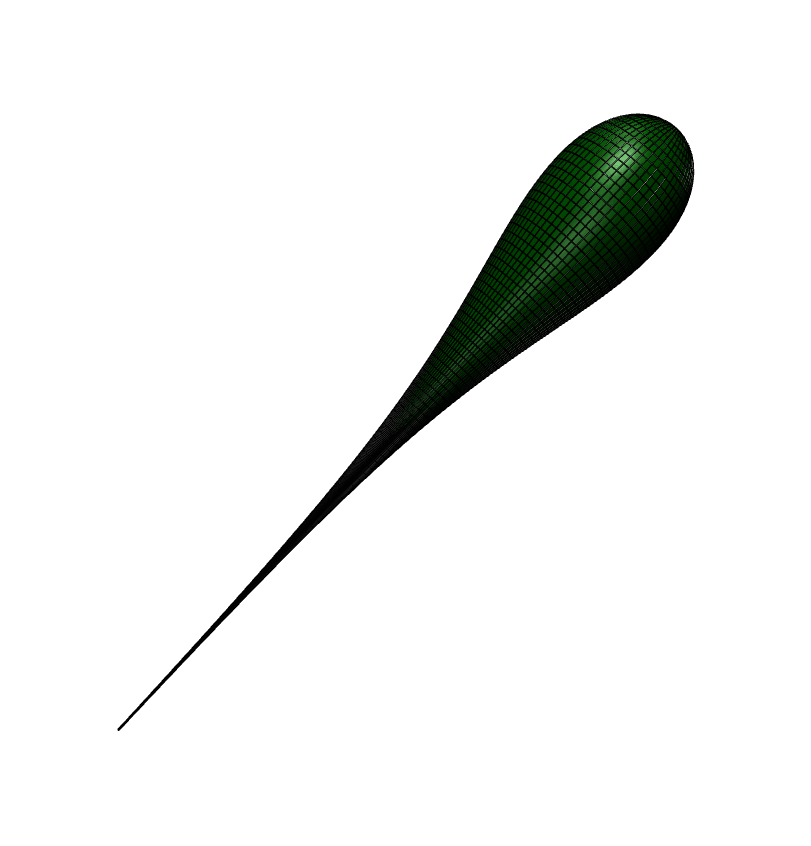}
\includegraphics[height=4.2cm]{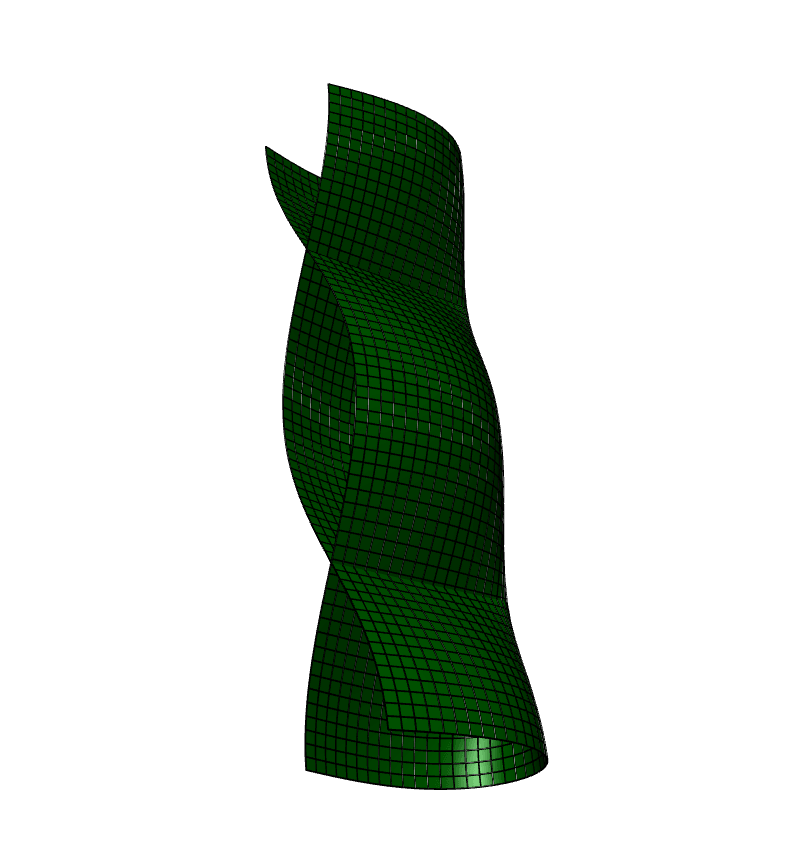}
\caption{The associated Willmore surface of the catenoid, various
  orthogonal projections  into $\R^3$.}
\end{figure}

The L\'opez-Ros deformation of the catenoid with parameter $\sigma =
e^{s+\ii t} \in\Cc_*$ is given, see Theorem \ref{thm:Goursat},  by
 \[
f_\sigma(x,y) = \begin{pmatrix}
\cos t (x\cosh s- \sinh x \sin y\sinh s) -\sin t (\cosh x\cos y \cosh s + y \sinh s) \\
\sin t (x\cosh s- \sinh x \sin y\sinh s) + \cos t (\cosh x\cos y \cosh s + y \sinh s) \\
\cosh x\sin y  
\end{pmatrix}\,.
\]

\begin{figure}[H]
\includegraphics[height=7cm]{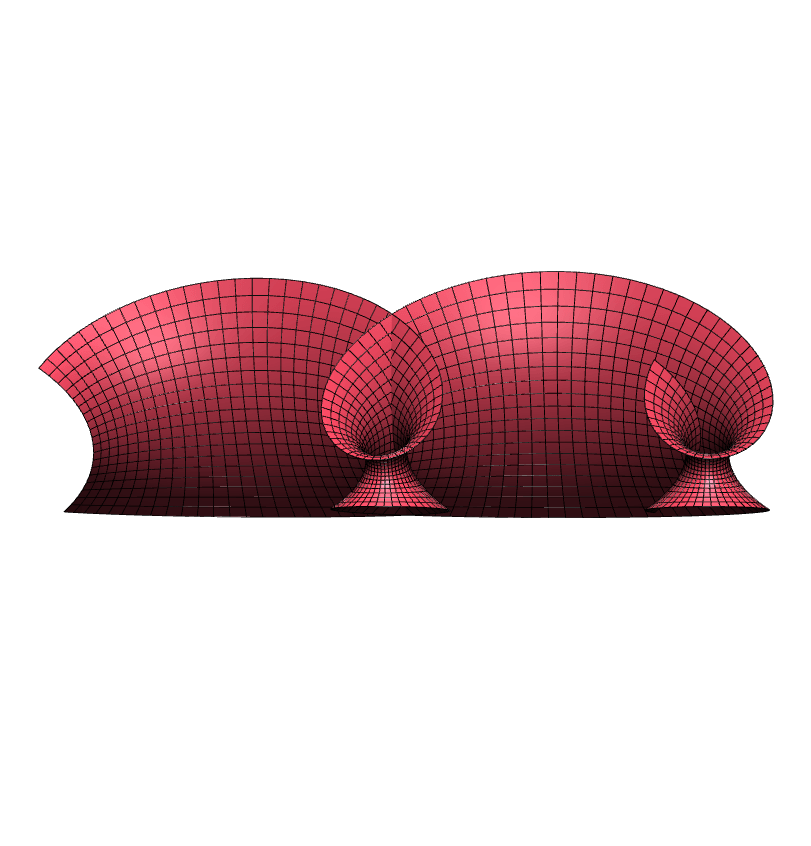}\quad
\includegraphics[height=7cm]{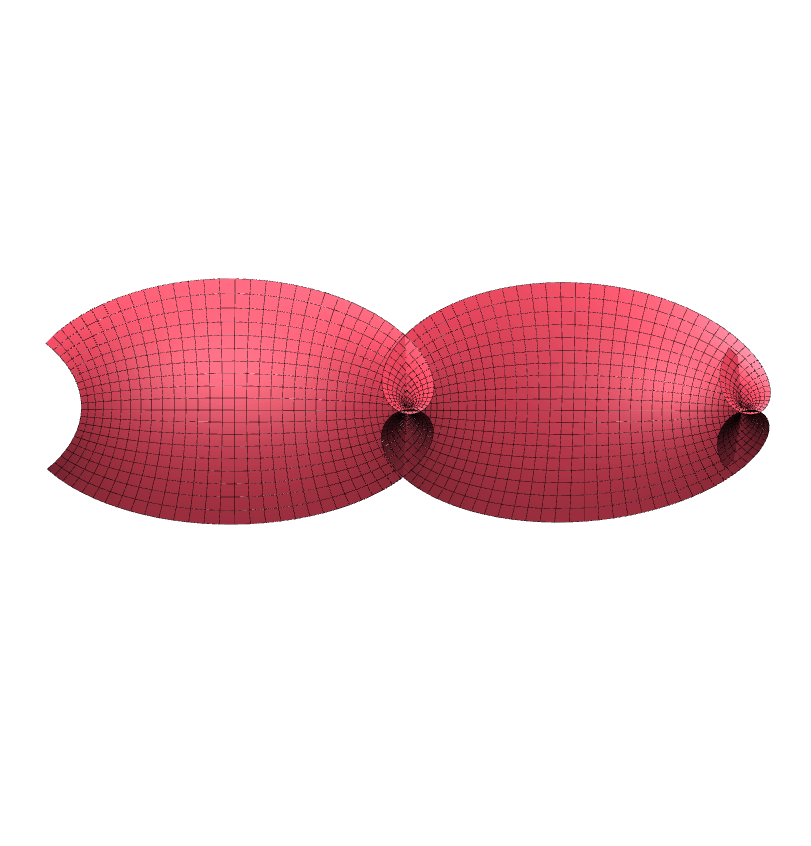}
\caption{The  L\'opez-Ros deformation of the catenoid with parameter
  $\sigma =2$ and $\sigma =4$.}
\label{fig:LRcatenoid}
\end{figure}

The periods of the simple factor dressing with parameter $\mu$  are given by
Corollary \ref{cor: periods in 3 space}:

\begin{lemma} The simple factor dressing $\hat f$ of the catenoid with
  parameters $(\mu,m, m)$ has translational periods 
\[
\hat f(x, y+2\pi) = \hat f(x,y) + \pi\left(i \hat b - \frac{\hat b}{\hat a-1} i (\hat a-1)\right)\,,
\]
where $\hat a = m \frac{\mu+\mu\invers}2m\invers, \hat b = m i
\frac{\mu\invers-\mu}2 m\invers$.

In particular, $\hat f(x, y+2\pi) = \hat f(x,y)$ if and only if
$m\in\C_*$ or $m\in \C_*j$ or $\mu\in S^1$. In this
case, $\hat f$ is a (reparametrised) catenoid.
\end{lemma}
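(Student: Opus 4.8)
The plan is to exploit that the simple factor dressing (\ref{eq:simple conformal}) is an $\H$--affine expression in $f$ and $f^*$ with \emph{constant} quaternionic coefficients, so that periods transform linearly. First I would record the periods of the catenoid and its conjugate directly from the parametrisations: since $e^{-i(y+2\pi)}=e^{-iy}$ we get $f(x,y+2\pi)=f(x,y)$, so the catenoid closes up, whereas $f^*(x,y+2\pi)=f^*(x,y)+2\pi i$. Setting $n=m$ in (\ref{eq:simple conformal}) and abbreviating $\hat a=mam\invers$, $\hat b=mbm\invers$, the formula reads $\hat f = -f\frac{\hat a-1}2 + f^*\frac{\hat b}2 - \frac{\hat b}{\hat a-1}\bigl(f\frac{\hat b}2 + f^*\frac{\hat a-1}2\bigr)$. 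Because the $f$--coefficients are constant and $f$ has trivial period, only the $f^*$--terms contribute; substituting $f^*\mapsto 2\pi i$ gives at once $\hat f(x,y+2\pi)-\hat f(x,y)=\pi\bigl(i\hat b-\frac{\hat b}{\hat a-1}i(\hat a-1)\bigr)$, which is the asserted period.

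For the closing condition I would set $\hat c=\frac{\hat b}{\hat a-1}=mcm\invers$ with $c=\frac{b}{a-1}=\frac{i(1+\mu)}{1-\mu}$ by (\ref{eq:b/ a-1 in mu}). Since $\hat b=\hat c(\hat a-1)$, the period rewrites as $\pi\,(i\hat c-\hat c i)(\hat a-1)$; as $\mu\neq1$ forces $a\neq1$ and hence $\hat a-1$ invertible, the period vanishes if and only if $\hat c$ commutes with $i$, i.e. $\hat c\in\C=\Span_\R\{1,i\}$, the centraliser of $i$ in $\H$. If $\mu\in S^1$ then $c\in\R$ and $\hat c=c\in\C$ automatically. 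If $\mu\notin S^1$ then $c\in\C\setminus\R$, so $\hat c=mcm\invers\in\C$ exactly when $mim\invers=\pm i$; writing out the (anti)commutation with $i$ shows $mim\invers=i\iff m\in\C_*$ and $mim\invers=-i\iff m\in\C_*j$. This yields precisely the stated trichotomy $m\in\C_*$, $m\in\C_*j$, or $\mu\in S^1$.

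It then remains to identify $\hat f$ with a catenoid in each case. The case $\mu\in S^1$ is immediate since $\hat f=f$ by Lemma~\ref{lem:simple factor with parameter}. By that lemma the dressing is unchanged under $m\mapsto mz$, $z\in\C_*$, so the case $m\in\C_*$ reduces to $m=1$ and (using $\lambda j=j\bar\lambda$) the case $m\in\C_*j$ reduces to $m=j$. For $m=1$ I would feed the coordinates $f_1=x$, $f_2=\cosh x\cos y$, $f_3=\cosh x\sin y$, $f_2^*=\sinh x\sin y$, $f_3^*=-\sinh x\cos y$ into (\ref{eq:sfd in r3 quaternion}); the hyperbolic and trigonometric addition formulas collapse the entries to $f^\mu=\bigl(x,\ \cosh(x+s)\cos(y+t),\ \cosh(x+s)\sin(y+t)\bigr)$, which is the catenoid reparametrised by $(x,y)\mapsto(x+s,y+t)$ and translated by $-s$ along its axis. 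For $m=j$ I would use $\hat f=\mathcal R_{j,j}\bigl((\mathcal R_{j,j}\invers f)^\mu\bigr)$ from Proposition~\ref{prop: SFD given by parameter mu}, noting that $\mathcal R_{j,j}$ fixes the $j$--axis and reverses $i$ and $k$; the same computation produces $\hat f=\bigl(x,\ \cosh(x-s)\cos(y-t),\ \cosh(x-s)\sin(y-t)\bigr)$, again a reparametrised catenoid.

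The period bookkeeping and the commutator identity are painless; the main obstacle is the last step, where one must genuinely carry out the addition--formula collapse and, in the $\C_*j$ case, track the reflection $\mathcal R_{j,j}$ carefully so that the axis direction and the signs inside $\cosh(x\pm s)$ and $\cos(y\pm t)$ come out consistent with an honest catenoid rather than some other member of the associated family.
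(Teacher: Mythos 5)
Your proof is correct, and it reaches the closing condition by a genuinely different route than the paper. The period formula itself is obtained identically in both: the paper also reads it off the affine expression (\ref{eq:simple conformal}) using $\Delta f=0$, $\Delta f^*=2\pi i$. The difference is the ``if and only if'': the paper first reduces to the parameter--$\mu$ dressing of the rotated surface $\tilde f=\mathcal R_{m,m}\invers f$ via Proposition \ref{prop: SFD given by parameter mu}, notes that $\tilde f$ closes while $\tilde f^*$ has period $2\pi m\invers i m$, and then invokes the general closing criterion of Corollary \ref{cor: periods in 3 space} (namely $\tau_1=0$ and $(\tau_2,\tau_3)=(\tau_3^*,-\tau_2^*)\tanh s$) to get $\mu\in S^1$ or $m\invers i m=\pm i$. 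You instead work directly with the period: factoring $\hat b=\hat c(\hat a-1)$ with $\hat c=mcm\invers$, $c=\frac{i(1+\mu)}{1-\mu}$, turns it into $\pi\,(i\hat c-\hat c i)(\hat a-1)$, and since $\hat a-1$ is an invertible quaternion for $\mu\neq 1$, closing becomes the centraliser condition $\hat c\in\C$, which you unwind via $c\in\R\iff\mu\in S^1$ and $mim\invers=\pm i\iff m\in\C_*$ or $m\in\C_*j$. This is more self-contained (it bypasses Corollary \ref{cor: periods in 3 space} and the rotation reduction for this step, at the cost of a small quaternionic commutator computation), whereas the paper's route recycles machinery it has already established. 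For the identification of $\hat f$ as a catenoid the arguments essentially coincide: $\mu\in S^1$ is trivial by Lemma \ref{lem:simple factor with parameter}, and $m\in\C_*$ reduces to $m=1$ and the explicit formula (\ref{eq:sfd in r3 quaternion}); the only variation is $m\in\C_*j$, which the paper dispatches with the parameter symmetry $(\mu,m,n)\mapsto(\bar\mu\invers,mj,nj)$ of Lemma \ref{lem:simple factor with parameter}, while you carry out the computation through $\mathcal R_{j,j}$ explicitly --- both yield $ix+j\cosh(x-s)e^{-i(y-t)}$, i.e.\ the dressing with parameter $\bar\mu\invers$, so the conclusions agree.
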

\begin{proof} 
Since $\hat f = \mathcal{R}_{m,m}((\mathcal{R}_{m,m}\invers(f))^\mu)$ is the   simple
factor dressing with parameters $(\mu, m,  m)$ it is enough to
investigate the periods of the simple factor dressing  $\tilde f^\mu$ with parameter
$\mu$ of the  minimal
surface $\tilde f = \mathcal{R}_{m,m}\invers f$.  Since $\tilde f(x, y+2\pi) =
\tilde f(x,y)$
and $\tilde f^*(x, y+2\pi) = \tilde f^*(x,y) + 2\pi m\invers i m$ we see by  
  Corollary \ref{cor: periods in 3 space} that the simple factor
  dressing  $\tilde f^\mu$ has
  vanishing periods if and only if   $\mu\in S^1$ or $m\invers i m = \pm i$,
that is, $m\in\C_*$ or $m\in \C_*j$. In the former case, the simple
factor dressing of $f$ is trivial. In the latter case  we see with Lemma
\ref{lem:simple factor with parameter} that $\hat f$ is
the simple factor dressing of $f$ with parameter $\mu$ or $\bar
\mu\invers$. But the simple factor dressing  of $f$ with parameter
$\mu$ is by (\ref{eq:sfd in r3 quaternion}) given by  
 \[
f^\mu(x,y) = ix + j \cosh(x+s)e^{-i(y +t)}
\]
where $s=-\ln|\mu|, t = \arg \frac{\bar
  \mu -1}{\bar \mu(1-\mu)}$.  Thus, for $m\in\C_*$ we see that $\hat f =
f^\mu$ is a reparametrisation of the catenoid. Using   again Lemma 
\ref{lem:simple factor with parameter}  we obtain also the case $m\in
\C_*j$. 

In the  case of general parameters $(\mu, m, m)$, $m\in\H_*$, we obtain with
 (\ref{eq:simple conformal}) that  
\[
\hat f(x, y+2\pi) =  \hat f(x, y) + \pi\left(i \hat b - \frac{\hat b}{\hat a-1} i (\hat a-1)\right)\,.
\]
\end{proof}

\begin{figure}[H]

\includegraphics[height=5cm]{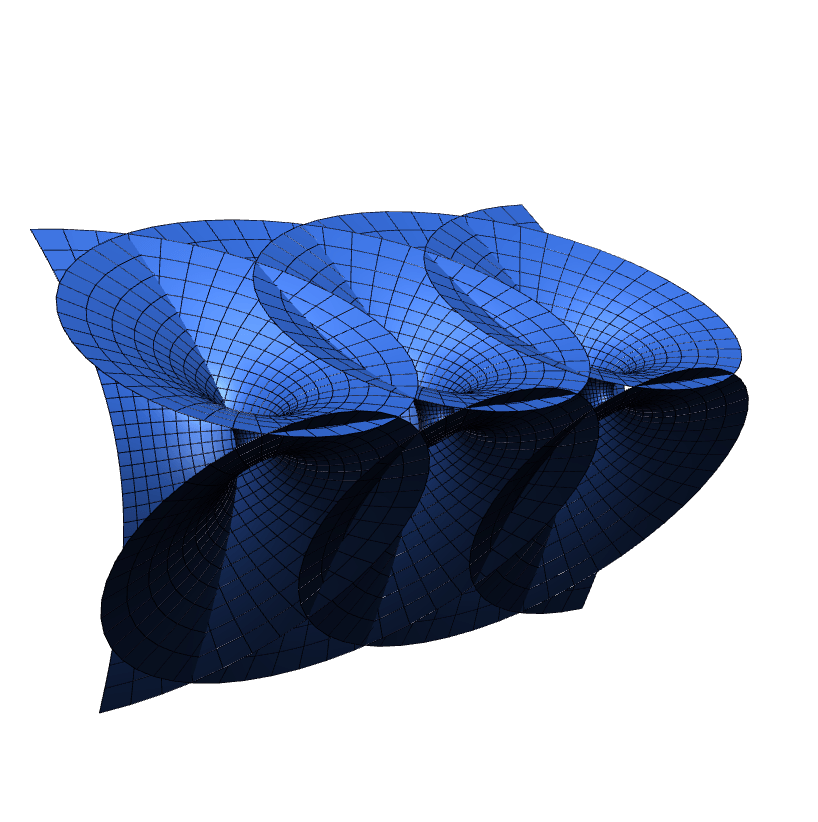}
\includegraphics[height=5cm]{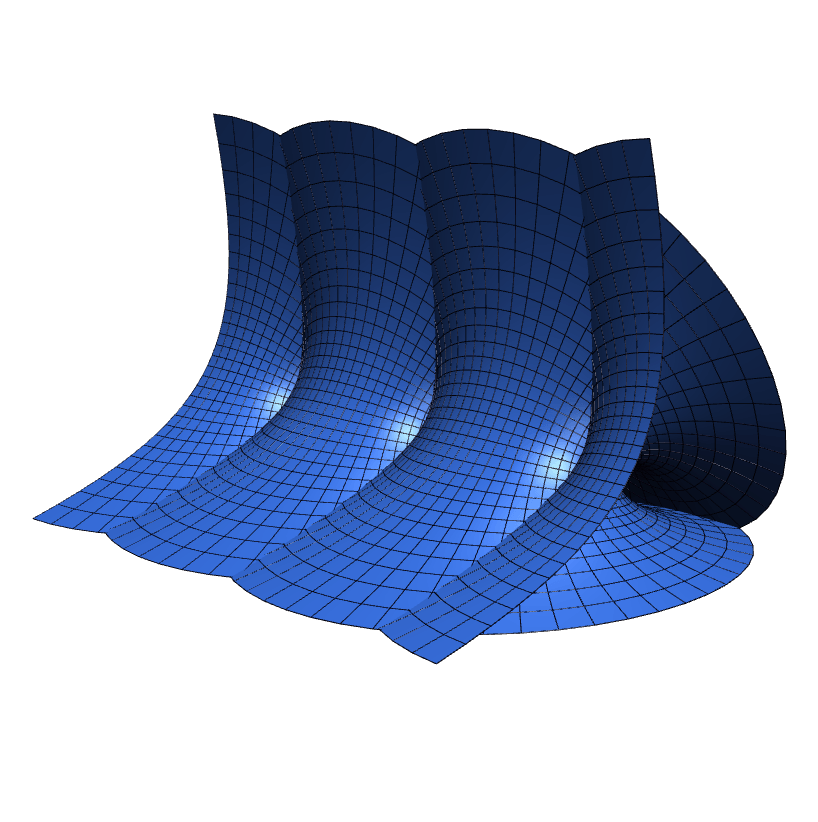}
\caption{Simple factor dressing of the catenoid with parameters
  $(-\frac i2, \frac 12(1+i-j-k))$.}
\label{fig: SFD catenoid}
\end{figure}

Thus, in general the simple factor dressing of a catenoid will have
translational periods. Although the resulting surfaces resemble
Catalan's surface (see Figure \ref{fig:LRcatenoid} and Figure \ref{fig: SFD catenoid})
 the simple factor dressing of a catenoid has by Corollary
 \ref{cor:complete}    no branch points.

If we allow the simple factor dressing to be a minimal surface in
$\R^4$ we obtain with Corollary \ref{cor:periods} further closed minimal surfaces: for example,
when choosing $m= \frac{1+k}2, n =\frac{ i-j}2$ then the simple factor dressing with
parameters $(\mu, m, n)$ gives a minimal immersion into $\R^4$ by
\[
\hat f(x,y) = ix  +  k \sin y \cosh x + (\sin y \sinh x \sinh s + j
\cos y \cosh x \cosh s)e^{-jt}\,,
\] 
where $s=-\ln|\mu|, t = \arg\frac{\bar \mu-1}{\bar\mu(1-\mu)}$. In
particular, we immediately see  that $\hat f(x, y +2\pi) = \hat f(x,y)$.

\begin{figure}[H]
\includegraphics[height=4.5cm]{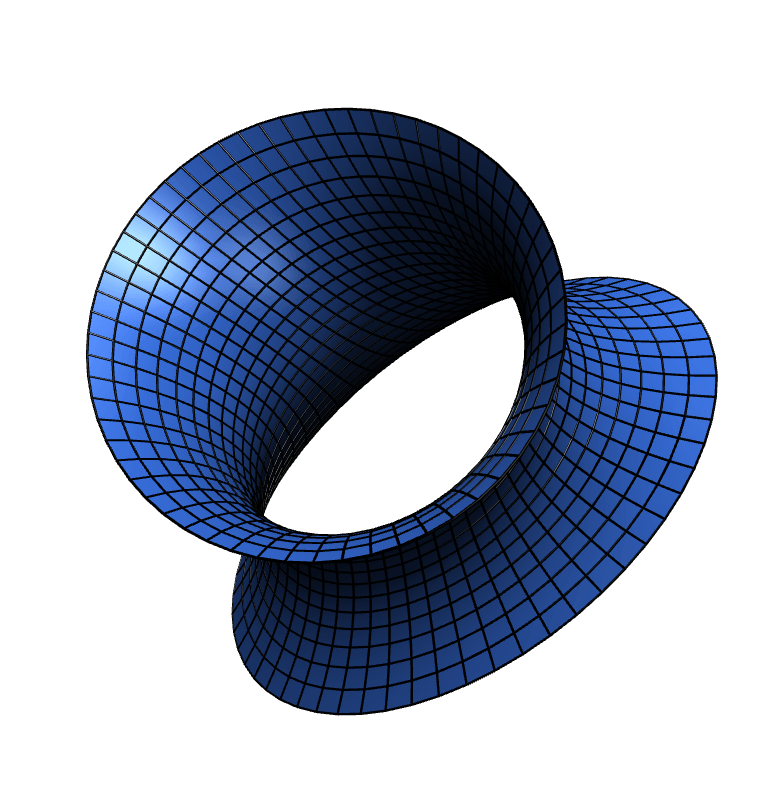}\quad
\includegraphics[height=4.5cm]{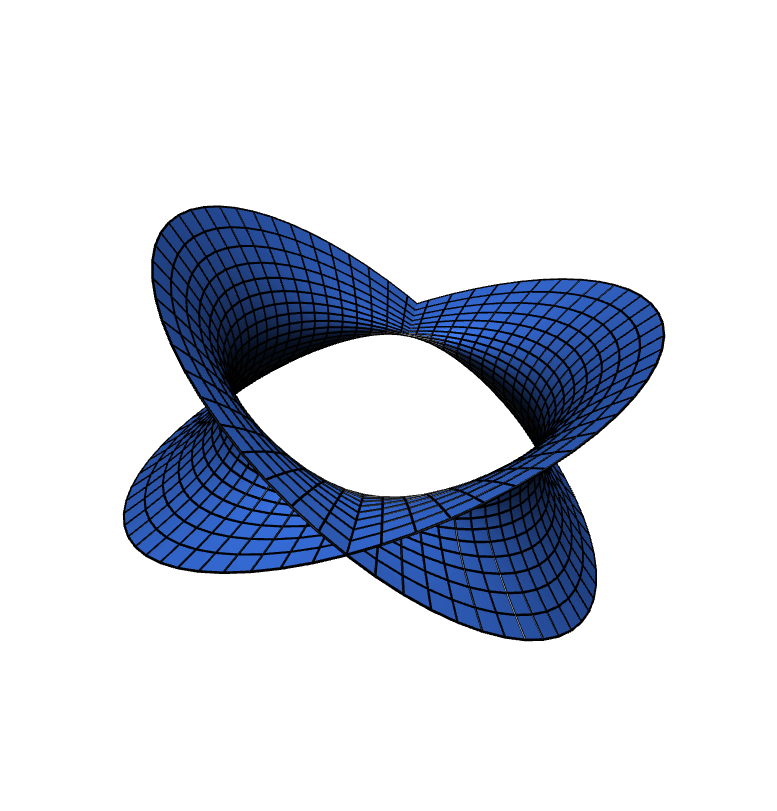}
\includegraphics[height=4.5cm]{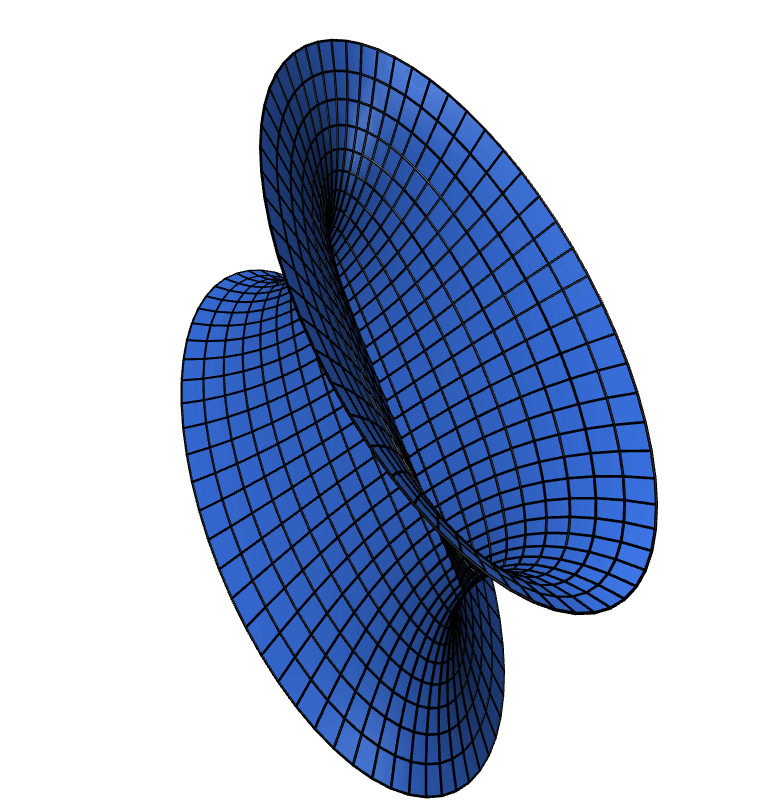}
\caption{Simple factor dressing of the catenoid with parameters
  $(-\frac i2, \frac{1+k}2, \frac{i-j}2)$, various orthogonal projections to $\R^3$.}
\end{figure}

Finally, we compute the $\mu$--Darboux transforms of the catenoid
which are given (\ref{eq: hat f}) by
\[
f^\sharp = (fR-f^*)(R+\rho)\invers
\]
where $\rho = m \frac{i(1+\mu)}{1-\mu}m\invers$ and $m\in\H_*$.  In
our case recall (\ref{eq: associated Willmore of catenoid}) that the associated Willmore surface $f^\flat=fR -f^*$
of the catenoid $f$ is
\[
f^\flat(x,y) =\frac 1{\cosh x}\big(\cosh x - x\sinh x - iy\cosh x + ji
xe^{-iy}\big)\,.
\]
For $\mu\in S^1$ we have $\rho =m\frac{i(1+\mu)}{1-\mu}m\invers \in\R$ and
$(R+\rho)\invers = \frac 1{1+\rho^2}(\rho-R)$. Therefore, we can
simplify in this case
\[
f^\sharp(x,y) = 
\frac 1{1+\rho^2}\left(\rho -(x\rho + y)\tanh x +  i(x- y\rho - \tanh
  x) + \frac{je^{-iy}}{\cosh x}(1+i(y + \rho x))\right)\,.
\] 

\begin{figure}[H]
\includegraphics[height=4.5cm]{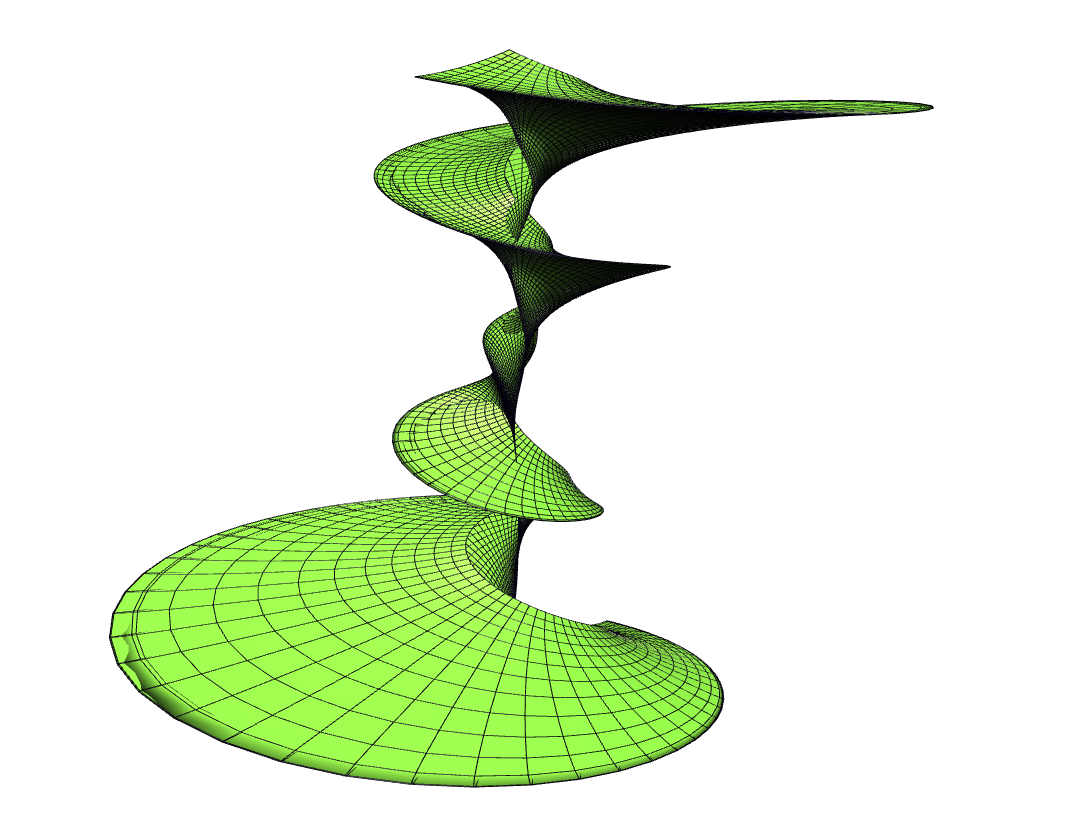}
\includegraphics[height=4.5cm]{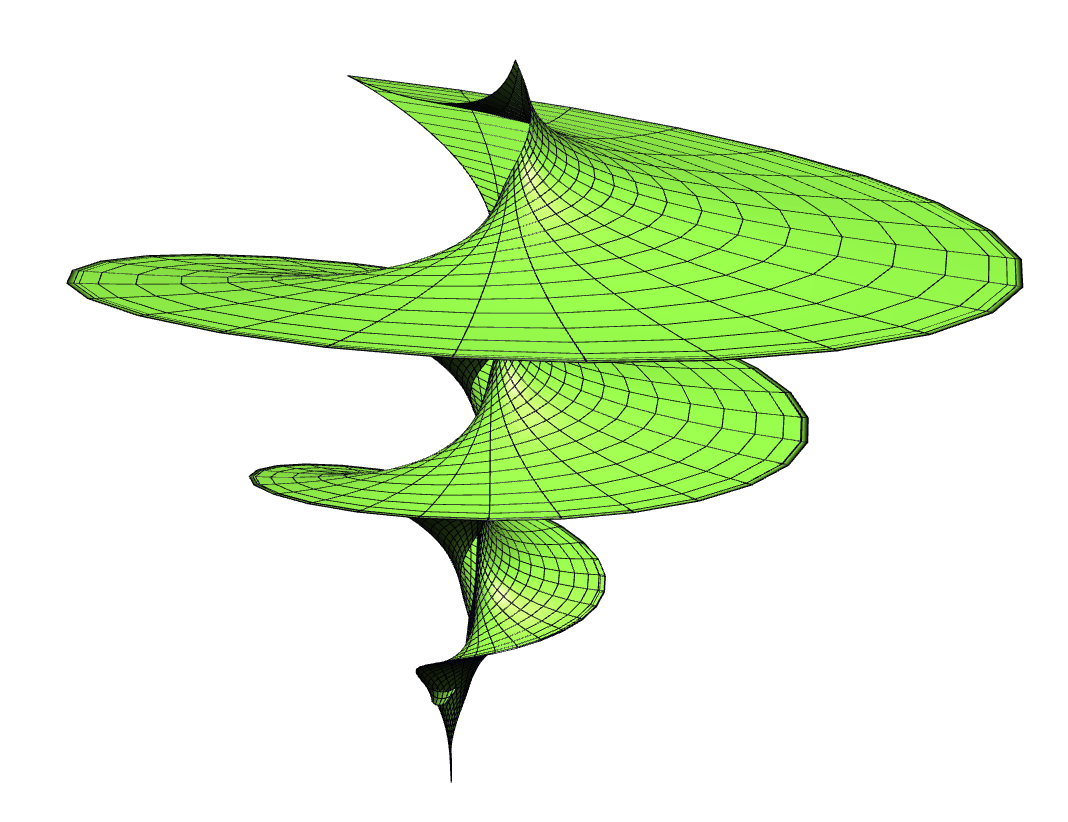}
\caption{$\mu$--Darboux transforms of the catenoid with
  $\mu=-\frac i2$, $m= \frac 12(1+i-j-k)$ and $\mu=i, m=1$,
  orthogonally projected into $\R^3$.}
\end{figure}


\subsection{Explicit examples with one planar end}

We will now consider examples, \cite{karcher},    of minimal surfaces with one planar
end given by the Weierstrass data $g(z) = z^{l+1}, dh =
z^{l-1} dz$, $l\in\N$, that is,
$
f(x,y) = \Re\Phi(z), \, z=x+iy\in\C_*\,,
$ 
where the holomorphic null curve $\Phi$ is given, see
(\ref{eq:Weierstrass representation}), as
\[
\Phi(z) =\left(\frac12(-\frac 1z - \frac{z^{2l+1}}{2l+1}), \frac
\ii 2(-\frac 1z +  \frac{z^{2l+1}}{2l+1}), \frac{z^l}l\right): \C_*
\to \Cc^3\,.
\]

Here we identify as before $z = x + \ii y$. Indeed, by Theorem
\ref{thm:FTCend}  the immersion  $f$ has a planar
end at the puncture $z=0$ since $\ord_{z=0} d\Phi = -2$ and the
residue of $d\Phi$ at $z=0$ vanishes.

\begin{figure}[H]
\includegraphics[height=4cm]{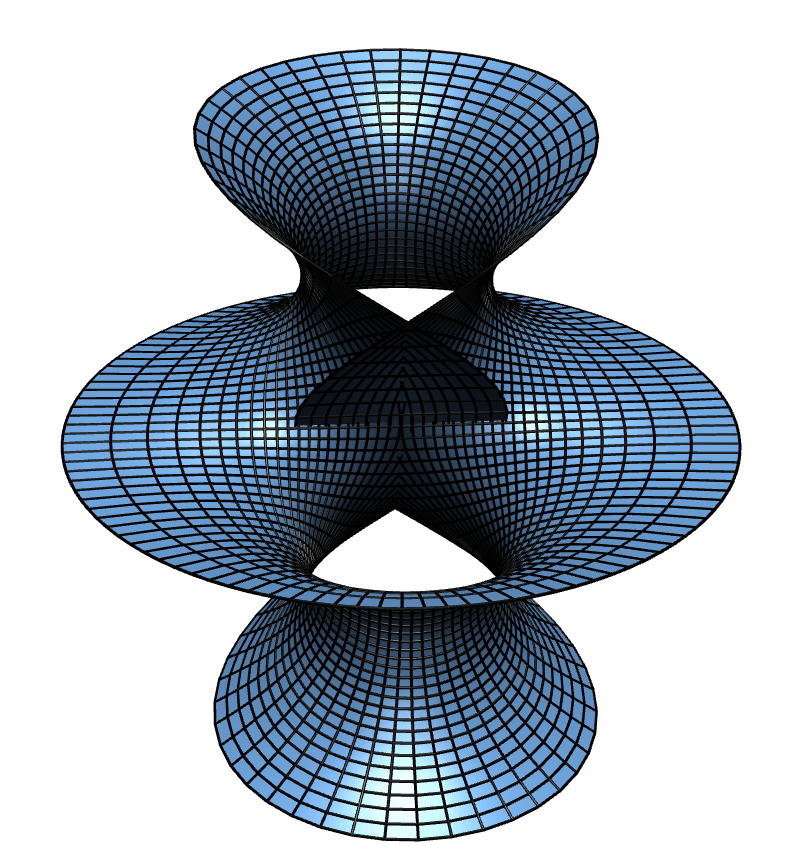} \qquad \qquad
\includegraphics[height=3.5cm]{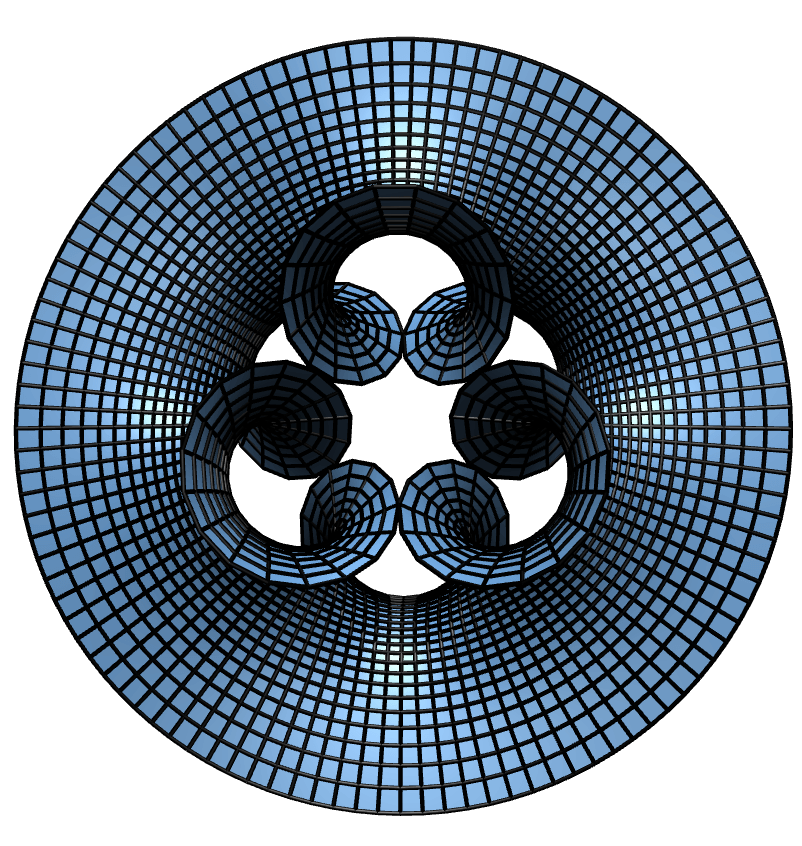} \\ \quad \\ 
\caption{Minimal surfaces with one planar end, $l=1$ and $l=3$.}
\end{figure}
Then  the  conjugate surface
\[
f^*(x,y) = \Im\Phi(z)
\]
is single--valued on $\C_*$, and so are the left and right
associated family,  

\begin{figure}[H]
\includegraphics[height=3.5cm]{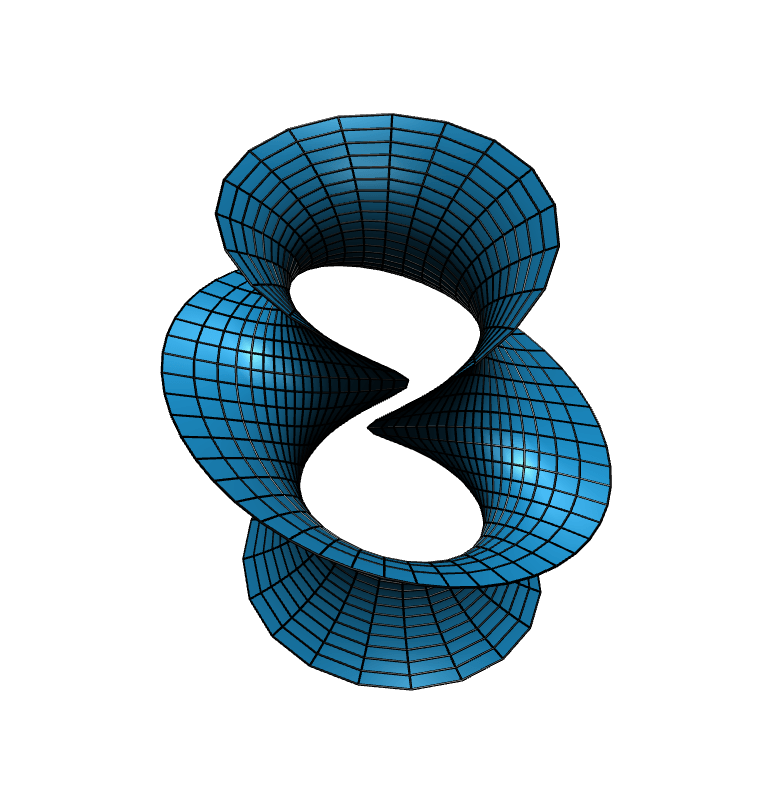}
\includegraphics[height=3.5cm]{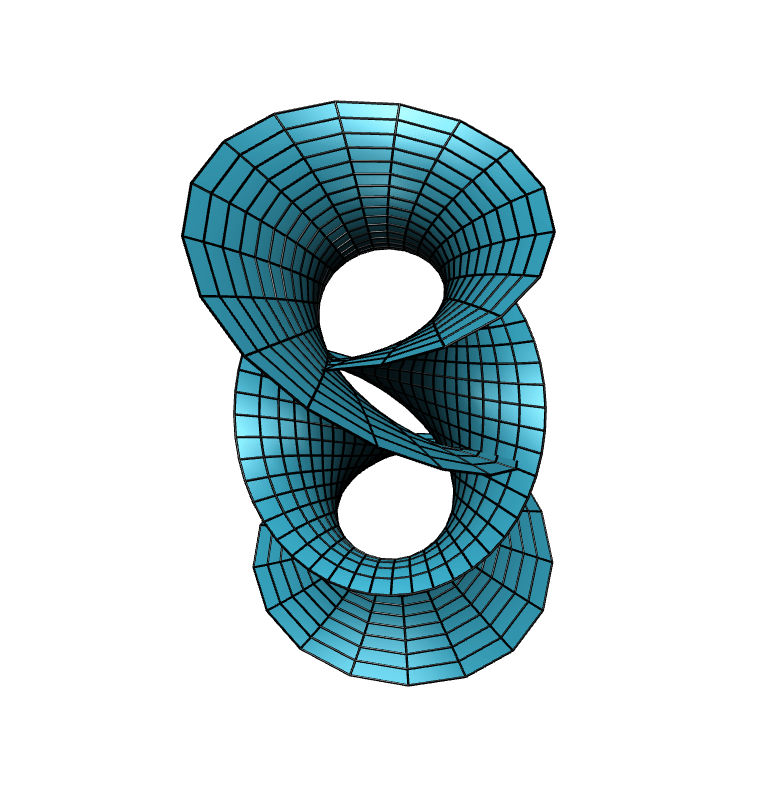} \\
\includegraphics[height=3.5cm]{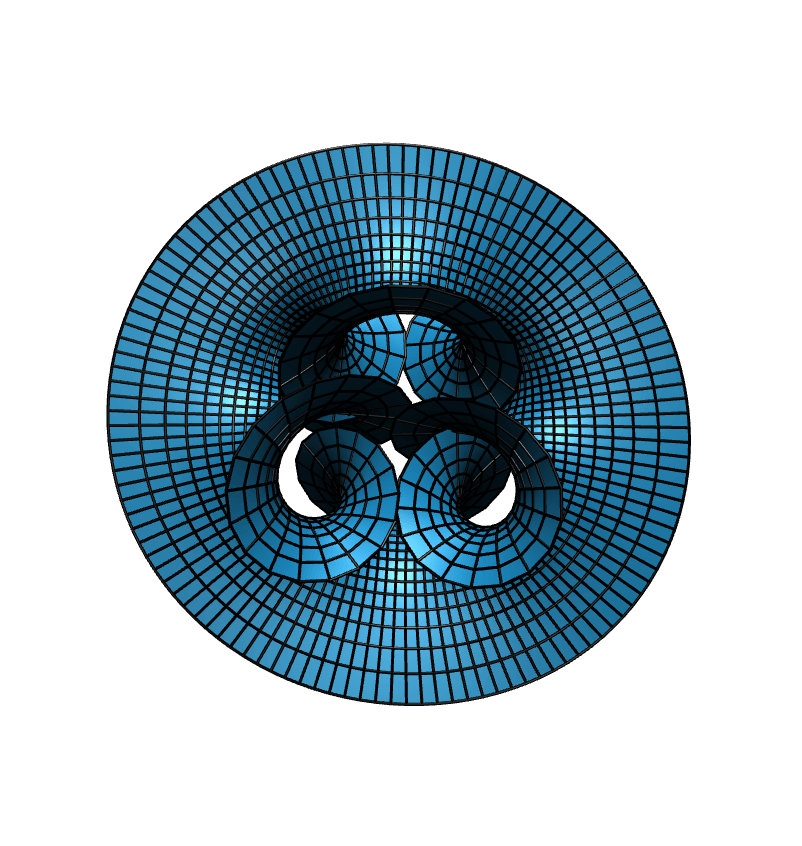}
\includegraphics[height=3.5cm]{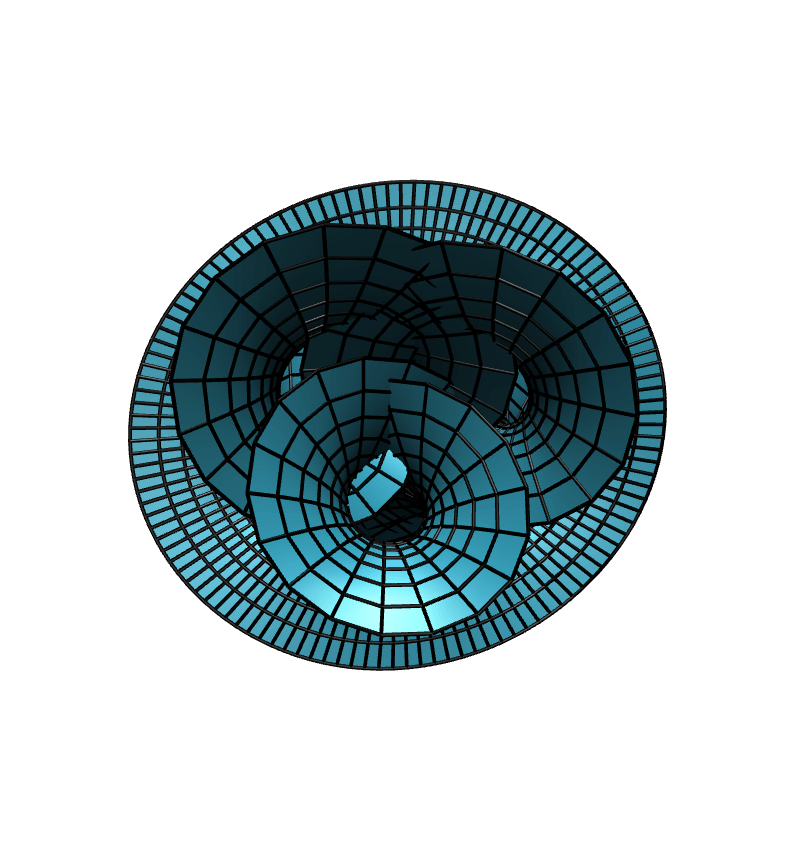}
\caption{Elements   $f^{\frac 1{\sqrt 7}, \frac 2{\sqrt {7}} +\frac
    j{\sqrt 7} - \frac k{\sqrt 7}}$ and $f_{\frac 1{\sqrt 7}, \frac 2{\sqrt {7}} +\frac
    j{\sqrt 7} - \frac k{\sqrt 7}}$  of the left and right associated family of
  a minimal surface with one planar end, $l=1$ and $l=3$, orthogonally projected into $\R^3$.}
\end{figure}

the associated Willmore surface,
\begin{figure}[H]
\includegraphics[height=5.5cm]{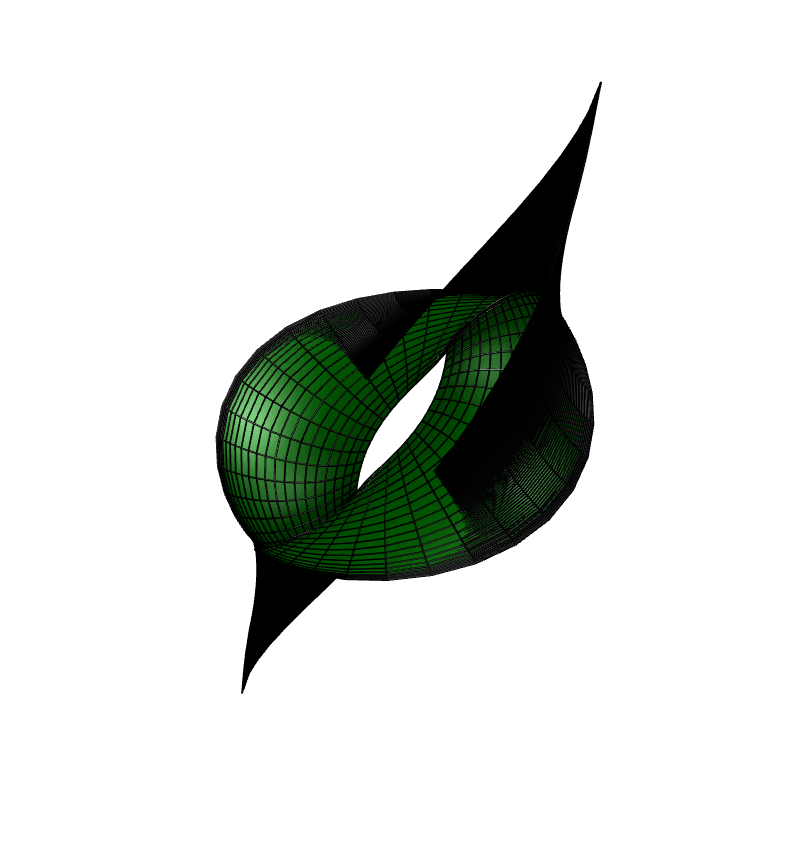}
\includegraphics[height=5.5cm]{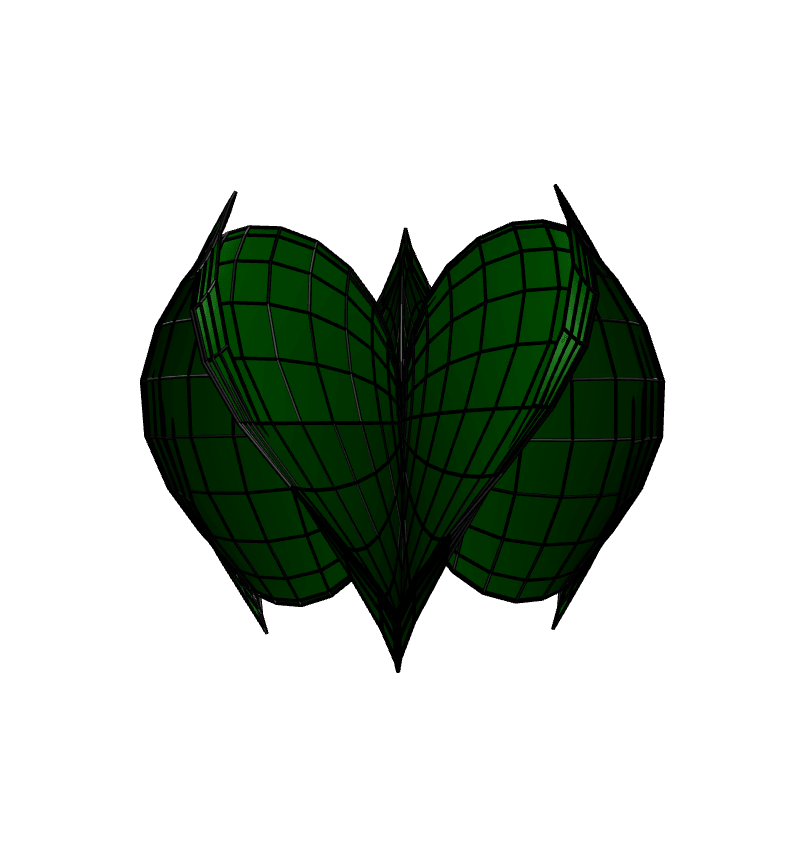}
\caption{The associated Willmore surface of a minimal surface with one
  planar end,  $l=1$ and $l=3$, orthogonally projected into $\R^3$}
\end{figure}
and the $\mu$--Darboux transforms of $f$. 
\begin{figure}[H]
\includegraphics[height=5.5cm]{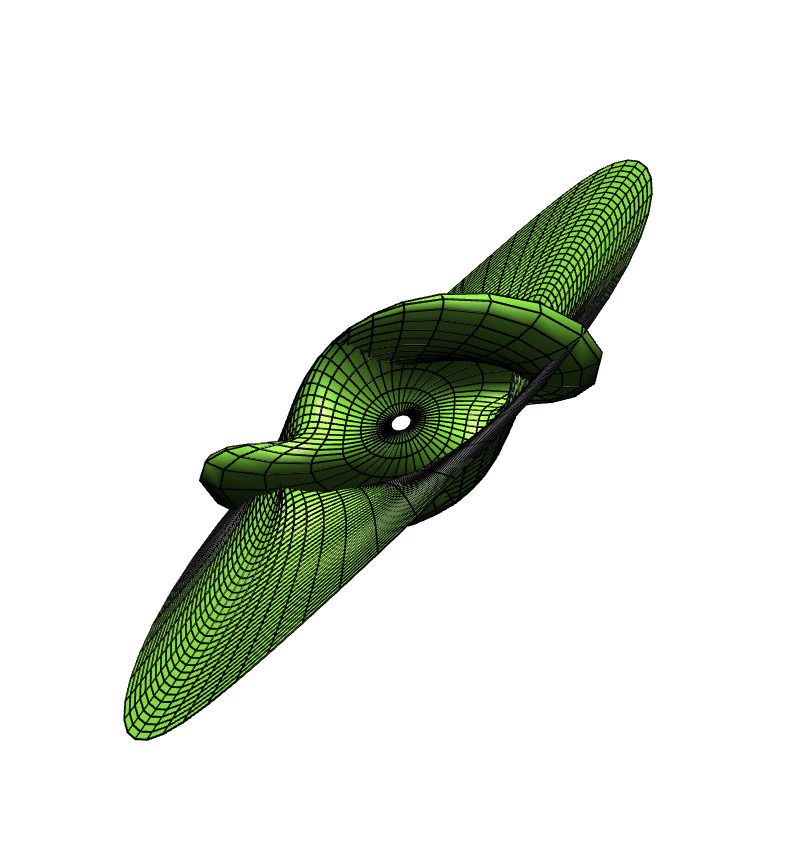}
\includegraphics[height=5.5cm]{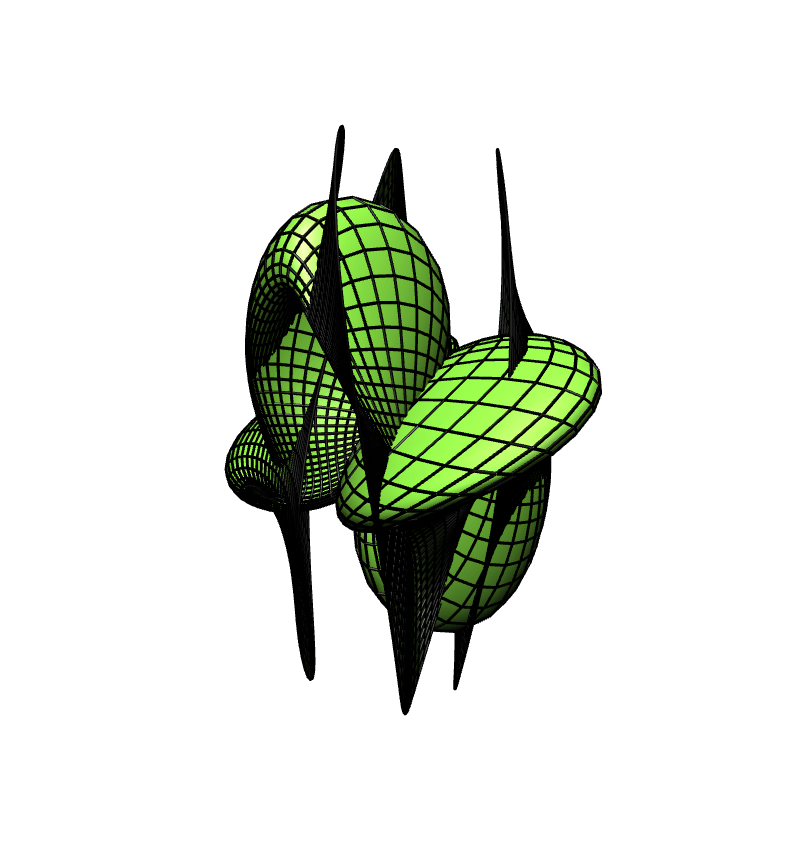}
\caption{A $\mu$--Darboux
  transform with $\mu=-\frac i2, m=1$, of a minimal surface with one
  planar end,  $l=1$ and $l=3$, orthogonally projected into $\R^3$.}
\end{figure}
We  discuss the  simple
factor dressings of $f$ with parameter $\mu$ in more detail for the
case $l=1$, that is, 
\[
f(x,y) = -\frac{ix}2\left(\frac 1{x^2+y^2}+ \frac{x^2-3y^2}3\right) -
\frac{jy}2\left(\frac 1{x^2+y^2}+ \frac{3x^2-y^2}3\right) + kx, \quad
(x,y)\not=(0,0), 
\]
with Gauss map, using (\ref{eq:Gauss with g}),
\[
N(x,y) =\frac1{1 +(x^2+y^2)^2}(2i(x^2-y^2)+ 4jxy +
  k((x^2+y^2)^2-1),  \quad
(x,y)\not=(0,0), 
\]
and conjugate minimal surface
\[
f^*(x,y) = \frac{iy}2\left(\frac 1{x^2+y^2}+ \frac{y^2-3x^2}3\right) -
\frac{jx}2\left(\frac 1{x^2+y^2}+ \frac{3y^2-x^2}3\right) + ky,  \quad
(x,y)\not=(0,0)\,.
\]
The simple factor dressing $f^\mu$ with parameter $\mu$ is given (\ref{eq:sfd in r3 quaternion}) by
\begin{eqnarray*}
f^\mu(x,y) &=&  -\frac{ix}2\left(\frac 1{x^2+y^2}+ \frac{x^2-3y^2}3\right)
\\ &&
+ \Big\{- jy(\frac{1}2\left(\frac 1{x^2+y^2}+
  \frac{3x^2-y^2}3\right)\cosh s  + \sinh s)  \\\
&&\quad  + kx(\cosh s 
-
\frac{1}2\left(\frac 1{x^2+y^2}+ \frac{3y^2-x^2}3\right)\sinh s)\Big\}e^{-it}\,, \quad
(x,y)\not=(0,0)\,,
\end{eqnarray*}
where $s=-\ln|\mu|, t= \arg\frac{\bar \mu-1}{\bar\mu(1-\mu)}$.
From Theorem \ref{thm:SFDends}  we know that $f^\mu$ is a minimal surface with a
planar end at the puncture $(x,y) =(0,0)$. 

We recall that $\mu = \frac{1-e^{-(s+it)}}{1-e^{s-it}}$ so that $\rho
  =i \frac{1+\mu}{1-\mu} = i\frac{1+e^{2s}-2e^{s+it}}{e^{2s} -1}$ .
  Since $N^\mu = (N+\rho)N(N+\rho)\invers$ and $\lim_{(x,y)\to
    (0,0)}N(x,y) =-k$ we therefore see that
\[
\lim_{(x,y)\to (0,0)} N^\mu(x,y) = (k - \rho)k(\rho -k)\invers=   \frac
1{\cosh(s)}(i \sinh (s) - k e^{-it}).
\]
In particular,  the end near the puncture  $(x,y)=(0,0)$ is asymptotic to the plane spanned by $(i+k\sinh s
e^{-it})$ and $je^{-it}$. 
\begin{figure}[H]
\includegraphics[height=6.5cm]{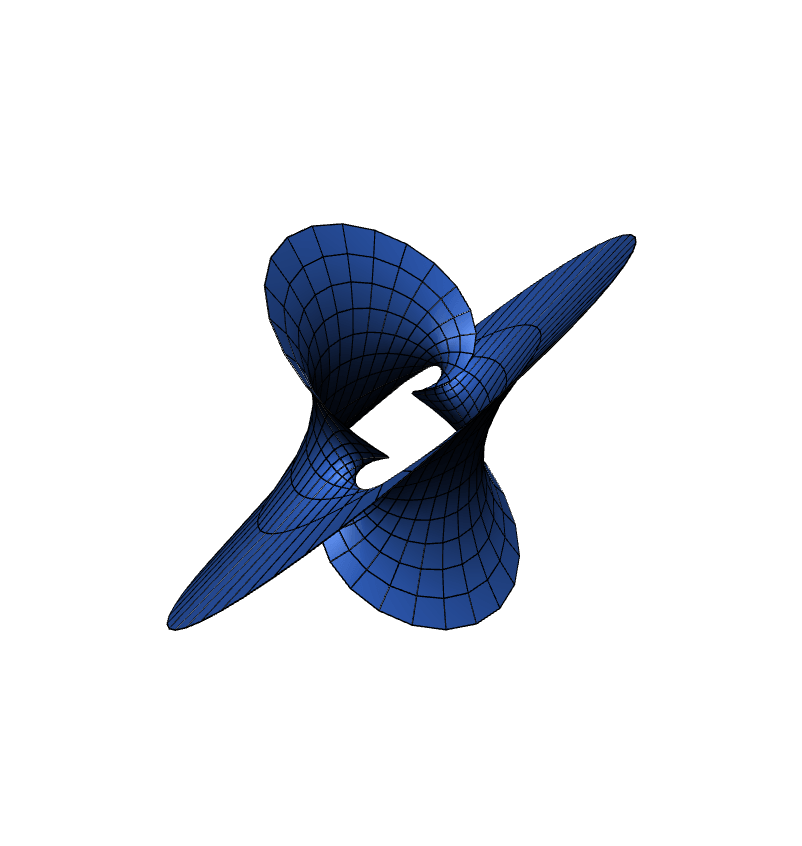}
\includegraphics[height=6.5cm]{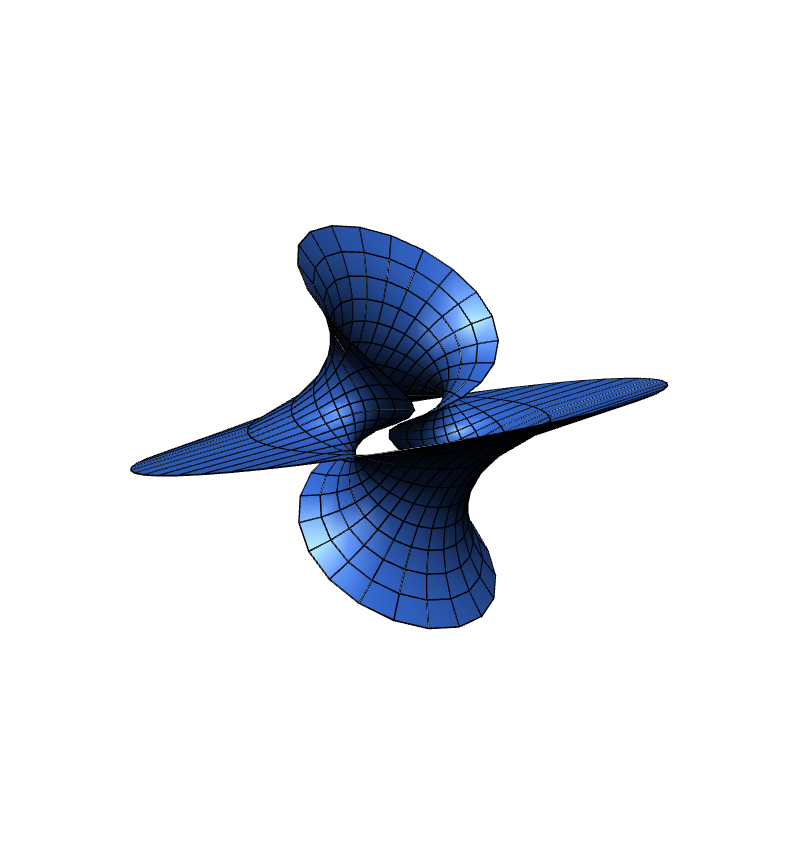}
\caption{Simple factor dressing of a minimal surface with one planar
  end, $l=1$, with parameters
  $\mu=-\frac i2$ and $\mu=-\frac 12+\frac i2$.}
\end{figure}

By Theorem \ref{thm:Goursat} the L\'opez--Ros deformation of $f$ with
parameter $\sigma =e^{s+it}\in\Cc_*$ is
\[
f_\sigma = \begin{pmatrix}
x\cos t  \,\left(- \frac 1{x^2+y^2} e^{-s}  +
\frac{3y^2-x^2}3e^s\right) - y  \sin t \,\left(- \frac 1{x^2+y^2} e^{-s} +
\frac{y^2-3x^2}3e^s \right) \\[.3cm]
x\sin t \, \left(- \frac 1{x^2+y^2} e^{-s} +
\frac{3y^2-x^2}3e^s\right)  + y \cos t \, \left(- \frac 1{x^2+y^2} e^{-s} +
\frac{y^2-3x^2}3e^s\right) \\[.3cm]
x
\end{pmatrix}\,.
\]
$f_\sigma$ has a vertical planar end at the puncture $z= x+iy
=0$. From the holomorphic null curves  of $f$ and $f_\sigma$,
$\sigma\not=1$,  we see
that $f_\sigma$ is not a reparametrisation of $f$.
\begin{figure}[H]
\includegraphics[height=6cm]{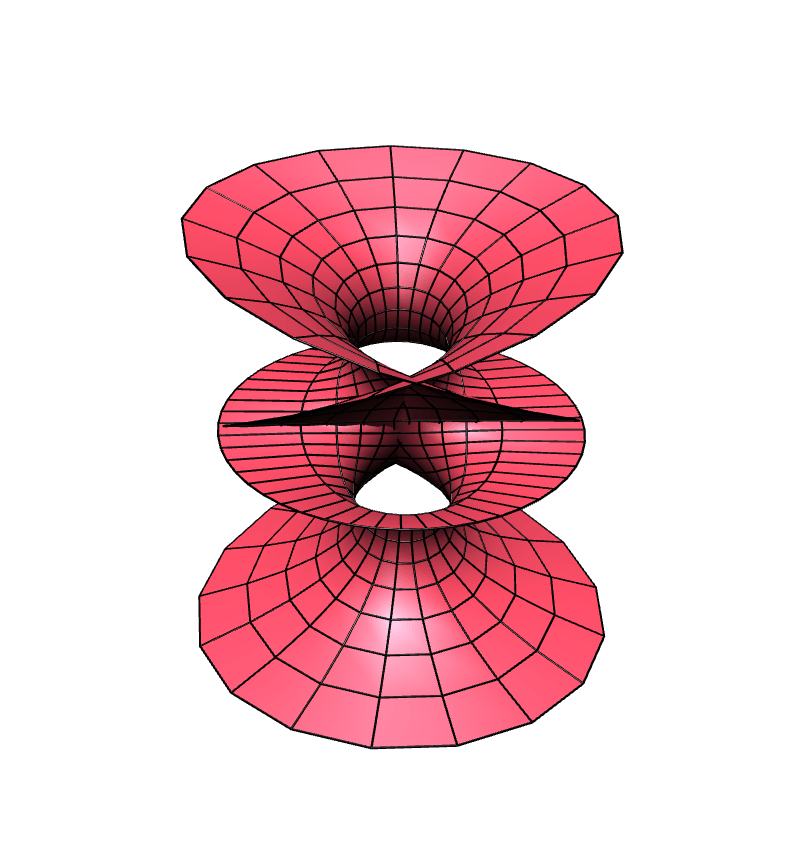}
\includegraphics[height=6cm]{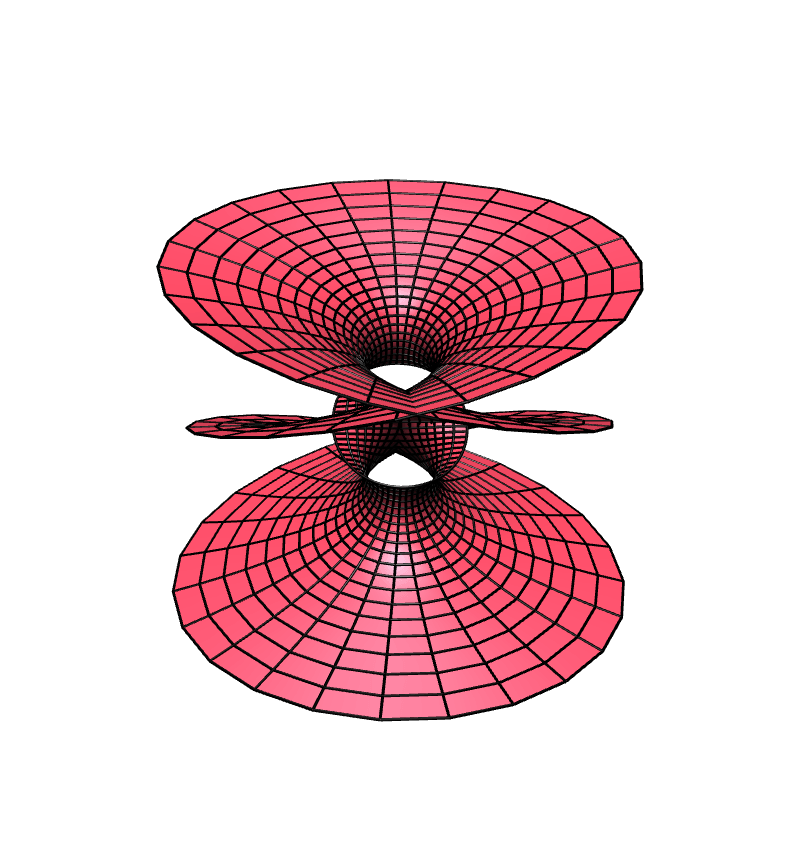}
\caption{L\'opez--Ros deformation of a minimal surface with one planar
  end, $l=1$, with parameters $\sigma =2$ and $\sigma=7$.}
\end{figure}

Finally, we include some pictures of the simple factor dressing for
more general parameters. Note that the surfaces are
single--valued for all parameters $(\mu, m, m)$, and have a planar end
at $z=0$.

\begin{figure}[H]
\includegraphics[height=6cm]{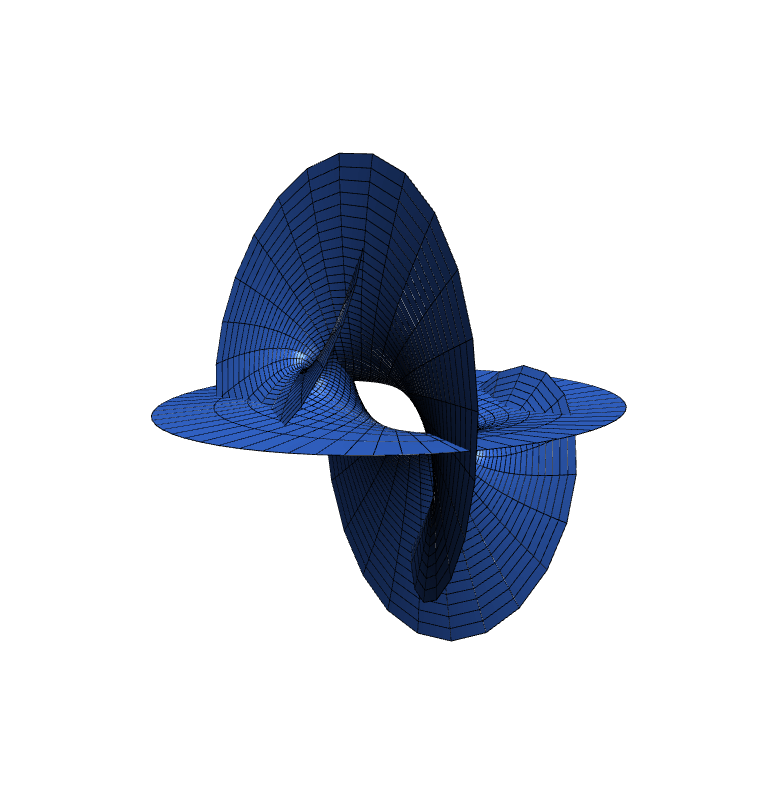}
\includegraphics[height=6cm]{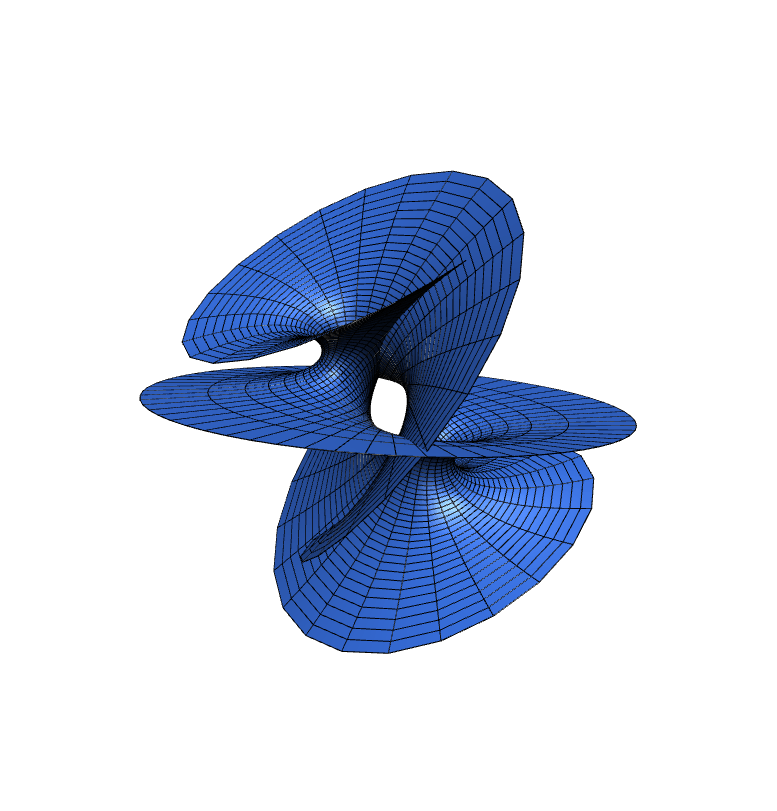}
\caption{Simple factor dressing of a minimal surface with one planar
  end, $l=1$, with parameters $(\mu, m, m)$ with
  $\mu= -\frac i2$ and $\mu=  -\frac 12 + \frac i2$, $m= \frac 12-k$.}
\end{figure}


\subsection{Scherk surfaces}

We will now consider the first Scherk surface given by
the Weierstrass data $g(z) = z, \omega(z) = -\frac{4}{z^4-1}dz$, that
is, $f =\Re\Phi$ is the real part
of the  (multi--valued)  holomorphic null curve
\[
\Phi(z) =(\ii\log\frac{z+\ii}{z-\ii}, \ii\log \frac{z+1}{z-1},
\log\frac{z^2+1}{z^2-1})\,, z\in\C\setminus\{\pm 1, \pm i\}\,,
\] 
identifying again $z= x+ \ii y$.

\begin{figure}[H]
\includegraphics[height=3.8cm]{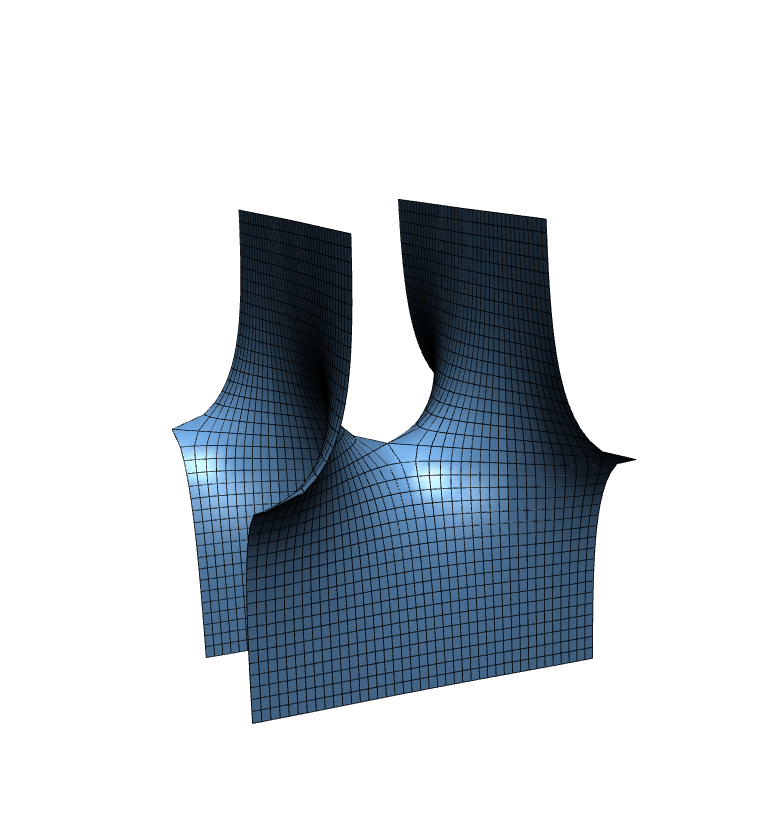}
\includegraphics[height=3.8cm]{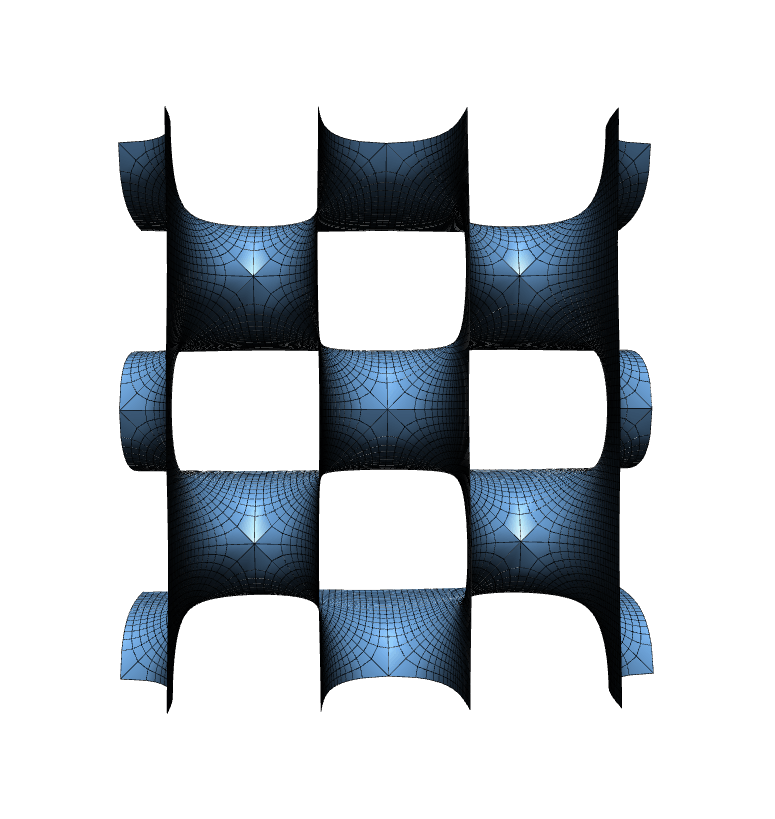}
\includegraphics[height=3.8cm]{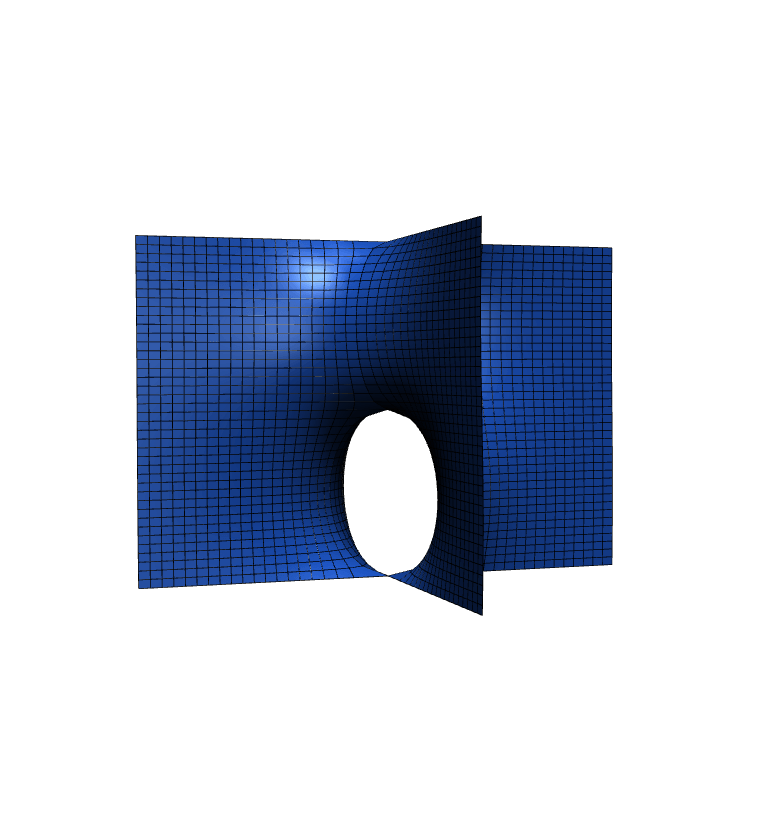}
\includegraphics[height=3.8cm]{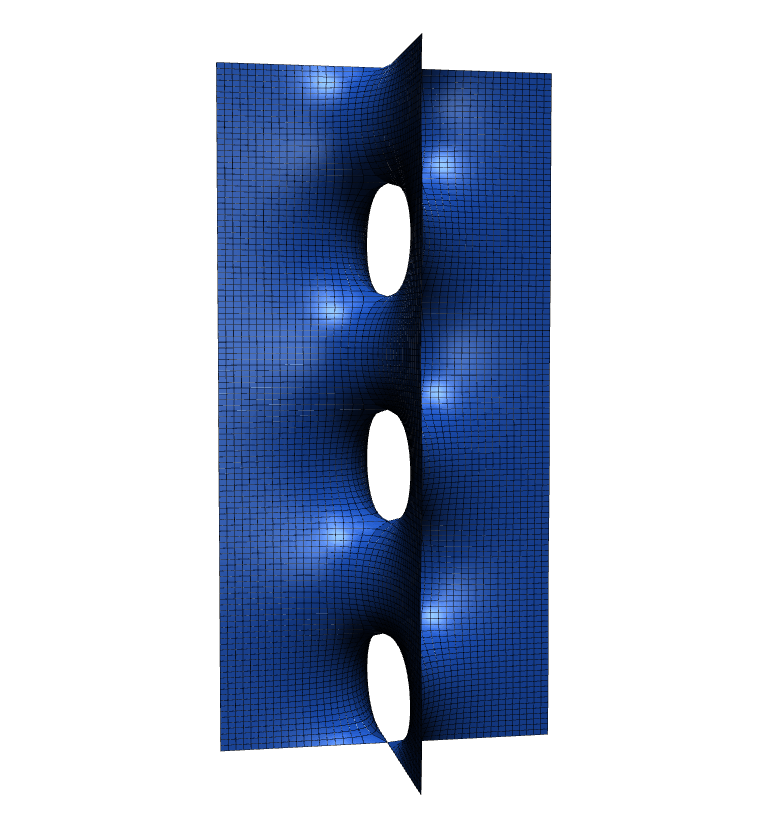}
\caption{Scherk's first surface and its conjugate, Scherk's fifth surface (parametrisation by height).}
\end{figure}

Denoting by $\gamma_p$ the positively oriented circle around
$p\in\{\pm 1, \pm i\}$ the periods $\gamma_p^*\Phi= \Phi + \tau_p +
\ii \tau_p^*$ of
$\Phi$ are given by 
\[
\tau_{\pm 1}  + \ii \tau^*_{\pm 1}= 2\pi(0,\pm 1, -\ii), \quad \tau_{\pm
  i} + \ii \tau^*_{\pm i} = 2\pi(\pm 1,0,\ii)\,.
\]
In particular,  the doubly--periodic first Scherk surface has periods
$
\gamma_p^*f = f +  \tau_p$ and its conjugate, 
the  simply--periodic fifth Scherk surface,  has periods  $\gamma_p^*f^*
= f^* +  \tau^*_p$.

\begin{figure}[H]
\includegraphics[height=6cm]{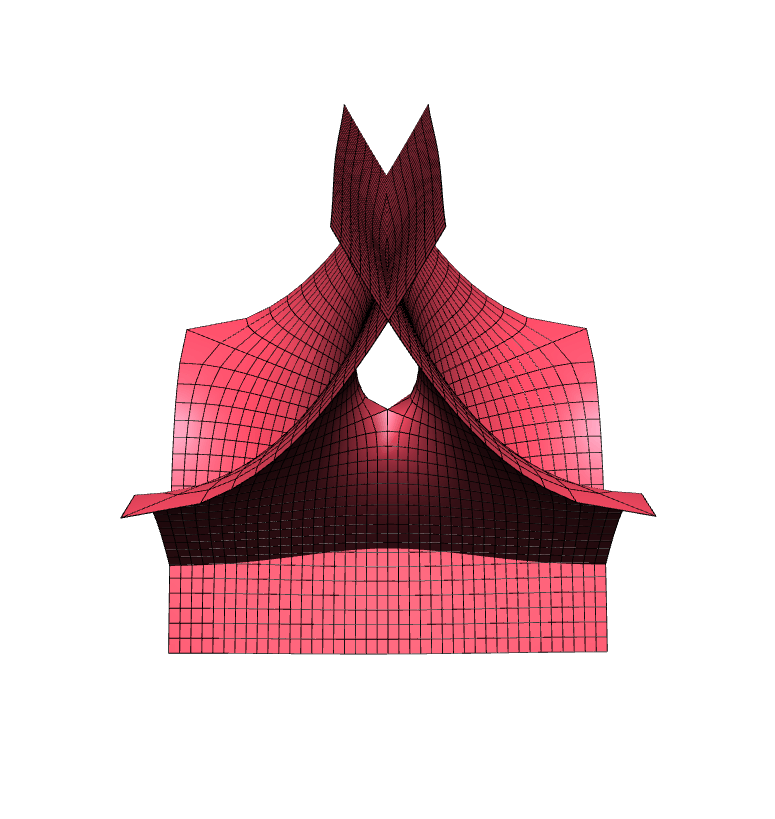}
\includegraphics[height=6cm]{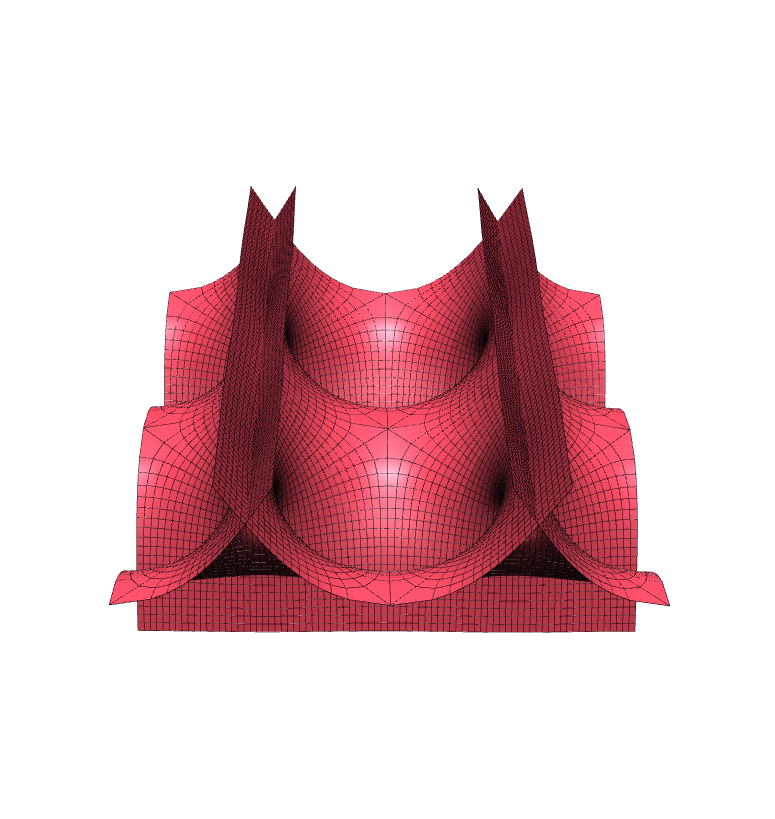}
\caption{L\'opez-Ros deformation of  Scherk's first surface with
  parameter $\sigma=0.6$}
\end{figure}

Since $  \tau^*_p \in \Span_{\R} \{k\}$ we see with Theorem
\ref{thm:Goursat} that the L\'opez-Ros deformation of $f$ with
parameter $\sigma$ is doubly--periodic with
\[
\gamma_p^*f_\sigma =f_\sigma + \begin{pmatrix} \cos t & - \sin t & 0 \\
  \sin t & \cos t & 0 \\ 0&0&1
\end{pmatrix} \tau_p \cosh s\,.
\]

\begin{figure}[H]
\includegraphics[height=6cm]{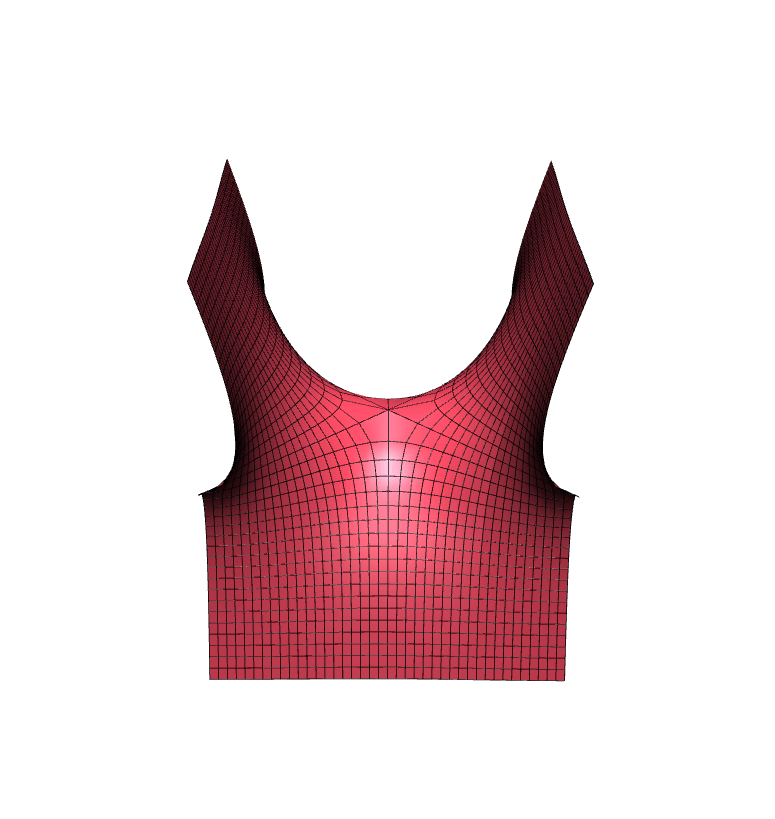}
\includegraphics[height=6cm]{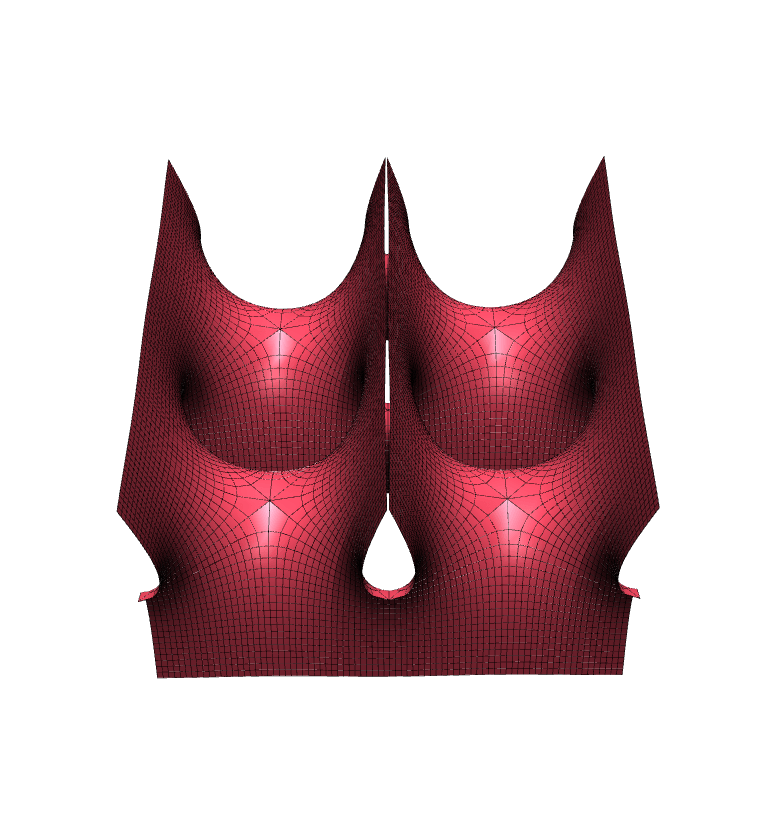}
\caption{L\'opez-Ros deformation of  Scherk's first surface with
  parameter  $\sigma = 1.4$.}
\end{figure}

Corollary \ref{cor: periods in 3 space} shows that the
periods of the simple factor dressing with parameter $\mu$  are given by
\[
\tau^\mu_{\pm 1} = \pm 2\pi e^{\pm s}\begin{pmatrix} 0 \\  \cos t  \\
  \sin t  
\end{pmatrix}\,,
\qquad \tau^\mu_{\pm i} = 
2\pi \begin{pmatrix}
\pm 1 \\ -\cos t \ \sinh s\\ -\sin t \ \sinh s
\end{pmatrix}\,,
\]
where $s= -\ln|\mu|,  t= \arg \frac{\bar\mu-1}{\bar\mu(1-\mu)}$; in
particular,   the
periods  cannot be simultaneously
closed. Moreover, since
  $\tau^\mu_1 + \tau^\mu_{-1} + \tau^\mu_{i} + \tau^\mu_{-i} =0$
and $\tau^\mu_1 = - \tau^\mu_{-1} e^{2s}$ we see  
  that $f^\mu$ is doubly--periodic with respect to the integer
lattice generated by $b\tau^\mu_1$ and $\tau^\mu_i$  if $s = \ln \sqrt
q$ with $q=\frac ab, a, b\in\N$.

In particular, for $q\in\N$ and $\mu = - \frac 1{\sqrt q}$ the simple
factor dressing with parameter $\mu$ is invariant under the integer lattice
$\Gamma=<\tau_1^\mu, \tau_i^\mu>$ since $q= \frac a b$ with $a=q,
b=1$, and thus $b\tau^\mu_1= \tau^\mu_1$.

\begin{figure}[H] 
\includegraphics[height=3.5cm]{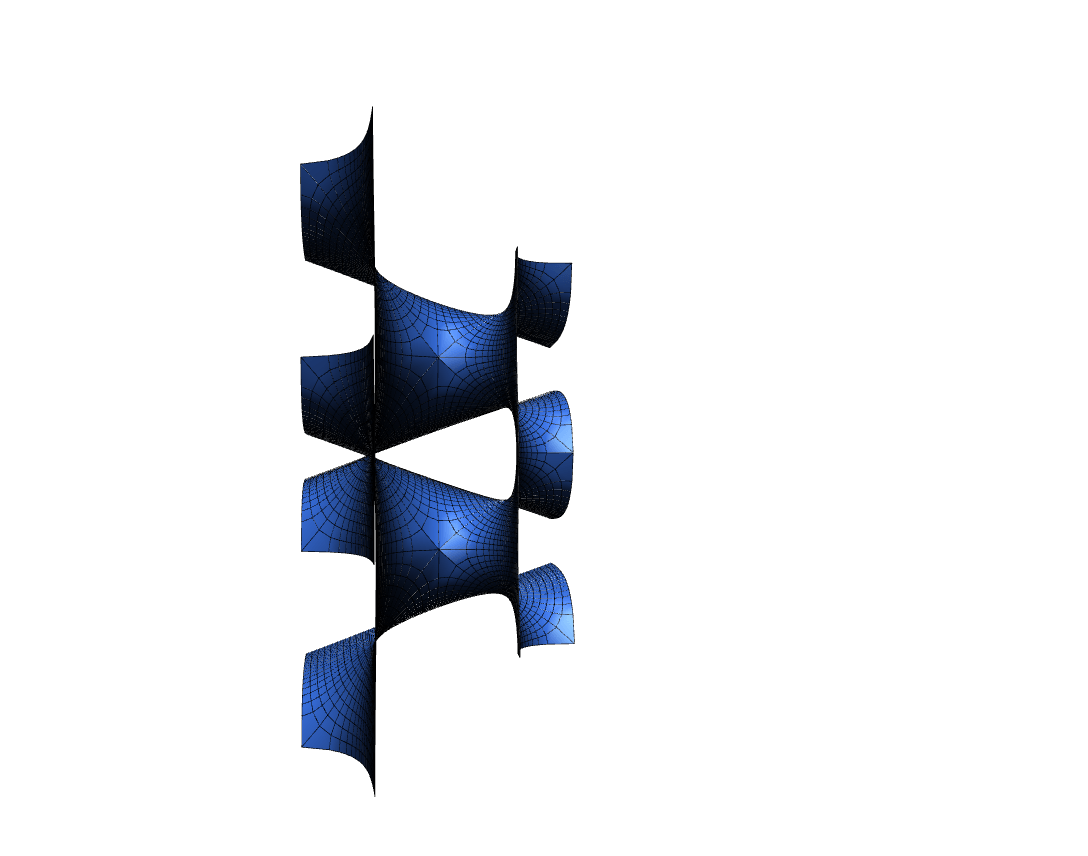}
\includegraphics[height=3.5cm]{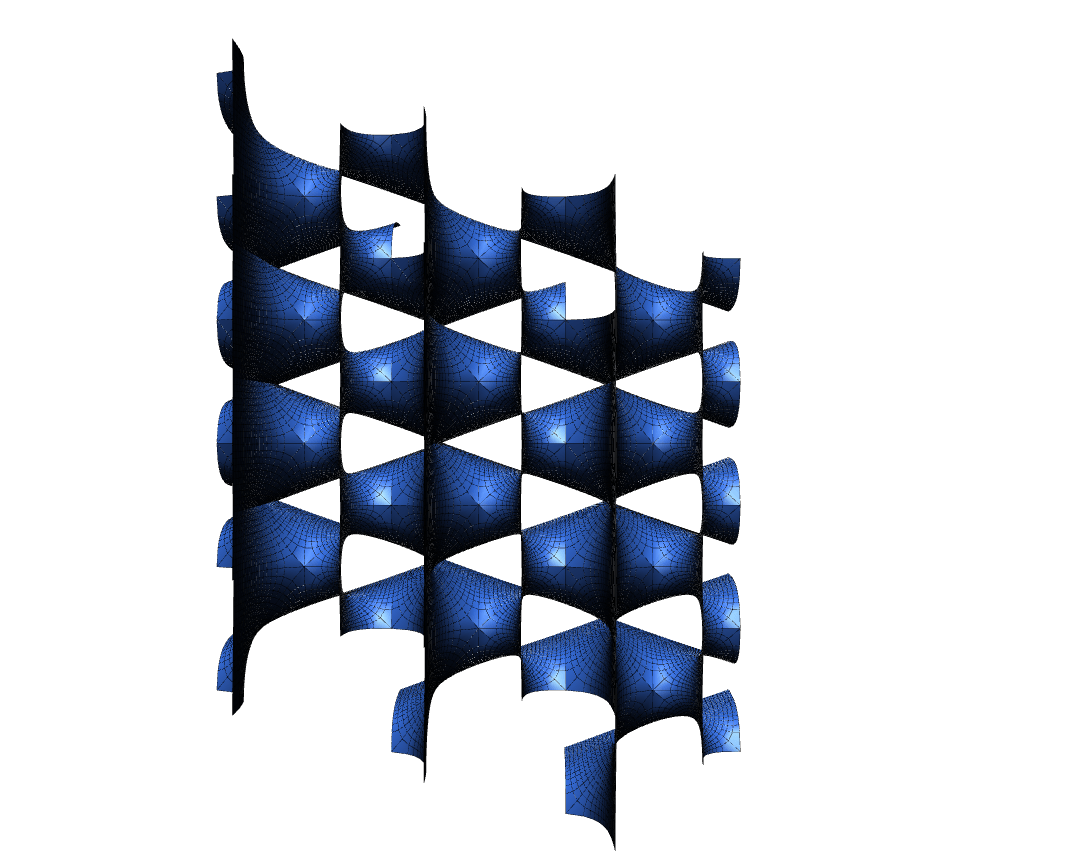}
\includegraphics[height=3.5cm]{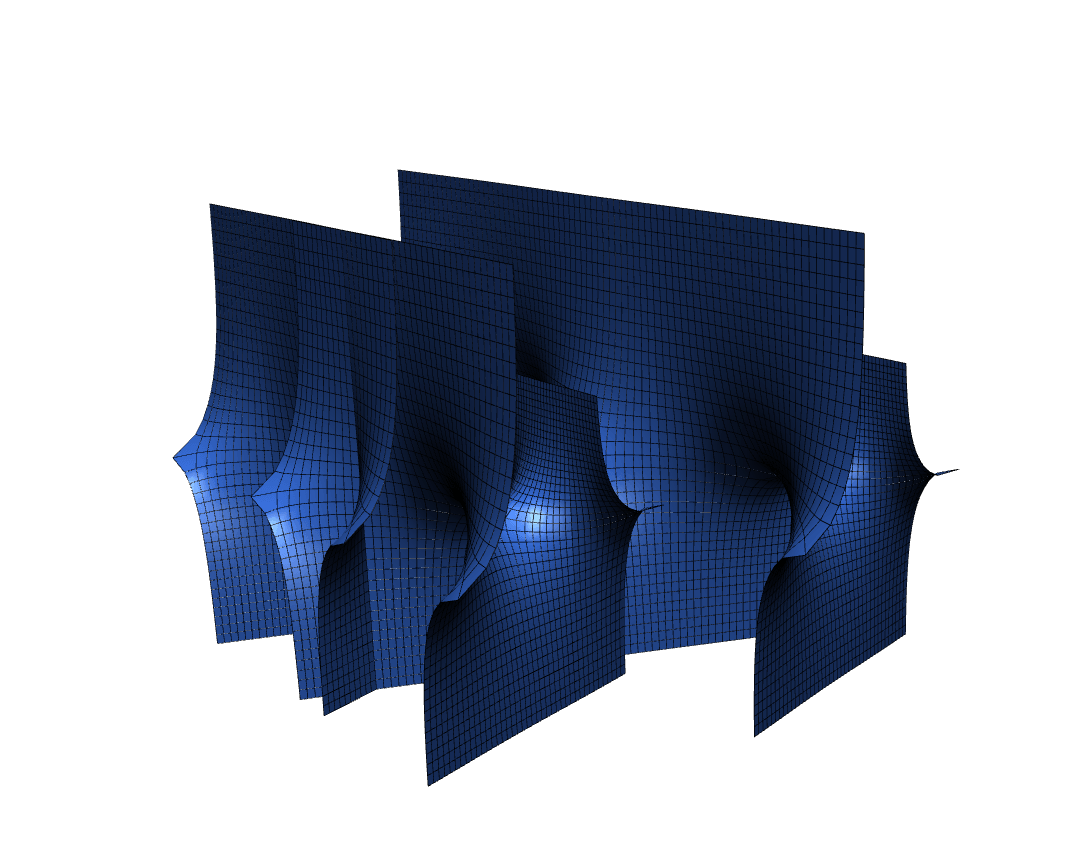}
\caption{Simple factor dressing with $\mu= -\frac 1{\sqrt 2}$  of
  Scherk's first surface: fundamental domain for the lattice $\Gamma =
  <2\pi \sqrt 2 j, 2\pi(i-\frac{\sqrt 2j}4)>$,
  larger piece of the surface, and side view.
}
\end{figure}

However, as already indicated by the pictures above, in this case  a simple factor dressing
is invariant under a smaller lattice: since $\tau_1^\mu =- q\tau_{-1}^\mu$
with $q\in\N$, we see that the simple factor dressing is invariant
under the integer lattice $\hat\Gamma=<\tau_{-1}^\mu, \tau_i^\mu>$.

\begin{figure}[H]
\includegraphics[height=3.5cm]{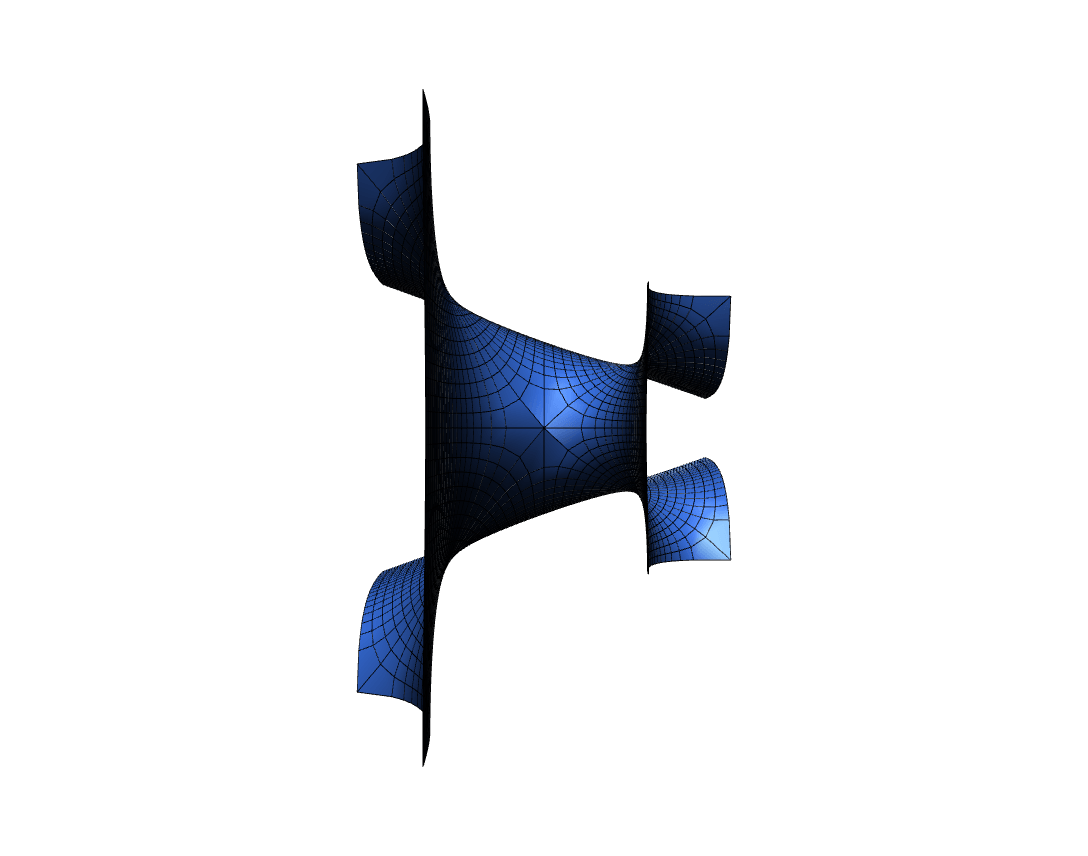}
\includegraphics[height=3.5cm]{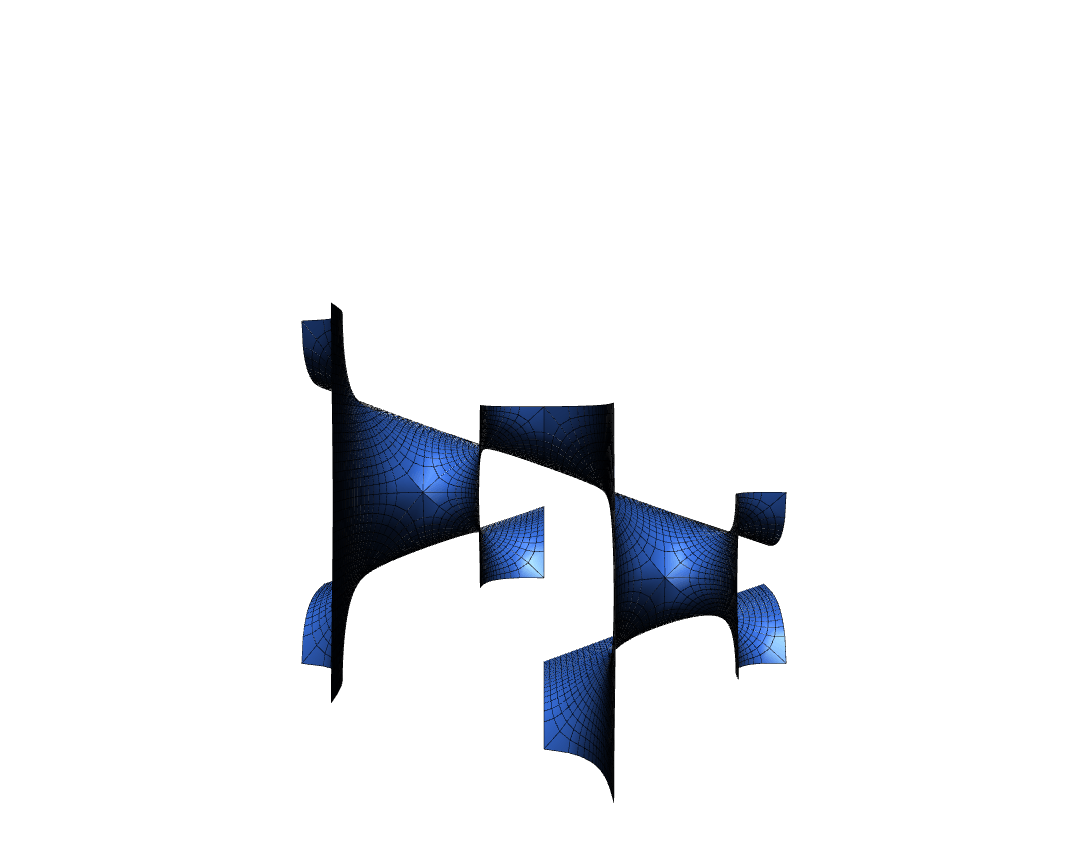}
\includegraphics[height=3.5cm]{SFDscherkfdtau1.png}
\caption{Simple factor dressing with $\mu= -\frac 1{\sqrt 2}$  of
  Scherk's first surfacee: fundamental domain  for the lattice $\hat\Gamma = <\tau^\mu_i , \tau^\mu_{-1}>$, translations by
  $\tau^\mu_{i} =  2\pi(i-\frac{\sqrt 2j}4)$ and $\tau^\mu_{-1} = -\pi \sqrt 2 j$.}
\end{figure}

We conclude the example of Scherk's first surface by providing the
pictures for  transforms with values in 4--space. The left-- and
right--associated family are minimal surfaces in $\R^4$

\begin{figure}[H]
\includegraphics[height=4.5cm]{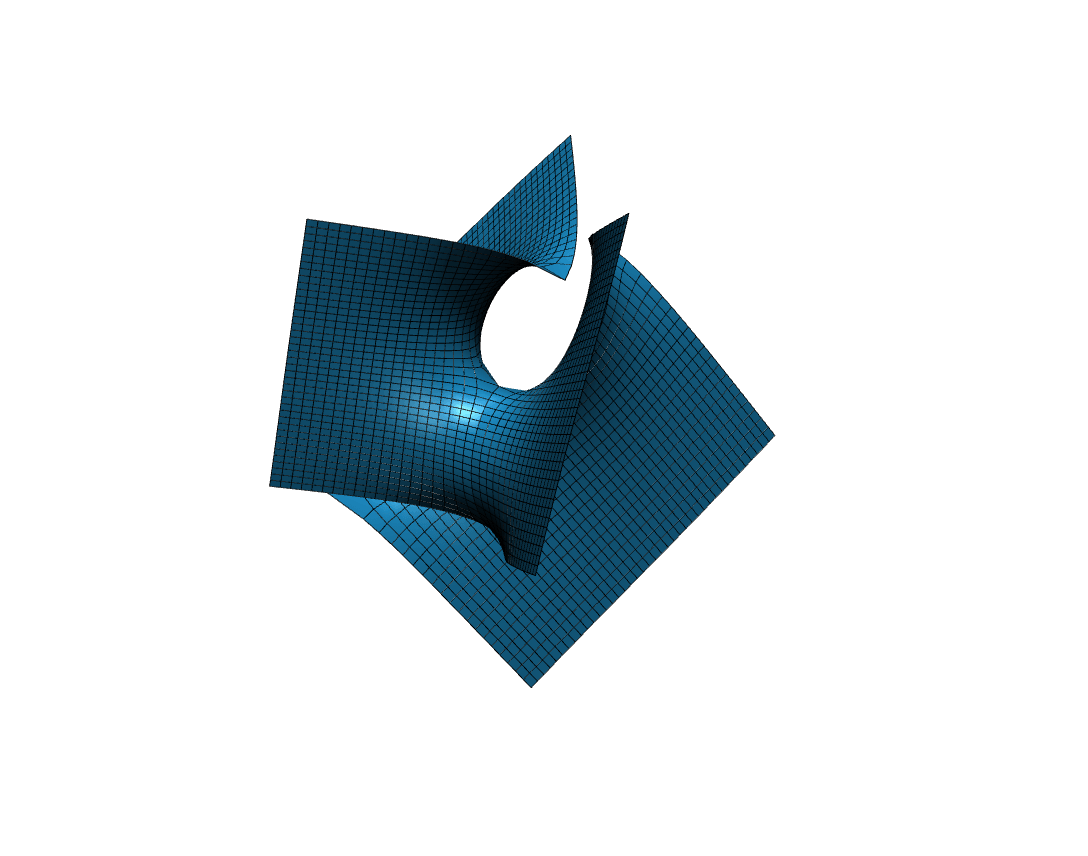}
\includegraphics[height=4.5cm]{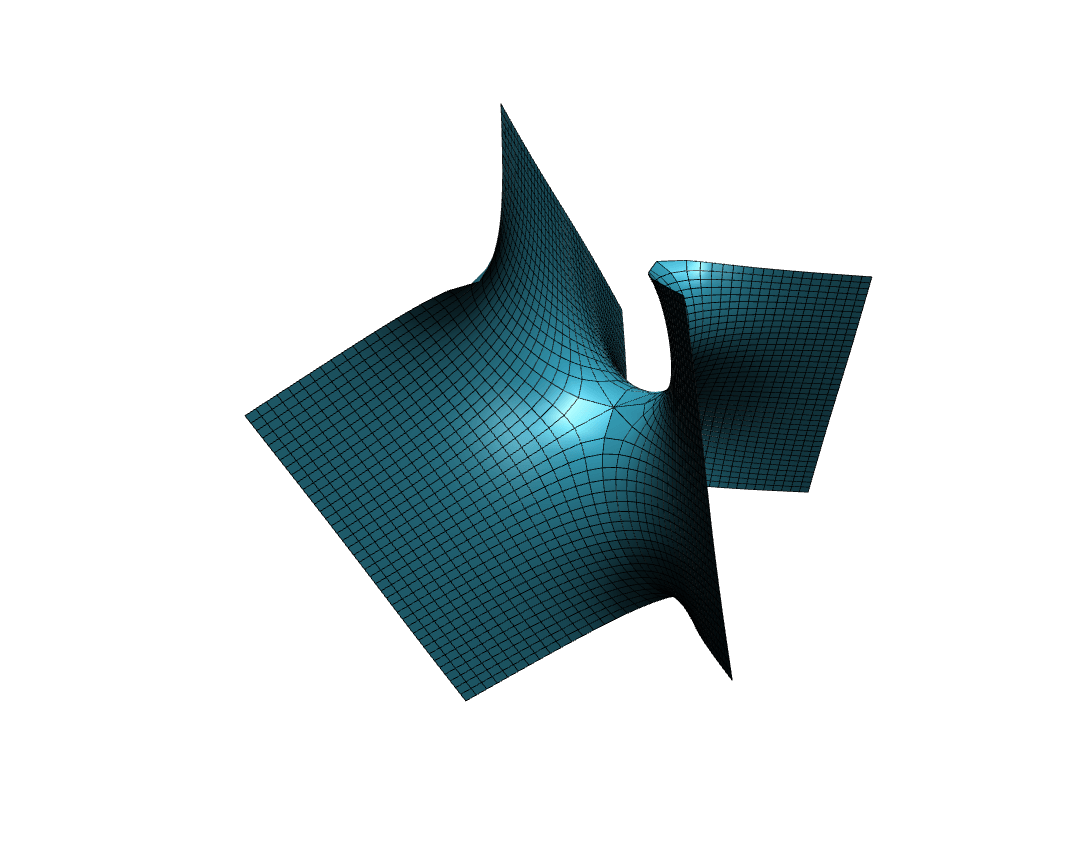}  
\caption{Elements   $f^{\frac 1{\sqrt 7}, \frac 2{\sqrt {7}} +\frac
    j{\sqrt 7} - \frac k{\sqrt 7}}$ and $f_{\frac 1{\sqrt 7}, \frac 2{\sqrt {7}} +\frac
    j{\sqrt 7} - \frac k{\sqrt 7}}$  of the left and right associated family of
  Scherk's first surface, orthogonally projected into $\R^3$.}
\end{figure}

whereas the associated Willmore surface

\begin{figure}[H]
\includegraphics[height=4cm]{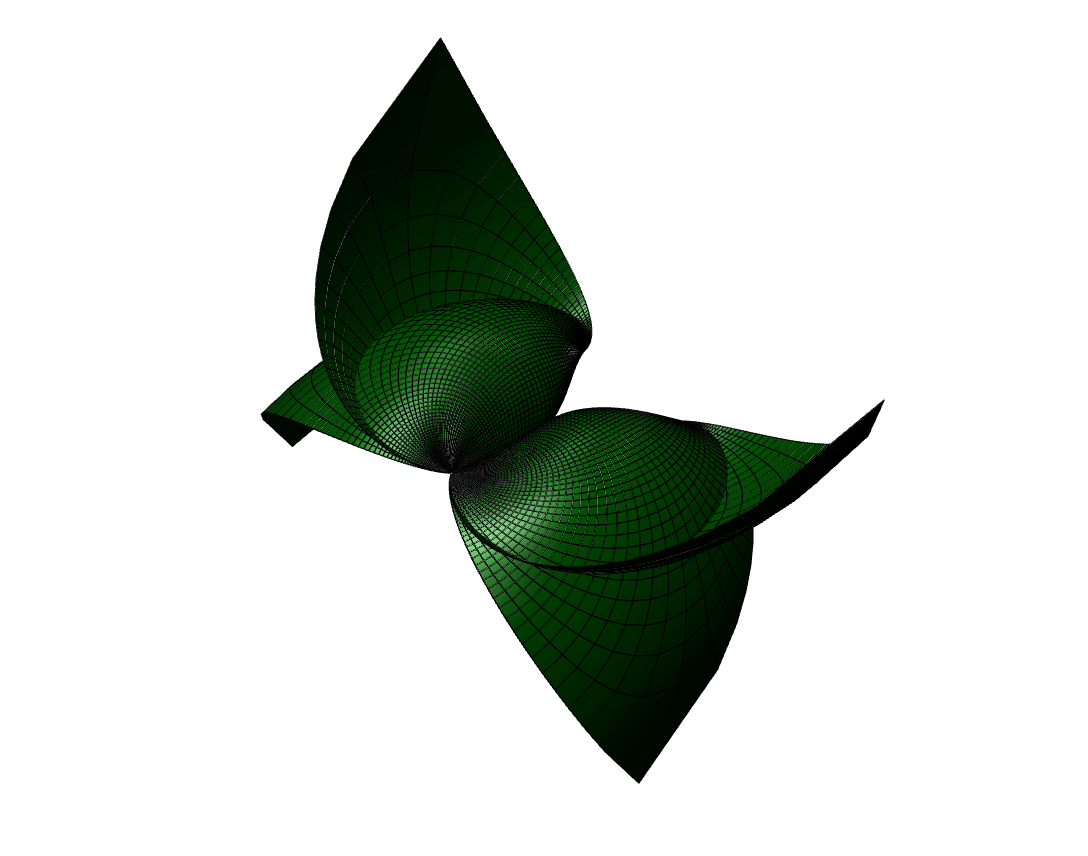}
\includegraphics[height=4cm]{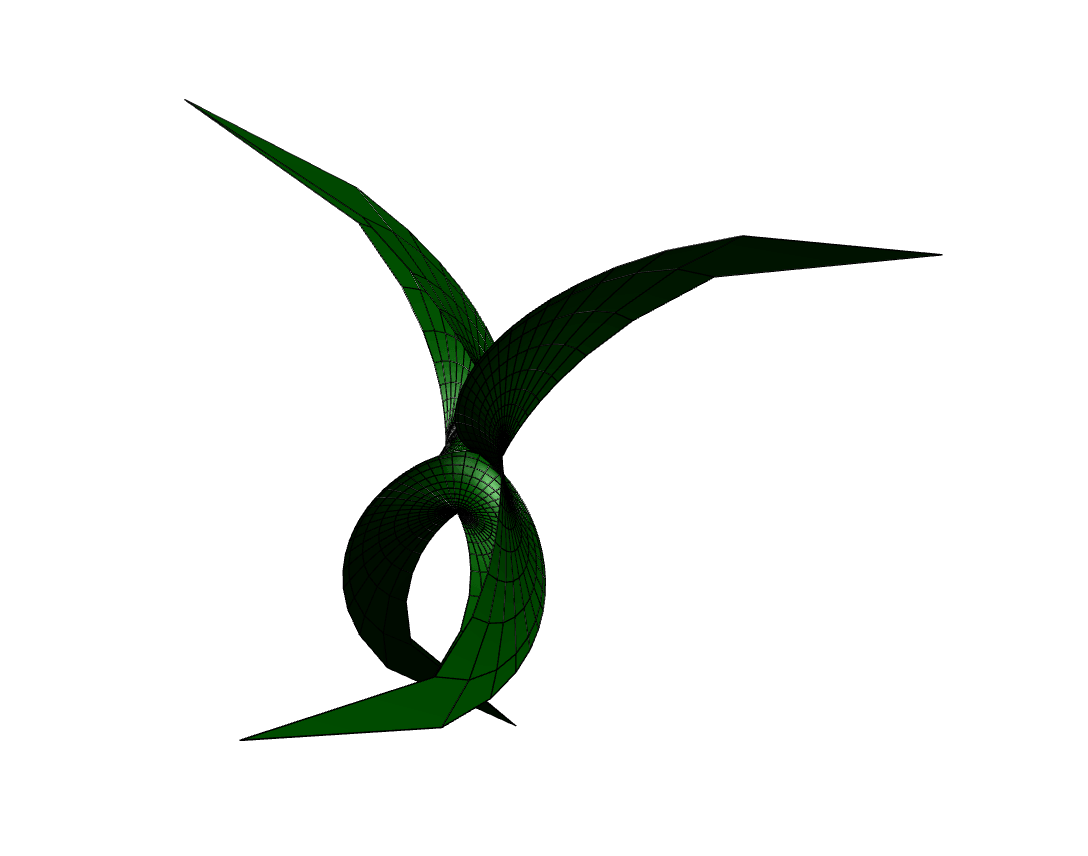}
\caption{The associated Willmore surface  of   Scherk's first surface
  orthogonally projected onto $\R^3$.}
\end{figure}

and the $\mu$--Darboux transforms are Willmore surfaces in $\R^4$.

\begin{figure}[H]
\includegraphics[height=5.5cm]{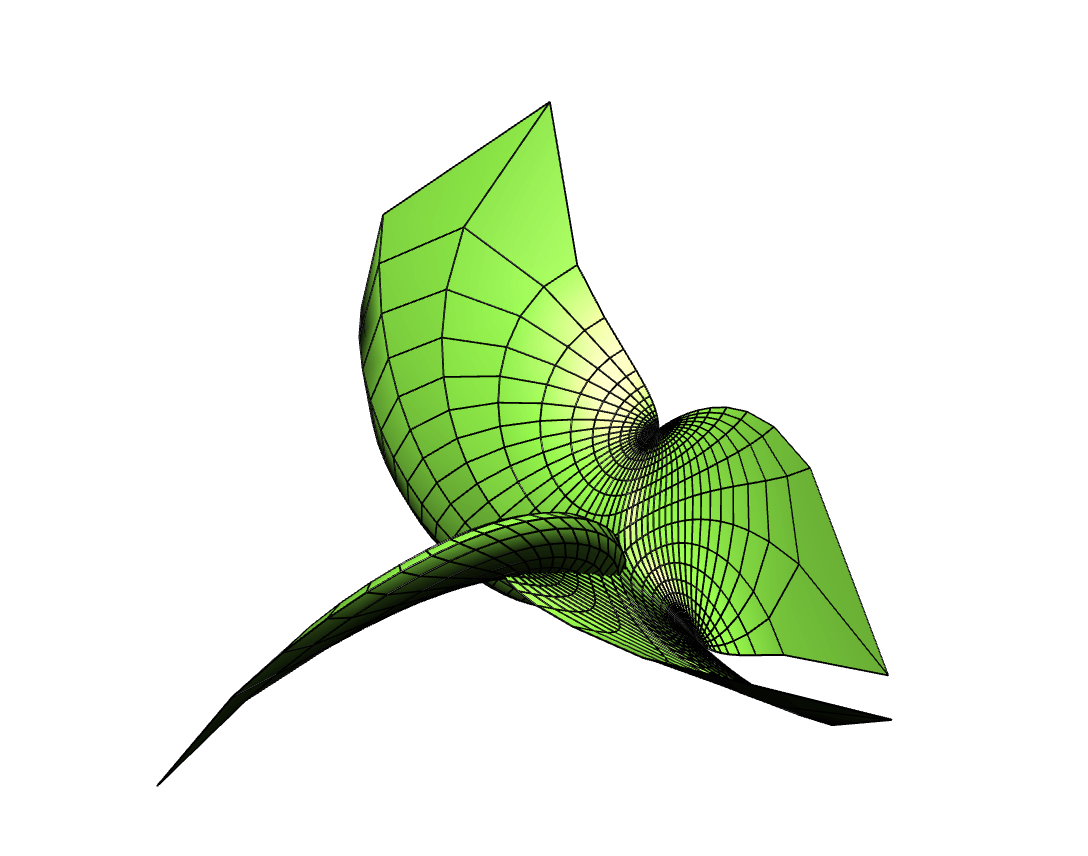}

\caption{The $\mu$--Darboux
  transform with $\mu=-\frac i2, m=1$, of Scherk's first surface, orthogonally projected into $\R^3$.}
\end{figure}


\subsection{Minimal torus}
Our final example is a minimal surface whose holomorphic null curve is
defined on a punctured torus. If  $\Lambda$ is the lattice in $\C$
over $\Z$ spanned by the two periods $\omega_1$ and $\omega_2$ then
the Weierstrass' elliptic function $\wp$  associated with $\Lambda$  satisfies
\[
(\wp')^2 = 4\wp^3- g_2\wp -g_3\,
\]
and thus
$
\wp''= 6\wp^2 -\frac 12g_2
$,
where $g_2, g_3$ are the invariants
\[
g_2(\omega_1, \omega_2) = 60 \sum_{(m,n)\not=(0,0)} (m\omega_1 + n\omega_2)^{-4}, 
\quad 
g_3(\omega_1, \omega_2) = 140 \sum_{(m,n)\not=(0,0)} (m\omega_1 + n\omega_2)^{-6}\,.
\]

\begin{figure}[H]
\includegraphics[height=3.8cm]{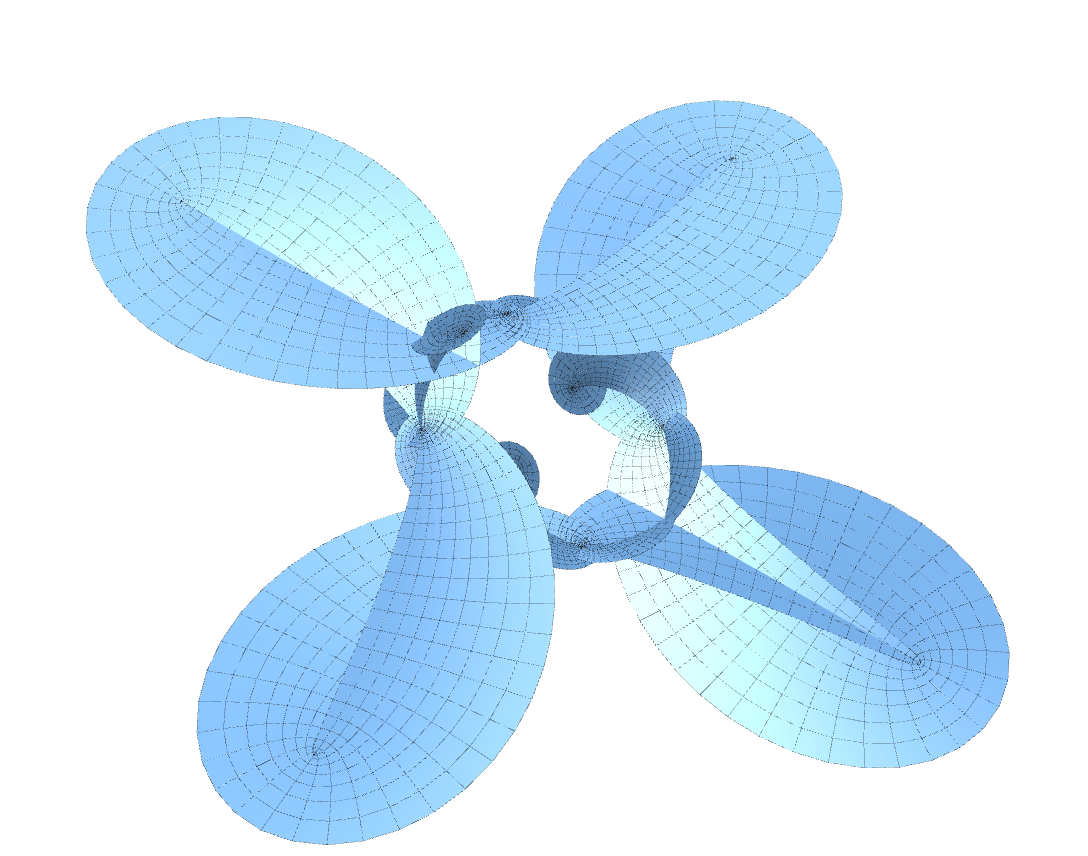}
\caption{Small--Weierstrass torus.}
\end{figure}

With this at hand, the minimal surface  in Example 6.2 in \cite{small} reads as
\[
f =\Re \Phi: M\to\R^3
\]
where $M = (\C \setminus \{0,\frac{\omega_1}2, \frac{\omega_2}2,
\frac{\omega_1+\omega_2}2\})/\Lambda$ and  
$\Phi=(\Phi_1, \Phi_2, \Phi_3)$ is the holomorphic null curve in $\Cc^3$ given by 
\begin{eqnarray*}
\Phi_1&=&  \frac{1}{8(\wp')^3} 
(-g_2^2 -8g_3^2-48g_3\wp - 12g_2g_3\wp -24g_2\wp^2 -3g_2^2\wp^2+64g_3\wp^3
\\
&& \qquad \qquad + 48\wp^4+24g_2\wp^4 + 16\wp^6 )\\
\Phi_2&=&
\frac{\ii}{8(\wp')^3} (-g_2^2 +8g_3^2-48g_3\wp + 12g_2g_3\wp -24g_2\wp^2 +3g_2^2\wp^2-64g_3\wp^3 \\
&& \qquad \qquad + 48\wp^4-24g_2\wp^4 - 16\wp^6) \\
\Phi_3&=&
\frac{1}{4(\wp')^3}(-2g_2g_3 -3g_2^2\wp -24g_3\wp^2 + 8g_2\wp^3 -48\wp^5)
\end{eqnarray*}
in $\C^3$.  As before, the Gauss map is given by the stereographic
projection
(\ref{eq:Gauss with g})  of
$
g = \frac{d\Phi_3}{d\Phi_1 - \ii d\Phi_2}$.

Since $\Phi$ is defined on the punctured torus so are both $f$ and
$f^*$. Again, this implies that all discussed transformes are defined
on the same punctured torus. The following pictures are made for a
square torus with invariants $g_2 = 100$ and $g_3=0$.

\begin{figure}[H]
\includegraphics[height=4cm]{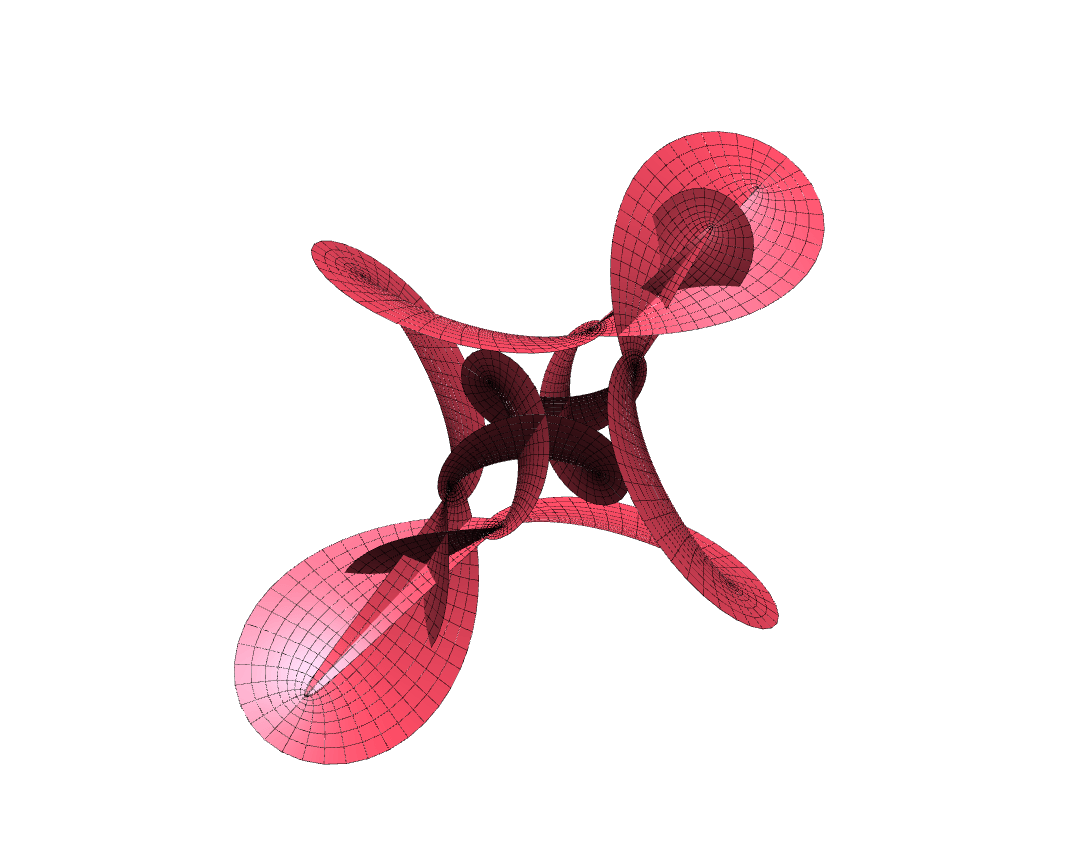}
\includegraphics[height=4cm]{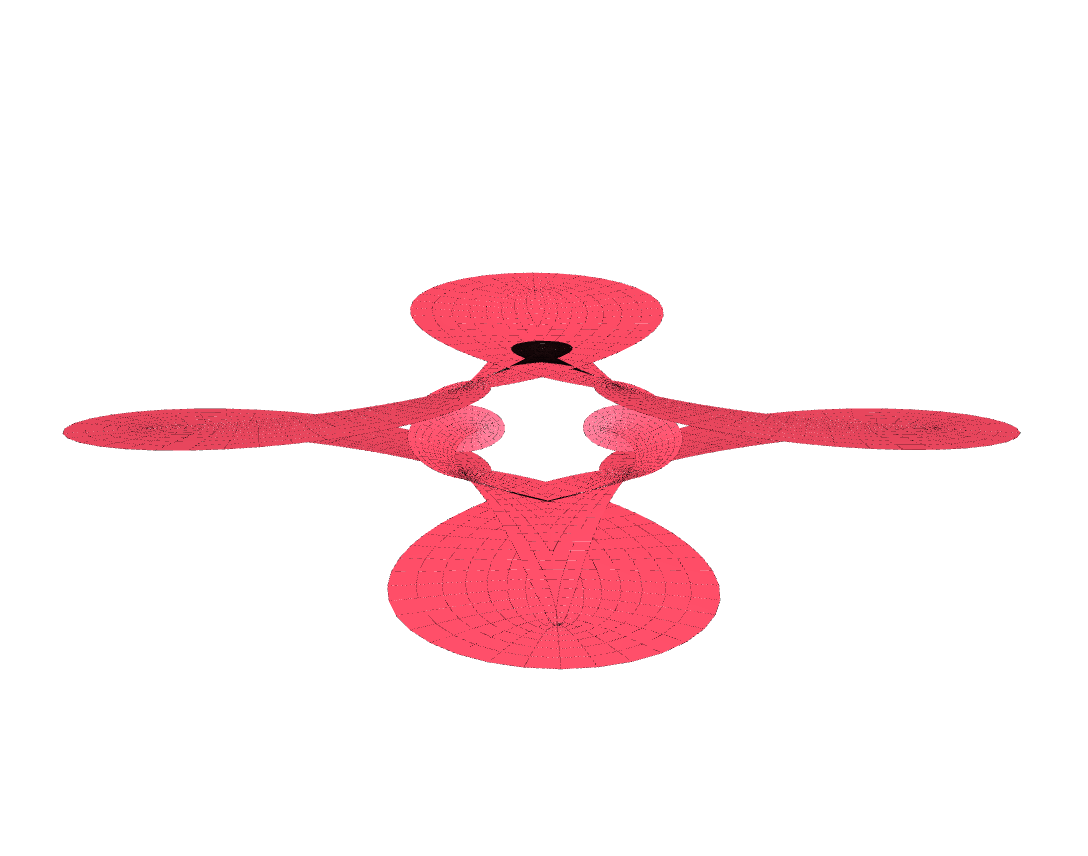}
\caption{L\'opez-Ros deformation of  a Small--Weierstrass torus with
  parameter $\sigma=0.3$ and $\sigma=1.4$}
\end{figure}
\vspace{-1cm}
\begin{figure}[H]
\includegraphics[height=4.5cm]{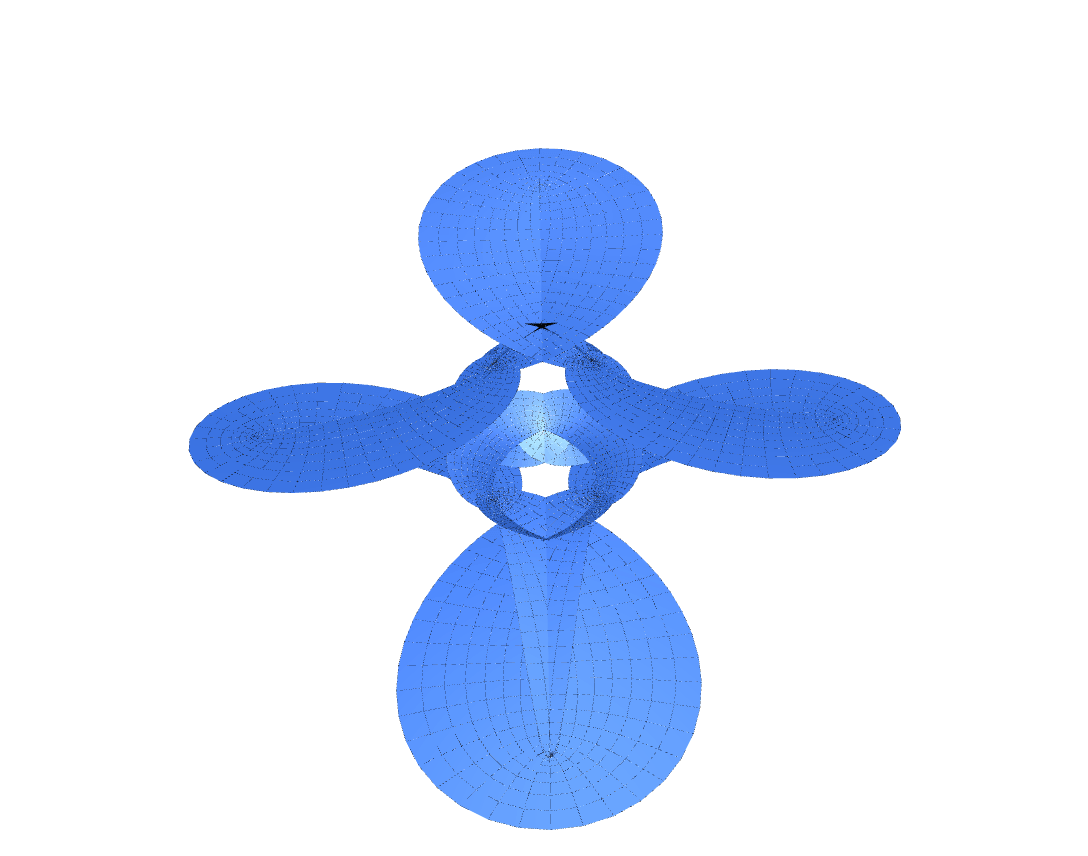}
\includegraphics[height=4.5cm]{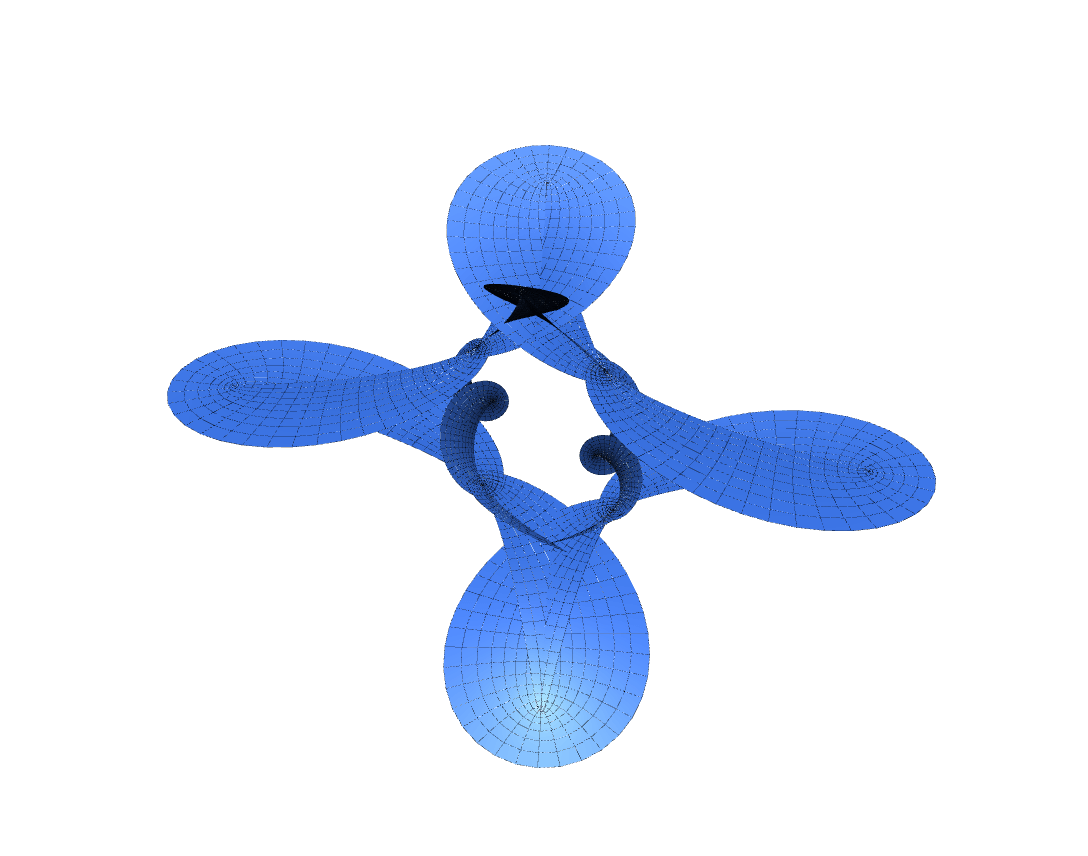}
\caption{Simple factor dressing of  a Small--Weierstrass torus  with
  parameter  $\mu = -\frac i2$ and parameters $(-\frac i2, m, m)$ with
  $m=1-i+j +2k$ respectively.}
\end{figure}
 \vspace{-1cm}
\begin{figure}[H]
\includegraphics[height=4.5cm]{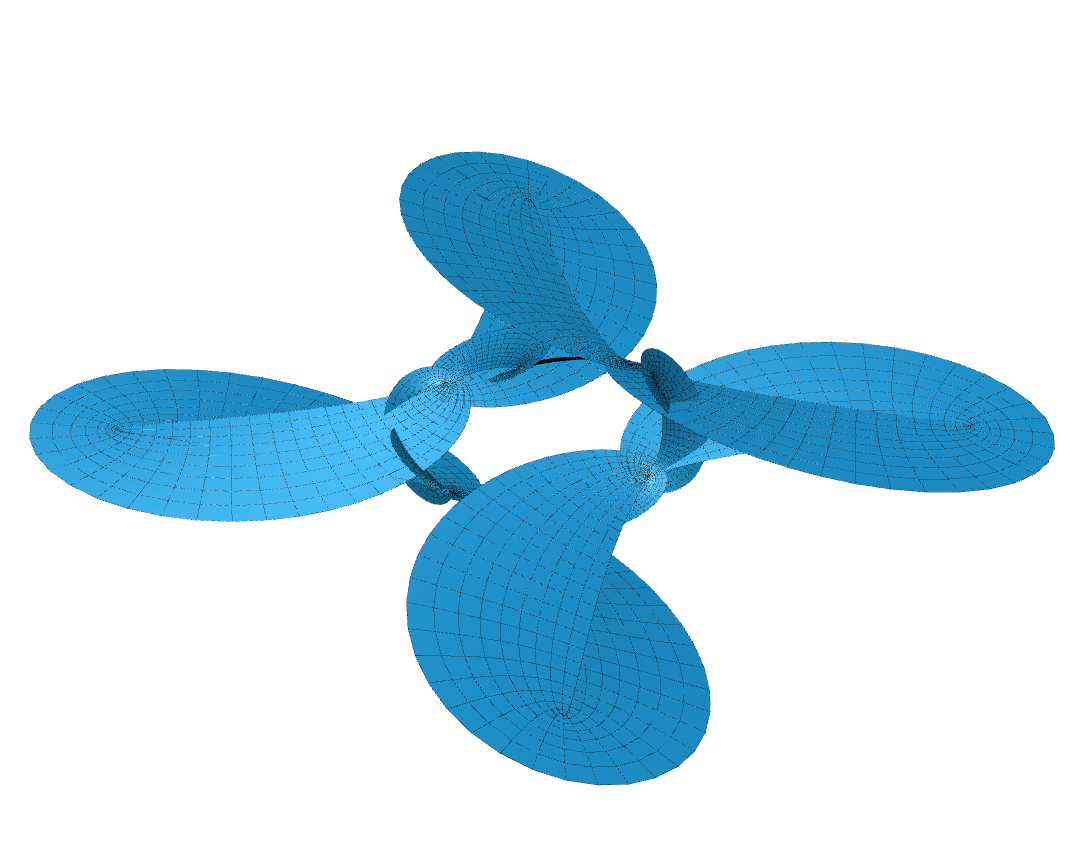}
\includegraphics[height=4.5cm]{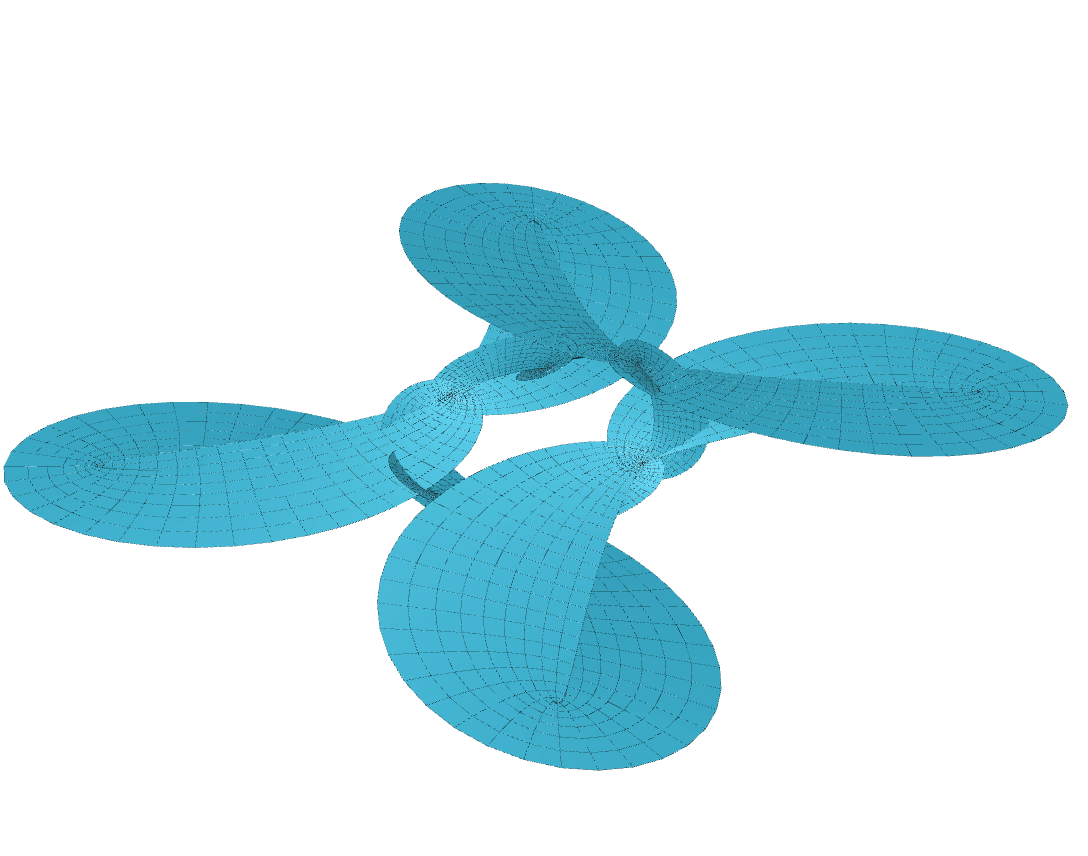}  
\caption{Elements   $f^{\frac 1{\sqrt 7}, \frac 2{\sqrt {7}} +\frac
    j{\sqrt 7} - \frac k{\sqrt 7}}$ and $f_{\frac 1{\sqrt 7}, \frac 2{\sqrt {7}} +\frac
    j{\sqrt 7} - \frac k{\sqrt 7}}$  of the left and right associated family of
 a Small--Weierstrass torus, orthogonally projected into $\R^3$.}
\end{figure}
\vspace{-1cm}
\begin{figure}[H]
\includegraphics[height=3.5cm]{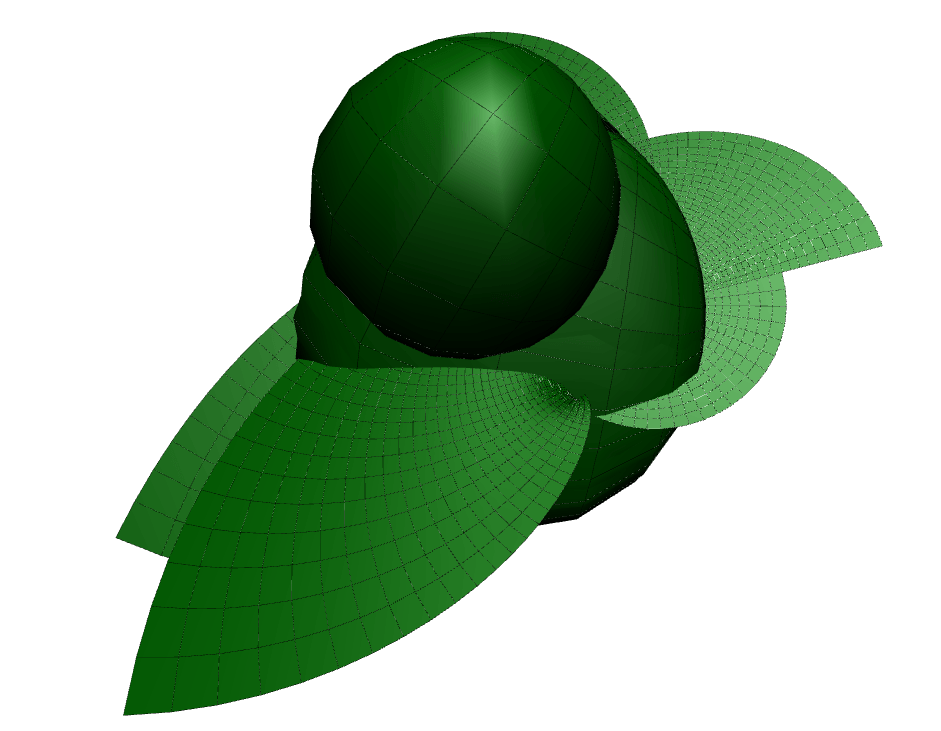}
\includegraphics[height=4cm]{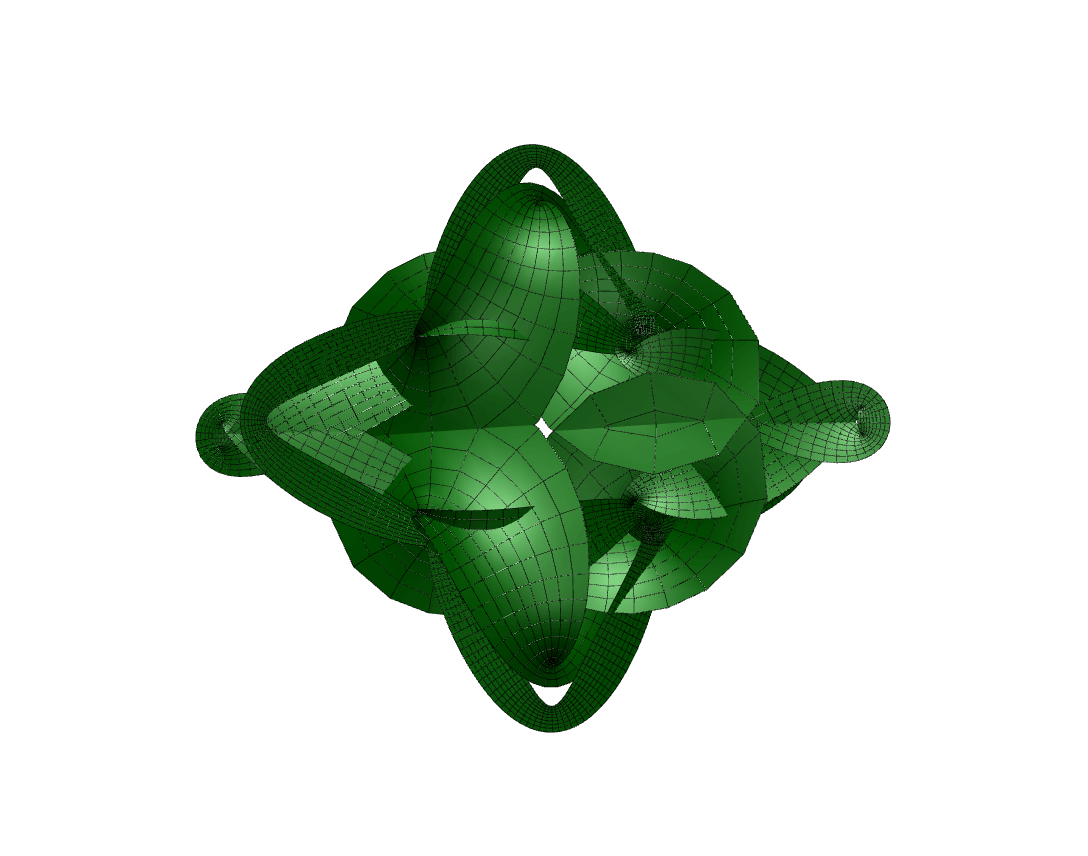}
\caption{The associated Willmore surface  of  a Small--Weierstrass torus,
  orthogonally projected onto $\R^3$.}
\end{figure}

\begin{figure}[H]
\includegraphics[height=5.5cm]{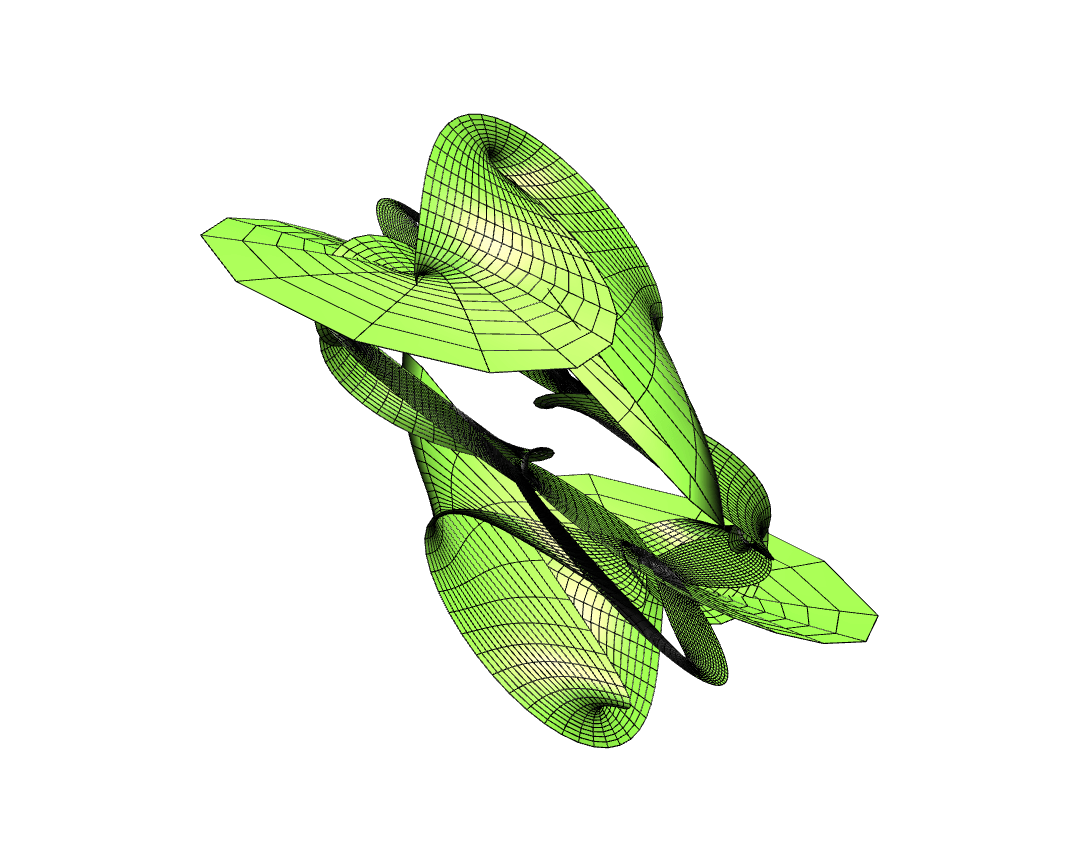}

\caption{The $\mu$--Darboux
  transform with $\mu=-\frac i2, m=1$, of a Small--Weierstrass torus, orthogonally projected into $\R^3$.}
\end{figure}


%
\todoo{\input{questions}}

\bibliographystyle{alpha}
\bibliography{doc}

\newcommand{\etalchar}[1]{$^{#1}$}
\begin{thebibliography}{BHJPP97}

\bibitem[BB11]{bernstein_breiner}
J.~Bernstein and C.~Breiner.
\newblock Symmetry of embedded genus 1 helicoids.
\newblock {\em Duke Math. J., Vol. 159}, pages 83--–97, 2011.

\bibitem[BDLQ13]{simple_factor_dressing}
F.~Burstall, J.~Dorfmeister, K.~Leschke, and A.~Quintino.
\newblock Darboux transforms and simple factor dressing of constant mean
  curvature surfaces.
\newblock {\em Manuscripta Math., Vol 140, Issue 1}, pages 213--236, 2013.

\bibitem[BFL{\etalchar{+}}02]{coimbra}
F.~Burstall, D.~Ferus, K.~Leschke, F.~Pedit, and U.~Pinkall.
\newblock {\em Conformal Geometry of Surfaces in ${S}^4$ and Quaternions}.
\newblock Lecture Notes in Math., Springer, Berlin, Heidelberg, 2002.

\bibitem[BFPP93]{Annals}
F.~E. Burstall, D.~Ferus, F.~Pedit, and U.~Pinkall.
\newblock {H}armonic tori in symmetric spaces and commuting {H}amiltonian
  systems on loop algebras.
\newblock {\em Ann. of Math. 138}, pages 173--212, 1993.

\bibitem[BHJPP97]{bjpp}
F.~E. Burstall, U.~Hertrich-Jeromin, F.~Pedit, and U.~Pinkall.
\newblock Curved flats and isothermic surfaces.
\newblock {\em Math. Z., Vol. 225}, pages 199--209, 1997.

\bibitem[BLPP12]{conformal_tori}
C.~Bohle, K.~Leschke, F.~Pedit, and U.~Pinkall.
\newblock Conformal maps from a 2--torus to the 4--sphere.
\newblock {\em J. Reine Angew. Math. (671)}, pages 1--30, 2012.

\bibitem[Bob91]{sym_bob}
A.~Bobenko.
\newblock Constant mean curvature surfaces and integrable equations.
\newblock {\em Russ. Math. Surv. 40}, pages 1--45, 1991.

\bibitem[Boh10]{bohle}
C.~Bohle.
\newblock Constrained {W}illmore tori in the 4-sphere.
\newblock {\em J. Differential Geom., Vol. 86, No. 1}, 2010.

\bibitem[Bon53]{Bonnet}
O.~Bonnet.
\newblock {Note sur la th\'{e}orie g\'{e}n\'{e}rale des surfaces}.
\newblock {\em Comptes rendus des s\'{e}ances de l'Acad\'{e}mie des sciences,
  Vol. 37}, pages 529--532, 1853.

\bibitem[BP95]{duke}
F.~E. Burstall and F.~Pedit.
\newblock Dressing orbits of harmonic maps.
\newblock {\em Duke Math. Journ., Vol. 80, No. 2}, pages 353--382, 1995.

\bibitem[BPP02]{tokyo}
F.~Burstall, F.~Pedit, and U.~Pinkall.
\newblock Schwarzian derivatives and flows of surfaces.
\newblock In Y.~Ohnita M.~Guest, R.~Miyaoka, editor, {\em Differential Geometry
  and Integrable Systems}, volume 308 of {\em Contemporary Mathematics}, pages
  39--61. AMS, Providence, Rhode Island, 2002.

\bibitem[BQ14]{burstall_quintino}
F.~Burstall and A~Quintino.
\newblock Dressing transformations of constrained {W}illmore surfaces.
\newblock {\em Comm. Anal. Geom. 22}, pages 469--518, 2014.

\bibitem[BT14]{bohle_taimanov}
C.~Bohle and I.~A. Taimanov.
\newblock Euclidean minimal tori with planar ends and elliptic solitons.
\newblock {\em Int. Math. Res. Not.}, pages 1--26, 2014.

\bibitem[Bur06]{fran_epos}
F.~E. Burstall.
\newblock Isothermic surfaces: conformal geometry, {C}lifford algebras and
  integrable systems.
\newblock In {\em Integrable systems, geometry, and topology}, volume~36 of
  {\em AMS/IP Stud. Adv. Math.}, pages 1--82. Amer. Math. Soc., Providence, RI,
  2006.

\bibitem[Cay89]{cayley}
A.~Cayley.
\newblock On certain results relating to quaternions.
\newblock {\em Math. Papers 1}, pages 123--126, 1889.

\bibitem[CGS95]{cieslinski1995isothermic}
J.~Cie{\'s}li{\'n}ski, P.~Goldstein, and A.~Sym.
\newblock Isothermic surfaces in {$E^3$} as soliton surfaces.
\newblock {\em Physics Letters A, Vol. 205, No. 1}, pages 37--43, 1995.

\bibitem[CLP13]{cmc}
E.~Carberry, K.~Leschke, and F.~Pedit.
\newblock Darboux transforms and spectral curves of constant mean curvature
  surfaces revisited.
\newblock {\em Annals of Global Analysis and Geometry, Vol. 43, Issue 4}, pages
  299--329, 2013.

\bibitem[CO67]{chern_osserman}
S.~Chern and R.~Osserman.
\newblock Complete minimal surfaces in {E}uclidean n-space.
\newblock {\em J. d'Analyse Math, Vol 19, Issue 1}, pages 15--34, 1967.

\bibitem[Dar99]{Darboux}
G.~Darboux.
\newblock Sur les surfaces isothermiques.
\newblock {\em C. R. Acad. Sci. Paris, Vol. 128}, pages 1299--1305, 1899.

\bibitem[DHS10]{dierkes}
U.~Dierkes, S.~Hildebrandt, and F.~Sauvigny.
\newblock {\em Minimal Surfaces}.
\newblock Springer, 2010.

\bibitem[DK05]{dorfmeister_kilian}
J.~Dorfmeister and M.~Kilian.
\newblock Dressing preserving the fundamental group.
\newblock {\em Diff. Geom. Appl. 23}, pages 176--–204, 2005.

\bibitem[DPT07]{dorfmeister_pedit_toda}
J.~Dorfmeister, F.~Pedit, and M.~Toda.
\newblock Minimal surfaces via loop groups.
\newblock {\em Balkan J. Geom. Appl., Vol 2}, pages 25--40, 2007.

\bibitem[DPW98]{DPW}
J.~Dorfmeister, F.~Pedit, and H.~Wu.
\newblock {W}eierstrass type representation of harmonic maps into symmetric
  spaces.
\newblock {\em Com. Anal. Geom., Vol. 6, No. 4}, pages 633--667, 1998.

\bibitem[DT07]{dajczer}
M.~Dajczer and R.~Tojeiro.
\newblock {\em All superconformal surfaces in $\R^4$ in terms of minimal
  surfaces}.
\newblock Inst. de Matematica Pura e Aplicada, 2007.

\bibitem[Eis12]{eisenhart}
L.~Eisenhart.
\newblock Minimal surfaces in {E}uclidean four-space.
\newblock {\em Amer. J. Math. 34, No. 3}, pages 215--236, 1912.

\bibitem[Eji88]{Ejiri}
N.~Ejiri.
\newblock {W}illmore surfaces with a duality in ${S}^n$(1).
\newblock {\em Proc. Lond. Math. Soc., III Ser. 57, No.2}, pages 383--416,
  1988.

\bibitem[EL78]{eells_lemaire1}
J.~Eells and L.~Lemaire.
\newblock A report on harmonic maps.
\newblock {\em Bull. London Math. Soc. 10}, pages 1--–68, 1978.

\bibitem[EL88]{eells_lemaire2}
J.~Eells and L.~Lemaire.
\newblock Another report on harmonic maps.
\newblock {\em Bull. London Math. Soc. 20}, pages 385--–524, 1988.

\bibitem[Enn64]{enneper}
A.~Enneper.
\newblock Analytisch-geometrische {U}ntersuchungen.
\newblock {\em Z. Math. Phys. 9}, pages 96--125, 1864.

\bibitem[FLPP01]{klassiker}
D.~Ferus, K.~Leschke, F.~Pedit, and U.~Pinkall.
\newblock Quaternionic holomorphic geometry: {P}l\"ucker formula, {D}irac
  eigenvalue estimates and energy estimates of harmonic 2-tori.
\newblock {\em Invent. math., Vol. 146}, pages 507--593, 2001.

\bibitem[FMUY14]{ferrer_martin_umehara_yamada}
L.~Ferrer, F.~Martin, M.~Umehara, and K.~Yamada.
\newblock A construction of a complete bounded null curve in {$C^3$}.
\newblock {\em Kodai Math. J., Vol 37}, pages 59--96, 2014.

\bibitem[Gou87]{goursat}
E.~Goursat.
\newblock Sur un mode de transformation des surfaces minima.
\newblock {\em Acta Math., Vol 11}, pages 135--186, 1887.

\bibitem[Gue97]{guest}
M.~A. Guest.
\newblock {\em Harmonic Maps, Loop Groups, and Integrable Systems}.
\newblock Cambridge University Press, 1997.

\bibitem[H{\'e}l98]{Helein1}
F.~H{\'e}lein.
\newblock {W}illmore immersions and loop groups.
\newblock {\em J. Diff. Geom., Vol. 50, No. 2}, pages 331--385, 1998.

\bibitem[Hit90]{hitchin-harmonic}
N.~Hitchin.
\newblock Harmonic maps from a $2$-torus to the $3$-sphere.
\newblock {\em J. Differential Geom., Vol. 31, No. 3}, pages 627--710, 1990.

\bibitem[HJP97]{darboux_isothermic}
U.~Hertrich-Jeromin and F.~Pedit.
\newblock {R}emarks on the {D}arboux transfoms of isothermic surfaces.
\newblock {\em Doc. Math. J. DMV, Vol. 2}, pages 313--333, 1997.

\bibitem[HK97]{hoffman_karcher}
D.~Hoffman and H.~Karcher.
\newblock {\em Complete embedded minimal surfaces of finite total curvature}.
\newblock Springer, 1997.

\bibitem[HO80]{hoffman_osserman}
D.~Hoffman and R.~Osserman.
\newblock The geometry of the generalized {G}auss map.
\newblock {\em Mem. Amer. Math. Soc. 28, No. 236}, pages iii+105pp, 1980.

\bibitem[HR02]{pascal_frederic}
F.~H\'{e}lein and P.~Romon.
\newblock Hamiltonian stationary {L}agrangian surfaces in $\mathbb{C}^2$.
\newblock {\em Communications in Analysis and Geometry, Vol. 10, No. 1}, pages
  79--126, 2002.

\bibitem[HW08]{helein_wood}
F.~H\'elein and J.C. Wood.
\newblock Harmonic maps.
\newblock In {\em Handbook of Global Analysis}, page 417–49. Elsevier Sci. B.
  V., 2008.

\bibitem[JM83]{jorge_meeks}
L.~P. Jorge and W.~H. {Meeks III}.
\newblock The topology of complete minimal surfaces of finite total {G}aussian
  curvature.
\newblock {\em Topology, Vol. 22, No. 2}, pages 203--221, 1983.

\bibitem[Kar89]{karcher}
H.~Karcher.
\newblock Construction of minimal surfaces.
\newblock Surveys in Geometry, pages 1–96, University of Tokyo, and Lecture
  Notes No. 12, SFB256, Bonn, 1989.

\bibitem[Lag60]{Lagrange}
J.~Lagrange.
\newblock {Essai d'une nouvelle m\'{e}thode pour d\'{e}terminer les maxima et
  les minima des formules int\'{e}grales ind\'{e}finies}.
\newblock {\em Miscellane Taurinensia, Vol. 2}, pages 173--195, 1760.

\bibitem[Les11]{willmore_harmonic}
K.~Leschke.
\newblock Harmonic map methods for {W}illmore surfaces.
\newblock {\em Contemporary Mathematics 542, AMS}, pages 204--212, 2011.

\bibitem[LM99]{lopez_martin}
F.~L\'opez and F.~Martin.
\newblock Complete minimal surfaces in {$R^3$}.
\newblock {\em Publ. Mat., Vol. 43, No. 2}, pages 341--449, 1999.

\bibitem[LR91]{lopez_ros}
F.~J. L\'opez and A.~Ros.
\newblock On embedded complete minimal surfaces of genus zero.
\newblock {\em J. Differential Geom. 33, No. 1}, pages 293--300, 1991.

\bibitem[LR10]{hsl}
K.~Leschke and P.~Romon.
\newblock Spectral curve of {H}amiltonian stationary tori.
\newblock {\em Calc. Var. PDE, Vol. 38, Issue 1}, pages 45--74, 2010.

\bibitem[Mor98]{moriya_minimal}
K.~Moriya.
\newblock On a variety of algebraic minimal surfaces in {E}uclidean 4-space.
\newblock {\em Tokyo J. Math, Vol. 21, No 1}, pages 121--134, 1998.

\bibitem[Mor09]{moriya}
K.~Moriya.
\newblock Super--conformal surfaces associated with null complex holomorphic
  curves.
\newblock {\em Bull. London Math. Soc. 41}, pages 327--331, 2009.

\bibitem[MP09]{meeks_perez}
W.~H. {Meeks III} and J.~P\'erez.
\newblock Properly embedded minimal planar domains with infinite topology are
  {R}iemann minimal examples.
\newblock In {\em Current Developments in Mathematics, Vol. 2008}, pages
  281–--346. International Press, 2009.

\bibitem[MP12]{meeks_perez_book}
W.~H. {Meeks III} and J.~P\'erez.
\newblock {\em A survey on classical minimal surface theory}.
\newblock University Lecture Series (AMS), Vol. 60, 2012.

\bibitem[MPR04]{meeks_perez_ros}
W.~H. {Meeks III}, J.~P{\'e}rez, and A.~Ros.
\newblock The geometry of minimal surfaces of finite genus {I}; curvature
  estimates and quasiperiodicity.
\newblock {\em J. Differential Geom., Vol. 66}, pages 1--45, 2004.

\bibitem[MPR14]{meeks_perez_ros2}
W.~H. {Meeks III}, J.~P\'erez, and A.~Ros.
\newblock Properly embedded minimal planar domains.
\newblock {\em Annals of Mathematics, Vol. 181, No. 3}, 2014.

\bibitem[MZ78]{zakharov_mikhailov}
A.~V. Mikhailov and V.~E Zakharov.
\newblock Relativistically invariant two-dimensional models in field theory
  integrable by the inverse scattering problem method.
\newblock {\em {JETP} {L}etters 47}, pages 1017--1027, 1978.

\bibitem[Nit89]{nitsche}
J.~C.~C. Nitsche.
\newblock {\em Lectures on minimal surfaces. Vol. 1. Introduction,
  fundamentals, geometry and basic boundary value problems}.
\newblock Cambridge University Press, 1989.

\bibitem[Ohn10]{ohnita}
Y.~Ohnita.
\newblock Harmonic maps of surfaces and integrable system approach (a survey).
\newblock {\em S\=urikaisekikenky\=usho K\=oky\=uroku, No. 1720}, pages 1--27,
  2010.
\newblock The progress and view of Harmonic Map Theory (Kyoto, 2010).

\bibitem[Oss64]{osserman}
R.~Osserman.
\newblock Global properties of minimal surfaces in {$E^3$} and {$E^n$}.
\newblock {\em Ann. of Math. (2) 80}, pages 340--364, 1964.

\bibitem[Poh76]{pohlmeyer}
K.~Pohlmeyer.
\newblock Integrable {H}amiltonian systems and interactions through quadratic
  constraints.
\newblock {\em Commun. Math. Phys. 46, No. 3}, pages 207--221, 1976.

\bibitem[PR02]{perez_ros}
J.~P\'erez and A.~Ros.
\newblock Properly embedded minimal surfaces with finite total curvature.
\newblock In {\em The global theory of minimal surfaces in flat spaces, Lecture
  Notes in Math., Vol. 1775}, pages 15 -- 66. Springer-Verlag, 2002.

\bibitem[PS89]{pin&ster}
U.~Pinkall and I.~Sterling.
\newblock On the classification of constant mean curvature tori.
\newblock {\em Ann.\ of Math.}, 130:407--451, 1989.

\bibitem[Qui09]{aurea}
A.~Quintino.
\newblock Constrained {W}illmore surfaces: {S}ymmetries of a {M}\"obius
  invariant integrable system.
\newblock arXiv:0912.5402, 2009.

\bibitem[Rig87]{rigoli}
M.~Rigoli.
\newblock The conformal {G}auss map of submanifolds of the {M}oebius space.
\newblock {\em Ann. Global Anal. Geom 5, No.2}, pages 97--116, 1987.

\bibitem[RV70]{ruh_vilms}
E.~Ruh and J.~Vilms.
\newblock The tension field of the {G}auss map.
\newblock {\em Trans. Am. Math. Soc.}, 149:569--573, 1970.

\bibitem[Sch90]{Schwarz}
H.~A. Schwarz.
\newblock {\em {Gesammelte mathematische Abhandlungen, Vol. 1}}.
\newblock {Springer}, 1890.

\bibitem[Sch83]{schoen}
R.~Schoen.
\newblock Uniqueness, symmetry, and embeddedness of minimal surfaces.
\newblock {\em J. Diff. Geom, Vol. 18}, pages 791--809, 1983.

\bibitem[Sch02]{schmidt}
M.~Schmidt.
\newblock A proof of the {W}illmore conjecture.
\newblock arXiv:math/0203224, 2002.

\bibitem[Sma92]{small}
A.~Small.
\newblock Minimal surfaces in $\mathbb{R}^3$ and algebraic curves.
\newblock {\em Diff. Geom. Appl. 2}, pages 369--384, 1992.

\bibitem[SZ79]{zakharov_shabat}
A.~Shabat and V.~Zakharov.
\newblock Integration of nonlinear equations of mathematical physics by the
  method of inverse scattering. {II}.
\newblock {\em Functional Analysis and Its Applications 13 (3)}, pages
  166--174, 1979.

\bibitem[Tai98]{taimanov_weierstrass}
I.~A. Taimanov.
\newblock The {W}eierstrass representation of closed surfaces in ${R}^3$.
\newblock {\em Funct. Anal. Appl. 32}, page 49–62, 1998.

\bibitem[Tai06]{taimanov_2dim_Dirac}
I.~Taimanov.
\newblock Two-dimensional {D}irac operator and the theory of surfaces.
\newblock {\em Uspekhi Mat. Nauk, Vol.61, Issue 1(367)}, page 85–164, 2006.

\bibitem[TU00]{terng_uhlenbeck}
C.~Terng and K.~Uhlenbeck.
\newblock B{\"a}cklund transformations and loop group actions.
\newblock {\em Comm. Pure and Appl. Math LIII}, pages 1--–75, 2000.

\bibitem[Uhl89]{uhlenbeck}
K.~Uhlenbeck.
\newblock {H}armonic maps into {L}ie groups (classical solutions of the chiral
  model).
\newblock {\em J. Diff.\ Geom., Vol. 30}, pages 1--50, 1989.

\bibitem[Uhl92]{Uhl92}
K.~Uhlenbeck.
\newblock {O}n the connection between harmonic maps and the self-dual
  {Y}ang-{M}ills and the sine-{G}ordon equations.
\newblock {\em J. Geom.\ Phys., Vol. 8}, pages 283--316, 1992.

\bibitem[Wei66]{weierstrass}
K.~Weierstrass.
\newblock {\"U}ber die {F}l\"achen, deren mittlere {K}r\"ummung \"uberall
  gleich null ist.
\newblock {\em Ber. Akad. Wiss. Berlin}, pages 612--625, 1866.

\end{thebibliography}

\end {document}